\documentclass[11pt,oneside]{article}

\usepackage{amsmath}
\usepackage{amssymb}
\usepackage{amsfonts}
\usepackage{mathrsfs}
\usepackage{graphicx}
\usepackage{epsfig}
\usepackage{accents}
\usepackage{geometry}
\usepackage[pagewise]{lineno}
\usepackage{amscd,amsmath,amssymb,amsthm,amsfonts,epsfig,graphics}
\usepackage{fancyhdr}
\usepackage{tikz}
\usepackage{pgf,pgfplots}
\usepackage{caption}
\usepackage{titlesec}
\usepackage{setspace}

\usepackage{indentfirst}
\usepackage[skins]{tcolorbox}
\tcbuselibrary{skins, breakable}

\usepackage{abstract}

\usepackage[numbers, sort]{natbib}
\newcommand \footnoteONLYtext[1]
{
\let \mybackup \thefootnote
\let \thefootnote \relax
\footnotetext{#1}
\let \thefootnote \mybackup
\let \mybackup \imareallyundefinedcommand
}

\usepackage{enumitem}
\setenumerate[1]{itemsep=-4pt, partopsep=0pt, parsep=\parskip, topsep=5pt}
\setitemize[1]{itemsep=-4pt, partopsep=0pt, parsep=\parskip, topsep=5pt}
\setdescription{itemsep=-4pt, partopsep=0pt, parsep=\parskip, topsep=5pt}

\allowdisplaybreaks

\newtheorem{definition}{Definition}[section]
\newtheorem*{claim}{Claim}
\theoremstyle{plain}
\newtheorem{thm}{Theorem}
\newtheorem{lem}{Lemma}[section]

\newtheorem{fact}{Fact}[section]
\newtheorem{prop}{Proposition}
\newtheorem{remark}{Remark}

\newtheorem*{nonametheorem}{Main Theorem}

\newtheorem*{thmR}{Reduced Main Theorem}

\numberwithin{equation}{section}
\numberwithin{figure}{section}
\numberwithin{table}{section}

\newcommand\keywords[1]{\it{Keywords}: #1}
\newcommand\msc[1]{\it{2010 Mathematics Subject Classification}:#1}

\makeatletter
\renewcommand{\section}{\@startsection{section}{1}{0mm}
{-\baselineskip}{0.5\baselineskip}{\Large\bf\leftline}}
\makeatother

\makeatletter
\renewcommand{\subsection}{\@startsection{subsection}{1}{0mm}
{-\baselineskip}{0.5\baselineskip}{\large\bf\leftline}}
\makeatother

\title{\Large{\bf{{Stochastic stability for weakly hyperbolic contracting Lorenz maps}}}}

\author{{Haoyang Ji }
\\
}

\date{}

\begin{document}

\maketitle

\vspace{-2cm}

\begin{abstract}
\noindent{ \bf{Abstract.}} In this article we study the expanding properties of random perturbations of contracting Lorenz maps satisfying the summability condition of exponent 1. Under general conditions on the maps and perturbation types, we prove stochastic stability in the strong sense: convergence of the densities of the stationary measures to the density of the physical measure of the unperturbed map in the $L^1$-norm. This improves the main result in \cite{Me}.

\end{abstract}

\footnoteONLYtext{Date: \date{\today}}
\footnoteONLYtext{\msc{\rm{ 37E05} \rm{37H30}}}
\footnoteONLYtext{\keywords{\rm{one dimensional dynamics, Lorenz map, physical measure, stochastic stability}}}
\footnoteONLYtext{H. Ji was supported by NSFC Grant No.12301103.}
\section{Introduction}
\label{intro}

Lorenz flows are related to the systems numerically studied by Lorenz in \cite{Lo} originated by truncating  Navier-Stokes equations for modeling atmospheric conditions. This system exhibits the famous strange Lorenz attractor and has played an important role in the development of the subject of dynamical systems. The existence of a strange attractor for classic Lorenz flows was listed by Steven Smale as one of several challenging problems for the twenty-first century, and was proved by Tucker in \cite{Tu}. Guckenheimer and Williams \cite{GW}, and also  Afra$\rm{\breve{i}}$movi$\rm{\check{c}}$-Bykov-Shilnikov \cite{ABS}, introduced the geometric Lorenz flows in which it was supposed that the eigenvalues $\lambda_2< \lambda_1 < 0 < \lambda_3$ at the singularity of the flow satisfying the expanding condition $\lambda_1 + \lambda_3 >0$. In \cite{ACT} Arneodo, Coullet and Tresser began to study a model obtained in the same way just replacing the expanding condition by the contracting condition $\lambda_1 + \lambda_3 <0$. The general assumptions used to construct the geometric models also permit the reduction of the 3-dimension problem, first to a 2-dimensional Poincar\'e section and then to a one-dimensional map, the so-called Lorenz maps.

From a topological viewpoint, a Lorenz map $f : I \setminus \{ c\} \to I$ is nothing else than an interval map with two monotone branches and a discontinuity $c$ in between. On both one-sided neighborhoods of the discontinuity the Lorenz map equals $|x|^{\alpha}$ near the origin up to coordinate changes. The parameter $\alpha>0$ is the {\it critical exponent} which by construction equals the ratio of the absolute value between the stable and unstable eigenvalues. If $\alpha < 1$, then the derivative of $f$ at $c$ is infinite. Such maps are typically overall expanding and chaotic, and by this reason these maps are called {\it expanding Lorenz maps}. Since $\alpha<1$ holds in the situation of the classical Lorenz systems, expanding Lorenz maps has been studied widely and their dynamics is well understood. If $\alpha>1$, then $f$ is called {\it contracting Lorenz maps}. This case is significantly harder due to the interplay between contraction near the discontinuity and expansion outside.

The dynamics of smooth interval maps has been studied exhaustively in the last forty years, especially for unimodal maps. Critical points and critical values play fundamental roles in the study of interval dynamics. From this point of view, Lorenz maps are of hybrid type: these maps have a single critical point as unimodal maps, but two critical values as bimodal maps. The presence of both contraction and discontinuity means that many techniques from the theory of expanding maps and one-dimensional maps are not applicable. However the starting points should still be the refined theory of smooth one-dimensional dynamics, especially of  unimodal maps. The symbolic and topological dynamics of such Lorenz maps have been widely studied, see \cite{B, KP}. The measurable dynamics was studied previously in \cite{AS, CD, KP, Ro, Me1} among others. The first step towards a theory of Lorenz renormalization was taken by Martens and de Melo \cite{MM} who developed a combinatorial counterpart of unimodal renormalization. Further study in this direction can be found in \cite{GG,  MW} among others.

In this paper we study random perturbations of contracting Lorenz maps under weakly hyperbolic assumption, with a main focus on stochastic stability. We shall study composition of maps of the form $f_{t_{n-1}} \circ \cdots \circ f_{t_1} \circ f_{t_0}$ where $f_{t_0}, f_{t_1}, \cdots$ are independently chosen random maps from a one-parameter continuous family perturbed from a weakly hyperbolic contracting Lorenz maps $f$. We prove stochastic stability: a typical random orbit  $f_{t_{n-1}} \circ \cdots \circ f_{t_1} \circ f_{t_0}(x)$ has roughly the same asymptotic distribution in the phase space as a typical orbit of the unperturbed map $f$ in a strong sense. For precise description, see subsection 2.2.

Stochastic stability of dynamical systems was introduced by Kolmogrov and Sinai. It is natural in consideration that any system arising from real world is unavoidably affected by external noises. An extensive historical account on stochastic stability of dynamical systems can be found in \cite{BDV} or \cite{V}. Uniformly expanding maps and uniformly hyperbolic systems are known to be stochastically stable \cite{Ki1}. For non-uniformly expanding interval maps which satisfy a condition of Benedicks-Carleson type, stochastic stability was previously studied in \cite{BV, BY, Ts}. These systems are assumed to exhibit expansion away from a critical region with slow recurrence rate to it and hence admit absolutely continuous invariant measure. In \cite{S} Shen proved strong stochastic stability for interval maps under a much weaker non-uniformly assumption and more general perturbation types. Even for unimodal maps with a wild attractor, stochastic stability was proved in the weak sense in \cite{LW}. For stochastic stability in other direction, see \cite{BV} for H\'enon-like maps, \cite{AA, AV} for multidimensional local diffeomorphisms, and \cite{SvS} for intermittent circle maps with a neutral fixed point. In recent years, there is also an increasing interest in the study of statistical properties of random systems, including quenched (path-wise) decay of correlation. See \cite{BBM} or \cite{D} for non-uniformly expanding unimodal maps. Inducing schemes are powerful tools in these research.

For contracting Lorenz maps, Metzger \cite{Me} used methods and strategy in \cite{BV} to prove strong stochastic stability for Rovella-like maps. Note that the Rovella-like condition is also a kind of Benedicks-Carleson type condition. For infinitely renormalizable contracting Lorenz maps with {\it a priori bounds}, stochastic stability was proved by Wang and the author in \cite{JW}. In \cite{LR}, quenched exponential decay of correlations for random Rovella-like maps was studied. The main goal of the present work is to improve the result in \cite{Me} to more general conditions. The non-uniformly expanding condition is significantly weaker, and the perturbation types allowed here are also more general. In particular, no recurrence condition is imposed. The Main Theorem will be proved using an inducing scheme borrowed from \cite{S}. The random inducing scheme constructed here is weaker than the random Young Tower appeared in \cite{AV} and \cite{D}, but is enough to prove stochastic stability.

This paper is organized as follows. In Section 2 we present formally the main definitions and the Main Theorem. In Subsection 3.1, we state the Reduced Main Theorem which contains a form of inducing scheme, and prove the Main Theorem. In Subsection 3.2, we study the expansion results for deterministic contracting Lorenz maps under large derivatives condition. The backward contraction property appeared in \cite{BRSS} plays a crucial role. Subsection 3.3 contains lemmas about binding arguments initiated in \cite{BC} and \cite{Ts}, and a stochastic version of Ma${\rm \tilde{n}}$\'e's Theorem (Proposition 6). Some of the results in Subsection 3.2 and 3.3 have been proved in \cite{LR} under a stronger non-uniformly expanding condition. Therefore we only provide the proof of the lemmas in case that our proofs are different from therein. In Section 4 we shall obtain the lower bounds on the growth of derivatives along random orbits which stays outside a particular neighborhood of the critical point (Theorem 2). This is based on a combination of analysis on expansion  results and binding argument. As a consequence of Theorem 2, we shall prove the first landing maps of random orbits into a suitably chosen critical neighborhood $\tilde B(\epsilon)$ has small total distortion (Proposition 9) in Subsection 4.3. Section 5 and Section 6 are the most technical parts and are devoted to the proof of the Reduced Main Theorem. In Section 5 we study the recurrence of random orbits into $\tilde B(\epsilon)$ and estimate diffeomorphic return times. The final inducing step is carried out in Section 6 (Proposition 16). To do this, we shall use the so-called {\it $\theta$-good return time}  introduced in \cite{S} instead of hyperbolic time used in \cite{AV} and \cite{LR}. The key point is that the size of the tail sets decays at a polynomial rate.

\section{Statement of results}

\subsection{Contracting Lorenz maps}

Denote $I=[0, 1]$. A piecewise $C^3$ interval map $f: I \to I$ with a discontinuity at $c \in (0, 1)$ is called a {\it Lorenz map} if $f(0) =0, f(1) =1$, $Df(x) >0$ for all $x \in I \setminus \{ c\}$. The point $c$ is called the {\it singular point} (or {\it critical point}). A Lorenz map has two critical values defined by $c_1^- = \lim_{x \to c^-}f(x)$ and $c_1^+ = \lim_{x \to c^+}f(x)$, thus implicitly thinking of $c^+$ and $c^-$ as distinct critical points.

Let $CV : = \{ c_1^+, c_1^-\}$ denote the set of critical values of $f$. Denote $I^- = [0, c)$, $I^+ = (c, 1]$ and $I_c = I \setminus \{ c\}$. When we consider the iterates of a Lorenz map $f: I \to I$, we are essentially considering the iterates of $f : I_c \to I$, just recognizing that the pre-images of the singular point $c$ are countable.

A Lorenz map is called {\it contracting} provided $Df(c^-) = Df(c^+) =0$. A contracting Lorenz map $f$, with singularity $c$, is called {\it non-flat} if there exist $u \in [0, 1], v \in [0, 1]$, $\ell > 1$ and $C^3$ diffeomorphisms $\phi : [0, c] \to [0, u^{{1}/{\ell}}]$ and $\psi : [c, 1] \to [0, v^{{1}/{\ell}}]$ such that $\phi(c) = 0 =\psi(c)$, $\phi(0) = u^{{1}/{\ell}}, \psi(1) =v^{{1}/{\ell}}$ and 
\begin{equation} f(x) = \begin{cases}
u - (\phi(x))^{\ell} & \mbox{ if } x < c\\
1 - v + (\psi(x))^{\ell} & \mbox{ if } x > c.
\end{cases}
\end{equation}
The exponent $\ell$ are referred to as the {\it critical order} of the one-sided critical points $c^-$ and $c^+ $, respectively. Note that $u$ and $1-v$ are the two critical values of $f$.

A Lorenz map is called non-trivial if $c_1^+ < c < c_1^-$. Otherwise, all points converge to some fixed point under iteration and for this reason, $f$ is called trivial. Unless otherwise noted, all Lorenz maps are assumed to be nontrivial. In general, $c_k^{\pm}$ will denote points in the orbit of the critical values:
\[
c_k^{\pm} = \lim_{x \to c\pm}f^k(c), k \geq 1.
\]

The Schwarzian derivative of a $C^3$ diffeomorphism $h : J \to h(J)$ is denoted by
\[ S h (x) = \frac{D^3 h (x)}{D h (x)} - \frac{3}{2} \left( \frac{D^2 h (x)}{D h (x)}\right)^2 ( Dh (x) \neq 0).
\]

Given a contracting Lorenz map $f : I_c  \to I$. We say that $f$ satisfies:
\begin{itemize}
\item[(1)] the {\it Large derivatives condition} (abbreviated (LD)), if for each $v \in CV$, we have
\[
\lim_{n \to \infty} Df^n(v) = \infty;
\]
\item[(2)] the {\it Summability condition of exponent 1} (abbreviated (${\rm SC_1}$)), if for each $v \in CV$, we have
\[
\sum_{n=0}^{\infty} \frac{1}{Df^n(v)} < \infty;
\]
\item[(3)] the {\it Collet-Eckmann condition} (abbreviated (CE)), for each $v \in CV$, we have
\[
\liminf_{n \to \infty} \frac{1}{n} \log D f^n (v) >0.
\]
\end{itemize}

Let $\mathcal A$ denote the collection of $C^3$ contracting Lorenz maps $f : I_c \to I$ with non-flat critical point and with the following properties:
\begin{itemize}
\item[(A1)] $f$ has no attracting or neutral periodic orbits;
\item[(A2)] $f$ has negative Schwarzian derivative;
\item[(A3)] $f$ is topologically mixing on $[c_1^+, c_1^-]$. 
\end{itemize}
Let $\mathcal S_1$ denote the collection of maps $f \in \mathcal A$ which satisfies (${\rm SC_1}$) and let $\mathcal {LD}$ denote the collection of maps $f \in \mathcal A$ which satisfies (LD). Clearly, $\mathcal S_1 \subset \mathcal{LD}$ and a map $f \in \mathcal{LD}$ has no critical relation: for any $v \in CV$ and any integer $n \geq 1$, $f^n(v) \neq c$.

The following theorem was proved by Bruin {\it et al} for multimodal maps \cite{BRSS} and by Cui and Ding for contracting Lorenz maps \cite{CD}.

\begin{thm}\cite{CD}
Let $f \in \mathcal{LD}$, then $f$ has an invariant probability $\mu$ which is absolutely continuous with respect to the Lebesgue measure (abbreviated acip) and the density of $\mu$ belongs to $L^p$ for all $p < \ell/(\ell-1)$. Moreover, $f$ admits no wandering intervals. 
\end{thm}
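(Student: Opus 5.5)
We would follow the strategy of Bruin--Rivera-Letelier--Shen--van Strien \cite{BRSS}, adapted to the two critical values $c_1^\pm$ of a Lorenz map. The core is \emph{backward contraction}. Given $\delta>0$, let $\tilde B(\delta)\subset B(\delta)$ denote the component of $f^{-1}$ applied to a neighborhood of $CV$ that corresponds to the critical neighborhood $B(\delta)=(c-\delta,c+\delta)$. The first step is to show that (LD) implies the following: for every $r>1$ there is $\delta_0>0$ such that for all $\delta<\delta_0$, every $s\ge 1$, and every component $W$ of $f^{-s}(\tilde B(r\delta))$ with $\operatorname{dist}(f(W),CV)\le\delta$, we have $|W|<\delta$. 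The proof uses the non-flat form $u-\phi^\ell$, $1-v+\psi^\ell$ together with negative Schwarzian (A2), which gives Koebe distortion on monotone pullbacks. One pulls back along the critical orbit, and $Df^n(c_1^\pm)\to\infty$ forces the pullbacks of the sets $f(\tilde B(r\delta))$ along the critical value orbits to shrink. Here (A1), and the consequent absence of critical relations, rules out degenerate pullbacks. The Lorenz feature that $c^-$ and $c^+$ are treated as separate critical points only doubles the bookkeeping.

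Next we deduce the expansion estimates. Backward contraction combined with Koebe and a Mañé-type hyperbolicity away from $c$ (via (A1), (A2), and the absence of wandering intervals, established in parallel using the same pullback argument together with (A3)) gives the following. If $f^n(x)\in B(\delta)$ and $x,\dots,f^{n-1}(x)\notin B(\delta)$, then $|Df^n(x)|\ge C\,\delta^{1-\ell}$ up to constants. More generally we obtain a polynomial tail bound on the first-return map to $\tilde B(\delta)$, uniformly in $\delta$.

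To construct the acip, we apply Nowicki--van Strien type estimates. Let $\mathcal L$ denote the transfer operator. Using the expansion bound above, we estimate the Lebesgue measure of $f^{-n}(A)$ for small sets $A$, obtaining
\[
|f^{-n}(A)|\le C|A|^{1/\ell}
\]
uniformly in $n$. The exponent reflects the worst case of preimages near $c$, with $1/\ell$ coming from the order of the critical point. Consequently the Cesàro averages $\frac1n\sum_{k<n}\mathcal L^k 1$ are uniformly integrable, and any weak limit is an acip $\mu$. Moreover, a bound of the form $\mu(A)\le C|A|^{1/\ell}$ gives a weak-$L^{p}$ estimate on the density, and hence $L^p$ for all $p<1/(1-1/\ell)=\ell/(\ell-1)$.

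The main obstacle is the first step: proving backward contraction from (LD) alone, without any summability assumption. Following \cite{BRSS}, the argument proceeds by contradiction. Suppose some pullback $W$ is long. Then one iteratively finds a chain of pullbacks, each containing a critical value orbit segment. Using Koebe space, the derivative along that segment is bounded, which contradicts $Df^n(v)\to\infty$. In the Lorenz setting we must handle chains that alternate between $c_1^-$ and $c_1^+$. We expect to need to choose $\delta$ so that the two one-sided neighborhoods are comparable, which the non-flatness with a common $\ell$ permits.
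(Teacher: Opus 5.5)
The paper only quotes this theorem from \cite{CD}, the Lorenz-map version of \cite{BRSS}. The part of that argument it reproduces is (LD) $\Rightarrow$ $BC(r)$ for every $r>1$ (Lemmas 3.3--3.4), and your outline follows the same route. Your sketch of that step is essentially the contrapositive of the paper's direct argument: choose $\mathcal V$ so that $Df^n(f(c))>K$ whenever $f^n(c)\in\mathcal V$; apply the one-sided Koebe principle on the last diffeomorphic stretch of the chain to put the pullback with $c$ in its boundary inside $\tilde B(\delta)$; then induct on $s$.

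Several statements in your first two steps are wrong as written.
\begin{itemize}
\item In $BC(r)$ the hypothesis is $d(W,CV)<\delta$, not $d(f(W),CV)\le\delta$. With your version $W$ may lie next to $c$, where pulling back through the critical point enlarges lengths by a factor of order $\delta^{1/\ell-1}$, so $|W|<\delta$ cannot be expected.
\item Your first-entry bound $|Df^n(x)|\gtrsim\delta^{1-\ell}$ fails for general $x$: take $n=1$, $x\notin B(\delta)$, $f(x)\in B(\delta)$, and $Df(x)$ is bounded. The correct form is Proposition 5, $Df^n(x)\ge\kappa_0D(\delta)^{-1}(\delta/\delta'')^{1-1/\ell}$ with $\delta''=\max\{d(x,CV),\delta\}$, which is large only for $x$ within $O(\delta)$ of $CV$.
\item Absence of critical relations comes from (LD), not (A1).
\item In the class $\mathcal A$ the no-wandering-intervals clause is immediate. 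By (A1), orbits outside $[c_1^+,c_1^-]$ enter it, and an interval there with pairwise disjoint iterates contradicts (A3).
\end{itemize}

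The genuine gap is the step that produces the measure. You assert $|f^{-n}(A)|\le C|A|^{1/\ell}$ uniformly in $n$ ``using the expansion bound above'', but give no mechanism for turning derivative bounds into a preimage estimate. With the sharp exponent $1/\ell$, this is the kind of bound Nowicki--van Strien obtain under a summability condition, and it is not available under (LD) alone. It would give $\mu(A)\le C|A|^{1/\ell}$, and you can test that as follows. Fix one critical orbit $c_n=f^{n-1}(c_1)$ and let $A=\bigcup_{n\le N}A_n$, with $A_n$ disjoint intervals around $c_n$ of length $r_n=t\,|Df^{n-1}(c_1)|^{-1/(\ell-1)}$ and $t$ small. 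By invariance, $\mu(A_n)$ is at least the $\mu$-mass of the pullback of $A_n$ at $c_1$. That mass is $\gtrsim(r_n/|Df^{n-1}(c_1)|)^{1/\ell}$ whenever the density is bounded below on the relevant side of $c$. Summing,
\[
\frac{\mu(A)}{|A|^{1/\ell}}\gtrsim\Big(\sum_{n\le N}|Df^{n-1}(c_1)|^{-1/(\ell-1)}\Big)^{1-1/\ell},
\]
which is unbounded unless $\sum_n|Df^{n}(c_1)|^{-1/(\ell-1)}<\infty$, a summability condition that (LD) does not imply.

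What backward contraction yields instead is a bound $C|A|^{\beta}$ with $\beta<1/\ell$. Roughly, each passage of a pullback near $c$ turns a scale $\eta$ at $CV$ into a scale $\eta^{1/\ell}$. Between consecutive passages, $BC(r)$ makes the pulled-back scales at $CV$ shrink by a factor comparable to $r$. This controls the sum over passages at the price of an exponent loss that decreases as $r$ grows. That is why $BC(r)$ for \emph{every} $r$ (Lemma 3.4) is what gives the full range $p<\ell/(\ell-1)$.

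Your outline never uses the size of $r$ in the measure step. That step, not the proof of backward contraction (about a page in the paper), is where the real work of \cite{BRSS,CD} lies. Once $1/\ell$ is replaced by an arbitrary $\beta<1/\ell$ and such a preimage bound is actually proved, your Ces\`aro/uniform-integrability construction and your weak-$L^p$ deduction go through.
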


By condition (A3), the acip $\mu$ for $f \in \mathcal {LD}$  is ergodic, and moreover, unique. Such a measure is clearly a {\it physical measure} in the sense that its {\it basin}
\begin{equation}
B(\mu)= \{ x \in I :  \frac{1}{n} \sum_{k=1}^n \delta_{f^k(x)} \to \mu \mbox{ as } n \to \infty \mbox{ in the weak$^{\star}$ topology} \}
\end{equation}
has positive Lebesgue measure.

\subsection{Random perturbations}

To model random perturbations of a discrete-time system $f : I \to I$ we may consider sequences obtained by iteration $x_{n+1} = g_n \circ g_{n-1} \circ \cdots \circ g_0(x_0)$ of maps $g_n$ chosen at random $\epsilon$-close to $f$.

For each $k =0, 1, \cdots$, we use $\mathscr F_k$ to denote the space of all $C^k$ contracting Lorenz maps from $I$ into itself which have only hyperbolic repelling periodic points endowed with the $C^k$ metric. For $g \in \mathscr F_1$, let $c_g$ denote the critical point of $g$.

Setting $f_0 = f \in \mathcal A$. A one-parameter family $\{ f_t \}_{t \in [-1, 1]} \subset \mathscr F_1$ is called {\it admissible} if the following four conditions are satisfied.

\begin{enumerate}

\item[(C1)] The critical point $c_t$ stays fixed for all $t \in [-1, 1]$, that is, $c_t = c$ for some $c \in (0, 1)$. 

\item[(C2)] $|\partial_t \Phi(x, t)| \leq 1$ for any $x \in I_c, t \in [-1, 1]$, where $\Phi(x, t) = f_t(x) $.

\item[(C3)] There exists a constant $C > 0$ such that for any $t \in [-1, 1]$ and $x, y \in I_c$, we have
\begin{equation}
2 d(x, y) < d(x , c) \Longrightarrow \bigg| \log \frac{Df_t(x)}{Df_t(y)} \bigg| \leq C \frac{d(x, y)}{d(x, c)}.
\end{equation}
\item[(C4)] There exist real numbers $\ell >1$ and $\delta>0, O_1>0, O_2 >0$ such that for any $t \in [-1, 1]$ and whenever $x \in (c -\delta, c+\delta) \setminus \{ c\}$, we have
\[
O_1 d(x, c)^{\ell-1} \leq Df_t(x) \leq O_2  d(x, c)^{\ell-1}.
\]
\end{enumerate}

It is convenient to use condition (C1) since for any $f, g \in \mathscr F_1$ with $|| f-g||_{C^1} < \epsilon$, we must have $c_f = c_g$ provided $\epsilon$ small enough, because the critical point is a jump discontinuity. Condition (C3) and (C4) are inspired by the non-flatness of the singular point.

Denote $\Omega = [-1, 1]^{\mathbb N}$ and $\Omega_{\epsilon} = [-\epsilon, \epsilon]^{\mathbb N}$ for $\epsilon \in (0, 1]$. For any $\omega \in \Omega$, where $\omega = (\omega_0, \omega_1, \cdots, \omega_n, \cdots)$, and $n \geq 1$, write 
\begin{equation}
f_{\omega}^n = f_{\omega_{n-1}} \circ f_{\omega_{n-2}} \circ \cdots \circ f_{\omega_1} \circ f_{\omega_0}, \ f_{\omega}^0(x) = x.
\end{equation}
The corresponding $\epsilon$-{\it random orbits} can be formulated as 
\[
x_n = f_{\omega}^n(x), n \geq 0, \omega \in  \Omega_{\epsilon}.
\]
As usual, let $F : I \times \Omega \to I \times \Omega$ denote the skew-product map:
\[
(x, \omega) \to (f_{\omega}(x), \sigma \omega).
\]

For $\epsilon \in (0, 1]$,  let $\nu_{\epsilon}$ be a Borel probability measure supported in $[-\epsilon, \epsilon]$. We denote by $P_{\epsilon}$ the measure ${\rm Leb}|_I \times \nu_{\epsilon}^{\mathbb N} $, where ${\rm Leb}$ is the Lebesgue measure. This measure naturally induces a probability measure on the space of $\epsilon$-random orbits which is our reference measure.  A Borel probability measure $\mu_{\epsilon}$ is called {\it physical} for {\it $\epsilon$-perturbations} if the set of $\epsilon$-random orbits $\{ x_n \}_{n=0}^{\infty}$ with the following property has positive measure:
\[
\frac{1}{n} \sum_{i=0}^{n-1} \delta_{x_i} \to \mu_{\epsilon} \text{ as $n \to \infty$ in the weak$^{\star}$ topology}.
\]

There is an associated Markov chain, denoted by $\chi^{\epsilon}$, with state space $I$ and transition probabilities $\{p_{\epsilon}(x, \cdot) \}_{x \in I}$ defined by
\begin{equation}
p_{\epsilon} (x, A) = \nu_{\epsilon} (\{ t \in [-\epsilon, \epsilon] : f_t(x) \in A \}). 
\end{equation}
Then each $p_{\epsilon}(x, \cdot)$ is supported in the $\epsilon$-neighborhood of $f(x)$. To obtain meaningful results, we shall assume certain regularity of $\nu_{\epsilon}$. Denote a family $\mathbb P_{\epsilon} = \{p_{\epsilon}(x, \cdot) \}_{x \in I}$ of probability measure on $I$. We write $\nu_{\epsilon} \in \mathbb M_{\epsilon} (L)$ if for each $x \in I$, and each Borel set $A \subset I$, we have 
\begin{equation}
p_{\epsilon} (x, A) \leq L \left(\frac{|A|}{2 \epsilon} \right)^{\frac{1}{L}},
\end{equation}
where $|A|$ denote the Lebesgue measure of $A$ and $L>1$ is a constant.

Indeed, for each $\epsilon >0$ small, the physical measure $\mu_{\epsilon}$ is also the stationary measure for homogenous Markov chains $\chi^{\epsilon}$ with transition probabilities $p_{\epsilon}(x, \cdot)$. Recall that a probability measure $\mu_{\epsilon}$ on $I$ is called a {\it stationary measure} for $\chi^{\epsilon}$, or for $\mathbb P_{\epsilon}$, or for $\nu_{\epsilon}$, if for each Borel set $A \subset I$, we have
\begin{equation}
\mu_{\epsilon} (A) = \int_I p_{\epsilon} (x ,A)d \mu_{\epsilon}(x) = \int_{[-\epsilon, \epsilon]} \mu_{\epsilon} (f_t^{-1}(A)) d \nu_{\epsilon}(t).
\end{equation}
Stationary measures always exist, provided that the transition probabilities $p_{\epsilon}(x, \cdot)$ depend continuously on the point $x$. It is also well-known that $\mu_{\epsilon}$ is a stationary measure for $\chi^{\epsilon}$ if and only if $\mu_{\epsilon} \times \nu_{\epsilon}^{\mathbb N}$ is invariant under $F$, see for example \cite{Ki1, AA, AV}.

If $f$ has unique physical measure $\mu_f$, then we say that $f$ is {\it stochastically stale} with respect to $(\nu_{\epsilon})_{\epsilon >0}$ if for each $\epsilon >0$ small enough, there exists a unique stationary measure $\mu_{\epsilon}$ for $\nu_{\epsilon}$ and $\mu_{\epsilon} \to \mu_f$ as $\epsilon \to 0$ in the weak$^{\star}$ topology. We say that $f$ is {\it strongly stochastically stale} if $\mu_{\epsilon} \to \mu_f$ in the strong topology, i.e. if ${\rm d}_{tv}(\mu_{\epsilon}, \mu_f) \to 0$ as $\epsilon \to 0$. Here ${\rm d}_{tv}(\mu_{\epsilon}, \mu_f) = \sup_A | \mu_{\epsilon}(A) - \mu_f(A) |$ where $A$ runs over all Borel sets. If $\mu_{\epsilon}, \mu_f$ are absolutely continuous with densities $\zeta_{\epsilon}$ and $\zeta_{f}$, then strong convergence is equivalent to $|| \zeta_{\epsilon} - \zeta_f ||_1 \to 0$ as $\epsilon \to 0$ where $|| \cdot ||_1$ stands for the $L^1$ norm.

The main theorem of this article is the following.

\begin{nonametheorem}

Let $f \in \mathcal S_1$ and let $\{ f_t \}_{t \in [-1, 1]} $ be an admissible one-parameter family from $\mathscr F_1$ with $f_0 = f$. For each $\epsilon > 0$ small, let $\nu_{\epsilon}$ be a Borel probability measure on $[-\epsilon, \epsilon]$ such that $\nu_{\epsilon} \in \mathbb M_{\epsilon}(L)$ for some constant $L>1$. Then there exists $\epsilon_0>0$ such that the following holds for each $\epsilon \in (0, \epsilon_0]$:

(1) The random system $f_{\omega}$ has a unique physical measure $\mu_{\epsilon}$ for $\nu_{\epsilon}$. The physical measure $\mu_{\epsilon}$ is absolutely continuous w.r.t. the Lebesgue measure and for almost all $\epsilon$-random orbits $\{ x_n \}_{n=0}^{\infty}$,
\[
\frac{1}{n} \sum_{i=0}^{n-1} \delta_{x_i} \to \mu_{\epsilon} \text{ as $n \to \infty$ in the weak$^{\star}$ topology}.
\]

(2) The map $f$ is strongly stochastically stable.
\end{nonametheorem}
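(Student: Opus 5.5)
The plan follows the inducing strategy of Shen's work \cite{S}, adapted to the Lorenz setting with two critical values $c_1^\pm$. The Main Theorem will be reduced to a statement about a random inducing scheme with uniform tail estimates. Concretely, we aim to show the following. There exist a small interval $\tilde B(\epsilon)$ around $c$ (or a fixed reference interval $J$), constants $\lambda>1$, $C>0$, $\gamma>1$ independent of $\epsilon$, and, for $P_\epsilon$-almost every $(x,\omega)$ with $x\in J$, an inducing time $R(x,\omega)$. These should satisfy three properties. First, the branch $f^{R}_\omega$ restricted to the maximal interval containing $x$ maps diffeomorphically onto $J$ with bounded distortion (controlled via (C3), (C4) and negative Schwarzian-type Koebe estimates). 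Second, its derivative is at least $\lambda$. Third, the tail $P_\epsilon(R>n)\le Cn^{-\gamma}$ uniformly in $\epsilon\in(0,\epsilon_0]$.

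Given such a uniform scheme, the Main Theorem follows by a standard argument.
\begin{enumerate}
\item The induced random map has, for each fixed noise, densities whose transfer operator maps a cone of log-Lipschitz densities on $J$ into itself. We obtain absolutely continuous stationary measures for the skew product by pushing forward with the tower formula $\mu_\epsilon=\sum_n (F^n)_*(\hat\mu|_{R>n})$.
\item Summability of the tail (since $\gamma>1$) makes $\mu_\epsilon$ finite with density $\zeta_\epsilon$ uniformly integrable in $\epsilon$. The densities in fact lie in a set precompact in $L^1$, because the tail terms are uniformly small and each finite piece has bounded-distortion densities.
\item Uniqueness and ergodicity come from topological mixing (A3) together with the fact that every random orbit almost surely enters $J$. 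Birkhoff's theorem for the skew product then gives the statement on almost all $\epsilon$-random orbits.
\item For part (2), any $L^1$-limit of $\zeta_\epsilon$ as $\epsilon\to0$ is the density of an $f$-invariant absolutely continuous probability, using (C2) and continuity of $t\mapsto f_t$. By uniqueness of the acip from Theorem 1 it equals $\zeta_f$. Precompactness then upgrades weak$^\star$ convergence to $\|\zeta_\epsilon-\zeta_f\|_1\to0$.
\end{enumerate}

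The inducing scheme itself will be built in the following steps.
\begin{enumerate}
\item Deterministic expansion for $f\in\mathcal{LD}$ uses the backward contraction property of \cite{BRSS}. This gives expansion of $Df^n$ along orbits staying outside small critical neighborhoods, and derivative growth along the critical orbits from (${\rm SC_1}$).
\item A binding argument transfers the growth of $Df^n(c_1^\pm)$ to points near $c$ and to random orbits. Condition (C2) keeps the random orbit $\epsilon n$-shadowing within the binding period, as long as $\epsilon$ is small relative to the binding scale. Here (${\rm SC_1}$) is exactly what makes the accumulated errors $\sum_k \epsilon/Df^k(v)$ summable.
\item A stochastic Mañé-type lemma gives exponential growth for random orbits avoiding $\tilde B(\epsilon)$. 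From it we deduce that first landing maps into $\tilde B(\epsilon)$ have small total distortion.
\item We define $\theta$-good return times to $\tilde B(\epsilon)$ and estimate the measure of points that have no good return by time $n$.
\end{enumerate}

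The main obstacle is the last step: getting polynomial tail decay uniformly in $\epsilon$ without any recurrence (Benedicks–Carleson) assumption. The first thing to try is a large-deviation count. Each visit to $\tilde B(\epsilon)$ at depth $\rho$ costs a loss of about $\rho^{\ell-1}$ in derivative and is followed by a binding period with gain. The regularity $\nu_\epsilon\in\mathbb M_\epsilon(L)$ gives $P(\text{deep return at depth }\rho)\lesssim(\rho/\epsilon)^{1/L}$. The very deep returns, closer than $\epsilon$, are handled by randomness, while the moderate ones are handled by deterministic expansion. Combining these estimates should give a polynomial tail with exponent exceeding $1$.
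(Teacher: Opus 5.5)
Your outer reduction matches the paper's passage from the Reduced Main Theorem to the Main Theorem: a Markov inducing scheme with $\epsilon$-uniform polynomial tails onto a set of fixed size, then $L^1$-precompactness of the densities, then uniqueness of the acip of $f$. Your list of ingredients (backward contraction, binding via (${\rm SC_1}$), a random Ma\~n\'e lemma, small total distortion of first landings, $\theta$-good returns, and a depth count using $\mathbb M_\epsilon(L)$) matches Sections 3--5.

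The gap is in the step you flag as the main obstacle. A single large-deviation count on the depths of returns to $\tilde B(\epsilon)$ cannot give a tail bound independent of $\epsilon$. Under (${\rm SC_1}$), random orbits avoiding $\tilde B(\epsilon)$ expand only like $e^{\epsilon^{\alpha(\epsilon)}s}$ (Theorem 2). The rate $\epsilon^{\alpha(\epsilon)}$ is only known to decay slower than any power of $\epsilon$; it is not bounded away from $0$, and this is exactly where (${\rm SC_1}$) is weaker than the Rovella-type hypotheses.

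Consequently the depth count (Proposition 14, with Lemma 5.4 carrying the use of $\mathbb M_\epsilon(L)$) only yields $\hat h^\theta_{\epsilon,\tau}\le m\epsilon^{-\alpha}$ off $Bad_m(\kappa,\epsilon)$. That gives a normalized $p$-th moment bound $\epsilon^{-\gamma}$ at scale $\epsilon$ (Propositions 12, 15, 17), which blows up as $\epsilon\to0$. Moreover, the good returns it produces land in $\tilde B(\delta')$ for an uncontrolled $\delta'\in[\epsilon,\delta_0]$, or are only $\tau$-scale expansion times, and from there the orbit may fall back to small scales many times. Taking the target to be $\tilde B(\epsilon)$ itself does not help: the branch images then shrink with $\epsilon$ and the compactness step (Lemma 3.2) fails.

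What is missing is the multi-scale induction of Section 6. Let $S^\theta_p(\delta,\epsilon;\delta_0)$ denote the normalized $p$-th moment on $\tilde B(\delta)\times\Omega_\epsilon$ of $\inf_{\delta'\in[\delta,\delta_0]}h^\theta_{\delta'}$. The paper proves the recursion $S^\theta_p(e\delta,\epsilon;\delta_0)<\lambda\bigl(S^\theta_p(\delta,\epsilon;\delta_0)+2\hat S^{\theta/e}_p(\delta,\epsilon;\delta_0)\bigr)$ with $\lambda\in(e^{-1/\ell},1)$. Its ingredients are:
\begin{itemize}
\item the change of variables for future-free induced maps (Lemma 6.2);
\item the bound $\mathcal L^{\nu_\epsilon}_{\mathscr G}(y)\le\frac{36\theta}{\theta_0}\frac{|\tilde B(\delta)|}{|\tilde B(\delta')|}$ on the randomized transfer operator of the first $\theta$-good return map $\mathscr G$ to scales in $[\delta,\delta_0]$ (Lemma 6.5, proved by a nested-interval decomposition);
\item the concatenation rules for close and good returns (Lemma 6.1);
\item the geometric gain $|\tilde B(\delta)|/|\tilde B(e\delta)|\approx e^{-1/\ell}$, together with choosing $\theta$ small.
\end{itemize}
Iterating over the $\approx\log(\delta_0/\epsilon)$ scales with weights $\delta^\gamma$ and $\lambda e^\gamma<1$ absorbs the $\epsilon^{-\gamma}$ loss at the bottom scale. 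This gives an $\epsilon$-independent moment bound at the fixed scale $\delta_0$ (Proposition 16).

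A secondary omission: a fixed interval $J$ cannot serve as the common target. Under noise the orbit of $\partial J$ re-enters $J$, so branch domains need not be nested or disjoint. The paper instead constructs an $\omega$-dependent nice set $V$ with $\tilde B(\delta_0)\subset V^\omega\subset\tilde B(2\delta_0)$ (Propositions 10 and 11) and induces onto $V^{\sigma^m\omega}$.
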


\section{Preliminaries and Reduced Main Theorem}

Denote $a \asymp b$ if there exists a constant $C \geq 1$ such that $C^{-1} b \leq a \leq C b$;  denote $a \lesssim b$ if there exists a constant $C \geq 1$ such that $ a \leq C b$; and denote $a \ll b$ if there exists a large constant $C \geq 1$ such that $a \leq C b$.

Denote by $| \cdot |$ or ${\rm Leb}( \cdot )$ the Lebesgue measure. For a subset $X$ of $I \times \Omega$, let $X^{\omega}$ denote the fiber of $X$ over $\omega$, that is, $X^{\omega} = \{ x \in I :  (x, \omega) \in X\}$.

Given a $C^1$ diffeomorphism $\varphi: J \to T$ between bounded intervals, define
\[
{\rm Dist}(\varphi|J) = \sup_{x, y \in J} \log \frac{|D\varphi(x)|}{|D\varphi(y)|}
\]
and
\[
\mathcal N(\varphi|J) = \sup_{J'} {\rm Dist}(\varphi | J') \frac{|J|}{|J'|},
\]
where the supremum is taken over all subintervals $J'$ of $J$. Note that when $\varphi$ is $C^2$, we have
\[
\mathcal N(\varphi|J) = \sup_{x \in J} \frac{|D^2\varphi(x)|}{|D\varphi(x)|} |J|.
\]

Suppose $f \in \mathcal A$. For each $\delta >0$, let
\[
\tilde B(\delta) = f^{-1} (c_1^+, c_1^+ + \delta) \cup f^{-1} (c_1^- - \delta, c_1^-),
\]
and 
\[
D(\delta) = \frac{\delta}{|\tilde B(\delta)|} \asymp \frac{\delta}{\delta^{\frac{1}{\ell}}} = \delta^{1 - \frac{1}{\ell}}.
\]
Let
\[
\hat B(\delta) = \tilde B(\delta) \cup \{ c\}
\]
which is an interval. To simplify the notation, we shall not distinguish $\tilde B(\delta)$ and $\hat B(\delta)$ in the rest of the article. So when we are talking about a diffeomorphism $g : T \to \tilde B(\delta)$, we are actually referring to $g : T \to \hat B(\delta)$.

Throughout we fix a small constant $\delta_* = \delta_*(f)>0$ and let
\[
d_*(x, c) = \begin{cases}
d(f(x), CV) & \text{ if } x \in \tilde B(\delta_*),\\
\delta_* & \text{ otherwise}.
\end{cases}
\] 
Replacing $\delta_*$ by a smaller constant, we may assume the following:
\begin{equation}
x \in \tilde B(\delta_*), \delta = d_*(x, c) \text{ and } t \in [-\delta, \delta] \Longrightarrow Df_t(x) \geq D(\delta).
\end{equation}
Recall that we set $I_c = I \setminus \{ c\}$.

If $J$ is an interval and $\lambda >0$, we use $\lambda J$ to denote the concentric open interval which has length $\lambda |J|$. We say that $J$ is $\lambda$-well-inside another interval $I$ or $I$ contains the $\lambda$-scaled neighborhood of $J$, if $I \supset (1 + 2 \lambda) J$. We shall use the following result throughout our analysis. For a proof, see \cite{BRSS} or  \cite{MS}.

\begin{prop}
For any $f \in \mathcal A$. Let $s \geq 1$ be an integer and let $T = (a, b)$ be an interval. Assume that $f^s|T$ is a diffeomorphism onto its image. Then

\begin{itemize}
\item[(1)] (the Koebe principle) If $J$ is a subinterval of $T$ such that $f^s(J)$ is $\tau$-well inside $f^s(T)$, then for any $x, y \in J$,
\[
\left( \frac{\tau}{1+\tau} \right)^2 \leq \frac{Df^s(x)}{Df^s(y)} \leq \left( \frac{1+\tau}{\tau} \right)^2.
\]
\item[(2)] (the macroscopic Koebe principle) If $J$ is a subinterval of $T$ such that $f^s(J)$ is $\tau$-well inside $f^s(T)$, then $J$ is $\tau'$-well inside $T$, where $\tau'=\tau^2/(1+2\tau)$.
\item[(3)] (the one-sided Koebe principle) Let $x \in T$ be such that 
\[
|f^s(a) - f^s(x)| \geq \tau |f^s(x) - f^s(b)|,
\]
then
\[
Df^s(x) \geq \left( \frac{\tau}{1+\tau} \right)^2 Df^s(b).
\]
\end{itemize}
\end{prop}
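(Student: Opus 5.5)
The plan is to reduce everything to the classical Koebe distortion theory for $C^3$ diffeomorphisms with negative Schwarzian derivative. First I check that $f^s|T$ is such a diffeomorphism. By (A2) each branch of $f$ has $Sf<0$ on $I_c$. The chain rule for the Schwarzian,
\[
S(g\circ h)=(Sg\circ h)\,(Dh)^2+Sh,
\]
then gives $S(f^s)<0$ on $T$, because $T$ does not contain any preimage of $c$ up to time $s-1$. For each part I will normalise by affine changes of coordinates, which do not change $S$ or ratios of derivatives, and then apply the minimum principle: a map with $Sg<0$ has $|Dg|^{-1/2}$ convex, so $|Dg|$ has no interior positive minimum.

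For part (1) I will use the standard cross-ratio argument. Negative Schwarzian implies that $g=f^s|T$ expands the cross-ratio
\[
\mathrm{Cr}(T,J)=\frac{|J||T|}{|L||R|},
\]
where $L,R$ are the components of $T\setminus J$. Shrinking $J$ to a point $x$ gives the infinitesimal version:
\[
Dg(x)\ \ge\ \frac{|g(T)|\,|g(L)|\,|g(R)|}{|T|\,|L|\,|R|}\,\frac{|T|^2}{|g(T)|^2}\cdot(\text{const}),
\]
which is the usual formulation of the Koebe lemma. Next I pull back by the inverse $g^{-1}:g(T)\to T$. This inverse also has negative Schwarzian, because $S(g^{-1})=-(Sg\circ g^{-1})\,(Dg^{-1})^2$. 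The known sharp estimate is that if $g(J)$ is $\tau$-well inside $g(T)$, then $D(g^{-1})(u)/D(g^{-1})(w)$ lies in $[(\tau/(1+\tau))^2,((1+\tau)/\tau)^2]$. Taking reciprocals gives the stated bound, whose constants are symmetric under inversion.

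For part (3) I will apply the same cross-ratio inequality to the configuration in which $x$ is close to the endpoint $b$. The one-sided hypothesis says that the image of $[a,x]$ is at least $\tau$ times the image of $[x,b]$. Comparing $Dg(x)$ with the derivative near $b$, through the contraction of $\mathrm{Cr}$ under $g^{-1}$, gives $Dg(x)\ge(\tau/(1+\tau))^2Dg(b)$.

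For part (2), the macroscopic version, I will take the components $L,R$ of $T\setminus J$ with $|g(L)|,|g(R)|\ge\tau|g(J)|$. I will then use the cross-ratio inequality for $g^{-1}$ on the configuration $(L\cup J\cup R, J\cup R)$ to show $|L|\ge\tau'|J|$, and symmetrically $|R|\ge\tau'|J|$. This is a short algebraic manipulation that leads to $\tau'=\tau^2/(1+2\tau)$.

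The main obstacle is the bookkeeping of the exact constants, rather than any conceptual step. Since the statement is quoted from the literature (\cite{BRSS}, \cite{MS}), I will cite the computations there for the precise constants. My own task is to verify the hypothesis needed: $S(f^s)<0$ on $T$. This rests on (A2) together with the fact that $f^s|T$ avoids $c$.
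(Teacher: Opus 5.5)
Your overall route is the paper's: the paper gives no argument and simply refers to \cite{BRSS} and \cite{MS}. The one hypothesis that really needs checking here is $S(f^s)<0$ on $T$, which follows from the chain rule for the Schwarzian together with (A2) and $c\notin f^j(T)$ for $j<s$. You check this correctly. However, two of the steps you sketch are wrong as written.

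\textbf{Part (1).} The formula you quote, $S(g^{-1})=-(Sg\circ g^{-1})(Dg^{-1})^2$, gives $S(g^{-1})>0$, not $<0$. The inverse therefore contracts $\mathrm{Cr}$, which is what you yourself use in part (3). So you cannot invoke the Koebe principle for $g^{-1}$ as a negative-Schwarzian map. Even if you could, its hypothesis would be on the image side of $g^{-1}$, namely that $J$ is well inside $T$, and that is not what is given. The estimate you then quote for $D(g^{-1})$ is just part (1) for $g$ rewritten through $D(g^{-1})(g(x))=1/Dg(x)$, so the detour adds nothing. Drop it and apply the Koebe principle directly to $g=f^s|T$: its image-side hypothesis is exactly ``$g(J)$ is $\tau$-well inside $g(T)$''.

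\textbf{Part (2).} The configuration $(L\cup J\cup R,\,J\cup R)$ is degenerate, because $J\cup R$ shares the endpoint $b$ with $T$ and so one complementary piece is empty. Use $(T,J)$ itself. Writing $\alpha=|g(L)|/|g(J)|\ge\tau$ and $\beta=|g(R)|/|g(J)|\ge\tau$,
\[
\frac{|J|}{|L|}<\mathrm{Cr}(T,J)\le\mathrm{Cr}(g(T),g(J))=\frac{1}{\alpha\beta}+\frac{1}{\alpha}+\frac{1}{\beta}\le\frac{1+2\tau}{\tau^2}.
\]
Hence $|L|>\tau'|J|$ with $\tau'=\tau^2/(1+2\tau)$, and symmetrically for $R$.

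\textbf{Part (3).} Your sketch is right in spirit. To make it precise, set $L_x=[a,x]$ and $R_x=[x,b]$. The infinitesimal inequality at $x$ gives $Dg(x)\ge |g(L_x)|\,|g(R_x)|\,|T|/(|g(T)|\,|L_x|\,|R_x|)$. The inequality for $J'=[x,b-h]$, in the limit $h\to0$, gives $Dg(b)\le |g(R_x)|\,|g(T)|\,|L_x|/(|g(L_x)|\,|R_x|\,|T|)$. Dividing the first by the second yields $Dg(x)/Dg(b)\ge (|g(L_x)|/|g(T)|)^2(|T|/|L_x|)^2\ge(\tau/(1+\tau))^2$. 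This uses that $g$ is a negative-Schwarzian diffeomorphism up to $b$, which the statement tacitly assumes when it writes $Df^s(b)$.
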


The following is a well-known result for smooth interval dynamics due to Ma${\rm \tilde{n}}$\'e \cite{Ma}. For a similar result for Rovella-like maps, see Alves and Soufi \cite{AS}.

\begin{prop}
Let $f \in \mathcal A$. For each neighborhood $U$ of $c$, there exists $C>0$ and $\lambda > 1$ depending only on $f$ such that for each $x \in I_c$ and $n \geq 1$, if $x, f(x), \cdots, f^{n-1} \notin U$, then $Df^n(x) \geq C \lambda ^n$. Moreover, for Lebesgue almost every $x \in I_c$ there exists an integer $n \geq 1$ such that $f^n(x) \in U$.
\end{prop}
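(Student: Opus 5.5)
The plan is to follow Mañé's original scheme, adapted to Lorenz maps: rule out non-hyperbolic behaviour on the compact set avoiding $U$, then obtain uniform expansion. Shrinking $U$ only makes the hypothesis stronger, so we may take $U=(c-\eta,c+\eta)$ small. Let
\[
\Lambda=\{x\in I_c: f^n(x)\notin U \text{ for all } n\ge 0\}.
\]
Since $f$ is monotone on each branch and continuous away from $c$, $\Lambda$ is compact and forward invariant. The structural input comes from (A1): there are no attracting or neutral periodic orbits. There are also no wandering intervals, by the Theorem of \cite{CD} above, or more precisely by the general no-wandering-interval result for $\mathcal A$ maps with negative Schwarzian. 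Finally, (A2) gives negative Schwarzian derivative on each branch, and this is what makes the Koebe principle of Proposition 1 available.

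\textbf{Step 1: hyperbolicity of $\Lambda$.} Suppose $\Lambda$ is not hyperbolic. Then there are $x_k\in\Lambda$ and $n_k\to\infty$ with $Df^{n_k}(x_k)$ bounded. Take the maximal interval $T_k\ni x_k$ on which $f^{n_k}$ is a diffeomorphism. Its image cannot stay long and short simultaneously. More precisely, for orbits staying outside $U$, the maximal diffeomorphic extension of $f^{n_k}$ has image containing a definite neighbourhood of $f^{n_k}(x_k)$. The reason is that the boundary points of the pullbacks are preimages of $c$ or of $\{0,1\}$, which stay at distance at least $\eta$ along the orbit.

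Koebe (Proposition 1(1)) then gives bounded distortion on a definite neighbourhood $J_k\ni x_k$. Hence $|J_k|\gtrsim |f^{n_k}(J_k)|/Df^{n_k}(x_k)$ is bounded below. Passing to a limit yields an interval $J$ with $|f^n(J)|$ not tending to zero and $f^n|J$ monotone along a subsequence. By the standard dichotomy (Lemma of Guckenheimer/de Melo--van Strien type), $J$ is either a wandering interval or contained in the basin of a periodic attractor. Both are excluded. A contradiction follows, so there are $C>0$ and $\lambda>1$ with $Df^n(x)\ge C\lambda^n$ for $x\in\Lambda$.

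\textbf{Step 2: extend to finite orbit pieces.} Let $x,\dots,f^{n-1}(x)\notin U$. Shrinking $U$ to $U'=(c-\eta/2,c+\eta/2)$ and using compactness, uniform hyperbolicity on $\Lambda'$ (for $U'$) extends by continuity to a neighbourhood $V$ of $\Lambda'$. So there is $N$ such that $Df^N\ge 2$ on $V$. A point whose first $n$ iterates avoid $U$ spends all but a bounded number of time blocks near $\Lambda'$.

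The alternative is cleaner: apply Step 1's Koebe argument directly to finite orbits. Suppose $Df^{n}(x)$ failed to grow exponentially uniformly. Then one gets $x^{(k)},n_k$ with $Df^{n_k}(x^{(k)})\le k\,\lambda_0^{n_k}$ for $\lambda_0$ close to 1, and the same limit-interval contradiction applies. I expect this step, making the uniform constant work for all $n$ and not only along $\Lambda$, to be the main obstacle. I would handle it by proving
\[
\inf\{Df^{n}(x): x,\dots,f^{n-1}(x)\notin U\}\to\infty
\]
via the Koebe contradiction, and then using submultiplicativity to upgrade this to $C\lambda^n$.

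\textbf{Step 3: almost every point enters $U$.} The set $\Lambda$ is hyperbolic and avoids $c$. Cover it by finitely many intervals of definite size $r$ on which the branches of $f^n$ have image of size at least $r$ and bounded distortion (Koebe again). At each generation, the fraction of each such interval whose next pullback enters $U$ is bounded below, because $f$ is topologically mixing (A3), so some iterate of any $r$-interval meets $U$. Hence the measure of $\{x: f^j(x)\notin U,\ j<n\}$ decays exponentially, and $\mathrm{Leb}(\Lambda)=0$. This gives the final claim.
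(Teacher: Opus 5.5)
The paper does not prove this proposition; it cites Mañé \cite{Ma} (and \cite{AS} for Rovella-like maps). So the comparison is with the standard negative-Schwarzian argument. Your skeleton is the right one: Koebe space, bounded distortion, a homterval contradiction, then supermultiplicativity. However, the step that carries all the weight is not proved.

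\textbf{The gap.} In Step 1 you claim that the image of the maximal diffeomorphic branch $T_k\ni x_k$ of $f^{n_k}$ contains a definite neighbourhood of $y_k=f^{n_k}(x_k)$. Your reason is `because the boundary points of the pullbacks are preimages of $c$, which stay at distance at least $\eta$ along the orbit'. This does not follow.

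\begin{itemize}
\item The endpoints of $f^{n_k}(T_k)$ are points $c_i^{\pm}$ of the critical orbits (or $0,1$). Nothing you say keeps $y_k$ away from them.
\item That $f^j(x_k)\notin U$ only shows that $f^j(T_k)$ has length $\ge\eta$ on one side at the moment $j$ when it abuts $c$. The remaining $n_k-j$ iterates are unbounded in number and may compress that side to almost nothing.
\item The claim is actually false if you only know that the orbit avoids $U$. Suppose a map has an attracting cycle disjoint from $U$ whose immediate basin has $c$ as an endpoint. Then the images of the maximal branches of $f^n$ at a point $p$ of the cycle have an endpoint $c_n^{\pm}$ accumulating on $p$.
\end{itemize}

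So (A1) must enter exactly at this point, not only in your final homterval contradiction. The same unproved space is reused in Step 2 (`the Koebe contradiction') and in Step 3 (`Koebe again').

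\textbf{The missing idea.} You need a bound on the time between the moment a pullback reaches a definite scale and time $n$.

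\begin{itemize}
\item (A1) and (A3) exclude homtervals. By compactness there is $N_0=N_0(\eta)$ such that no interval of length $\ge\eta/2$ is mapped diffeomorphically by $f^m$ with $m\ge N_0$.
\item Let $x,\dots,f^{n-1}(x)\notin U=(c-\eta,c+\eta)$. Pull back $W_n=B(f^n(x),r)$ step by step along the orbit.
\item Suppose some pullback $W_i$ with $i<n$ had length $\ge\eta/2$. Take the largest such $i$ and intersect $W_i$ with the $\eta/2$-neighbourhood of $f^i(x)$.
\item This gives an interval of length $\ge\eta/2$ on which $f^{n-i}$ is a diffeomorphism. Its images up to time $n-1$ stay $\eta/2$-away from $c$, where $Df\ge\kappa(\eta)>0$.
\item Hence $n-i<N_0$, and its image, which lies in $W_n$, has length at least $\min\{1,\kappa\}^{N_0}\eta/2$. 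This is impossible once $r$ is small.
\end{itemize}

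Therefore every finite orbit piece avoiding $U$ has Koebe space $r=r(\eta)$, not just points of $\Lambda$. Koebe and the same contraction principle then give $\inf\{Df^n(x): x,\dots,f^{n-1}(x)\notin U\}\to\infty$ directly. Your supermultiplicativity argument upgrades this to $C\lambda^n$, and Step 3 goes through. The detour through hyperbolicity of $\Lambda$, and the worry you raise in Step 2, become unnecessary.

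A minor point: the theorem of \cite{CD} is stated for $\mathcal{LD}$, not $\mathcal A$. For $f\in\mathcal A$, take the absence of wandering intervals and homtervals from (A1) and (A3).
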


\subsection{Reduced Main Theorem}

We adopt the following concept of {\it nice sets} as in the deterministic case.

\begin{definition}
A nice set for $\epsilon$-random perturbations is a measurable subset $V$ of $I_c \times \Omega_{\epsilon}$ with the following properties:

(1) For each $\omega \in \Omega_{\epsilon}, V^{\omega}$ (as a subset of $I_c$) is an open neighborhood of $c$.

(2) For each $\omega \in \Omega_{\epsilon}, x \in \partial V^{\omega}$  and for each $ n \geq 1$, we have
\[
f_{\omega}^n(x) \notin V^{\sigma^n \omega}.
\]
\end{definition}

\begin{definition}

Assume that $V$ is a nice set as above. A positive integer $m$ is called a Markov inducing time of $(x, \omega)\in V$, if there exists an interval $J \ni x$ such that

(1) $f_{\omega}^m$ maps $J$ diffeomorhpically onto $V^{\sigma^m \omega}$ with $\mathcal N(f_{\omega}^m|J) \leq 1$;

(2) if $x \in V^{\omega}$, then 
\[
\inf_{y \in J} Df_{\omega}^m (y) \geq e^2 \frac{|V^{\sigma^m \omega}|}{|V^{\omega}|}.
\]
For $(x, \omega) \in V$, let $m_V(x, \omega)$ denote the minimal Markov inducing time of $(x, \omega)$. If such a time does not exist, then set $m_V(x, \omega) = \infty$.
\end{definition}

\begin{thmR}
Let $\{f_t \}_{t \in [-1, 1]} $ be an admissible family with $f_0 =f \in \mathcal S_1$. For each $\epsilon > 0$ small, $\nu_{\epsilon}$ is a probability measure on $[-\epsilon, \epsilon]$ which belongs to the class $\mathbb M_{\epsilon}(L)$, where $L > 1$ is a fixed constant. Fix $p \geq 1$. Then for each $\delta_0 > 0$ small enough, there exist constants $C_0 > 0$ and $\epsilon_0 > 0$ with the following property: For each $\epsilon \in (0, \epsilon_0]$, there exists a nice set $V$ for $\epsilon$-random perturbations such that
\[
\tilde B(\delta_0) \times \Omega_{\epsilon} \subset V \subset \tilde B(2\delta_0) \times \Omega_{\epsilon};
\]
and such that 
\[
P_{\epsilon} (\{ (x, \omega) \in V: m_V(x, \omega) >m \}) \leq C_0 m^{-p}. 
\]

\end{thmR}

We shall briefly state the proof of the Main Theorem since the argument follows from \cite{S}[Section 3] easily.

Let $\mathcal P$ denote the set of Borel probability measure on $I$ and let $\mathcal T_{\epsilon} : \mathcal P \to \mathcal P$ be defined as 
\[
\mathcal T_{\epsilon} m(A) = \int_{[-\epsilon, \epsilon]} m(f_t^{-1}(A)) d \nu_{\epsilon} (t)  = \int_I p_{\epsilon} (x, A) d m(x)
\]
for $m \in \mathcal P$ and each Borel set $A \subset I$. It is equivalently to say that the measure $\mathcal T_{\epsilon} m$ is the pullback under transition kernel $p_{\epsilon}(x, \cdot)$. Note that a stationary measure $\mu_{\epsilon}$ for $\chi^{\epsilon}$ is just a fixed point of $\mathcal T_{\epsilon}$. It is also well-known that for each $m \in \mathcal P$, any weak$^{\star}$ accumulation point of the sequence $(1/n) \sum_{i=0}^{n-1} \mathcal T_{\epsilon}^i m$ is a stationary measure.

Assume $\nu_{\epsilon} \in \mathbb M_{\epsilon}(L)$. Then for each $m \in \mathcal P$ and each Borel set $A \subset I$, we have
\[
\mathcal T_{\epsilon} m(A) = \int_I p_{\epsilon} (x, A) d m(x) \leq L \left( \frac{|A|}{2\epsilon} \right)^{\frac{1}{L}}.
\]
It follows that any stationary measure $\mu_{\epsilon}$ for $\nu_{\epsilon}$ is absolutely continuous w.r.t. the Lebesgue measure.

By the Reduced Main Theorem, there exist $\delta_0 > 0$, $\epsilon_0 > 0$ and $C_0 > 0$ such that for each $\epsilon \in (0, \epsilon_0]$ there exists a nice set $V = V_{\epsilon}$ for $\epsilon$-random perturbations with $\tilde B(\delta_0) \subset V^{\omega} \subset \tilde B(2 \delta_0)$ for all $\omega \in \Omega_{\epsilon}$ and such that 
\begin{equation}
P_{\epsilon} (\{ (x, \omega) \in V: m_V(x, \omega) >m \}) \leq C_0 m^{-2}. 
\end{equation}
In the following, we fix such a choice of $V_{\epsilon}$ for each $\epsilon \in (0, \epsilon_0]$. Let 
\[
U_{\epsilon} = \{ (x, \omega) \in V_{\epsilon} : m_{V_{\epsilon}}(x, \omega) < \infty \},
\]
and let $G_{\epsilon} : U_{\epsilon} \to V_{\epsilon}$ denote the map $(x, \omega) \to F^{m_{V_{\epsilon}}(x, \omega)}(x, \omega)$. Since $P_{\epsilon} (V_{\epsilon} \setminus U_{\epsilon}) = 0$, $G^n_{\epsilon} (x, \omega)$ is defined for each $n \geq 0$ and almost every $(x, \omega) \in V_{\epsilon}$.

\begin{lem}
If $(x, \omega) \in {\rm dom}(G_{\epsilon}^n)$ and $k = \sum_{i=0}^{n-1} m_{V_{\epsilon}}(G_{\epsilon}^i(x, \omega)) $, then $f_{\omega}^k$ maps an interval $J_k^{\omega} (x)$ diffeomorphically onto $V_{\epsilon}^{\sigma^k \omega}$ and $\mathcal N (f_{\omega}^k| J_k^{\omega} (x) ) \leq 2e^2$, $|J_k^{\omega} (x)| \leq (e^2)^{-n}$.
\end{lem}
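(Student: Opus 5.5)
The plan is to build $J_k^{\omega}(x)$ by pulling back through the successive Markov branches, and then to get the length bound from the derivative condition (2) and the nonlinearity bound from a telescoping sum with geometric weights.

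\textbf{Setup.} Write $(x_j,\omega^{(j)}) = G_\epsilon^j(x,\omega)$ and $m_j = m_{V_\epsilon}(x_j,\omega^{(j)})$. Put $k_0=0$ and $k_{j+1}=k_j+m_j$, so that $\omega^{(j)}=\sigma^{k_j}\omega$ and $k=k_n$. Write $V_j = V_\epsilon^{\sigma^{k_j}\omega}$. By the definition of a Markov inducing time there is an interval $T_j\ni x_j$ such that $\varphi_j := f^{m_j}_{\sigma^{k_j}\omega}$ maps $T_j$ diffeomorphically onto $V_{j+1}$. Moreover $\mathcal N(\varphi_j|T_j)\le 1$, and since $x_j\in V_j$ we have $\inf_{T_j} D\varphi_j \ge e^2|V_{j+1}|/|V_j|$. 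We may take $T_j\subset V_j$, shrinking it if necessary by restricting to the component of $\varphi_j^{-1}(V_{j+1})$ containing $x_j$.

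\textbf{Construction and length.} Define $J_k^{\omega}(x)$ by backward induction: $\varphi_{n-1}^{-1}(V_n)=T_{n-1}\subset V_{n-1}=\varphi_{n-2}(T_{n-2})$, so we can pull back inductively. This gives an interval $J=J_k^\omega(x)\ni x$ on which $h_j:=\varphi_{j-1}\circ\cdots\circ\varphi_0 = f^{k_j}_\omega$ is a diffeomorphism with $h_j(J)\subset T_j$, and $f^k_\omega = h_n$ maps $J$ onto $V_n$. The chain rule and (2) give $\inf_J Df^k_\omega \ge e^{2n}|V_n|/|V_0|$, hence $|J|\le e^{-2n}|V_0|\le e^{-2n}$. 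The same argument applied to the tail compositions gives $|h_j(J)|\le e^{-2(n-j)}|V_j|$.

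\textbf{Nonlinearity.} Since the maps are $C^2$ away from $c$, I will use $\mathcal N(\varphi|J)=\sup_J |D^2\varphi/D\varphi|\,|J|$ together with
\[
\frac{D^2 h_n}{Dh_n}(y)=\sum_{j=0}^{n-1}\frac{D^2\varphi_j}{D\varphi_j}(h_j y)\,Dh_j(y).
\]
From $\mathcal N(\varphi_j|T_j)\le1$ we get $|D^2\varphi_j/D\varphi_j|\le 1/|T_j|$, so the $j$-th term times $|J|$ is at most $Dh_j(y)|J|/|T_j|\le e^{{\rm Dist}(h_j|J)}\,|h_j(J)|/|T_j|$ by the mean value theorem. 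Two ingredients control this.
\begin{itemize}
\item Since ${\rm Dist}(\varphi_j|T_j)\le\mathcal N(\varphi_j|T_j)\le 1$, the mean value theorem gives $|h_j(J)|/|T_j|\le e\,|h_{j+1}(J)|/|V_{j+1}|\le e\cdot e^{-2(n-j-1)}$.
\item For the distortion, the definition of $\mathcal N$ as a supremum over subintervals gives ${\rm Dist}(\varphi_i|h_i(J))\le \mathcal N(\varphi_i|T_i)\,|h_i(J)|/|T_i|$. Hence ${\rm Dist}(h_j|J)\le\sum_{i<j} e^{1-2(n-i-1)}\le e/(1-e^{-2})$, uniformly in $j$ and $n$.
\end{itemize}
Summing the geometric series over $j$ then yields a uniform bound $\mathcal N(f^k_\omega|J)\le C$.

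\textbf{Main obstacle.} The delicate point is the bookkeeping of constants needed to reach exactly $2e^2$. The crude estimate above gives a universal constant but possibly a larger one. To sharpen it, I would bound the ratios $|h_j(J)|/|T_j|$ more tightly and avoid applying the full factor $e^{\rm Dist}$ to the last, dominant, term. In particular, the $j=n-1$ term involves $h_{n-1}(J)=T_{n-1}$ exactly, so it contributes at most $e^{{\rm Dist}(h_{n-1}|J)}$, and the earlier terms decay like $e^{-2(n-j-1)}$. Any uniform constant suffices for the later use of this lemma, so I would first secure uniformity and then refine the numerical constant.
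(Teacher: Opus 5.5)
\textbf{Verdict: the approach matches the paper's, but the step that lets you compose the branches is justified incorrectly, and two smaller points need repair.}

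Your strategy is the paper's. You pull back through the successive Markov branches, get $|J|\le e^{-2n}$ from condition (2) of Definition 3.2, and control the nonlinearity by a chain-rule sum whose weights $|h_j(J)|/|T_j|$ decay geometrically. The problem is the inclusion $T_j\subset V_j$, which you need in order to pull $T_j$ back through $\varphi_{j-1}|T_{j-1}$.

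Your argument for it fails. $T_j$ is by definition mapped \emph{onto} $V_{j+1}$, so restricting to the component of $\varphi_j^{-1}(V_{j+1})$ containing $x_j$ changes nothing. Genuinely shrinking $T_j$ to $T_j\cap V_j$ would destroy surjectivity onto $V_{j+1}$, and the final interval would no longer map onto $V_\epsilon^{\sigma^k\omega}$.

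The missing idea is the niceness of $V_\epsilon$ (Definition 3.1(2)), which is exactly what makes Markov inducing times composable. $T_j$ is an interval containing $x_j\in V_j$ and not containing $c$. So if $T_j\not\subset V_j$, it contains a point $b\in\partial V_j$. Then $f^{m_j}_{\sigma^{k_j}\omega}(b)\in f^{m_j}_{\sigma^{k_j}\omega}(T_j)=V_\epsilon^{\sigma^{m_j}(\sigma^{k_j}\omega)}$, contradicting niceness. With this, your construction and the length bound go through; the paper relies on the same inclusion $J_i\subset T_i\subset V_\epsilon^{\sigma^{k_{i-1}}\omega}$.

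Two smaller repairs are needed in the nonlinearity estimate.
\begin{enumerate}
\item \emph{Regularity.} The family lies only in $\mathscr F_1$, so the perturbed maps need not be $C^2$, and the formula $\mathcal N=\sup|D^2\varphi/D\varphi|\,|J|$ is not available. Work with the subinterval definition instead, as the paper does. For $J'\subset J$ you have ${\rm Dist}(f^k_\omega|J')\le\sum_j{\rm Dist}(\varphi_j|h_j(J'))\le\sum_j|h_j(J')|/|T_j|$ and $|h_j(J')|/|J'|\le e^{{\rm Dist}(h_j|J)}|h_j(J)|/|J|$. This yields exactly your bound $\mathcal N(f^k_\omega|J)\le\sum_{j<n}e^{{\rm Dist}(h_j|J)}|h_j(J)|/|T_j|$.
\item \emph{The constant.} Your estimates as stated give only about $e^{3.14}\cdot 3.14\approx 73$, not $2e^2$. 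The refinement is immediate:
\begin{itemize}
\item In the distortion sum, $i<j\le n-1$ forces $n-i-1\ge 1$, so ${\rm Dist}(h_j|J)\le e\sum_{m\ge1}e^{-2m}=e/(e^2-1)<1/2$.
\item The $j=n-1$ ratio is exactly $1$, since $h_{n-1}(J)=T_{n-1}$.
\end{itemize}
Hence $\mathcal N(f^k_\omega|J)\le e^{1/2}\bigl(1+e/(e^2-1)\bigr)<3<2e^2$.
\end{enumerate}
For comparison, the paper reaches $2e^2$ by proving $|J_i|/|T_i|\le e^{-(n-1-i)}$ inductively, bounding the relevant partial distortions by $2$, and summing $e^2\sum_i|J_i|/|T_i|\le 2e^2$.
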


\begin{proof}

For each $0 \leq i \leq n-1$, let $m_i = m_{V_{\epsilon}}(G_{\epsilon}^i(x, \omega))$ and $k_i = \sum_{0 \leq j \leq i} m_j$. Set $k_{-1} = 0$. Then for $0 \leq i \leq n$, $G_{\epsilon}^i(x, \omega) = (f_{\omega}^{k_{i-1}}(x), \sigma^{k_{i-1}} \omega)$ with $f_{\omega}^{k_{i-1}}(x) \in V_{\epsilon}^{\sigma^{k_{i-1}} \omega}$. By Definition 3.2, let $T_i \ni f_{\omega}^{k_{i-1}}(x)$ be the interval such that 
\[
f_{\sigma^{k_{i-1}}\omega}^{m_i} : T_i \to V_{\epsilon}^{\sigma^{k_i} \omega} \mbox{ with \ } \mathcal N(f_{\sigma^{k_{i-1}}\omega}^{m_i} | T_i ) \leq 1, 0 \leq i \leq n-1.
\]
To simplify notations, let 
\[
F_i = f_{\sigma^{k_{i-1}}\omega}^{m_i} | T_i, 0 \leq i \leq n-1.
\]
By Definition 3.2 statement (2), 
\[
\inf_{y \in T_i} D F_i \geq e^2 \frac{|V_{\epsilon}^{\sigma^{k_i} \omega}|}{|V_{\epsilon}^{\sigma^{k_{i-1}}\omega}|} \Rightarrow \frac{|T_i|}{| V_{\epsilon}^{\sigma^{k_{i-1}}\omega}|} \leq e^2.
\]
Let 
\[
J_k^{\omega}(x) : = J_0 = (F_{n-1} \circ \cdots \circ F_0)^{-1} (V_{\epsilon}^{\sigma^k \omega}), J_1 = F_0 (J_0), \cdots, J_{n-1} = F_{n-2} (J_{n-2}) = T_{n-1}.
\]
By the same reason, we have that for $0 \leq i \leq n-1$,
\[
J_i \subset T_i \subset V_{\epsilon}^{\sigma^{k_{i-1}} \omega} \mbox{ with \ } \frac{|J_i|}{| V_{\epsilon}^{\sigma^{k_{i-1}} \omega}|} \leq (e^2)^{- (n-i)}.
\]
In particular, $|J_k^{\omega}(x)| \leq (e^2)^{-n} |V_{\epsilon}^{ \omega}| \leq (e^2)^{-n}$.

We first prove by induction on $i$ that
\[
\frac{|J_i|}{|T_i|} \leq e^{- (n - 1 - i)}.
\]
Indeed, for $i = n-1$, the claim is trivial since $J_{n-1} = T_{n-1}$. Assume the claim holds for some $1 \leq i \leq n-1$. Recall that ${\rm Dist}(F_i |T_i) \leq \mathcal N(F_i |T_i) \leq 1 $ for all $i \geq 0$. Hence by bounded distortion, for $i - 1$ we have
\[
\frac{|J_{i-1}|}{|T_{i-1}|} \leq e \frac{|J_i|}{|V_{\epsilon}^{\sigma^{k_{i-1}} \omega}|} = e \frac{|J_{i}|}{|T_i|}  \frac{|T_i|}{|V_{\epsilon}^{\sigma^{k_{i-1}} \omega}|} \leq e \cdot e^{- (n-1 -i)} \cdot e^{-2} = e^{- (n-1 -(i -1))}.
\]

\begin{claim}
For each $0 \leq i \leq n-1$, we have
\[
 {\rm Dist} (F_{n-1} \circ \cdots \circ F_i |J_i) \leq 2.
\]
\end{claim}

Since $\mathcal N(F_i |T_i) \leq 1$, then on each subinterval $J' \subset T_i$, ${\rm Dist}(F_i|J') \leq |J'|/ |T_i| $. By the chain rule, 
\begin{align*}
 {\rm Dist} (F_{n-1} \circ \cdots \circ F_i |J_i)  & \leq  {\rm Dist}(F_{n-1} |J_{n-1}) + \cdots + {\rm Dist}(F_i |J_i)  \\
 & \leq 1 + \frac{|J_{n-2}|}{|T_{n-2}|} + \cdots \frac{|J_i|}{|T_i|}  \\
 & \leq 1 + e^{-1} + \cdots + e^{-(n-1-i)} \leq 2.
\end{align*}

Finally, by the above claim we can show that 
\begin{align*}
\mathcal N(f_{\omega}^k | J_k^{\omega}(x)) & = \mathcal N(F_{n-1} \circ \cdots \circ F_0  | J_0) = \sup_{J' \subset J_0} {\rm Dist} (F_{n-1} \circ \cdots \circ F_0  | J') \frac{|J_0|}{|J'|} \\
& \leq \sup_{J' \subset J_0} \left\{ {\rm Dist} (F_{n-1} | F_{n-2} \circ \cdots \circ F_0 (J')) + \cdots + {\rm Dist} (F_0 | J') \right\} \frac{|J_0|}{|J'|} \\ 
& \leq \sup_{J' \subset J_0} \left\{ \frac{F_{n-2} \circ \cdots \circ F_0 (J')}{|T_{n-1}|}\frac{|J_0|}{|J'|} + \cdots +  \frac{|J'|}{|T_0|} \frac{|J_0|}{|J'|} \right\} \\
& \leq e^2 \left\{ \frac{|J_{n-1}|}{|T_{n-1}|} + \cdots + \frac{|J_0|}{|T_0|} \right\} \leq 2e^2.
\end{align*}

\end{proof}

Let $L^1 = L^1(I)$ denote the Banach space of all $L^1$ functions $\varphi: I \to \mathbb R$ w.r.t. the Lebesgue measure and let $||\varphi||_1$ denote the $L^1$ norm of $\varphi$. As usual, we will use the Perron-Frobenius operator. Given $ J \subset I$, $\omega \in \Omega$ and $n \geq 0$, define
\[
\mathcal L_{J, n}^{\omega} (x) = \sum_{\substack{f_{\omega}^n (y) = x\\ y \in J}} \frac{1}{D f_{\omega}^n (y)} \mbox{ and \ } \hat {\mathcal L}_{J, n}^{\omega}(x) = \frac{1}{|J|}\mathcal L_{J, n}^{\omega} (x) .
\]
We should remark that these are functions in $L^1$. Moreover, $\mathcal L_{J, n}^{\omega} (x) $ is the density function of the absolutely continuous measure $(f_{\omega}^n)_*({\rm Leb}|J) $ and has support $f_{\omega}^n (J)$. We also note that $\hat {\mathcal L}_{J, n}^{\omega}(x) $ is the density of the push forward of the relative measure on $J$, so the integral over $I$ of this density equals $1$.

\begin{lem}

For each $\rho > 0$ there exists a compact subset $\mathcal K(\rho)$ of $L^1$ such that for any interval $J \subset I$, any $\omega \in \Omega$ and any integer $n \geq 0$, if 
\begin{enumerate}
\item[(1)] $|f_{\omega}^n (J)| > \rho$, 

\item[(2)] $f_{\omega}^n$ maps $J$ diffromorhpically onto its image, and

\item[(3)] $\mathcal N(f_{\omega}^n |J) \leq 2e^2$.
\end{enumerate}
Then $\hat {\mathcal L}_{J, n}^{\omega}(x)  \in \mathcal K(\rho)$.

\end{lem}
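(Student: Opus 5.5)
The plan is to describe $\hat{\mathcal L}_{J,n}^{\omega}$ explicitly and show it lies in a family of functions that is uniformly bounded and uniformly of bounded variation. Helly's selection theorem (equivalently, compactness of the embedding of $BV$ into $L^1$) then supplies the compact set $\mathcal K(\rho)$.

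Set $T = f_{\omega}^n(J)$. By hypothesis (2), $f_{\omega}^n|J$ is a diffeomorphism onto $T$. Hence $\hat{\mathcal L}_{J,n}^{\omega}$ vanishes off $T$, and on $T$ it is given by
\[
\hat{\mathcal L}_{J,n}^{\omega}(x) = \frac{1}{|J|\, Df_{\omega}^n(\varphi(x))}, \qquad \varphi = (f_{\omega}^n|J)^{-1}.
\]

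\textbf{Step 1: two-sided bound on $T$.} Hypothesis (3) gives ${\rm Dist}(f_{\omega}^n|J) \le \mathcal N(f_{\omega}^n|J) \le 2e^2$, so the derivative varies on $J$ by at most a factor $e^{2e^2}$. By the mean value theorem, some point of $J$ has derivative $|T|/|J|$. It follows that
\[
e^{-2e^2}\,\frac{1}{|T|} \le \hat{\mathcal L}_{J,n}^{\omega} \le e^{2e^2}\,\frac{1}{|T|}
\]
on $T$. By hypothesis (1), $|T| > \rho$, so $\sup \hat{\mathcal L}_{J,n}^{\omega} \le e^{2e^2}/\rho$.

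\textbf{Step 2: total variation on $T$.} Write $g = \log \hat{\mathcal L}_{J,n}^{\omega}$ on $T$. Take a partition $x_0 < \dots < x_k$ of $T$ and pull it back by $\varphi$ to subintervals $J_i' \subset J$. For each $i$,
\[
|g(x_{i+1}) - g(x_i)| \le {\rm Dist}(f_{\omega}^n|J_i') \le 2e^2\,\frac{|J_i'|}{|J|},
\]
using the definition of $\mathcal N$ (this is the only point where $\mathcal N$ is needed rather than ${\rm Dist}$). Summing over $i$, the variation of $g$ on $T$ is at most $2e^2$. Since $|\exp a - \exp b| \le \max(e^a,e^b)\,|a-b|$, the variation of $\hat{\mathcal L}_{J,n}^{\omega}$ on $T$ is at most $2e^2 \cdot e^{2e^2}/\rho$.

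\textbf{Step 3: total variation on $I$.} Extending by zero adds two jumps at $\partial T$, each at most $e^{2e^2}/\rho$. So the total variation of $\hat{\mathcal L}_{J,n}^{\omega}$ on $I$ is bounded by a constant $M(\rho)$ depending only on $\rho$.

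\textbf{Step 4: compactness.} Define $\mathcal K(\rho)$ to be the set of functions $\psi$ on $I$ with $0 \le \psi \le e^{2e^2}/\rho$ and ${\rm Var}_I \psi \le M(\rho)$, viewed as $L^1$ classes. By Helly's selection theorem, every sequence in $\mathcal K(\rho)$ has a subsequence converging pointwise everywhere. The limit again satisfies both bounds, because variation is lower semicontinuous under pointwise convergence. Dominated convergence then gives convergence in $L^1$. Hence $\mathcal K(\rho)$ is sequentially compact, and so compact, in $L^1$.

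The main obstacle is Step 2: the bounded variation estimate must be extracted from the $\mathcal N$ bound alone, with no $C^2$ assumption. The subinterval form of the definition of $\mathcal N$ handles this directly. The interval being $\hat B$-type or open makes no difference, since endpoints have measure zero.
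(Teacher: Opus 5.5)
Your proof is correct, and it reaches compactness by a different route than the paper.

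\textbf{The paper's route.} The paper defines the class $\mathscr D_C$ of probability densities $\psi$ with the following properties:
\begin{itemize}
\item $\psi$ is supported on an interval $I_\psi$ of length at least $C^{-1}$ and is positive there;
\item $\psi$ satisfies the log-Lipschitz bound $|\psi(x)-\psi(y)|\le C\psi(x)|x-y|$ on $I_\psi$.
\end{itemize}
It asserts that $\mathscr D_C$ is compact in $L^1$, which is an Arzel\`a--Ascoli-type fact. It then checks membership by a pointwise estimate, using the $C^2$ formula $\mathcal N(\varphi|J)=\sup|D^2\varphi/D\varphi|\,|J|$ and the mean value theorem.

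\textbf{Your route.} You control only the total variation of the density. You obtain this by summing the subinterval form of $\mathcal N$ over a partition, and you conclude with Helly's selection theorem.

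\textbf{What your route buys.}
\begin{itemize}
\item The compactness step is fully justified. Helly's theorem and lower semicontinuity of variation handle the varying supports automatically. For $\mathscr D_C$ one must instead track convergence of the support intervals and verify closedness, which the paper dismisses with ``clearly''.
\item You never differentiate twice. This matters because the admissible family is only assumed to lie in $\mathscr F_1$.
\end{itemize}

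\textbf{What the paper's route buys.} It gives a stronger regularity statement: a uniform log-Lipschitz modulus on the support rather than mere bounded variation. That extra regularity is not needed in the proof of the Main Theorem, where only $L^1$-compactness is used.

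The paper's log-Lipschitz bound can itself be derived from ${\rm Dist}$ alone. Apply your Step 2 estimate to the single subinterval $\varphi([z_1,z_2])$ and combine it with Step 1. This gives $|\log\psi(z_1)-\log\psi(z_2)|\le 2e^2e^{2e^2}|z_1-z_2|/\rho$ without any second derivatives.
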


\begin{proof}

For $C>1$, let $\mathscr D_C$ denote the subset of $L^1$ consisting of maps $\psi : I \to \mathbb R$ for which there exists an interval $I_{\psi} \subset I$ such that 

\begin{enumerate}
\item[(1)] $I_{\psi} \geq C^{-1}$;
\item[(2)] $\psi(x) = 0$ for all $I \setminus I_{\psi}$;
\item[(3)] $\psi(x) > 0$ for $x \in I_{\psi}$;
\item[(4)] $|\psi(x) - \psi(y)| \leq C\psi(x) |x-y|$ for all $x, y \in I_{\psi}$;
\item[(5)] $\int_0^1 \psi(x) dx =1$.
\end{enumerate}
Clearly, $\mathscr D_C$ is a compact subset of $L^1$. Moreover, for each $\rho>0$, there exists $C>1$ such that for any $\omega, J$ and $n$ as in the lemma, we have $\hat {\mathcal L}_{J, n}^{\omega}(x) \in \mathscr D_C$. Indeed, only (4) requires explanation. Take any $z_1, z_2 \in f_{\omega}^n (J)$ and $f_{\omega}^n (y_1) = z_1, f_{\omega}^n (y_2) = z_2$. Then
\begin{align*}
\big| \hat {\mathcal L}_{J, n}^{\omega}(z_1) - \hat {\mathcal L}_{J, n}^{\omega}(z_2) \big| & = \frac{1}{|J|} \cdot \bigg| \frac{1}{Df_{\omega}^n(y_1)} - \frac{1}{Df_{\omega}^n(y_2)} \bigg|  \\
& =  \frac{1}{|J|} \frac{1}{Df_{\omega}^n(y_1)}  \cdot \frac{|Df_{\omega}^n(y_2) - Df_{\omega}^n(y_1)|}{Df_{\omega}^n(y_2)} \\
& \leq \frac{1}{|J|} \frac{1}{Df_{\omega}^n(y_1)} \cdot 2e^2 \frac{D^2f_{\omega}^n(\xi_1)}{Df_{\omega}^n(\xi_1)} \cdot |y_2 - y_1| \\
& \leq  \frac{1}{|J|} \frac{1}{Df_{\omega}^n(y_1)} \cdot  \frac{(2e^2)^2}{|J|} \cdot \frac{|z_2 - z_1|}{Df_{\omega}^n (\xi_2)} \\
& \leq \frac{4e^4}{\rho} \hat {\mathcal L}_{J, n}^{\omega}(z_1)\cdot |z_2 - z_1|,
\end{align*} 
where $\xi_1$ and $\xi_2$ are between $y_1$ and $y_2$. So, taking $K(\rho) = \mathscr D_C$ which completes the proof.

\end{proof}

\begin{proof}[Proof of the Main Theorem] 

Statement (1) follows exactly from \cite{S}[subsection 3.1], so we only prove statement (2) here.

Take $Z = \tilde B(\delta_0)$. Let 
\[
\varphi_n(x) = \int_{\Omega_{\epsilon}} \mathcal L_{Z, n}^{\omega} (x) d \nu_{\epsilon}^{\mathbb N}(\omega).
\]
Since $\varphi_i(x) dx = \mathcal T_{\epsilon}^i ({\rm Leb} |Z)$, then $(1/n) \sum_{i=0}^{n-1} \varphi_i(x) dx$ converges to the unique stationary measure $\mu_{\epsilon}$. 

\begin{claim}
It suffices to prove that there exists a compact subset $\mathcal K$ of $L^1$  independent of $\epsilon$ and $n$ such that $\varphi_n \in \mathcal K$ for all $n$ and all $\epsilon >0$ small.  
\end{claim}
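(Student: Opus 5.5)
The plan has two stages. First we show that the compactness statement in the Claim implies strong stochastic stability. Then we sketch how the compact set $\mathcal K$ is produced, although the Claim itself only asks for the reduction.

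\emph{Stage one: the reduction.} Assume there is a compact $\mathcal K\subset L^1$, independent of $\epsilon$ and $n$, with $\varphi_n\in\mathcal K$ for all $n$ and all small $\epsilon$. Replace $\mathcal K$ by its closed convex hull, which is still compact in $L^1$ (Mazur). Then every Cesàro average $\frac1n\sum_{i<n}\varphi_i$ lies in $\mathcal K$.

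These averages converge weak$^\star$ to the unique stationary measure $\mu_\epsilon$. Compactness gives a subsequence converging in $L^1$, and its limit must be the density $\zeta_\epsilon$ of $\mu_\epsilon$ (normalized by $|Z|$). So $\zeta_\epsilon\in\mathcal K$ for all $\epsilon\in(0,\epsilon_0]$.

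Now take any sequence $\epsilon_j\to0$. By compactness there is a subsequence with $\zeta_{\epsilon_j}\to\zeta$ in $L^1$. Then $\zeta\ge0$, $\int\zeta=1$, and $\mu_{\epsilon_j}\to\zeta\,dx$ weak$^\star$.

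Next we check that $\zeta\,dx$ is $f$-invariant. Take a continuous test function $g$. Stationarity gives $\int g\,d\mu_\epsilon=\int\!\int g\circ f_t\,d\nu_\epsilon(t)\,d\mu_\epsilon$. By (C2), $|g\circ f_t-g\circ f|\le \omega_g(\epsilon)$ away from $c$, where $\omega_g$ is the modulus of continuity of $g$. The only problem is near the discontinuity $c$. We handle it using the uniform integrability coming from $L^1$-compactness: the $\mu_{\epsilon_j}$-mass of a small neighbourhood of $c$ is uniformly small, and the limit $\zeta\,dx$ gives $\{c\}$ zero mass. Passing to the limit gives $\int g\,\zeta=\int g\circ f\,\zeta$.

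So $\zeta\,dx$ is an acip for $f$. By Theorem 1 and (A3) the acip is unique, so $\zeta=\zeta_f$. Since every sequence has a subsequence with the same limit, $\|\zeta_\epsilon-\zeta_f\|_1\to0$.

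\emph{Stage two: producing $\mathcal K$.} Fix $\omega$ and decompose $Z=\tilde B(\delta_0)\subset V^\omega$ according to the returns of $G_\epsilon$. For almost every $x$, let $k$ be the largest partial sum of Markov inducing times that is $\le n$. Lemma 3.1 gives an interval $J_k^\omega(x)$ on which $f_\omega^k$ is a diffeomorphism onto $V^{\sigma^k\omega}\supset\tilde B(\delta_0)$ with $\mathcal N\le 2e^2$.

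The remaining $n-k$ iterates are part of a single inducing time $m>n-k$. Along them, $f^{n-k}_{\sigma^k\omega}$ maps the corresponding subinterval diffeomorphically with $\mathcal N\le1$, onto an interval whose length is bounded below in terms of $\delta_0$. This holds because the image continues onto $V$ with large derivative, and the Koebe principle (Proposition 1) applies. Cut off those pieces whose final image has length at most $\rho$. Their total weight is controlled by the tail estimate $P_\epsilon(m_V>m)\le C_0m^{-2}$ together with bounded distortion, which gives a total of order $\sum_m m\cdot m^{-2}$ at scale $\rho$. Choosing the parameters appropriately, this contribution can be made small, say $\le\eta(\rho)\to0$ as $\rho\to0$, uniformly in $n$ and $\epsilon$.

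Each remaining piece contributes, by Lemma 3.2, a multiple of an element of $\mathcal K(\rho)$. So $\varphi_n$ lies within $L^1$-distance $\eta(\rho)$ of $|Z|\,\overline{\mathrm{conv}}\,\mathcal K(\rho)$, and $\varphi_n$ is bounded in $L^1$ since $\int\varphi_n=|Z|$. Hence the set $\{\varphi_n\}$ is totally bounded, and we take $\mathcal K$ to be its closure.

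\emph{Main obstacle.} The hardest step is the tail control of the incomplete last excursion, uniformly in $n$ and $\epsilon$. The plan is to use the polynomial tail with $p=2$, which makes the expected excursion length summable, together with the contraction $|J|\le e^{-2n}$ from Lemma 3.1.
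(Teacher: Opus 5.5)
Your Stage one is correct and is essentially the paper's own argument. You pass to the (closed) convex hull of $\mathcal K$ so that the Ces\`aro averages $\frac1n\sum_{i<n}\varphi_i$ stay in a fixed compact set, deduce that $\mu_\epsilon$ has a density there, and identify every $L^1$-limit as $\epsilon\to0$ with the density of the unique acip of $f$. You are more careful than the paper about Mazur's theorem, the normalization by $|Z|$, and the invariance of the limit. That last step is in fact easier than you make it: (C2) gives $|f_t-f|\le|t|$ on all of $I_c$, and $g\circ f\in L^\infty$ pairs directly with $L^1$-convergence, so $c$ needs no special treatment.

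Stage two is not required for the Claim, and as sketched it is not right:
\begin{itemize}
\item The image at time $n$ of an excursion interval is only bounded below by $|V^{\sigma^{k+m}\omega}|(\sup Df+\epsilon)^{-(m-(n-k))}$, not in terms of $\delta_0$ alone.
\item Proposition 1 concerns iterates of $f$, not random compositions, so it cannot be invoked there.
\item Your bound $\sum_m m\cdot m^{-2}$ diverges. The paper instead discards all pieces with $m>M$, at total cost $\lesssim\sum_{k=0}^{n}\max\{M,n-k\}^{-2}\lesssim M^{-1}$.
\item The paper also separately discards the at most two short pieces meeting $\partial Z$, which your sketch omits.
\end{itemize}
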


\noindent
{\bf Proof of Claim.} The assumption of the claim implies that there is a compact subset $\mathcal K_0$ (the convex hull of $\mathcal K$) of $L^1$, such that for each $n = 1, 2, \cdots $ and each $\epsilon > 0$ small enough, we have $(1/n) \sum_{i=0}^{n-1} \varphi_i (x) \in \mathcal K_0$. Since $\mu_{\epsilon}$ is the weak$^{\star}$ limit of $(1/n) \sum_{i=0}^{n-1} \varphi_i (x) dx $ as $n \to \infty$, it follows that $\mu_{\epsilon} = \varphi_{\epsilon} (x) dx$ for some $\varphi_{\epsilon}(x) \in \mathcal K_0$. Since any limit of $\varphi_{\epsilon}$ as $\epsilon \to 0$ is the density of an acip of $f$, and since $f$ is topological mixing, $f$ has at most one acip. It follows that $\varphi_{\epsilon}$ converges in $L^1$ as $\epsilon \to 0$, and converges to the density $\varphi$ of the acip of $f$.

By considering subsequences, it even suffices to show that for each $\eta >0$, there exists a compact subset $\mathcal K_{\eta} $ of $L^1$ such that for each $n$, $\varphi_n$ can be written in the following form:
\begin{equation}
\varphi_n : = \varphi_n^0 + \varphi_n^1 + \varphi_n^2,
\end{equation}
where $||\varphi_n^i||_1 \leq 2 \eta, i =0, 1$ and $\varphi_n^2 \in \mathcal K_{\eta}$.

Let $V = V_{\epsilon}$ and $G = G_{\epsilon}$. For $(x, \omega) \in V$, let $\mathbb M(x, \omega) $ denote the collection of positive integers of the form $\sum_{j=0}^{n-1} m_V(G^j (x, \omega))$, where $n$ runs over all positive integers for which $(x, \omega) \in {\rm dom}(G^n)$. For $m \geq 1$ and $k \geq 1$, let 
\[
U_{0, m} = \{ (x, \omega) \in V : m_V(x, \omega) = m \}
\]
and 
\[
U_{k, m} = \{ (x, \omega) \in V : k \in \mathbb M(x, \omega) \mbox{ and } F^k(x, \omega) \in U_{0, m} \}.
\]
Moreover, let $H_{k, m} = U_{k, m} \cap (Z \times \Omega_{\epsilon})$ for each $k \geq 0$ and $ m \geq 1$. Let $\mathcal H_{k, m}^{\omega}$ (resp. $\mathcal U_{k, m}^{\omega}$) denote the collection of the components of $H_{k, m}^{\omega}$ (resp. $U_{k, m}^{\omega}$). Note that if $x \in J \in \mathcal H_{k, m}^{\omega}$, then $k +m \in \mathbb M(x, \omega)$ and $ J \subset J_{k+m}^{\omega} (x)$, hence
\begin{equation}
\mathcal N(f_{\omega}^n |J) \leq 2e^2.
\end{equation}

Fix $n \geq 0$ and let $\Sigma_n = \{ (k, m) \in \mathbb N^2 : 0\leq k \leq n, m+k > n \}$. Then the sets $H_{k, m}, (k, m) \in \Sigma_n$ are pairwise disjoint, and for almost every $\omega \in \Omega_{\epsilon}$, $\mathcal H^{\omega} : = \bigcup_{(k, m) \in \Sigma_n} \mathcal H_{k, m}^{\omega}$ forms a measurable partition of $Z$ up to a set of Lebesgue measure 0. Then 
\begin{equation}
\varphi_n = \int_{\Omega_{\epsilon}} \sum_{J \in \mathcal H^{\omega}} \mathcal L_{J, n}^{\omega} d \nu_{\epsilon}^{\mathbb N} (\omega). 
\end{equation}

Now fix $ \eta >0$. For each $\omega \in \Omega_{\epsilon}$, we shall introduce a decomposition
\begin{equation}
\mathcal H^{\omega} = \mathcal H^{\omega, 0} \cup \mathcal H^{\omega, 1} \cup \mathcal H^{\omega, 2},
\end{equation}
and write
\[
\varphi_n^i =  \int_{\Omega_{\epsilon}} \sum_{J \in \mathcal H^{\omega, i}} \mathcal L_{J, n}^{\omega} d \nu_{\epsilon}^{\mathbb N} (\omega).
\]

The set $\mathcal H^{\omega, 0}$ is the collection of elements $\mathcal J$ of $\mathcal H^{\omega}$ for which $\partial J \cap \partial Z \neq \emptyset$ and $|J | < \eta$. For each $\omega \in \Omega_{\epsilon}$, $\mathcal H^{\omega, 0}$ has at most two elements. Thus, $|| \varphi_n^0 ||_1 \leq 2 \eta$.

To define $\mathcal H^{\omega, 1}$, we first observe that for each $(k, m) \in \Sigma_n$ and $\omega \in \Omega_{\epsilon}$, we have 
\[
| H_{k, m}^{\omega}| \leq | U_{k, m}^{\omega}| \leq C_1 | U_{0, m}^{\sigma^k \omega}|,
\]
where $C_1>0$ is a constant. Indeed, for any $(x, \omega) \in U_{k, m}$, we have
\[
f_{\omega}^k (U_{k, m}^{\omega} \cap J_k^{\omega} (x)) \subset U_{0, m}^{\sigma^k \omega},
\] 
and then the statement follows since $\mathcal N(f_{\omega}^k | J_k^{\omega}(x)) \leq 2e^2$. Let $M$ be a positive integer such that $C_0 C_1 M^{-1} < \eta$ and let $\mathcal H^{\omega, 1}$ be the collection of all components of $\bigcup_{(k, m) \in \Sigma_n, m >M} \mathcal H_{k, m}^{\omega}$ which are not contained in $\mathcal H^{\omega, 0}$.

Let us prove $|| \varphi_n^1 ||_1 < 2 \eta$. Let $G_{k, m} = \bigcup_{(k, m) \in \Sigma_n, m >M} H_{k, m}$, and let $G_M = \bigcup_{k=0}^n G_{k, M}$. Then for each $0 \leq k \leq n$,
\begin{align*}
P_{\epsilon} (G_{k, m}) & = \int_{\Omega_{\epsilon}} |G_{k, m}^{\omega}| d \nu_{\epsilon}^{\mathbb N} (\omega) \leq C_1 \int_{\Omega_{\epsilon}} \sum_{m > \max\{M, n-k \}} |U_{0, m}^{\sigma^k \omega}| d \nu_{\epsilon}^{\mathbb N} (\omega) \\
& = C_1 \int_{\Omega_{\epsilon}} \sum_{m > \max\{M, n-k \}} |U_{0, m}^{\omega}| d \nu_{\epsilon}^{\mathbb N} (\omega) \\
& = C_1 P_{\epsilon} (\{ (x, \omega) \in V : m_V(x, \omega) > \max\{M, n-k \} \}).
\end{align*}
By (3.2), we have
\[
|| \varphi_n^1 ||_1 \leq P_{\epsilon} (G_M) \leq C_0 C_1 \sum_{k=0}^{n} \max \{M, n-k \}^{-2} < 2 C_0 C_1 M^{-1} < 2 \eta.
\]

Finally, define $\mathcal H^{\omega, 2} = \mathcal H^{\omega} \setminus (\mathcal H^{\omega, 0} \cup \mathcal H^{\omega, 1})$. We shall show that for each $J \in \mathcal H^{\omega, 2}$, $|f_{\omega}^n (J)|$ is bounded from below by a constant $\rho = \rho(\eta) > 0$. Indeed, letting $(k, m) \in \Sigma_n$ be such that $J \in \mathcal H^{\omega}_{k, m}$, it suffices to show that $|f_{\omega}^{k+m}(J)|$ is bounded away from zero, since 
\[
|f_{\omega}^{k+m}(J)| \leq (\sup Df + \epsilon)^{k+m-n} |f_{\omega}^n (J)| \leq (\sup Df + \epsilon)^{M} |f_{\omega}^n (J)|.
\]
If $\partial J \cap \partial Z = \emptyset$, then $f_{\omega}^{k+m}(J)= V^{\sigma^{k+m} \omega}$, hence its length is bounded away from zero. If $\partial J \cap \partial Z \neq \emptyset$, then $|J| \geq \eta$, so by definition of $m_V$, $|f_{\omega}^{k+m}(J)|$ is bounded away from zero as well.

Combining with (3.4), by Lemma 3.2 there exists a compact subset $\mathcal K(\rho)$ of $L^1$ such that for each $J \in \mathcal H^{\omega, 2}$, $\hat {\mathcal L}_{J, n}^{\omega}(x)  \in \mathcal K(\rho)$. Therefore, $\varphi_n^2$ is contained in some compact subset $K_{\eta}$ of $L^1$.

\end{proof}

\subsection{Deterministic dynamcis}

In this subsection we study the deterministic dynamics of maps $f \in \mathcal {LD}$.  The following concept of `backward contraction'  was introduced in \cite{R} and  was very important in \cite{BRSS}.

\begin{definition} 
For a constant $r > 1$, that will be usually large, we say that $f$ satisfies the backward contraction property with constant $r$ (abbreviated $BC(r)$) if the following holds: there exists $\delta_0 >0$ such that for each $\delta < \delta_0$, each $s \geq 1$ and each component $W$ of $f^{-s}\tilde B(r \delta)$, 
\[
dist(W, CV) < \delta \Rightarrow |W| < \delta.
\]
We say that $f$ satisfies $BC(\infty)$ if it satisfies $BC(r)$ for all $r > 1$.
\end{definition}

Note that provided that $\delta$ is small enough, then
\[
\tilde B(r \delta) \approx r^{1/\ell} \tilde B(\delta).
\]

A sequence of open intervals $\{ G_j \}_{j=0}^s$ is called a {\bf chain} if for each $0 \leq j < s$, $G_j$ is a component of $f^{-j}(G_{j+1})$. The {\bf order} of the chain is defined to be the number of $j$'s with $0 \leq j < s$ and such that $\partial G_j$ contains the critical point $c$.

The follow lemma was adapted from \cite{BRSS}[Lemma 1] and \cite{CD}[Lemma 4]. The proof here also differs slightly from \cite{LR}[Sublemma 3.11] since our assumption on $f$ is weaker.

\begin{lem}
Suppose $f \in \mathcal {LD}$. Then for any $r>1$, there exists $\delta_0 >0$ such that the following holds for each $\delta \in (0, \delta_0)$. For each $c \in \{c^-, c^+ \}$, if $f^s(c) \in \tilde B(r\delta)$  for some $s \geq 1$  and if $J$ is the component of $f^{-s} \tilde B(r\delta)$ containing $c$ in its boundary, then 
\[
J \subset \tilde B(\delta).
\]
\end{lem}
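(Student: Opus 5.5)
We use (LD), Koebe, and the fact that $\tilde B(\cdot)$ scales like the $1/\ell$ power. Fix $r>1$, a one-sided critical point, say $c^+$, and a time $s\ge 1$ with $f^s(c^+)\in\tilde B(r\delta)$. Let $J=(c,b)$ be the component of $f^{-s}\tilde B(r\delta)$ with $c\in\partial J$. Then $f(J)=(c_1^+,f(b))$ and $f^{s-1}$ maps $f(J)$ diffeomorphically onto a one-sided piece of the interval $\hat B(r\delta)$. It suffices to show $|f(J)|<\delta$, since then $J\subset f^{-1}(c_1^+,c_1^++\delta)\subset\tilde B(\delta)$.

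First we dispose of small $s$. There is no critical relation and $f$ has no attracting cycles, so $f^{j}(c_1^\pm)\ne c$ for all $j$. Hence for any fixed $N$ the points $f^{s}(c^\pm)$ with $s\le N$ stay a definite distance from $c$. For $\delta_0$ small, $f^s(c^\pm)\notin\tilde B(r\delta)$ when $s\le N$, so we may assume $s>N$, with $N$ as large as we like.

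For $s>N$, we enlarge the range. Let $T\supset f(J)$ be the maximal interval with endpoint $c_1^+$ on which $f^{s-1}$ is a diffeomorphism and whose image lies in $\tilde B(\delta_*)$ (or in a fixed larger neighbourhood $\hat B(K r\delta)$ with $K$ large). By the maximality of $J$, $f^{s-1}(f(J))$ is contained in $\hat B(r\delta)$, which has length $\asymp (r\delta)^{1/\ell}$. Either $f^{s-1}(T)$ extends to a neighbourhood of size $\asymp (Kr\delta)^{1/\ell}$, or a boundary point of $T$ maps into $c$ at an earlier time or hits $\partial I$.

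In the first case, $f^{s-1}(f(J))$ is $\tau$-well inside $f^{s-1}(T)$ on the side away from $c_1^+$, with $\tau\to\infty$ as $K\to\infty$. The one-sided Koebe principle (Proposition 1(3)) then gives $Df^{s-1}(y)\gtrsim Df^{s-1}(c_1^+)$ for $y\in f(J)$. Hence
\[
|f(J)|\lesssim \frac{(r\delta)^{1/\ell}}{Df^{s-1}(c_1^+)} .
\]
This alone is not $<\delta$, since $\delta^{1/\ell}\gg\delta$. It must be combined with the position: the point $f^{s-1}(c_1^+)=f^s(c^+)$ lies in $\tilde B(r\delta)$, so $d(f^{s}(c_1^+),CV)< r\delta$. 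Applying the same Koebe argument to the extra iterate through $f$ near $c$, the image $f^{s}(f(J))$ has length $\lesssim r\delta$. The derivative of $f^{s}$ at points of $f(J)$ is then comparable to
\[
Df^{s-1}(c_1^+)\cdot D(r\delta)\asymp Df^{s-1}(c_1^+)\,(r\delta)^{1-1/\ell},
\]
using (2.x) for $Df$ near $c$ via $d_*$. This gives $|f(J)|\lesssim r\delta/\big(Df^{s-1}(c_1^+)(r\delta)^{1-1/\ell}\big)$. Cleaner: $|f(J)|\lesssim r\delta\cdot\sup_{f(J)}1/Df^{s}$, and the $\ell$-power gain is exactly compensated. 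In either form, (LD) with $s>N$ makes $Df^{s-1}(c_1^+)$ exceed any prescribed constant $C(r)$, which yields $|f(J)|<\delta$.

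The main obstacle is the second case, where the Koebe space is lost because some boundary point of a pullback hits $c$ before time $s$. There I would argue by induction on $s$ (equivalently on the order of the chain $\{f^j(f(J))\}$). At the first time $j<s-1$ at which the pullback of $\hat B(Kr\delta)$ along the orbit of $c_1^+$ has $c$ in its boundary, that pullback is a component of $f^{-(s-j)}\tilde B(Kr\delta)$ with a critical point in its boundary. The induction hypothesis, applied with $Kr$ in place of $r$ (so $\delta_0$ must be chosen uniformly for a finite ladder of constants, using the $1/\ell$-scaling $\tilde B(r\delta)\approx r^{1/\ell}\tilde B(\delta)$), places it inside $\tilde B(\delta')$ for a tiny $\delta'$. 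This in turn forces $f^{j}(c_1^+)$ extremely close to $CV$ at a time $j\ge N$. Together with the derivative growth from (LD), this recovers the Koebe space or yields a contradiction. Making the constants in this induction close up is the delicate part.
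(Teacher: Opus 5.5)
\textbf{There is a genuine gap in the core estimate.} Your reduction to $|f(J)|<\delta$ is fine. So are the disposal of small $s$ and the one-sided Koebe estimate on a diffeomorphic pullback of an enlarged neighbourhood; they match the paper, which uses $\tilde B(8^\ell r\delta)$ to get $3$-scaled space around $\tilde B(r\delta)$.

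Your bound $|f(J)|\lesssim (r\delta)^{1/\ell}/Df^{s-1}(c_1^+)$ is correct. The ``combination with the position'' adds nothing, though: $r\delta/\bigl(Df^{s-1}(c_1^+)(r\delta)^{1-1/\ell}\bigr)$ is literally the same quantity. It is also obtained by treating $D(r\delta)$ as a lower bound for $Df$ on $f^s(J)$. That fails, because $f^s(J)$ is a piece of $\tilde B(r\delta)$ adjacent to $c$, where $Df\to 0$. For the same reason $\sup_{f(J)}1/Df^s=\infty$, so the ``cleaner'' form cannot be used either.

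Knowing only $Df^{s-1}(c_1^+)>C(r)$ therefore gives $|f(J)|\lesssim (r\delta)^{1/\ell}/C(r)$. As you noted yourself, this is not $<\delta$ for small $\delta$.

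The missing idea is to apply (LD) to the derivative one step longer, and to use non-flatness as an \emph{upper} bound on the last factor at $f^s(c^+)\in\tilde B(r\delta)$:
\[
Df^{s-1}(c_1^+)=\frac{Df^{s}(c_1^+)}{Df(f^{s}(c^+))}\geq \frac{K}{O_2\, d(f^s(c^+),c)^{\ell-1}}\gtrsim \frac{K}{(r\delta)^{1-1/\ell}}.
\]
This gives $|f(J)|\lesssim r\delta/K<\delta$ for a constant $K\gg r$ fixed in advance, which (LD) provides once the return time $s$ is large. It is exactly the paper's step $Df^{s-s_1-1}(f(c))=Df^{s-s_1}(f(c))/Df(f^{s-s_1}(c))>K/(O_2|G'|^{\ell-1})$.

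\textbf{The case where Koebe space is lost is left open.} The mechanism you suggest, invoking the statement with $Kr$ in place of $r$, does not close. The order of the chain is unbounded, so the ladder $r,Kr,K^2r,\dots$ is not finite and no uniform $\delta_0$ results.

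The paper avoids this as follows:
\begin{enumerate}
\item It runs two chains at once: $G_j$ pulls back $\tilde B(8^\ell r\delta)$, and $H_j$ pulls back $\tilde B(r\delta)$ with $H_0=J$.
\item It lets $s_1<s$ be the \emph{last} time $G_{s_1}$ has $c$ in its boundary. Then $f^{s-s_1-1}:G_{s_1+1}\to G_s$ is a diffeomorphism with Koebe space.
\item The estimate above, applied to the convex hull of $H_{s_1+1}$ and $f(c)$, gives $H_{s_1}\subset\tilde B(\delta)\subset\tilde B(r\delta)$.
\item Since $H_{s_1}$ contains $f^{s_1}(c)$, $J$ lies in the component of $f^{-s_1}\tilde B(r\delta)$ with $c$ in its boundary. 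Induction on $s$ with the \emph{same} $r$ then finishes the proof.
\end{enumerate}
No Koebe space ever has to be recovered across earlier critical times.
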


\begin{proof}

Since $f \in \mathcal {LD}$, for a constant $K > 18 \cdot 8^{\ell} O_2 r$ to be large enough, there exists a neighborhood $\mathcal V$ of $c$ such that if $f^n(c) \in \mathcal V$, $c \in \{c^-, c^+ \}$, then $Df^n(f(c)) > K$.

Choose $\delta_0>0$ small enough such that $\tilde B(8^{\ell}r \delta) \subset \mathcal V$ for $\delta < \delta_0$. Consider the chains $\{G_j \}_{j=0}^s$ and $\{H_j \}_{j=0}^s$ with $G_s = \tilde B(8^{\ell}r \delta) \supset H_s = \tilde B(r \delta)$ and $G_0 \supset H_0 =J$. Let $s_1 < s$ be the maximal integer such that $G_{s_1}$ contains $c$ in its boundary, $c \in \{c^-, c^+ \}$. Let $H_{s_1+1}'$ be the convex hull of $H_{s_1 + 1} \cup \{ f(c)\}$, and observe that $H_{s_1+1}' \subset G_{s_1+1}$.

\begin{claim}
\[
H_{s_1} \subset \tilde B(\delta).
\]
\end{claim}

In fact, since 
\[
f^{s-s_1-1} : G_{s_1 +1} \to G_s
\]
is a diffeomorphism, and $H_s$ is $3$-well-inside $G_s$. By one-sided Koebe principle, applied to each components of $G_{s_1+1} \setminus \{ f(c) \}$ intersecting $H_{s_1+1}'$, that for each $x \in H_{s_1+1}'$, we have
\[
D f^{s-s_1-1} (x) \geq \left(\frac{3}{7}\right)^2 D f^{s-s_1-1} (f(c)).
\]
Since $f^{s-s_1}(c) \in \mathcal V$, $Df^{s-s_1}(f(c)) > K$. Then
\[
D f^{s-s_1-1} (f(c)) = \frac{D f^{s-s_1}(f(c))}{Df(f^{s-s_1}(c))} > \frac{K}{Df(f^{s-s_1}(c))},
\]
By non-flatness (C4), 
\[
Df(f^{s-s_1}(c)) \leq O_2 |G'|^{\ell -1},
\]
where $G'$ is the connected component of $G_s \setminus \{ c \}$ containing $f^{s-s_1}(c)$. By the Mean Value Theorem, 
\[
\frac{|f^{s-s_1-1}H_{s_1+1}'|}{|H_{s_1+1}'|} = D f^{s-s_1-1} (\zeta) \geq \left(\frac{3}{7}\right)^2 D f^{s-s_1-1} (f(c)).
\]
Therefore,
\[
|H_{s_1+1}'| \leq \left(\frac{7}{3}\right)^2 \frac{|f^{s-s_1-1}H_{s_1+1}'|}{D f^{s-s_1-1} (f(c))} \leq \frac{9 |G_s|}{D f^{s-s_1-1} (f(c))}.
\]
 Combining these together, we have
\[
|H_{s_1+1}'| \leq \frac{9 O_2 |G_s|^{\ell}}{K} \leq \frac{18 \cdot 8^{\ell} O_2 r \delta}{K} < \delta.
\]
The claim follows. If $s_1= 0$ then the proof is completed. For the general case, the lemma follows by an induction on $s$.

\end{proof}

\begin{lem}
Suppose $f \in \mathcal {LD}$. Then for any $r>1$, $f$ satisfies $BC(r)$. 
\end{lem}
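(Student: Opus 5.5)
We argue by contradiction, following \cite{BRSS}[Theorem 1] and \cite{CD}, using Lemma 3.3 as the key input. Fix $r>1$. Suppose $BC(r)$ fails. Then for arbitrarily small $\delta>0$ there exist $s\ge1$ and a component $W$ of $f^{-s}\tilde B(r\delta)$ with ${\rm dist}(W,CV)<\delta$ but $|W|\ge\delta$. Consider the chain $\{G_j\}_{j=0}^{s}$ with $G_s=\tilde B(r\delta)$ and $G_0=W$. Also consider the slightly enlarged chain with $G_s'=\tilde B(8^\ell r\delta)$, which gives Koebe space as in the proof of Lemma 3.3. We split according to whether the chain has order zero or positive order.

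\textbf{Case 1: the order is positive.} Let $s_0$ be the largest $j<s$ such that $c\in\partial G_j$. Apply Lemma 3.3 to $f^{s-s_0}(c)\in \tilde B(r\delta)$ (one side of $c$). This gives $G_{s_0}\subset\tilde B(\delta)$, so $G_{s_0}$ is much smaller than $\tilde B(r\delta)$ in a definite ratio. We then iterate backwards: the next critical time $s_1<s_0$ sees $G_{s_0}\subset \tilde B(\delta)\subset\tilde B(r\delta)$, and Lemma 3.3 applies again. By induction on the number of critical times, every $G_{s_i}$ lies inside $\tilde B(\delta)$.

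Finally consider the last stretch $f^{s_{\rm last}}:G_0\to G_{s_{\rm last}}$. This map is diffeomorphic, and by induction its target sits well inside the corresponding piece of the larger chain. The Koebe principle (Proposition 1) then bounds $|G_0|$ by a definite fraction of the corresponding component $G_0'$ of the enlarged chain. We also need smallness of $|G_0'|$. For this we use the absence of wandering intervals (Theorem 1) together with (A1), which give that components of preimages of a fixed small neighborhood have lengths tending to zero uniformly. Thus $|W|<\delta$ if we also use ${\rm dist}(W,CV)<\delta$ and non-flatness to compare scales.

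\textbf{Case 2: order zero.} Here $f^s:W\to\tilde B(r\delta)$ is a diffeomorphism. Since $W$ is within $\delta$ of a critical value $v$, we compare with the orbit of $v$. The interval $W'$, the convex hull of $W$ and $v$, lies in a component of $f^{-s}\tilde B(8^\ell r\delta)$ unless a critical point intervenes, and the latter possibility reduces to Case 1 via Lemma 3.3. In the good situation the one-sided Koebe principle gives $Df^s\gtrsim Df^s(v)$ on $W$. Since $f^s(v)$ lands in $\tilde B(8^\ell r\delta)\subset\mathcal V$, (LD) yields $Df^{s}(v)\cdot Df(f^{s}(v))>K$. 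Repeating the computation of Lemma 3.3 then gives $|W|\lesssim O_2 (r\delta)/K<\delta$ for $K$ large.

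\textbf{Main obstacle.} The main difficulty is the bookkeeping in Case 1. One must make sure the Koebe space ($3$-well-inside) survives each passage through the critical point, so the induction closes with constants independent of the number of critical times. I would handle this as in \cite{BRSS}: at each critical time, use the fact that $\tilde B(\delta)$ is well inside $\tilde B(8^\ell r\delta)$, with ratio about $(8^\ell r)^{1/\ell}$. This lets the Koebe constant regenerate at every step instead of accumulating.
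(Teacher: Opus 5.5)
Your skeleton matches the paper's: the enlarged chain from $\tilde B(8^{\ell} r\delta)$, Lemma 3.3 at each critical passage, and a split by the order of the chain. However, the step that actually produces $|W|<\delta$ is missing, and what you put in its place does not work.

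\textbf{Case 1.} After Lemma 3.3 gives $G_{s_{\rm last}}\subset\tilde B(\delta)$, you finish with Koebe plus the absence of wandering intervals. That principle is only qualitative: pullbacks of a fixed neighbourhood shrink as the number of iterates grows. $BC(r)$, by contrast, is a sharp statement at scale $\delta$. Pullbacks of $\tilde B(r\delta)$, an interval of size $\asymp (r\delta)^{1/\ell}\gg\delta$, must have length $<\delta$. A definite fraction of $|G_0'|$ need not be $<\delta$, and \emph{non-flatness to compare scales} does not supply that estimate.

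The paper instead inducts on $s$. Since $G_{s_{\rm last}}\subset\tilde B(\delta)\subset\tilde B(r\delta)$, the interval $W$ lies in a component of $f^{-s_{\rm last}}\tilde B(r\delta)$ with $s_{\rm last}<s$ that is still within $\delta$ of $CV$, so the induction hypothesis applies. Consequently no Koebe space has to survive critical passages; your stated main obstacle is not where the difficulty lies.

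\textbf{Case 2.} You only treat the situation where $v$ lies in the enlarged pullback $G_0'$, the component of $f^{-s}\tilde B(8^{\ell}r\delta)$ containing $W$. There the Lemma 3.3-type derivative computation is fine. But your claim that the hull of $W\cup\{v\}$ lies in $G_0'$ \emph{unless a critical point intervenes} is false. Typically $f^s(v)$ is nowhere near $c$, so $f^s$ of that hull leaves $\tilde B(8^{\ell}r\delta)$ with no critical point involved, and (LD) gives nothing.

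This is exactly the case the paper's Case 2 handles, using the idea you are missing:
\begin{itemize}
\item If the enlarged chain has order zero, $f^s|G_0'$ is a diffeomorphism onto $\tilde B(8^{\ell}r\delta)$, and $\tilde B(r\delta)$ sits well inside that interval.
\item By the (macroscopic) Koebe principle, $W$ is $1$-well-inside $G_0'$, so each component of $G_0'\setminus W$ has length at least $|W|$.
\item If $v\notin G_0'$, the component of $G_0'\setminus W$ facing $v$ lies in the gap between $W$ and $v$, which has length $<\delta$. Hence $|W|<\delta$.
\end{itemize}
The paper obtains $v\notin G_0'$ for free by starting its chain one step earlier, at $G_0\ni x$ with $x\in\tilde B(\delta)$ and $f(x)\in W$; its $G_1$ then plays the role of your $G_0'$. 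Order zero means $c\notin\partial G_0$, which forces the relevant one-sided critical value out of $G_1$. Positive order is handled by Lemma 3.3 and induction on $s$.
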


\begin{proof}

Let $\delta >0$ be a small constant, let $x \in \tilde B(\delta)$. Let $s \geq 1$ be such that $f^s(x) \in \tilde B(r \delta)$ and let $J_k$ be the component of $f^{-(s-k)} \tilde B(r \delta)$ which contains $f^k(x)$. We want to show that $|J_1| < \delta$. 

Let us prove this by induction on $s$. If $s =1$, the statement is trivially true, since $J_1$ is empty set. Fix $s_0$ and assume that the statement holds if $s < s_0$. To prove the statement for $s = s_0$, consider the chains $\{ G_j \}_{j=0}^s$ with $G_s = \tilde B(8^{\ell}r \delta)$ and $G_0 \ni x$. We distinguish two cases:

{\bf Case 1.} There exists $0 \leq s_1 < s$ such that $G_{s_1}$ contains the critical point $c$ in its boundary. By the previous lemma, $G_{s_1} \subset \tilde B(\delta)$. If $s_1 = 0$, then $J_0 \subset G_0 \subset \tilde B(\delta)$. Otherwise, the statement follows by the induction hypothesis.

{\bf Case 2.} For any $0 \leq k < s$, $G_k$ contains no critical point in its boundary. Then $f^{s-1} : G_1 \to G_s$ is a diffeomorphism. By Koebe principle, $J_1$ is 1-well-inside $G_1$. Since $c \notin G_0, f(c) \notin G_1$ and $f(x) \in B(f(c), \delta)$, it follows that $|J_1| < \delta$. 

\end{proof}

We shall use the following version of backward contraction property. We emphasize the difference in the statement formulation between Definition 3.3 and Proposition 3 (as the difference between \cite{BRSS}[Theorem 1] and \cite{S}[Proposition 4.3]) is that: in the former, $\hat r(\delta)$ is first chosen as a fixed constant and then the contraction property is shown to hold for all $\delta > 0$ small; whereas in the latter, $\hat r(\delta) > 1$ depends on $\delta$ which is chosen to be fixed.

\begin{prop}
Suppose $f \in \mathcal {LD}$. For each $\delta >0$ small, there exists a constant $\hat r (\delta) > 1$ such that $\lim_{\delta \to 0} \hat r(\delta) = \infty$ and for each integer $s \geq 1$, if $W$ is a component of $f^{-s}\tilde B(\hat r(\delta) \delta)$ and $d(W, CV) \leq \delta$, then $|W| < \delta$.
\end{prop}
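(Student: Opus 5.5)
Proposition 3 follows from Lemma 3.4, which gives $BC(r)$ for every $r>1$, by a diagonal choice of $\hat r$. For each integer $k\geq 2$, applying Lemma 3.4 with $r=k$ produces a threshold $\delta_0(k)>0$ such that for every $\delta<\delta_0(k)$, every $s\geq 1$ and every component $W$ of $f^{-s}\tilde B(k\delta)$, we have $d(W,CV)<\delta\Rightarrow |W|<\delta$. Replacing $\delta_0(k)$ by $\min_{j\le k}\delta_0(j)$, we may assume $\delta_0(k)$ is nonincreasing in $k$. We may also assume $\delta_0(k)\to 0$, since otherwise one could simply take $\hat r$ huge. Then define
\[
\hat r(\delta)=\max\{k\geq 2:\ \delta<\delta_0(k)\}
\]
for $\delta<\delta_0(2)$, truncated to $\hat r(\delta)=1/\delta$ if this set is unbounded. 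Since each fixed $k$ satisfies $\delta<\delta_0(k)$ for all small $\delta$, we get $\hat r(\delta)\to\infty$ as $\delta\to0$. By construction, for each fixed small $\delta$, the property $BC(\hat r(\delta))$ holds at the scale $\delta$.

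The only mismatch is that the statement uses the non-strict hypothesis $d(W,CV)\le\delta$, while Definition 3.3 uses strict inequality. I would absorb this by a small slack. Apply the $BC$ property at scale $\delta'=(1+\eta)\delta$ with constant $r'=\hat r(\delta)/(1+\eta)$, choosing the $k$ above with $\delta'<\delta_0(k)$. Then $\tilde B(r'\delta')=\tilde B(\hat r(\delta)\delta)$ and $d(W,CV)\le\delta<\delta'$, which gives $|W|<\delta'$. This costs a factor $1+\eta$ in the conclusion.

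A cleaner route is to re-examine the proof of Lemma 3.4, where the conclusion $|J_1|<\delta$ actually comes out with room to spare. In Case 2, Koebe gives $|J_1|\le C\,d(f(x),CV)$-type bounds with $J_1$ well inside $G_1$. In Case 1, Lemma 3.3 gives $|H'_{s_1+1}|\le 18\cdot 8^\ell O_2 r\delta/K$, which is less than $\delta/2$ once $K$ is doubled. So the same argument proves the non-strict version $d(W,CV)\le\delta\Rightarrow|W|<\delta$ for every fixed $r$, and the diagonal choice then applies verbatim.

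The main subtlety is the uniformity in $s$: the threshold $\delta_0(r)$ from Lemma 3.4 must be independent of $s$. This holds because Lemma 3.3 uses only the (LD) neighborhood $\mathcal V$ and the induction in Lemma 3.4 does not shrink $\delta$. With that in hand, the diagonal construction of $\hat r(\delta)$ completes the proof.
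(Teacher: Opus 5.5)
Your proof is correct, but it takes a different route from the paper. You use Lemma 3.4 as a black box and diagonalize over the thresholds $\delta_0(k)$ of $BC(k)$, $k\in\mathbb N$.

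The paper instead writes $\hat r(\delta)$ down explicitly and re-runs the proofs of Lemmas 3.3 and 3.4 at a single scale with $\delta$-dependent constants:
\begin{itemize}
\item $N_\delta$ is the first time a critical orbit enters $\tilde B(8^{\ell}\sqrt\delta)$;
\item $K_\delta=\inf\{Df^n(f(c)) : n\ge N_\delta,\ c\in\{c^-,c^+\}\}$, so $K_\delta\to\infty$ by (LD);
\item $\hat r(\delta)=\min\{K_\delta/(18\cdot 8^{\ell}O_2),\,\delta^{-1/2}\}$, with $\mathcal V=\tilde B(8^{\ell}\sqrt\delta)$ and $K=K_\delta$ in Lemma 3.3. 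The cap $\delta^{-1/2}$ ensures $\tilde B(8^{\ell}\hat r(\delta)\delta)\subset\mathcal V$.
\end{itemize}

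The paper's route buys quantitative information: the growth of $\hat r$ is tied to the growth of $Df^n$ along the critical orbits (cf.\ Remark 1). It also gives $\hat r(\delta)\delta\le\sqrt\delta\to 0$ for free, which is convenient when $\tilde B(\hat r(\delta)\delta)$ should be a small neighbourhood later.

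Your route is softer and needs no re-inspection of the proofs, but it yields no rate. If you also want $\hat r(\delta)\delta\to0$, cap your $\hat r$ by $\delta^{-1/2}$. This is harmless because the property is monotone in $r$: for $r'\le r$, a component of $f^{-s}\tilde B(r'\delta)$ lies inside a component of $f^{-s}\tilde B(r\delta)$ that is at least as close to $CV$.

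On the $\le$ versus $<$ issue, which the paper passes over silently:
\begin{itemize}
\item Your first ``slack'' fix only gives $|W|<(1+\eta)\delta$, so it does not prove the statement as written.
\item Your second observation is the one that works. For example, in Case 2 of Lemma 3.4 the macroscopic Koebe principle with the paper's $\tau=3$ gives $\tau'=9/7>1$. Since $f(c)\notin G_1$, this yields $|J_1|\le\frac{7}{9}\,d(J_1,f(c))$, which leaves the needed margin.
\end{itemize}
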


\begin{proof}

We only need to find a explicit form of the constant $\hat r(\delta)$. Without loss of generality, we may assume that the singular point $c$ is recurrent in the sense that $c \in \omega(c_1^+) = \omega(c_1^-)$. Since the non-recurrent case is much easier.

For each $\delta>0$ small, consider the neighborhood $\tilde B(8^{\ell} \sqrt \delta)$. Let $N_{\delta}$ be the maximal integer such that for each $c \in \{c^-, c^+ \}$ and each $1 \leq i < N_{\delta}$,
\[
f^{i} (c) \notin \tilde B(8^{\ell} \sqrt \delta).
\]
Clearly, $N_{\delta}$ is non-decreasing and $N_{\delta} \to \infty$ as $\delta \to 0$. Then let 
\[
K_{\delta} = \inf \{ Df^n(f(c)) | n \geq N_{\delta}, c \in \{ c^-, c^+\} \}.
\]
So $K_{\delta}$ is also non-decreasing and $K_{\delta} \to \infty$ as $\delta \to 0$. Therefore whenever $f^s(c) \in \tilde B(8^{\ell} \sqrt \delta)$, we have $n \geq N_{\delta}$ and hence $Df^n(f(c)) \geq K_{\delta}$.

Now we can take 
\[
\hat r(\delta) = \min \left\{\frac{K_{\delta}}{18 \cdot 8^l O_2}, \frac{1}{\sqrt{\delta}} \right\}, \hat r(\delta) \to \infty \mbox{ as \ } \delta \to 0.
\] 
Taking into account the proof of Lemma 3.3 and Lemma 3.4, we can conclude the proof here. In particular, we have $\hat r(\delta) \delta \to 0$ as $\delta \to 0$.

\end{proof}

\begin{remark}

The form of the growth function $\hat r(\delta)$ for multimodal interval maps under different growth condition on derivatives was studied in \cite{BRSS}[Theorem 3]. For example, if $f$ satisfies the Collet-Eckmann condition, then $\hat r(\delta) = C \delta^{- \alpha}$ where $C > 0, \alpha \in (0, 1]$ are constants.

\end{remark}

Once we have proved Proposition 3, we can obtain the following results which are reformulations of \cite{LR}[Proposition 3.7, Proposition 3.8]. The proofs hold without modification since the Rovella-like condition (R2) and (R3) are not essentially used in this part.

Let $\mathcal L(\delta)$ denote the collection of all orbits $\{f^j(x) \}_{j=0}^n$ with $f^j(x) \notin \tilde B(\delta)$ for each $j = 0, 1, \cdots, n-1$ and $f^n(x) \in \tilde B(2\delta)$, for some $n \geq 1$.

\begin{prop}

Given $f \in \mathcal{LD}, L > 1, \theta \in (0, 1)$ and $\zeta > 0$, for any critical value $v \in \{c_1^-, c_1^+ \}$ and any $\delta > 0$ small enough, there exists a positive integer $M_v(\delta)$ such that the following hold:
\begin{equation}
A(v, f, M_v(\delta)) : = \sum_{i=0}^{M_v(\delta) -1} \frac{Df^i (x)}{d(f^i(x), c) } \leq \frac{\theta}{\delta}, 
\end{equation}
\begin{equation}
f^j (v) \notin \tilde B(L \delta) \text{ for each } j = 0, 1, \cdots, M_v(\delta) -1, 
\end{equation}
and 
\begin{equation}
 Df^{M_v(\delta) +1} (v)  \geq \left( \frac{\delta'}{\delta} \right)^{1 - \zeta},  
\end{equation}
where 
\[
\delta' = \max\{ d_*(f^{M_v(\delta)}(v), c), \delta \}.
\]
Moreover, 
\begin{equation}
M_v(\delta) \to \infty \text{ as } \delta \to 0. 
\end{equation}

\end{prop}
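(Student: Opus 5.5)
The plan is to define $M_v(\delta)$ as a first entrance time of the critical orbit into a critical neighbourhood that is large compared with $\delta$. The three estimates then come from pulling back $\tilde B(\cdot)$ along the orbit of $v$, controlling sizes with the backward contraction property (Proposition 3 and Lemma 3.4) and derivatives with the Koebe principle (Proposition 1). I read the summand in the first displayed estimate as $Df^i(v)/d(f^i(v),c)$. This is the usual binding quantity: an interval of length $\delta$ at $v$ has $i$-th image of length about $Df^i(v)\delta$, and the requirement is that this stays a small fraction of $d(f^i(v),c)$.

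\textbf{Definition of $M_v(\delta)$.} Fix $r$ large, depending on $L,\theta,\zeta,\ell$. Let $M_v(\delta)$ be the first $n\ge 1$ with $f^n(v)\in\tilde B(R\delta)$, where $R\ge L$ is a large constant. Since $f\in\mathcal{LD}$ has no critical relation, the first return time of $v$ into $\tilde B(R\delta)$ tends to infinity as $\delta\to0$. This gives $M_v(\delta)\to\infty$. The estimate $f^j(v)\notin\tilde B(L\delta)$ for $j<M_v(\delta)$ is then automatic.

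\textbf{Multi-scale pull-back.} Put $\delta'=\max\{d_*(f^{M}(v),c),\delta\}$ and use the scales $\delta_k=r^{k}\delta$ with $\delta_k\le \delta'$. For each $k$, consider the component $W_k$ of $f^{-M}\tilde B(r\delta_k)$, or of a suitable earlier preimage, that contains $v$. Lemma 3.3 and the minimality of $M$ show that the corresponding chain has order $0$: the orbit of $v$ does not come close to $c$ before time $M$. Hence each pull-back is diffeomorphic, with Koebe space coming from $\tilde B(r\delta_k)\supset \tilde B(\delta_k)$. Proposition 3 / $BC(r)$ then gives $|W_k|<\delta_k$. Koebe yields
\[
Df^{M}(v)\gtrsim |\tilde B(\delta')|/\delta .
\]
The last step contributes $Df(f^M(v))\gtrsim D(\delta')\asymp \delta'^{1-1/\ell}$, by the choice of $\delta_*$. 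Composing the estimates scale by scale, each factor $r$ in scale costs only a bounded Koebe constant. Taking $r$ large, depending on $\zeta$, these constants are absorbed into the exponent, and we get $Df^{M+1}(v)\ge(\delta'/\delta)^{1-\zeta}$.

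\textbf{The binding sum.} For $0\le i<M$, the image $f^i(W_0)$ is an interval of length at least about $Df^i(v)\,\delta$, again by Koebe. It is well inside a component of a pull-back of $\tilde B(r\delta)$ that avoids $c$. Comparing it with $d(f^i(v),c)$ therefore gives $Df^i(v)\,\delta\lesssim r^{-1/\ell}\,d(f^i(v),c)$ at the times when $f^i(v)$ is near $c$. Away from $c$, Proposition 2 (Mañé) gives exponential growth. To sum over $i$, I group the times $i$ according to the scale of $d(f^i(v),c)$ and use that consecutive visits near $c$ at comparable scales are separated by expansion, which makes the sum geometric. Choosing $r$ (equivalently $R$) large makes it $\le\theta/\delta$.

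The main obstacle is this last step, obtaining the sum bound $\le\theta/\delta$ with constant $\theta$ (rather than merely bounded), together with the exponent $1-\zeta$. Both require the multi-scale bookkeeping to lose only a factor that tends to $0$, respectively to exponent $\zeta$, as $r\to\infty$. For this I would follow the argument of \cite{LR}[Propositions 3.7, 3.8], substituting Proposition 3 for the Rovella-type recurrence assumptions.
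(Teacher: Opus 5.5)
Your choice of $M_v(\delta)$ as the first entry time of the orbit of $v$ into $\tilde B(R\delta)$ does not work, and the step meant to bound the binding sum runs in the wrong direction.

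First, this time can be infinite. If $c$ is non-recurrent, a case the paper explicitly allows (see the proof of Proposition 3), then $\inf_n d(f^n(v),c)>0$. So for small $\delta$ the orbit of $v$ never enters $\tilde B(R\delta)$, whereas $A(v,f,n)\ge Df^{n-1}(v)\to\infty$ by (LD).

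Second, when it is finite it usually lies far beyond the end of binding. The first estimate forces $Df^{M-1}(v)\le A(v,f,M)\le\theta/\delta$. Now take a map with $Df^n(v)\ge Ce^{\lambda n}$ and $d(f^n(v),c)\ge e^{-\alpha n}$, $\ell\alpha<\lambda$ (e.g. Rovella-like maps). Entering $\tilde B(R\delta)$ requires $e^{-\alpha M}\lesssim (R\delta)^{1/\ell}$, hence $Df^{M-1}(v)\gtrsim (R\delta)^{-\lambda/(\ell\alpha)}\gg\theta/\delta$.

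Note also that with your definition $\delta'\le R\delta$. So the third estimate follows trivially from (LD), and your multi-scale argument is not needed. Your definition thus excludes exactly the case that estimate is about: binding ends with $f^{M}(v)$ at a scale $\delta'\gg\delta$, possibly outside $\tilde B(\delta_*)$.

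The underlying problem is that Proposition 3 and Lemma 3.4 only give upper bounds $|W|<\delta$ on pull-backs, i.e. lower bounds on $Df^i(v)$. Your claim $|f^i(W_0)|\gtrsim Df^i(v)\,\delta$ needs $|W_0|\gtrsim\delta$, which is the reverse inequality. No backward-contraction statement can give the upper bound $Df^i(v)\,\delta\ll d(f^i(v),c)$ that the sum requires.

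That bound must be built into the definition. $M$ has to be a stopping time for the binding sum, $M\le N:=\max\{n: A(v,f,n)\le\theta/\delta\}$. Here $N$ is finite by (LD), and $N\to\infty$ since there are no critical relations. Backward contraction is then what gives the second estimate for every such $M$. Suppose $j<N$ were the first time with $f^j(v)\in\tilde B(L\delta)$. The pull-back of $\tilde B(\hat r(L\delta)L\delta)$ along $v,\dots,f^j(v)$ is diffeomorphic (Lemma 3.3) and has length $<L\delta$ (Proposition 3). Koebe then gives $Df^j(v)/d(f^j(v),c)\gtrsim \hat r(L\delta)^{1/\ell}/(L\delta)>\theta/\delta$, contradicting $A(v,f,N)\le\theta/\delta$.

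The genuinely nontrivial part is choosing $M\le N$, still tending to infinity, with $Df^{M+1}(v)\ge(\delta'/\delta)^{1-\zeta}$. In general $M=N$ fails. Suppose the sum crosses $\theta/\delta$ right after a close return at time $k$ at a scale $\rho$ with $\delta\ll\rho\ll\delta^{\zeta}$. Then $N=k+1$, $\delta'=\delta_*$, and $Df^{N+1}(v)\lesssim\theta\rho/\delta<(\delta_*/\delta)^{1-\zeta}$. Handling this requires relating the growth of $A(v,f,\cdot)$ to the derivative recovered after close returns at intermediate scales. That is the content of the argument of \cite{LR} the paper invokes, and your proposal does not supply it.
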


\begin{prop}
Given $f \in \mathcal {LD}$, there exists a constant $\kappa_0 > 0$ such that for each $\delta > 0$ small, the following holds. For $\{f^j(x) \}_{j=0}^n \in \mathcal L(\delta)$, putting $\delta'' = \max\{ d(x, CV), \delta\}$, we have
\[
Df^n (x) \geq \frac{\kappa_0}{D(\delta)} \left(\frac{\delta}{\delta''} \right)^{1 - \frac{1}{\ell}}.
\]
\end{prop}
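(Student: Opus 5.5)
We need to prove Proposition 5: for an orbit $\{f^j(x)\}_{j=0}^n \in \mathcal L(\delta)$, with $\delta''=\max\{d(x,CV),\delta\}$, we have $Df^n(x)\ge \frac{\kappa_0}{D(\delta)}(\delta/\delta'')^{1-1/\ell}$. The plan is a backward-contraction argument in the spirit of \cite{BRSS} and \cite{S}[Lemma 4.4], using Proposition 3 together with the Koebe principle (Proposition 1).

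First, pull back a large neighborhood of the landing point. Since $f^n(x)\in\tilde B(2\delta)$, consider the component $W_n$ of $\tilde B(\hat r(\delta)\delta)$ containing $f^n(x)$, and let $\{W_j\}_{j=0}^n$ be the chain with $W_j\ni f^j(x)$. The key claim is that $f^{n}:W_0\to W_n$ is a diffeomorphism, i.e.\ $c\notin W_j$ for $1\le j<n$ (and for $j=0$ if $x\notin\tilde B(\delta)$). Suppose instead that some $W_j$ with $0<j<n$ contains $c$. Since $f^j(x)\notin\tilde B(\delta)$, the piece $f(W_j)$ would be a component of $f^{-(n-j-1)}\tilde B(\hat r(\delta)\delta)$ meeting $CV$ and containing a point at distance $\ge\delta$ from $CV$ on the relevant side. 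This contradicts Proposition 3, which forces such a component to have length $<\delta$. The same argument will give $|f(W_j)|<\delta$ whenever $d(f(W_j),CV)\le\delta$. This is where the first-landing structure of $\mathcal L(\delta)$ is used.

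Second, apply Koebe. Since $\tilde B(2\delta)$ is well inside $\tilde B(\hat r(\delta)\delta)$ with a large scaling factor $\asymp \hat r(\delta)^{1/\ell}$, Proposition 1(1),(2) give that the pullback $T$ of $\tilde B(2\delta)$ along the chain satisfies $\mathrm{Dist}(f^{n-1}|f(T))\le O(1)$. It follows that
\[
Df^{n}(x)\asymp |\tilde B(2\delta)|\cdot\frac{Df(x)}{|f(T)|}\cdot\frac{1}{|f(T)|}\,|f(T)|,
\]
which is to be read more carefully as follows: $Df^{n-1}(f(x))\gtrsim |\tilde B(2\delta)|/|f(T)|$ and $Df(x)\gtrsim \delta''^{1-1/\ell}$ by (C4) when $x$ is near $c$, while $Df(x)\gtrsim 1$ otherwise. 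It then remains to bound $|f(T)|$ from above by a constant multiple of $\delta''$.

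Third, bound $|f(T)|$ by splitting into cases. If $d(f(T),CV)\le\delta$, Proposition 3 gives $|f(T)|<\delta\le\delta''$. Otherwise $f(T)$ stays at distance $>\delta$ from $CV$. The one-sided Koebe principle (Proposition 1(3)), applied to $f(W_0)$, which lies away from $CV$ and has image containing the $\hat r$-scaled neighborhood, should give $|f(T)|\lesssim d(f(x),CV)\le \delta''$. Combining the three steps yields
\[
Df^n(x)\gtrsim \frac{\delta^{1/\ell}}{\delta''}\,\delta''^{1-1/\ell}=\frac{1}{D(\delta)}\Big(\frac{\delta}{\delta''}\Big)^{1-1/\ell}.
\]
The exponent bookkeeping uses $|\tilde B(\delta)|\asymp\delta^{1/\ell}$ and $D(\delta)=\delta^{1-1/\ell}$. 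The constant $\kappa_0$ absorbs the Koebe constants and $O_1,O_2$. The case $x\notin\tilde B(\delta_*)$ is handled by Proposition 2, with $\delta''\asymp 1$.

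The main obstacle will be the first step: rigorously excluding the critical point from intermediate pullbacks and controlling the first pullback $|f(T)|$ relative to $d(f(x),CV)$. This is where Proposition 3 must be used exactly at scale $\delta$, with $\hat r(\delta)$ depending on $\delta$. I would follow the proof of \cite{LR}[Proposition 3.8] verbatim, replacing their Rovella-type backward contraction by Proposition 3.
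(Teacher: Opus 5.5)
The paper gives no argument of its own here. It only says the proof of [LR, Prop.~3.8] goes through once Proposition~3 is available, which is what your last sentence proposes. The argument you actually sketch, however, does not prove the statement.

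\textbf{The roles of the points are misread.} $\delta''=\max\{d(x,CV),\delta\}$ refers to the \emph{initial} point $x$, and $x\notin\tilde B(\delta)$. So there is no critical factor $Df(x)$ to split off; in Lemmas 4.1--4.2 the critical passage preceding $x$ is accounted for separately. Your claim ``$Df(x)\gtrsim (\delta'')^{1-1/\ell}$ by (C4) when $x$ is near $c$'' is false as stated. For $x$ near $c$ one has $d(x,CV)\asymp 1$, hence $\delta''\asymp 1$, while $Df(x)$ can be $\asymp D(\delta)$. The inequality $d(f(x),CV)\le\delta''$ is also unjustified.

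\textbf{The final bookkeeping is wrong.} We have $\frac{\delta^{1/\ell}}{\delta''}(\delta'')^{1-1/\ell}=(\delta/\delta'')^{1/\ell}$, whereas $\frac{1}{D(\delta)}(\delta/\delta'')^{1-1/\ell}\asymp(\delta'')^{-(1-1/\ell)}\asymp 1/D(\delta'')$. At $\delta''=\delta$ your bound is $\asymp 1$, while the claim is $\asymp 1/D(\delta)\to\infty$.

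\textbf{What does work.} Read correctly, your Steps 1--2 settle the case $d(x,CV)\le\delta$, provided $W_n$ is the full two-sided interval $\hat B(\hat r(\delta)\delta)$; a one-sided component leaves no Koebe space on the side of $c$. Proposition 3 rules out critical pull-backs exactly as you say and gives $|W_0|<\delta$. The Koebe principle then gives $Df^n(x)\gtrsim |\tilde B(2\delta)|/|T|$, in fact $\gtrsim \hat r(\delta)^{1/\ell}/D(\delta)$, since $T$ is $\asymp\hat r(\delta)^{1/\ell}$-well inside $W_0$.

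\textbf{The genuine gap is the case $\rho:=d(x,CV)>\delta$.} There you would need $|T|\lesssim \delta^{1/\ell}\rho^{1-1/\ell}$, which is much smaller than $\rho$ when $\rho\gg\delta$. So even your Step 3 target $|T|\lesssim\delta''$ is too weak. Moreover, Proposition 3 at scale $\delta$ says nothing about $W_0$ once $d(W_0,CV)>\delta$, and the one-sided Koebe principle cannot convert $d(x,CV)$ into a length bound. A single pull-back at scale $\hat r(\delta)\delta$ is not enough.

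\textbf{A multi-scale induction on $n$ closes it.}
\begin{enumerate}
\item Let $k\le n$ be the first entry time into $\tilde B(\rho)$, or $k=n$ if there is none. The first case at scale $\rho$ applies, since the orbit avoids $\tilde B(\rho)$ before $k$ and lands in $\tilde B(2\rho)$. It gives $Df^k(x)\gtrsim \hat r(\rho)^{1/\ell}/D(\rho)$.
\item If $k<n$, (3.1) gives $Df(f^k(x))\ge D(\rho_1)$, where $\rho_1=d(f^{k+1}(x),CV)\in[\delta,\rho)$.
\item The induction hypothesis applied to $\{f^j(x)\}_{j=k+1}^n\in\mathcal L(\delta)$ gives $\gtrsim \kappa_0/D(\rho_1)$.
\item The product is $\gtrsim \hat r(\rho)^{1/\ell}\kappa_0/D(\rho)$. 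It is $\hat r(\rho)\to\infty$ that absorbs the constants, so $\kappa_0$ does not deteriorate along the induction.
\end{enumerate}
Proposition 2 should only be used for the initial segment up to the first entry into a fixed neighbourhood such as $\tilde B(\delta_*)$, because its constants depend on the neighbourhood. After that the orbit may re-enter $\tilde B(\delta_*)\setminus\tilde B(\delta)$ many times, so Proposition 2 cannot handle the whole orbit as you suggest.
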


\subsection{Random expansion results}

In order to control the distortion of iterates of random perturbations of $f$, we shall use the well-known `telescope' technique appearing in \cite{BC, Ts}. 

Consider an admissible one-parameter family $\{f_t \}_{t \in [-1, 1]}$ with $f_0 =f \in \mathcal A$. For $x \in I_c, \omega \in \Omega$ and an integer $n \geq 1$, let 
\[
A(x, \omega, n) = \sum_{i=0}^{n-1} \frac{Df_{\omega}^i(x)}{d(f_{\omega}^i(x), c)}.
\]
So, if $f_{\omega}^j (x) = c$ for some $j \in \{ 0, 1, \cdots, n-1 \}$, then we set $A(x, \omega, n) = \infty$, and in this case $f_{\omega}^j(x)$ is meaningless for $i \in \{ j+1, \cdots, n-1 \}$.

The following lemma is Lemma 3.2 in \cite{LR} whose proof is an easy adaption of Lemma 2.3 in \cite{S}. This lemma provides us Markov structure.

\begin{lem}
Let $\{f_t \}_{t \in [-1, 1]} $ be an admissible family with $f_0 =f \in \mathcal A$. Then there exists a constant $\theta_0 >0$ such that for any $(x, \omega) \in I_c \times \Omega$ and any integer $n \geq 1$ with $A(x, \omega, n) < \infty$, setting 
\[
J = \bigg[ x - \frac{\theta_0}{A(x, \omega, n)}, x + \frac{\theta_0}{A(x, \omega, n)} \bigg] \cap I_c.
\]
Then we have that $f_{\omega}^n |J$ is a diffeomorphism and $\mathcal N (f_{\omega}^n | J) \leq 1/2 < 1$. In particular, $c \notin J$. Moreover, for each $y \in J$, we have 
\begin{equation}
e^{-1} A(x, \omega, n) <A(y, \omega, n) < e A(x, \omega, n)
\end{equation}
and
\begin{equation}
e^{-2} \frac{Df_{\omega}^n(x)}{ A(x, \omega, n)} \leq \frac{Df_{\omega}^n(y)}{A(y, \omega, n)} \leq e^2 \frac{ Df_{\omega}^n(x)}{A(x, \omega, n)}.
\end{equation}
\end{lem}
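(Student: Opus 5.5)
The proof is a telescope/distortion argument in the spirit of \cite{S}[Lemma 2.3]. It uses only the admissibility conditions (C2)--(C4), together with a uniform bound $\sup_{t,x\in I_c}|D^2 f_t(x)|\,d(x,c)/Df_t(x)\le C$. This bound follows from (C3) in its infinitesimal form: letting $y\to x$ in (C3) gives $|D\log Df_t(x)|\le C/d(x,c)$.

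Fix $(x,\omega)$ and $n$ with $A=A(x,\omega,n)<\infty$. Put $x_i=f_\omega^i(x)$ and $J_i=f_\omega^i(J)$, where $J$ is as in the statement and $\theta_0$ is a small constant to be chosen. We prove by induction on $k=0,1,\dots,n$ the following statement $(H_k)$.

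\emph{For $i<k$, $f_{\omega_i}$ is a diffeomorphism on $J_i$. For every $y\in J$ and every $i\le k$,
\[
|J_i|\le 2e\,\theta_0\,\frac{Df_\omega^i(x)}{A},\qquad e^{-1/2}\le \frac{Df_\omega^i(y)}{Df_\omega^i(x)}\le e^{1/2}.
\]}

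The case $k=0$ is trivial. For the inductive step, every term of the sum defining $A$ is bounded by $A$, so $Df_\omega^i(x)/A\le d(x_i,c)$. Hence, for $\theta_0$ small,
\[
|J_i|\le 2e\theta_0\, d(x_i,c)< \tfrac12 d(x_i,c).
\]
Thus $c\notin J_i$, which gives the diffeomorphism property, and every point of $J_i$ satisfies $d(\cdot,c)\ge \tfrac12 d(x_i,c)$. Then (C3) gives
\[
\log\frac{Df_{\omega_i}(y_i)}{Df_{\omega_i}(x_i)}\le C\,\frac{|J_i|}{d(x_i,c)}\le 2eC\theta_0\,\frac{Df_\omega^i(x)}{A\,d(x_i,c)}.
\]
Summing over $i<k$ gives total distortion at most $2eC\theta_0\le 1/2$. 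Since $J_{k}$ has length $\int_J Df^k_\omega$, this also propagates the length bound to $i=k$, which closes the induction. In particular $f_\omega^n|J$ is a diffeomorphism and $c\notin J$.

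For the $\mathcal N$ bound, I take a subinterval $J'\subset J$. Applying the chain-rule decomposition of the Lemma 3.1 proof and the same (C3) estimate gives
\[
{\rm Dist}(f_\omega^n|J')\le \sum_i C\,\frac{|f^i_\omega J'|}{d(x_i,c)/2}.
\]
Bounded distortion gives $|f^i_\omega J'|\le e^{1/2}Df_\omega^i(x)|J'|$. Together with $|J|=2\theta_0/A$, this yields
\[
{\rm Dist}(f_\omega^n|J')\,\frac{|J|}{|J'|}\le 4e^{1/2}C\theta_0\le \tfrac12
\]
after shrinking $\theta_0$. Hence $\mathcal N(f_\omega^n|J)\le 1/2$.

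For (3.9), I compare the terms of $A(y,\omega,n)$ and $A(x,\omega,n)$ one by one. The derivative ratio is in $[e^{-1/2},e^{1/2}]$. The distance ratio satisfies $d(y_i,c)/d(x_i,c)\in[1-2e\theta_0,1+2e\theta_0]$, which lies within $e^{\pm 1/2}$ for $\theta_0$ small. So each term ratio is in $(e^{-1},e)$, and so is the ratio of sums. Estimate (3.10) then follows by combining (3.9) with $Df^n_\omega(y)/Df^n_\omega(x)\in[e^{-1/2},e^{1/2}]$, or more crudely $[e^{-1},e]$, which gives the factor $e^2$.

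The main obstacle is making the induction self-consistent. The length bound on $J_i$ is needed to apply (C3), which requires $2|J_i|<d(x_i,c)$, and (C3) is in turn needed to get the distortion that controls $|J_{i+1}|$. Ordering the estimates as above handles this. The same estimates apply uniformly to every fibre map $f_{\omega_i}$, because (C3) holds uniformly in $t$, so no extra care is needed for the random composition.
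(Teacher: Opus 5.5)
Your proof is correct. It is the standard telescope induction from Shen's Lemma 2.3 in \cite{S}, and the paper gives no proof of its own for this lemma, deferring to \cite{LR} and \cite{S}, so your argument is the one the paper relies on. Your opening remark that (C3) yields a pointwise bound on $D^2 f_t$ presupposes $C^2$ regularity, which the family in $\mathscr F_1$ need not have; since that bound is never actually used and (C3) alone carries the argument, nothing breaks.
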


When a point $y$ is sufficiently close to a critical value $v$, then we expect the orbit of $y$ to shadow the early iterates of $v$ at least for some period of time. This insight leads to the so-called binding argument.

\begin{definition}
Let $\{f_t \}_{t \in [-1, 1]} $ be an admissible family with $f_0 =f \in \mathcal A$. Given $v \in I_c, \epsilon >0$ and $C > 0$, a positive integer $N$ is called a $C$-binding period for $(v, \epsilon)$ if for each $y \in I_c$ with $d(y, v)\leq \epsilon$, each $\omega \in \Omega_{\epsilon}$ and each $0 \leq j <N$, the following hold:
\begin{align}
2|f_{\omega}^j (y) - f^j(v)|& \leq d(f^j(v), c);\\
\frac{1}{e} Df^{j+1}(v) & \leq D f_{\omega}^{j+1}(y)  \leq e Df^{j+1}(v);\\
C\epsilon Df^{j+1}(v) & \geq |f_{\omega}^{j+1}(y) - f^{j+1}(v) |.
\end{align}

\end{definition}

We remark here that (3.14) implies that $v$ and $y$ are always on the same side of $c$.

The following is Lemma 3.4 in \cite{LR} which is also an adaption of Lemma 2.5 in \cite{S}.

\begin{lem}
Let $\{f_t \}_{t \in [-1, 1]} $ be an admissible family with $f_0 =f \in \mathcal A$. Then there exists a constant $\theta_1>0$ such that the following holds provided that $\epsilon >0$ is small enough. For any point $v \in I_c$, and let $N$ be a positive integer such that 
\[
W : = \sum_{j=0}^N \frac{1}{Df^j(v)} < \infty \text{ and } A(v, 0, N) W \leq \frac{\theta_1}{\epsilon}.
\]
Then $N$ is an $eW$-binding period for $(v, \epsilon)$.
\end{lem}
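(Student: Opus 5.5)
We prove the three properties (3.14)--(3.16) simultaneously by induction on $j \in \{0, 1, \dots, N-1\}$, in the spirit of the binding argument of \cite{S}. Fix $y$ with $d(y,v) \le \epsilon$ and $\omega \in \Omega_\epsilon$, and write $y_j = f^j_\omega(y)$, $v_j = f^j(v)$. Throughout we use the comparison quantity
\[
\Delta_j := |y_j - v_j|
\qquad\text{and aim for}\qquad
\Delta_{j} \le eW\epsilon\, Df^{j}(v).
\]
For $j=0$ this holds because $\Delta_0 \le \epsilon$ and $eW \ge 1$.

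\textbf{Step 1 (position estimate).} Suppose the three properties hold for all indices below $j$. From $\Delta_j \le eW\epsilon\,Df^j(v)$ and the bound
\[
\frac{Df^j(v)}{d(v_j,c)} \le A(v,0,N),
\]
we get
\[
\frac{2\Delta_j}{d(v_j,c)} \le 2e W \epsilon\, A(v,0,N) \le 2e\theta_1 \le 1,
\]
once $\theta_1$ is chosen small. This is exactly (3.14) at index $j$. In particular the segment $[y_j, v_j]$ stays on one side of $c$, and every point $z$ in it satisfies $d(z,c) \ge d(v_j,c)/2$.

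\textbf{Step 2 (one-step distortion).} At step $j$ we compare $Df_{\omega_j}(y_j)$ with $Df(v_j)$ in two pieces.
\begin{itemize}
\item The spatial change is handled by (C3), which gives
\[
\bigl|\log Df_{\omega_j}(y_j) - \log Df_{\omega_j}(v_j)\bigr| \le C\,\frac{\Delta_j}{d(v_j,c)}.
\]
\item The parameter change, $|\log Df_{\omega_j}(v_j) - \log Df(v_j)|$, is the delicate part. Condition (C2) controls only $\partial_t f_t$, not $\partial_t Df_t$. I would handle this by absorbing the $t$-perturbation into the position error: $f_{\omega_j}(v_j)$ differs from $f(v_j)$ by at most $\epsilon$. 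Concretely, I would either use the $C^1$-continuity of the family (available since $\{f_t\} \subset \mathscr F_1$), or rewrite the recursion so that the error term reads
\[
\Delta_{j+1} \le Df(\xi_j)\,\Delta_j + \epsilon.
\]
\end{itemize}

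Summing over $i \le j$, the $C\Delta_i/d(v_i,c)$ terms total at most $C\,eW\epsilon\,A(v,0,N) \le Ce\theta_1$, which is smaller than $1/2$. This gives (3.15), provided the parameter contribution is also summable to at most $1/2$.

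\textbf{Step 3 (propagating the position error).} The recursion is
\[
\Delta_{i+1} \le \sup_{[y_i,v_i]} Df_{\omega_i}\cdot \Delta_i + \epsilon,
\]
where the $+\epsilon$ comes from (C2). Unrolling it with the distortion bounds of Step 2 gives
\[
\Delta_{j+1} \le e\,Df^{j+1}(v)\,\epsilon \sum_{i=0}^{j+1} \frac{1}{Df^i(v)} \le eW\epsilon\, Df^{j+1}(v).
\]
This is (3.16) with $C = eW$, and it closes the induction.

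\textbf{Main obstacle.} The main obstacle is the parameter part of the derivative comparison in Step 2. The admissibility conditions do not directly bound $|\log Df_t(x) - \log Df_0(x)|$ near $c$. I would try to use (C4) near $c$: both derivatives are comparable to $d(x,c)^{\ell-1}$. Away from $c$ I would use $C^1$-closeness. The aim is to show that this contribution is at most $C'\epsilon/d(v_i,c)$ per step, which sums to a bounded multiple of $\epsilon A(v,0,N)$, and hence is small under the hypothesis $A(v,0,N)W \le \theta_1/\epsilon$ since $W \ge 1$.
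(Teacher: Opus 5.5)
The paper gives no proof of its own here; it cites \cite{LR} and \cite{S}. Your scheme is the standard binding induction, but there is a genuine gap in the derivative comparison.

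\textbf{What works.} Step~1 is fine. The two position estimates are also fine, provided you run the recursion with the \emph{unperturbed} derivative. That is, split
$f_{\omega_j}(y_j)-f(v_j)=[f_{\omega_j}(y_j)-f(y_j)]+[f(y_j)-f(v_j)]$, use (C2) on the first bracket to get $\Delta_{j+1}\le Df(\xi_j)\Delta_j+\epsilon$, and unroll using (C3) for $f$ alone. The version in your Step~3, with $\sup Df_{\omega_i}$, reintroduces the perturbed derivatives and so depends on the same unresolved point.

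\textbf{The gap.} The derivative comparison $e^{-1}Df^{j+1}(v)\le Df^{j+1}_\omega(y)\le e\,Df^{j+1}(v)$ unavoidably requires bounding $\sum_{i\le j}|\log Df_{\omega_i}(v_i)-\log Df(v_i)|$, and your proposal leaves this open. The tools you suggest cannot close it:
\begin{itemize}
\item (C4) only gives $O_1/O_2\le Df_t(x)/Df(x)\le O_2/O_1$. This ratio is bounded but not close to $1$, so each visit near $c$ can cost a fixed factor.
\item Qualitative $C^1$-continuity of $t\mapsto f_t$ gives an $o(1)$ error per step with no rate. Meanwhile $N$ is not bounded independently of $\epsilon$, and $Df$ is small near $c$, so an additive error is not a small relative error there.
\item The per-step bound $C'\epsilon/d(v_i,c)$ you aim for amounts to Lipschitz dependence of $\log Df_t$ on $t$, which is not among (C1)--(C4).
\end{itemize}
Even granting that bound, $\sum_i\epsilon/d(v_i,c)$ is controlled by $\epsilon A(v,0,N)W$ (via $1\le Df^i(v)W$), not by a multiple of $\epsilon A(v,0,N)$.

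\textbf{The missing idea.} What (C2)--(C4) do give is a square-root estimate, and that suffices. Let $|t|\le\epsilon$ and $g=f_t-f$; then $|g|\le\epsilon$ by (C2).
\begin{itemize}
\item One has $Df_t\le K\,Df$: near $c$ this is (C4); away from $c$ it follows from (C3) together with $f_t(I)\subset I$ and $\inf Df>0$.
\item Applying (C3) to both $f_t$ and $f$ then gives $|Dg(x+s)-Dg(x)|\le C\,Df(x)|s|/d(x,c)$ for $|s|<d(x,c)/2$.
\item Integrating over a step $h$ towards $c$ gives $|Dg(x)|\,h\le 2\epsilon+C\,Df(x)h^2/d(x,c)$. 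Choosing $h=\sqrt{\epsilon\,d(x,c)/Df(x)}$ yields
\[
\Bigl|\frac{Df_t(x)}{Df(x)}-1\Bigr|\le C'\sqrt{\frac{\epsilon}{d(x,c)\,Df(x)}} .
\]
\end{itemize}
Along the orbit,
\[
\frac{1}{d(v_i,c)Df(v_i)}=\frac{Df^i(v)}{d(v_i,c)}\cdot\frac{1}{Df^{i+1}(v)}\le A(v,0,N)W\le\frac{\theta_1}{\epsilon},
\]
so the choice of $h$ is admissible once $\theta_1<1/4$. Cauchy--Schwarz then gives
\[
\sum_{i=0}^{N-1}\sqrt{\frac{\epsilon}{d(v_i,c)\,Df(v_i)}}\le\Bigl(\epsilon\sum_{i=0}^{N-1}\frac{Df^i(v)}{d(v_i,c)}\Bigr)^{1/2}\Bigl(\sum_{i=0}^{N-1}\frac{1}{Df^{i+1}(v)}\Bigr)^{1/2}\le\sqrt{\epsilon A(v,0,N)W}\le\sqrt{\theta_1}.
\]
Adding your spatial term $Ce\theta_1$, the total logarithmic error is at most $1$ for $\theta_1$ small. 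This closes the derivative comparison and hence the induction.
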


We will prove expansion result for random systems analogously to \cite{S}[Proposition 2.7]. The proof differs slightly with Lemma 3.5 and Lemma 3.6 in \cite{LR}, so we will briefly give the proof here.

\begin{prop}
Let $\{f_t \}_{t \in [-1, 1]} $ be an admissible family with $f_0 =f \in \mathcal A$. For any neighborhood $U$ of $c$, there exist $K>1$ and $\eta >0$ such that the following hold provided $\epsilon >0$ is small enough.

\begin{itemize}
\item[(1)] For $x \in I_c$, $\omega \in \Omega_{\epsilon}$ and $n \geq 1$, if $f_{\omega}^j(x) \notin U$ for all $0 \leq j <n$, then $Df_{\omega}^n (x) \geq K^{-1} e^{\eta n}$.
\item[(2)] For each $\omega \in \Omega_{\epsilon}$ and $n \geq 1$,
\[
\big|\{x  \in I_c : f_{\omega}^j (x) \notin U \text{ for } 0 \leq j <n  \} \big| \leq K e^{- \eta n}.
\]
\end{itemize}
\end{prop}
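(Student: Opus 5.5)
We will prove Proposition 6 by transferring the deterministic Mañé estimate (Proposition 2) to random orbits through a finite-time continuity argument. Shrinking $U$ only strengthens the hypothesis, so we first replace $U$ by a smaller symmetric neighborhood $U_0=(c-\delta,c+\delta)$ whose closure lies inside $U$. Proposition 2 applied to $U_0$ gives $C>0$ and $\lambda>1$ with $Df^n(x)\ge C\lambda^n$ whenever $x,\dots,f^{n-1}(x)\notin U_0$. We then fix an integer $N$ with $C\lambda^N\ge 4$.

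\textbf{Step 1: a uniform block estimate for random orbits.} On the compact set $I\setminus U_0$ (one-sided at the boundary of $I$), the maps $f_t$ are uniformly $C^1$-close to $f$ as $t\to0$. This uses (C2), and also the continuity of $t\mapsto Df_t$, which follows since the family lies in $\mathscr F_1$ and is continuous, together with (C3). Consequently, for $\epsilon$ small, every random orbit $f^j_\omega(x)$, $0\le j<N$, that stays outside $U$ is $\epsilon'$-shadowed by the deterministic orbit of $x$, where $\epsilon'$ is as small as we like. Because $\overline{U_0}\subset U$, the deterministic orbit then stays outside $U_0$. Continuity of $Df_t$ on this compact set now yields $Df^N_\omega(x)\ge \tfrac12 Df^N(x)\ge 2$. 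The same argument gives the analogous bound $Df_\omega^k(x)\ge c_0>0$ for every $k<N$, by taking the minimum of $Df_t$ over $I\setminus U$, which is positive.

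\textbf{Step 2: proving (1).} Write $n=qN+r$ with $0\le r<N$ and apply Step 1 to each block using the shifted noise $\sigma^{jN}\omega$. This gives $Df^n_\omega(x)\ge 2^q c_0\ge K^{-1}e^{\eta n}$ with $\eta=(\log 2)/N$.

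\textbf{Step 3: proving (2).} Let $E_n^\omega$ be the set in (2). First, $E_n^\omega$ is a finite union of intervals: their number is bounded by the number of monotone pieces, and each exit of an orbit from $U$ creates only one additional cut. On each component $T$, the map $f^n_\omega|T$ is a diffeomorphism with derivative at least $K^{-1}e^{\eta n}$, so $|T|\le Ke^{-\eta n}$. This bound alone does not control the total measure, so we need distortion.

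To get it, we use that $d(f^j_\omega(y),c)\ge\delta$ along these orbits, so (C3) bounds the one-step distortion by $C|f^j_\omega(T)|/\delta$. Summing over $j$ and using the backward exponential contraction from (1) gives bounded distortion of $f^n_\omega|T$. Then $|T|\lesssim |f^n_\omega(T)|\,e^{-\eta n}$, and summing over components reduces (2) to bounding $\sum_T|f^n_\omega(T)|$. This is where the difficulty lies, because images of different components can overlap.

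To handle this, we instead argue by induction in blocks of length $N$. We show that
\[
|E^{\omega}_{(q+1)N}|\le \tfrac12\,|E^{\sigma^N\omega}_{qN}|
\]
up to a bounded factor. The point is that $E^\omega_{(q+1)N}\subset E^\omega_N\cap (f^N_\omega)^{-1}E^{\sigma^N\omega}_{qN}$. The set $E^\omega_N$ has boundedly many (at most $M(N)$) monotone branches, each expanding by at least $2$, so the preimage has measure at most $M(N)\cdot\tfrac12\,|E^{\sigma^N\omega}_{qN}|$. To beat the factor $M(N)$, we choose $N$ larger so that the expansion is $\ge 4M(N)$. Here $M(N)$ grows at most exponentially, like $2^N$, since each step has two branches, while the expansion $C\lambda^N$ may also be only exponential. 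So this comparison is the main obstacle.

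If a direct comparison fails, we will fall back on the standard Mañé-type argument. The key input is that the deterministic invariant set $\Lambda=\bigcap_n f^{-n}(I\setminus U_0)$ has measure zero, by Proposition 2's second statement, and carries a hyperbolic structure. This gives $|\{x: f^j(x)\notin U_0,\ j<N\}|\le \tfrac14$ for large $N$, uniformly in a form stable under $\epsilon$-perturbation by Step 1. Combining this with bounded distortion on each branch then yields the geometric decay $|E^\omega_{qN}|\le K2^{-q}$, and hence (2).
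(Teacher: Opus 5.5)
\paragraph{Part (1).} This is essentially the paper's argument. You shrink $U$ to $U_0\Subset U$, take $C,\lambda$ from Proposition~2 for $U_0$, and choose $N$ with $C\lambda^N\ge 4$. You then use closeness of $f^l_\omega$ to $f^l$ for $l\le N$ along orbits staying outside $U$, and concatenate blocks to get $\eta=\log 2/N$. Your remark that (C2) together with the equicontinuity from (C3) yields the needed $C^1$-closeness away from $c$ is a reasonable justification of the paper's ``by continuity''.

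\paragraph{Part (2) has a genuine gap.} The branch-counting attempt fails, as you suspect. The number $M(N)$ of monotone pieces of $f^N_\omega$ on $E^\omega_N$ can grow exponentially at a rate not dominated by the expansion, and nothing lets you beat it. The fallback does not repair this. Knowing $|E^\omega_N(U_0)|\le 1/4$ and having bounded distortion on each component does not give decay, because a component $T$ of $E^\omega_{qN}$ (or of $E^\omega_N$) can have an arbitrarily small image under the relevant iterate. If that image lies inside the surviving set at the next stage, all of $T$ survives. So a small \emph{total} measure says nothing about the surviving \emph{proportion} of a given branch. This is exactly the overlap problem you identified earlier, and ``bounded distortion on each branch'' does not address it.

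\paragraph{The missing idea: a definite-scale extension of each branch.} This is how the paper proceeds.
\begin{enumerate}
\item For $x\in E^\omega_n(U)$, part (1) gives $A(x,\omega,n)\asymp Df^n_\omega(x)$. Indeed, $A(x,\omega,n)/Df^n_\omega(x)=\sum_{i<n}1/\bigl(Df^{n-i}_{\sigma^i\omega}(f^i_\omega(x))\,d(f^i_\omega(x),c)\bigr)$ is dominated by a geometric series.
\item Lemma~3.5 then provides an interval $J_n(x)\ni x$ on which $f^n_\omega$ is a diffeomorphism with $\mathcal N\le 1$, and whose image has length $\tau>0$ independent of $n,x,\omega$.
\item Shrinking $\tau$ makes the intermediate images $f^j_\omega(J_n(x))$ uniformly small, by the backward contraction from (1). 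Hence $J_n(x)\subset E^\omega_n(U_0)$.
\item Set $\eta_k=\sup_{\omega}|E^\omega_{kN}(U)|$. Distortion gives $|J_N(x)\cap E^\omega_{(k+1)N}(U)|\le e\tau^{-1}\eta_k|J_N(x)|$.
\item Take a Besicovitch subcover of $E^\omega_N(U)$ by the $J_N(x)$ with bounded multiplicity. Combined with $|E^\omega_N(U_0)|<\rho$ (from the measure-zero deterministic set plus perturbation, as you say), this yields $|E^\omega_{(k+1)N}(U)|\le C\rho\tau^{-1}\eta_k$.
\end{enumerate}
The smallness required is therefore $C\rho/\tau<1/2$, measured relative to the scale $\tau$ and the covering constant, not a fixed $1/4$. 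The order of choices is consistent: $N$ is chosen after $\tau$ and $\rho$, and $\tau$ does not depend on $N$.
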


\begin{proof}[Proof of statement (1)]

First note that due to the presence of perturbations, even if $f_{\omega}^j(x) \notin U$ for all $0 \leq j < n$, it doesn't mean that $f^j(x) \notin U$ for all $0 \leq j < n$. So we shall consider a neighborhood of $c$ strictly inside $U$.

Let $U_0$ be a neighborhood of $c$ such that $U_0 \Subset U$. Let $C>0$ and $\lambda>1$ be given by 
Proposition 2 for $U_0$ and let $N$ be a large integer such that $C \lambda^N \geq 4$. By continuity, provided $\epsilon < d(\partial U, \partial U_0)$ small enough, for any $x \in I_c$, $\omega \in \Omega_{\epsilon}$ and any $0 \leq l \leq N$, we have
\[
|f^l(x) - f_{\omega}^l(x)| < \epsilon \text{ and } |Df^l(x) - Df_{\omega}^l(x) | < \frac{C}{2}.
\]

Consider $x \in I_c$, $\omega \in \Omega_{\epsilon}$ such that $f_{\omega}^j (x) \notin U$ for all $0 \leq j < n$. Assume $\epsilon$ is small, and write $n = k N + r$ with $k \in \mathbb N$ and $0 \leq r < N$. Then
\[
D f_{\omega}^r (x) \geq Df^r (x) - \frac{C}{2} \geq \frac{C}{2} \lambda^r.
\]
Similarly, for each $0 \leq i <k$, we have
\[
D f_{\sigma^{iN +r}\omega}^N (f_{\omega}^{iN +r} (x) ) \geq D f^N (f_{\omega}^{iN +r} (x) ) - \frac{C}{2} \geq \frac{C}{2} \lambda^N \geq 2.
\]
Thus
\begin{align*}
Df_{\omega}^n (x) & = Df_{\omega}^r (x) \cdot Df_{\sigma^r\omega}^N (f_{\omega}^r (x)) \cdots D f_{\sigma^{(k-1)N +r}\omega}^N (f_{\omega}^{(k-1)N +r} (x) ) \\
& \geq 2^k \cdot \frac{C \lambda^r}{2} \geq \frac{C}{4} \cdot 2^{k+1} \geq \frac{C}{4} \cdot 2^{\frac{n}{N}} \geq \frac{1}{K} e^{\eta n},
\end{align*}
where $K = 4/C$ and $\eta = \log 2/N$.

\end{proof}

\begin{proof}[Proof of statement (2)]

For each $\omega \in \Omega_{\epsilon}$ and each $n \geq 1$, let
\[
\Lambda_n^{\omega} (U) = \{x\in I_c : f_{\omega}^j(x) \notin U \text{ for } 0 \leq j <n \} \mbox{ and \ } 
\Lambda_{\infty}^{\omega} (U) = \bigcap_{n=1}^{\infty} \Lambda_n^{\omega} (U).
\]
Let $U_0 \Subset U$ be an open neighborhood of $c$ and define $\Lambda_n^{\omega} (U_0)$ and $\Lambda_{\infty}^{\omega} (U_0)$ similarly.

Assume $\epsilon >0$ small. By statement (1), for each $x \in \Lambda_n^{\omega} (U)$ we have
\[
A(x, \omega, n) \asymp D f_{\omega}^n (x).
\]
In fact,  
\[
\frac{A(x, \omega, n)}{D f_{\omega}^n (x)} = \sum_{i=0}^{n-1} \frac{Df_{\omega}^i(x)}{D f_{\omega}^n (x)} \cdot \frac{1}{d(f_{\omega}^i(x), c)} = \sum_{i=0}^{n-1} \frac{1}{D f_{\sigma^i \omega}^{n-i} (f_{\omega}^i (x))} \cdot \frac{1}{d(f_{\omega}^i(x), c)}.
\]
Since $f_{\omega}^j(x) \notin U \text{ for } 0 \leq j <n$, $d(f_{\omega}^i(x), c) \geq \frac{|U|}{2}$. Therefore,
\[
\frac{A(x, \omega, n)}{D f_{\omega}^n (x)} \leq \frac{2}{|U|} \sum_{i=0}^{n-1} \frac{1}{D f_{\sigma^i \omega}^{n-i} (f_{\omega}^i(x))} \leq \frac{2}{|U|} \sum_{i=0}^{n-1} \frac{K}{e^{\eta(n-i)}} \leq C_1.
\]
On the other hand,
\[
\frac{A(x, \omega, n)}{D f_{\omega}^n (x)} \geq \sum_{i=0}^{n-1} \frac{1}{D f_{\sigma^i \omega}^{n-i} (f_{\omega}^i(x))} \geq \frac{1}{D f_{\sigma^{n-1} \omega} ( f_{\omega}^{n-1} (x))} \geq \frac{1}{\sup_{\omega \in \Omega_{\epsilon}} Df_{\omega}(x)} = C_2 > 0.
\]

By Lemma 3.5, there exists a constant $\theta_0 >0$ independent of $x$ and $n$ such that $f_{\omega}^n | J$ is a diffeomorphism and $\mathcal N(f_{\omega}^n | J) \leq 1$, where 
\[
J =  \bigg[ x - \frac{\theta_0}{A(x, \omega, n)}, x + \frac{\theta_0}{A(x, \omega, n)} \bigg].
\]
Then
\[
|f_{\omega}^n (J)| \asymp Df_{\omega}^n (x) \cdot |J| = 2 \theta_0 \cdot \frac{Df_{\omega}^n (x)}{A(x, \omega, n)} \asymp \theta_0.
\]
Hence there exists $\tau >0$ independent of $x$ and $n$ such that $f_{\omega}^n$ maps a neighborhood $J_n(x)$ of $x$ onto an interval of length $\tau$ with $\mathcal N(f_{\omega}^n | J_n(x)) \leq 1$. We may assume that $\tau < d(\partial U, \partial U_0)$ small, which guarantees $J_n(x) \subset \Lambda_n^{\omega} (U_0)$ for each $x \in \Lambda_n^{\omega}(U)$.

Let $\rho$ be a small constant to be determined later. We first claim that there exists positive integer $N = N(\rho)$ such that 
\[
| \Lambda_N^{\omega}(U_0)| < \rho
\]
whenever $\epsilon>0$ is small enough. Indeed, for the unperturbed map $f$, since $\Lambda_{\infty}^{0}(U_0)$ is a compact set with Lebesgue measure $0$, there exists an integer $N \geq 1$ such that $|\Lambda_{N}^{0}(U_0)|< \rho$. Assuming $\epsilon >0$ small enough, then for each $\omega \in \Omega_{\epsilon}$, $ \Lambda_N^{\omega}(U_0)$ is contained in a small neighborhood of  $\Lambda_N^{0}(U_0)$. The claim follows.

Now for each $k \geq 1$, define $\eta_k : = \sup_{\omega \in \Omega_{\epsilon}} |\Lambda_{kN}^{\omega}(U)|$. We claim that $\eta_{k+1} < \eta_k/2$ holds for all $k =1, 2, \cdots$ provided $\epsilon>0$ small enough. Let $\tilde \Lambda$ be the union of the intervals $J_N(x), x \in \Lambda_{N}^{\omega}(U)$. Then 
\[
|\tilde \Lambda| \leq |\Lambda_{N}^{\omega}(U_0)| < \rho.
\]
For each $x \in \Lambda_{(k+1)N}^{\omega} (U)$, let $J$ be any connected component of $J_N(x) \cap \Lambda_{(k+1)N}^{\omega} (U)$. By bounded distortion, we have
\[
\frac{|J|}{|J_N(x)|} \leq e \frac{|f_{\omega}^N(J)|}{|f_{\omega}^N (J_N(x)) | } \leq e \tau^{-1} |f_{\omega}^N(J)|.
\]
Since $f_{\omega}^N: \Lambda_{(k+1)N}^{\omega} (U) \to  \Lambda_{kN}^{\sigma^N \omega } (U) $, summing over all $J$, we have
\[
| J_N(x) \cap \Lambda_{(k+1)N}^{\omega} (U)| \leq e\tau^{-1} \eta_k |J_N(x)|.
\]
By Besicovitch's covering lemma, there exists a sub-family of $\{ J_N(x) : x \in \Lambda_{N}^{\omega}(U) \}$ with uniformly bounded intersection multiplicity which forms a covering of $\Lambda_{N}^{\omega}(U) $. Thus,
\[
| \Lambda_{(k+1)N}^{\omega} (U) | \leq C \tau^{-1} \eta_k |\tilde \Lambda| \leq C \eta_k \frac{\rho}{\tau},
\]
where $C$ is a universal constant. So we can choose $\rho$ small enough such that
\[
 \frac{ C \rho}{\tau} < \frac{1}{2}.
\]
This proves $\eta_{k+1} < \eta_k/2$.

For each $n \geq 1$, as in the proof of statement (1), write $n = kN +r$ with $k \in \mathcal N$ and $0 \leq r < N$. Since $Df_{\omega}^r (x) \geq C \lambda^r/2$, we have
\[
| \Lambda_{n}^{\omega} (U) | \leq \frac{ | \Lambda_{kN}^{\sigma^k \omega} (U)|}{Df_{\omega}^r (x)} \leq \frac{2}{C \lambda^r} \left(\frac{1}{2} \right)^k.
\]
This finishes the proof.

\end{proof}

\section{Growth of derivatives along pseudo-orbits}

The main goal of this section is to prove the following theorem. 

\begin{thm}
Let $\{f_t \}_{t \in [-1, 1]} $ be an admissible family with $f_0 =f \in \mathcal S_1$. For each $\epsilon > 0$ small enough, there exist $\Lambda(\epsilon) > 1$ and $\alpha(\epsilon) > 0$ such that 
\[
\lim_{\epsilon \to 0} \Lambda(\epsilon) = \infty \text{ and } \lim_{\epsilon \to 0} \alpha(\epsilon) = 0,
\]
and such that the following hold.
\begin{enumerate}
\item[(1)] Let $x \in I_{ c }$ and $\omega \in \Omega_{\epsilon}$ be such that $d(x, CV) \leq 4 \epsilon, f_{\omega}^j(x) \notin \tilde B(\epsilon)$ for $1 \leq j \leq s-1$ and $f_{\omega}^s (x) \in \tilde B(2 \epsilon)$. Then
\[
Df_{\omega}^s (x) \geq \frac{\Lambda(\epsilon)}{D(\epsilon)} e^{\epsilon^{\alpha(\epsilon)} s}.
\]
\item[(2)]  Let $x \in I_c$ and $\omega \in \Omega_{\epsilon}$ be such that $f_{\omega}^j(x) \notin \tilde B(\epsilon)$ for $0 \leq j \leq s-1$. Then
\[
Df_{\omega}^s (x) \geq A \epsilon^{1 - \frac{1}{\ell}} e^{\epsilon^{\alpha(\epsilon)} s},
\]
where $A>0$ is a constant independent of $\epsilon$.
\end{enumerate}
\end{thm}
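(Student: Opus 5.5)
Throughout, the plan is to decompose a random orbit that avoids $\tilde B(\epsilon)$ into \emph{bound pieces} that shadow a critical orbit and \emph{free pieces}, and to control each piece by the deterministic estimates of Subsection 3.2 transferred to $f_\omega$ via the binding results of Subsection 3.3. The stretched-exponential factor $e^{\epsilon^{\alpha(\epsilon)}s}$ is the weakest rate that survives after all losses; it comes from two places. On long free stretches Proposition 6(1) applies. On long bound stretches the condition (${\rm SC_1}$) gives $Df^n(v)\to\infty$, which, once we choose the scale, is turned into a rate $\ge \epsilon^{\alpha(\epsilon)}$.

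\textbf{Step 1: the bound period at a critical value.} Let $x$ satisfy $d(x,CV)\le 4\epsilon$, with $v\in CV$ the nearest critical value. Apply Proposition 4 with $\delta=\delta(\epsilon)$ a suitable multiple of $\epsilon$ and small $\theta$ and $\zeta$; this produces $M=M_v(\delta)$. Since $f\in\mathcal S_1$, $W=\sum_j 1/Df^j(v)$ is bounded uniformly. Combined with (3.12) and $\theta$ small, this gives $A(v,0,M)W\le \theta_1/\epsilon$, so Lemma 3.6 makes $M$ an $eW$-binding period for $(v,\epsilon)$. Consequently:
\begin{itemize}
\item for $j<M$ we have $Df_\omega^{j+1}(x)\asymp Df^{j+1}(v)$;
\item the random orbit stays in the region where $f^j(v)\notin\tilde B(L\delta)$;
\item by (3.14), $Df^{M+1}_\omega(x)\gtrsim(\delta'/\delta)^{1-\zeta}$.
\end{itemize}
At the end of the binding period there are two cases. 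If $f^M_\omega(x)$ lies in $\tilde B(2\epsilon)$, we use (C4) together with (3.1) to absorb the last factor $D(\delta')$. Otherwise the orbit is at distance $\gtrsim\delta'$ from $c$, which we feed into Step 2. Using $M_v(\delta)\to\infty$ and $\inf_{n\ge M}Df^n(v)\to\infty$, the gain over the bound period is at least $\Lambda_1(\epsilon)\to\infty$ times $1/D(\epsilon)$.

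\textbf{Step 2: free periods.} After binding, the orbit runs until it first enters $\tilde B(\epsilon)$ or $\tilde B(2\epsilon)$. For such free stretches we use a random analogue of Proposition 5. We fix a neighbourhood $U=\tilde B(\delta_*)$.
\begin{itemize}
\item Time spent outside $U$ is controlled by Proposition 6(1), which gives exponential growth $K^{-1}e^{\eta n}$.
\item Each visit into $U\setminus\tilde B(\epsilon)$, at depth $\delta_1=d_*(y,c)\ge\epsilon$, initiates a new binding (Step 1 with $\delta_1$ in place of $\epsilon$; perturbation size $\epsilon\le\delta_1$ keeps Lemma 3.6 applicable). The factor $Df\asymp D(\delta_1)$ lost at the entry is recovered by the bound gain $\gtrsim \Lambda(\delta_1)/D(\delta_1)\ge\Lambda(\delta_*)\cdot\ldots$, so every bound-plus-entry block yields a net factor $\ge 2$. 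This is the one-sided Koebe/Proposition 5 argument transported to random orbits.
\end{itemize}

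\textbf{Step 3: assembling and choosing $\alpha(\epsilon)$.} For part (1), multiply the Step 1 gain by the free gains. Each block of length $m$ contributes at least $\max(2,K^{-1}e^{\eta m})$, provided the block lengths are at most roughly $M_v(\epsilon)\lesssim N(\epsilon)$, where $N(\epsilon)$ bounds bound periods at scale $\ge\epsilon$. The total is then $\ge \frac{\Lambda(\epsilon)}{D(\epsilon)}e^{cs/N(\epsilon)}$. Choose $\alpha(\epsilon)$ with $\epsilon^{\alpha(\epsilon)}=c/N(\epsilon)$, slowing the growth of $N$ if necessary so that $\alpha\to0$. Since $N(\epsilon)$ grows subpolynomially by the ${\rm SC_1}$ bounds (Proposition 4 gives $M_v$ with $Df^M\ge$ power of $\delta'/\delta$, and any polynomial bound on $N(\epsilon)$ suffices), we get $\alpha(\epsilon)\to 0$. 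Part (2) is part (1) preceded by an initial free segment from an arbitrary $x$. That segment costs at most the factor $\asymp \epsilon^{1-1/\ell}=1/D(\epsilon)\cdot\ldots$, as in Proposition 5 with $\delta''\le 1$: the only loss is the first entry near $c$ before any binding compensates, and that loss is bounded by $\kappa_0 D(\epsilon)^{-1}$ up to constants, giving $A\epsilon^{1-1/\ell}$.

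\textbf{Main obstacle.} The delicate step is Step 1/Step 2's claim that every return to $U\setminus\tilde B(\epsilon)$ is fully compensated uniformly in depth, with the bound period length controlled well enough to yield the rate $\epsilon^{\alpha(\epsilon)}$. The recovery relies on $\delta'$ in (3.14) handling the case where the binding ends deep near $c$. Here I would first try an induction on the number of returns, in the style of \cite{S}[Proposition 2.7/Theorem 2], carrying the hypothesis "$Df^k_\omega\ge$ bound $\times$ $(\delta/\delta'')^{1-1/\ell}$" as in Proposition 5.
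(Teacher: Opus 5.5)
The genuine gap is in Step 3, which is the only place where the rate $\epsilon^{\alpha(\epsilon)}$ with $\alpha(\epsilon)\to0$ is produced. Your Steps 1--2 are, in outline, the paper's Proposition 7 (binding via Proposition 8, free stretches via the random analogue of Proposition 5, i.e.\ Lemma 4.1, concatenation as in Lemma 4.2).

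Your rate $c/N(\epsilon)$ needs a bound $N(\epsilon)$ on all binding periods at scales in $[\epsilon,\delta_0]$ with $\log N(\epsilon)=o(\log(1/\epsilon))$. Nothing you cite gives one. Proposition 4 gives no subpolynomial upper bound on $M_v(\delta)$. The natural bound from (3.7) and (${\rm SC_1}$) is, by Cauchy--Schwarz, $M^2\le W_0\sum_{i<M}Df^i(v)\le W_0A(v,f,M)\le\theta W_0/\delta$, i.e.\ $M_v(\delta)\lesssim\delta^{-1/2}$.

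Your remark that ``any polynomial bound on $N(\epsilon)$ suffices'' is false. If $N(\epsilon)\asymp\epsilon^{-a}$, then $\epsilon^{\alpha(\epsilon)}=c/N(\epsilon)$ forces $\alpha(\epsilon)\to a>0$. Nor can you ``slow the growth of $N$'': it must be a true upper bound on binding periods, whose lengths are dictated by the need to recover the loss $D(\delta')$.

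In fact a subpolynomial bound on $M_v(\delta)$ follows from the deterministic case $\omega=0$ of part (2). Apply it to $\{f^j(v)\}_{j<M}$, which avoids $\tilde B(L\delta)$ by (3.8), and use $Df^{M-1}(v)\le A(v,f,M)\le\theta/\delta$. So at exactly the hard point you assume something as strong as what is to be proved.

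The missing idea is the paper's induction over scales, which never bounds binding periods.
\begin{itemize}
\item Let $\eta_0(\delta)$ be the best rate in $Df^s_\omega(x)\ge\frac{e}{D(\delta)}e^{\eta_0(\delta)s}$ over complete returns in $\mathcal R^{\epsilon}(\delta)$. At the fixed scale $\epsilon_0$, $\eta_0(\epsilon_0)>0$ by Lemma 4.1 and Proposition 7.
\item A geometric mean of this bound and Proposition 7's $\hat\Lambda(\delta)/D(\delta)$ gives (4.15): gain $\Lambda(\delta)\ge2e^2$ with rate $(1-\kappa(\delta))\eta_0(\delta)$. Here $\kappa(\delta)\to0$ precisely because $\hat\Lambda(\delta)\to\infty$.
\item Lemma 4.3 cuts a return at scale $\delta'\in[\delta/2,\delta)$, at its visits to $\tilde B(2\delta)$, into returns at scale $\delta$. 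Each carries the gain $\frac{2e^2}{D(\delta)}$, which pays for an entry derivative $\ge D(\delta)/2$. Hence $\eta_0(\delta')\ge(1-\kappa(\delta))\eta_0(\delta)$.
\item Iterating over the $\log_2(\epsilon_0/\epsilon)$ dyadic scales, and using that the Ces\`aro means of $\kappa$ tend to $0$, gives $\eta_0(\epsilon)\ge\epsilon^{o(1)}$.
\end{itemize}
A long binding period is harmless here, because the coarser returns of the shadowed orbit inside it were already paid for at the previous scale.

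Part (2) is also not simply ``part (1) with a free prefix'': the orbit may stop anywhere, e.g.\ in the middle of a binding period. The paper handles this with a record-depth decomposition along $\rho_j=d_*(x_j,c)$. Every piece is then a complete return in $\mathcal R^{\epsilon}(\rho)$ to which (4.15) applies. The product $D(\rho_0)\prod_i D(\rho_{s_i})/D(\rho_{s_{i-1}})$ telescopes to $D(\rho_{s-1})\gtrsim\epsilon^{1-1/\ell}$.
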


In case that $f$ is in the Rovella family, it was proved in \cite{LR}[Proposition 3.1] that $\epsilon^{\alpha(\epsilon)}$ can be replaced by a positive constant independent of $\epsilon$. However, we believe that the recurrence condition (R3) therein can be dropped.

To prove Theorem 2, we shall decompose the random orbit into pieces, each of which is shadowed by either the true orbit of the critical value or a true orbit corresponding to a first landing into a critical neighborhood. The proof will be given in subsection 4.2. In subsection 4.3, we collect a few properties for return maps to the critical neighborhood $\tilde B(\epsilon)$ for random perturbations. 

\subsection{Return to critical neighborhoods}

In this subsection we shall prove the following proposition.

\begin{prop}
Let $\{f_t \}_{t \in [-1, 1]} $ be an admissible family with $f_0 =f \in \mathcal S_1$. For each $\epsilon >0$ small, there exists a constant $\hat \Lambda(\epsilon) > 0$ such that $\lim_{\epsilon \to 0} \hat \Lambda(\epsilon) = \infty$ and such that for each $\omega \in \Omega_{\epsilon}, x \in I_c$ with $d(x, CV) \leq 4\epsilon$ and an integer $s \geq 1$, if $f_{\omega}^j (x) \notin \tilde B(\epsilon)$ for $1 \leq j < s$ and $f_{\omega}^s(x) \in \tilde B(2\epsilon)$, then
\[
D f_{\omega}^s(x) \geq \frac{\hat \Lambda(\epsilon)}{D(\epsilon)}.
\]
\end{prop}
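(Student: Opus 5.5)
We follow the binding strategy of \cite{S}[Proposition 4.4] and \cite{LR}. The random orbit of $x$ is split into a \emph{bound} piece, during which it shadows the true orbit of the nearby critical value $v\in CV$ with $d(x,v)\le 4\epsilon$, and a \emph{free} piece, which ends with the landing into $\tilde B(2\epsilon)$ and is controlled by Proposition 5 together with a perturbative argument.

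First, choose $\delta=\delta(\epsilon)\ge\epsilon$ with $\delta\to0$ as $\epsilon\to0$; for instance take $\delta$ so that $\epsilon=\delta\,\hat r(\delta)^{-1/2}$, so that $\epsilon/\delta\to 0$. Apply Proposition 4 with this $\delta$, a small $\theta$ and a large $L$, to get $M=M_v(\delta)$. By (SC$_1$), $W=\sum_j 1/Df^j(v)<\infty$ is bounded independently of $\epsilon$. The first estimate of Proposition 4 gives $A(v,0,M)\le\theta/\delta\le\theta/\epsilon$. Hence, with $\theta W\le\theta_1$, Lemma 3.6 shows that $M$ is an $eW$-binding period for $(v,4\epsilon)$. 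This yields three facts:
\begin{itemize}
\item[(i)] $Df_\omega^{j}(x)\asymp Df^{j}(v)$ for $j\le M$;
\item[(ii)] $|f^j_\omega(x)-f^j(v)|\le 4eW\epsilon\,Df^j(v)$;
\item[(iii)] since $f^j(v)\notin\tilde B(L\delta)$ for $j<M$, and by the first binding inequality, $f^j_\omega(x)\notin\tilde B(2\epsilon)$ for $1\le j<M$.
\end{itemize}
Consequently $s\ge M$.

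Case $s=M$ (or more generally $s\le M$). Here $f^s(v)$ lies within $O(\epsilon Df^s(v))$ of $\tilde B(2\epsilon)$. Combining the third estimate of Proposition 4, namely $Df^{M+1}(v)\ge(\delta'/\delta)^{1-\zeta}$, with the non-flatness (C4) at the last step gives $Df^{M}(v)\gtrsim \delta'^{-1+1/\ell}(\delta'/\delta)^{1-\zeta}$. We compare this with $1/D(\epsilon)\asymp\epsilon^{-1+1/\ell}$. If $\delta'\ge\epsilon$ is comparable to $\epsilon$, the ratio is about $(\delta/\epsilon)^{\text{small}}\cdot$ a factor that grows since $M_v(\delta)\to\infty$. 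More precisely, we use that $Df^M(v)\ge K_\delta\to\infty$ and that $d(f^M(v),CV)\lesssim\epsilon Df^M(v)$, and we balance these two cases.

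Case $s>M$. We write $Df^s_\omega(x)=Df^{M}_\omega(x)\cdot Df^{s-M}_{\sigma^M\omega}(y)$ with $y=f^M_\omega(x)$. We then estimate the free part by Proposition 5, applied along the random orbit via a perturbative version: either iterate the binding argument, or shadow with Lemma 3.5 so that the random orbit segment is compared with a true orbit in $\mathcal L(\epsilon)$. Taking $\delta''=\max\{d(y,CV),\epsilon\}$, Proposition 5 gives $Df^{s-M}(y)\ge \kappa_0 D(\epsilon)^{-1}(\epsilon/\delta'')^{1-1/\ell}$. The loss factor $(\epsilon/\delta'')^{1-1/\ell}$ is compensated by the bound part: $Df^M_\omega(x)\gtrsim (\delta'/\delta)^{1-\zeta}$, with $\delta'\gtrsim\delta''$. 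Since $\delta\ll\delta''$ can fail only when the bound gain is already large, the product gives $\hat\Lambda(\epsilon)/D(\epsilon)$ with $\hat\Lambda(\epsilon)\gtrsim(\delta/\epsilon)^{\text{positive power}}\to\infty$.

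Main obstacle. The hardest step is transferring Proposition 5, which is deterministic, to the random free segment. Perturbations accumulate over unbounded times, so naive continuity fails. I would first try to decompose the free segment into successive pieces: each time the random orbit comes within $\sim\epsilon$ of $CV$ after passing near $c$, apply binding again. Between such visits, use Proposition 3.7(1) to get uniform expansion, and apply Lemma 3.5 distortion (the orbit stays out of $\tilde B(\epsilon)$, so $A\asymp D f$) to compare with a true orbit landing in $\tilde B(2\epsilon)$, up to a constant factor.
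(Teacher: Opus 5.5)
There is a genuine gap. It sits in the case $s>M$, which is the substance of the statement.

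\textbf{The binding scale is chosen the wrong way.} You bind for only $M_v(\delta)$ steps with $\delta/\epsilon\to\infty$. This discards part of the shadowing that $d(x,v)\le 4\epsilon$ provides, and the gain you get is measured against $1/D(\delta)$ rather than $1/D(\epsilon)$. Concretely, suppose the bound orbit comes back at depth $d_*(x_M,c)\approx\delta'\approx\delta$; nothing in Proposition 4 excludes this. Then your bound-part factor $(\delta'/\delta)^{1-\zeta}$ is $O(1)$. The Proposition 5 factor is $\kappa_0 D(\epsilon)^{-1}(\epsilon/\delta'')^{1-1/\ell}\approx \kappa_0(\epsilon/\delta)^{1-1/\ell}/D(\epsilon)$. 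So $\delta/\epsilon$ enters with a \emph{negative} power, not a positive one. Even adding the factor $\Lambda_0(\delta)$, you would need $\Lambda_0(\delta)\gg(\delta/\epsilon)^{1-1/\ell}$, and your choice $\delta/\epsilon=\hat r(\delta)^{1/2}$ gives no link between $\Lambda_0$ and $\hat r$ that would ensure this.

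There is also an index slip: the free piece must start at $x_{M+1}$, whose distance to $CV$ is comparable to $d_*(x_M,c)$. The quantity $d(x_M,CV)$ you use is unrelated to $\delta'$.

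The remedy is to bind at scale $\epsilon$ itself. Proposition 8 allows noise $\omega\in\Omega_\delta$ at the binding scale together with $d(y,v)\le 4\delta$. With $\delta=\epsilon$ it gives $Df^{M}_\omega(x)\gtrsim\Lambda_0(\epsilon)^{\zeta_1}/D(\epsilon)$, and, if $x_M\notin\tilde B(\epsilon)$, $Df^{M+1}_\omega(x)\gtrsim\Lambda_0(\epsilon)^{\zeta_1}(d_*(x_M,c)/\epsilon)^{1-1/\ell}$. The divergence of $\hat\Lambda(\epsilon)$ then comes from $\Lambda_0(\epsilon)\to\infty$, i.e.\ from $M_v(\epsilon)\to\infty$, not from a ratio of scales.

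Your case $s=M$ can be rescued by the balancing you hint at. Either $f^M(v)$ lies within $K\epsilon^{1/\ell}$ of $c$, and (C4) at the last step supplies the gain. Or $\epsilon Df^M(v)\gtrsim |x_M-f^M(v)|\gtrsim d(f^M(v),c)\ge K\epsilon^{1/\ell}$, which also suffices.

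\textbf{The free piece is not handled by the tools you list.} Proposition 5 is deterministic. Its random counterpart, Lemma 4.1, holds only for a \emph{fixed} landing scale $\delta_0$ and noise $\epsilon\le\epsilon(\delta_0)$: its proof bounds the segment length via Proposition 6 and then uses continuity, so it says nothing about landing at scale $\epsilon$ under noise of size $\epsilon$. The substitutes you propose do not close this:
\begin{itemize}
\item Proposition 6(1) gives expansion only outside a fixed neighbourhood of $c$. Expansion outside $\tilde B(\epsilon)$ is Theorem 2(2), which is deduced from the very statement you are proving.
\item Lemma 3.5 is a distortion bound for a fixed composition $f_\omega^n$. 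It is not a shadowing statement comparing random and true orbits.
\item Rebinding only when the orbit comes within $\sim\epsilon$ of $CV$ ignores passages through $\tilde B(\delta_0)\setminus\tilde B(\epsilon)$ at intermediate depths.
\end{itemize}

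The missing idea is a multiscale decomposition, which is how the paper proceeds.
\begin{itemize}
\item Fix a small $\delta_0$. Let $s_1\ge M_v(\epsilon)$ be the first time with $x_{s_1}\in\tilde B(\delta_0)$. The free stretch after the bound period then stays outside the fixed set $\tilde B(\delta_0)$, so Lemma 4.1 applies to it.
\item If $s_1<s$, rebind from $x_{s_1+1}$ to the nearest critical value at the variable scale $\delta_1=d_*(x_{s_1},c)\in[\epsilon,\delta_0]$. The noise still satisfies $\epsilon\le\delta_1$, so Proposition 8 applies. Iterate.
\item By Lemma 4.2, each piece bound at scale $\delta_i$ (the first scale being $\epsilon$) gains $\Lambda_0(\delta_i)^{\zeta_2}(\delta_{i+1}/\delta_i)^{1-1/\ell}$, and these ratios telescope.
\item The final piece, landing in $\tilde B(2\epsilon)$, contributes $\Lambda_0(\delta_{k-1})^{\zeta_2}/D(\delta_{k-1})$. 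Combined with the telescoped factor $(\delta_{k-1}/\epsilon)^{1-1/\ell}$, this yields $Df^s_\omega(x)\gtrsim\Lambda_0(\epsilon)^{\zeta_2}/D(\epsilon)$.
\end{itemize}
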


\begin{remark}

The proof of Proposition 7 follows from \cite{LR}[Proposition 3.14]. However, there are several important places that needed to be modified. First, the small constant $\eta_*$ appeared in \cite{LR}[Definition 3.15] is confusing and useless for contracting Lorenz maps, since there is only one critical point. Second, the constant $\zeta_2$ appeared in the proof of Proposition 3.14 and Proposition 3.16 therein is also redundant. For convenience of the readers, we shall state the proof here. 

\end{remark}

Let $C_0 = \max_{[0, 1]} Df \geq 1$. Let 
\[
W_0 = \max_{v \in CV} \sum_{n=0}^{\infty} \frac{1}{Df^n (v)}.
\]
Let $\theta > 0$ be a small constant such that 
\begin{equation}
4 \theta W_0 \leq \theta_1
\end{equation}
where $\theta_1$ is given in Lemma 3.6. Moreover, fix constants $L > 2^{\ell + 1}$ and $\zeta \in (0, 1/\ell)$. For $v \in CV$ and $\delta > 0$ small, we fix a positive integer $M_v(\delta)$, called {\bf the preferred binding period} for $(v, \delta)$, such that the conclusion of Proposition 4 holds for these constants $\theta, L$ and $\zeta$. Since $M_v(\delta) \to \infty$ as $\delta \to 0$ for each $v \in CV$, we have
\begin{equation}
\Lambda_0(\delta) : = \inf_{v \in CV} Df^{M_v(\delta) + 1}(v) \to \infty \text{ as } \delta \to 0. 
\end{equation}

\begin{prop}
Let $\{f_t \}_{t \in [-1, 1]} $ be an admissible family with $f_0 =f \in \mathcal S_1$, there exists a positive constant $\zeta_1$ with the following property. For $\delta > 0$ sufficiently small, and $v \in CV$, let $M = M_v(\delta) \geq 1$ be the preferred binding period defined as above. Then for any $\omega \in \Omega_{\delta}$ and $y \in I_c$ with $d(y, v)\leq 4 \delta$, we have 
\begin{equation}
y_j : = f_{\omega}^j(y) \notin \tilde B(2 \delta) \text{ for all } 0 \leq j < M, 
\end{equation}
\begin{equation}
Df_{\omega}^M (y) \geq \frac{\Lambda_0(\delta)^{\zeta_1}}{D(\delta)}. 
\end{equation}
Moreover, if $y_M \notin \tilde B(\delta)$, then
\begin{equation}
Df_{\omega}^{M+1}(y) \geq \Lambda_0(\delta)^{\zeta_1} \left( \frac{d_*(y_M, c)}{\delta} \right)^{1 - \frac{1}{\ell}}. 
\end{equation}
\end{prop}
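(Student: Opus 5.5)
The plan is to use the binding lemma (Lemma 3.6) to show that $M=M_v(\delta)$ is a binding period for $(v,4\delta)$. Everything else is then read off from the properties of $M_v(\delta)$ in Proposition 4.

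\textbf{Step 1: binding.} Set $W=\sum_{j=0}^{M}1/Df^j(v)\le W_0$. By (3.11), with the conventions $A(v,0,M)=\sum_{i<M}Df^i(v)/d(f^i(v),c)$ and $W\le W_0$, we get
\[
A(v,0,M)\,W\le \frac{\theta W_0}{\delta}\le \frac{\theta_1}{4\delta},
\]
using (4.1). Lemma 3.6 applied with $\epsilon=4\delta$ (legitimate since $\Omega_\delta\subset\Omega_{4\delta}$ and $\delta$ is small) then shows that $M$ is an $eW$-binding period for $(v,4\delta)$. Consequently, for $d(y,v)\le 4\delta$, $\omega\in\Omega_\delta$ and $0\le j<M$:
\begin{itemize}
\item $2|y_j-f^j(v)|\le d(f^j(v),c)$;
\item $Df_\omega^{j+1}(y)\asymp Df^{j+1}(v)$ up to the factor $e$;
\item $|y_{j+1}-f^{j+1}(v)|\le 4e W_0\,\delta\, Df^{j+1}(v)$.
\end{itemize}

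\textbf{Step 2: (4.3).} By (3.12), $f^j(v)\notin\tilde B(L\delta)$ for $j<M$. Since $2|y_j-f^j(v)|\le d(f^j(v),c)$, we have $d(y_j,c)\ge \tfrac12 d(f^j(v),c)$, and $\tilde B(L\delta)$ has half-length $\asymp (L\delta)^{1/\ell}$. The choice $L>2^{\ell+1}$ gives $\tfrac12|\tilde B(L\delta)|\ge|\tilde B(2\delta)|$, up to the non-flatness constants (C4), which can be absorbed by enlarging $L$ if necessary. This proves $y_j\notin\tilde B(2\delta)$ for $0\le j<M$.

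\textbf{Step 3: (4.5) and (4.4).} By binding, $Df^{M+1}_\omega(y)\ge e^{-1}Df^{M+1}(v)$, and (3.13) gives $Df^{M+1}(v)\ge(\delta'/\delta)^{1-\zeta}$, where $\delta'=\max\{d_*(f^M(v),c),\delta\}$. At the same time $Df^{M+1}(v)\ge\Lambda_0(\delta)$. Interpolating,
\[
Df^{M+1}(v)\ge \Lambda_0^{\zeta_1}\,\big((\delta'/\delta)^{1-\zeta}\big)^{1-\zeta_1}.
\]
Choosing $\zeta_1$ small so that $(1-\zeta)(1-\zeta_1)\ge 1-1/\ell$ is possible because $\zeta<1/\ell$.

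It remains to compare $d_*(y_M,c)$ with $\delta'$. If $y_M\notin\tilde B(\delta)$, the binding estimate gives $|y_M-f^M(v)|\lesssim\delta\,Df^{M}(v)$. Using the $M$-th term of the sum in (3.11), and the fact that $Df^M(v)/d(f^M(v),c)\le\theta/\delta$, we get $|y_M-f^M(v)|\lesssim \theta\, d(f^M(v),c)$. So $y_M$ is comparable in position to $f^M(v)$, and since $Df$ near $c$ is a power of the distance, $d_*(y_M,c)\lesssim \delta'$ up to constants. Constants lost here are absorbed by slightly decreasing $\zeta_1$, since $\Lambda_0\to\infty$. The ratio $Df_\omega(y_M)/Df(f^M(v))$ is controlled by (C3) together with this proximity.

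For (4.4), write $Df^M_\omega(y)=Df^{M+1}_\omega(y)/Df_{\sigma^M\omega}(y_M)$. Outside $\tilde B(\delta_*)$ the denominator is bounded. Inside, by (C4) it is $\asymp d(y_M,c)^{\ell-1}\asymp d_*(y_M,c)^{1-1/\ell}$. Also $d_*\ge\delta$ when $y_M\notin\tilde B(\delta)$. This yields
\[
Df^M_\omega(y)\gtrsim \Lambda_0^{\zeta_1}\,\delta^{-(1-1/\ell)}=\Lambda_0^{\zeta_1}/D(\delta).
\]
If instead $y_M\in\tilde B(\delta)$, the denominator is $\lesssim\delta^{1-1/\ell}$ and the bound $Df^{M+1}\gtrsim\Lambda_0^{\zeta_1}$ suffices.

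\textbf{Main obstacle.} The delicate step is the final comparison of $y_M$ with $f^M(v)$ near $c$. The binding conditions only hold for $j<M$, so closeness at time $M$ must be extracted from the tail condition (3.11) at the last index; the plan is to absorb all resulting constants into $\zeta_1$.
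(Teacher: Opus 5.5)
Your Steps 1 and 2 are correct and agree with the paper, but Step 3 has a genuine gap. The binding conditions hold only for $0\le j<M$. They give $Df^M_\omega(y)\ge e^{-1}Df^M(v)$ and $|y_M-f^M(v)|\le 4eW_0\,\delta\,Df^M(v)$, but not $Df^{M+1}_\omega(y)\ge e^{-1}Df^{M+1}(v)$.

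You try to supply the last step via proximity of $y_M$ to $f^M(v)$ and (C3). That rests on $Df^M(v)/d(f^M(v),c)\le\theta/\delta$, which is not available: $A(v,f,M)=\sum_{i=0}^{M-1}Df^i(v)/d(f^i(v),c)$ has no $i=M$ term, and Proposition 4 constrains $f^M(v)$ only through $\delta'$. Nothing prevents $f^M(v)\in\tilde B(\delta)$. In that case $Df(f^M(v))\lesssim\delta^{1-1/\ell}$, so $\delta\,Df^M(v)\gtrsim\Lambda_0(\delta)\,\delta^{1/\ell}\gg d(f^M(v),c)$. Since $y\mapsto y_M$ expands the $4\delta$-ball around $v$ by a factor at least $e^{-1}Df^M(v)$, there are admissible $y$ for which $y_M$ is far from $f^M(v)$ on the scale $d(f^M(v),c)$, possibly on the other side of $c$. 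For these, (C3) is unusable and $d_*(y_M,c)$ can be much larger than $\delta'$, so $(\delta'/\delta)^{1-1/\ell}$ does not dominate $(d_*(y_M,c)/\delta)^{1-1/\ell}$.

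Your derivation of (4.4) inherits the problem. For $y_M\notin\tilde B(\delta)$ it uses the unproved (4.5). For $y_M\in\tilde B(\delta)$, the claimed bound $Df^{M+1}_\omega(y)\gtrsim\Lambda_0^{\zeta_1}$ is false: $Df_{\sigma^M\omega}(y_M)$, and hence $Df^{M+1}_\omega(y)$, can be arbitrarily small there.

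The missing idea is that no comparison between $y_M$ and $f^M(v)$ is needed. For (4.4), stop at time $M$: $Df^M_\omega(y)\ge e^{-1}Df^{M+1}(v)/Df(f^M(v))$. Here $Df(f^M(v))\le C_1D(\delta')$ by non-flatness, and $Df^{M+1}(v)\ge\Lambda_0(\delta)^{2\zeta_1}(\delta'/\delta)^{1-1/\ell}$ by your interpolation, run with $2\zeta_1$ so that constants can later be absorbed into $\Lambda_0(\delta)^{\zeta_1}$. As $D(\delta')\asymp(\delta')^{1-1/\ell}$, this gives $Df^M_\omega(y)\gtrsim\Lambda_0(\delta)^{2\zeta_1}/D(\delta)$ wherever $y_M$ lies.

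For (4.5), multiply by $Df_{\sigma^M\omega}(y_M)\ge C_3D(\delta'')$, where $\delta''=d_*(y_M,c)\ge\delta$. The defining property of $\delta_*$ applies because $|\omega_M|\le\delta\le\delta''$. Then use $D(\delta'')/D(\delta')\asymp(\delta''/\delta')^{1-1/\ell}$ to turn $(\delta'/\delta)^{1-1/\ell}$ into $(\delta''/\delta)^{1-1/\ell}$.

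This is the paper's argument in its Case ii. The paper also treats $\delta''\ge\Lambda_0(\delta)^{2\zeta_1}\delta'$ separately, converting the separation $|y_M-f^M(v)|\gtrsim|\tilde B(\delta'')|$ into expansion via $|y_M-f^M(v)|\le 4eW_0\delta\,Df^M(v)$; the ratio estimate in fact already covers that case too. So the large-separation scenario you tried to rule out is harmless: it only makes $Df_{\sigma^M\omega}(y_M)$ large compared with $Df(f^M(v))$.
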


\begin{proof}

Fix $v \in CV$ and $\delta > 0$ small. By (3.7) and (4.1),
\[
A(v, f, M)W_0 \leq \frac{\theta}{\delta} \cdot W_0 \leq \frac{4 \theta W_0}{4 \delta} \leq \frac{\theta_1}{4\delta}.
\]
Then by Lemma 3.6, $M$ is an $eW_0$ binding period for $(v, 4 \delta)$. By (3.8) and (3.13), (4.3) holds provided $\delta > 0$ is small enough.

By non-flatness, there exists a constant $C_1 > 1$ independent of $\delta$ such that 
\begin{equation}
Df(f^M(v)) \leq C_1 D(\delta'), 
\end{equation}
where $\delta' = \max\{d_*(f^M(v), c), \delta\}$. To be precise, we may assume $\delta \leq \delta_*$. If $f^M(v) \in \tilde B(\delta_*)$, then $Df(f^M(v)) \leq O_2 d(f^M(v), c)^{\ell -1} \leq C_1 D(\delta')$; for otherwise, $Df(f^M(v)) \leq (C_0/D(\delta_*)) D(\delta_*) $.

Let $\zeta_1 = ({\ell}^{-1} - \zeta)/(2 - 2 \zeta)$. By (3.9) and the definition of $\Lambda_0(\delta)$ we obtain
\begin{equation}
Df^{M+1}(v) \geq \Lambda_0(\delta)^{2 \zeta_1} \cdot \Lambda_0(\delta)^{1- 2 \zeta_1} \geq \Lambda_0(\delta)^{2 \zeta_1} \left( \frac{\delta'}{\delta} \right)^{1 - \frac{1}{\ell}}. 
\end{equation}

Let us prove $(4.4)$. By (3.15), (4.6) and (4.7), we have
\[
Df_{\omega}^M (y) \geq \frac{Df^{M+1}(v)}{e Df(f^M(v))} \geq \frac{\Lambda_0(\delta)^{2 \zeta_1}}{eC_1 D(\delta')} \left( \frac{\delta'}{\delta} \right)^{1 - \frac{1}{\ell}} \geq \frac{C_2 \Lambda_0(\delta)^{2 \zeta_1}}{D(\delta')},
\]
where $C_2 > 0$ is a constant and we used the fact $\delta' \geq \delta$. When $\delta > 0$ is small enough, $C_2 \Lambda_0(\delta)^{\zeta_1} > 1$, so (4.4) holds.

In the rest we prove (4.5). Assume that $\delta'' : = d_*(y_M, c) \geq \delta$. By (3.1) we have
\begin{equation}
Df_{\sigma^M \omega} (y_M) \geq C_3 D(\delta''),
\end{equation}
where $C_3 > 0$ is a constant.  We distinguish two cases.

{\it Case i.} $\delta'' \geq \Lambda_0(\delta)^{2 \zeta_1} \delta' \geq \Lambda_0(\delta)^{2 \zeta_1} \delta$. When $\delta$ is small enough, $\Lambda_0(\delta)$ is large and $\delta' \leq \delta''/\Lambda_0(\delta)^{2 \zeta_1}$. So $\delta' \ll \delta''$. Thus, there exists $C_4 > 0$ such that $\eta_M : = |y_M - f^M(v)| \geq C_4 |\tilde B(\delta'')|$. By (4.8),
\[
\eta_M Df_{\sigma^M \omega} (y_M) \geq C_3 C_4  |\tilde B(\delta'')| D(\delta'') = C_3 C_4 \delta''.
\]
By (3.14) and (3.15), 
\[
Df_{\omega}^{M+1}(y)  \geq \frac{Df^M(v)}{e} Df_{\sigma^M \omega} (y_M) \geq \frac{\eta_M Df_{\sigma^M \omega} (y_M)}{4e^2 W_0 \delta}\geq C_5 \frac{\delta''}{\delta} \geq C_5 \Lambda_0(\delta)^{2 \zeta_1} \left( \frac{\delta''}{\delta}\right)^{1 - \frac{1}{\ell}},
\]
where $C_5 > 0$ is a constant. The inequality (4.5) holds when $\delta$ is small enough.

{\it Case ii.} $\delta'' \leq \Lambda_0(\delta)^{2 \zeta_1} \delta' $. In this case, combine (3.14), (4.6), (4.7) and (4.8), we have
\[
Df_{\omega}^{M+1}(y)  \geq \frac{Df^{M+1}(v)}{e} \frac{Df_{\sigma^M \omega}(y_M)}{Df(f^M(v))}  \geq C_6 \Lambda_0(\delta)^{2 \zeta_1} \left( \frac{\delta'}{\delta} \right)^{1 - \frac{1}{\ell}} \left( \frac{\delta''}{\delta'} \right)^{1 - \frac{1}{\ell}}  \geq C_6 \Lambda_0(\delta)^{2 \zeta_1} \left( \frac{\delta''}{\delta} \right)^{1 - \frac{1}{\ell}},
\]
where $C_6 > 0$ is a constant. The inequality (4.5) holds when $\delta$ is small enough. 

\end{proof}

Let $\mathcal O^{\epsilon} (\delta)$ denote of the collection of $\epsilon$-random orbits $\{x_j \}_{j=0}^n$ for which $x_j \notin \tilde B(\delta)$ for each $0 \leq j < n$, and let $\mathcal L^{\epsilon}(\delta)$ denote the collection of $\epsilon$-random orbits $\{x_j \}_{j=0}^n \in \mathcal O^{\epsilon} (\delta)$ for which $x_n \in \tilde B(\delta)$.

\begin{lem}
Let $\{f_t \}_{t \in [-1, 1]} $ be an admissible family with $f_0 =f \in \mathcal S_1$. For each $\delta > 0$, there exists $\epsilon = \epsilon(\delta) > 0$ and $\hat \eta = \hat \eta(\delta) > 0$ such that for any $\epsilon$-random orbit $\{ f_{\omega}^j(x) \}_{j=0}^n \in \mathcal L^{\epsilon}(\delta)$, we have
\[
Df_{\omega}^n(x) \geq \frac{\kappa}{D(\delta)} \left( \frac{\delta}{\delta''} \right)^{1 - \frac{1}{\ell}}e^{\hat \eta n},
\]
where $\delta'' = \max\{ d(x, CV), \delta\}$ and $\kappa > 0$ is a constant independent of $\delta$.
\end{lem}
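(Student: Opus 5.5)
The plan is to treat this as a perturbative version of Proposition 5 combined with the exponential expansion of Proposition 6. The key point is that $\delta$ is fixed first and $\epsilon = \epsilon(\delta)$ is then chosen small. For such $\epsilon$, finite pieces of random orbits of bounded length are $C^1$-close to true orbits of $f$.

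First I will obtain the exponential factor by reducing the statement to one without it. Apply Proposition 6(1) with $U = \tilde B(\delta/2)$. This gives $K>1$ and $\eta>0$ such that
\[
Df_{\sigma^i\omega}^{m}(x_i) \ge K^{-1}e^{\eta m}
\]
along any stretch of length $m$ avoiding $U$. I will then split the derivative as
\[
Df_\omega^n(x)=\bigl(Df_\omega^n(x)\bigr)^{1/2}\bigl(Df_\omega^n(x)\bigr)^{1/2}.
\]
Suppose one has the bound without the exponential, $Df_\omega^n(x)\ge \frac{\kappa'}{D(\delta)}(\delta/\delta'')^{1-1/\ell}$, together with the bound $Df_\omega^n(x)\ge K^{-1}e^{\eta n}$. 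Taking the geometric mean gives
\[
Df_\omega^n(x)\ge \Bigl(\tfrac{\kappa'}{K D(\delta)}\Bigr)^{1/2}\Bigl(\tfrac{\delta}{\delta''}\Bigr)^{\frac{1-1/\ell}{2}} e^{\eta n/2}.
\]
This has the wrong exponents, so the geometric mean alone does not suffice. The more robust approach is the following. For $n\le N_0(\delta)$, the exponential $e^{\hat\eta n}$ is bounded by a constant, and we absorb it by choosing $\hat\eta$ tiny, $\hat\eta\le 1/N_0$. For $n>N_0$, split off an initial block: write $n=n_1+n_2$ with $n_2$ the last $N_0$ steps before landing. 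The first $n_1$ steps give $K^{-1}e^{\eta n_1}$, which for $N_0$ large dominates the needed factor $\frac{\kappa}{D(\delta)}(\delta/\delta'')^{1-1/\ell}$ for the whole chain, uniformly since $\delta''\ge\delta$ and $D(\delta)$ is fixed. The last block is handled by the finite-time argument below.

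The core step is the finite-time comparison. Given $N_0$, choose $\epsilon(\delta)$ so small that for every $\omega\in\Omega_\epsilon$, every $x$ and every $m\le N_0$, the quantities $|f_\omega^m(x)-f^m(x)|$ and $Df_\omega^m(x)/Df^m(x)$ are controlled. The derivative ratio must be uniformly close to $1$, which requires staying a definite distance from $c$; this holds because the orbit avoids $\tilde B(\delta)$ and we compare against $\tilde B(\delta/2)$. Then a random orbit in $\mathcal L^\epsilon(\delta)$ of length $n\le N_0$ is shadowed by the true orbit $\{f^j(x)\}$. That true orbit avoids $\tilde B(\delta/2)$ before time $n$ and lands in $\tilde B(2\delta)$. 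After a harmless rescaling of $\delta$, which changes $D(\delta)$ and $\kappa$ only by constants depending on $\ell$, it belongs to $\mathcal L(\delta/2)$. Proposition 5 therefore gives
\[
Df^n(x)\ge \frac{\kappa_0}{D(\delta/2)}\Bigl(\frac{\delta/2}{\delta''}\Bigr)^{1-1/\ell},
\]
and the derivative comparison transfers this to $Df^n_\omega(x)$ up to a factor $2$. The same comparison applies to the terminal block of length $N_0$ starting at $x_{n_1}$, with $d(x_{n_1},CV)$ replaced by the trivial bound $\delta''\le 1$. Combining the two blocks gives the lemma with $\kappa$ a fixed multiple of $\kappa_0$ and $\hat\eta=\min(\eta/2,1/N_0)$.

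The main obstacle is uniformity. Near $c$ the maps are not uniformly $C^1$-close in a useful multiplicative sense, and the first point $x$ may itself be close to $CV$. That closeness is exactly what produces the factor $(\delta/\delta'')^{1-1/\ell}$, and it is benign because it involves no derivative at $c$. The delicate step is choosing $N_0$ depending only on $\delta$ so that $K^{-1}e^{\eta n_1}$ beats $1/D(\delta)$ for all admissible starting points. I would first try $N_0$ with $K^{-1}e^{\eta N_0/2}\ge \kappa_0^{-1}D(\delta)^{-1}$, and then fix $\epsilon(\delta)$ afterwards.
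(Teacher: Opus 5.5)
Your skeleton is the paper's proof: a threshold $N_0(\delta)$ from Proposition 6(1), shadowing by the true orbit plus Proposition 5 when $n\le N_0$, and $\hat\eta\lesssim 1/N_0$ to absorb $e^{\hat\eta n}$. However, the case $n>N_0$ does not close as you wrote it. You split $n=n_1+N_0$ and claim that $K^{-1}e^{\eta n_1}$ dominates $\kappa/D(\delta)$ ``for $N_0$ large''. But $n_1=n-N_0$ can equal $1$, and enlarging $N_0$ lengthens the terminal block, not the initial one. The terminal block cannot make up the difference. It starts at $x_{n_1}$, which need not be near $CV$, so Proposition 5 only gives it a derivative of order $\kappa_0\,\delta^{1-1/\ell}/D(\delta)\asymp\kappa_0$. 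When $d(x,CV)\le\delta$, you need a total factor $\asymp 1/D(\delta)\asymp\delta^{-(1-1/\ell)}$. So for $n=N_0+1$ and $x$ close to $CV$, the product $K^{-1}e^{\eta}\cdot O(\kappa_0)$ falls short.

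The split is unnecessary. All of $x_0,\dots,x_{n-1}$ avoid $\tilde B(\delta)\supset U$, so Proposition 6(1) applies to the whole orbit at once. Choose $N_0$ (as in your last paragraph) so that $K^{-1}e^{\eta N_0/2}\ge 1/D(\delta)$. Then for $n>N_0$, using $\delta''\ge\delta$,
\[
Df_\omega^n(x)\ge K^{-1}e^{\eta N_0/2}\,e^{\eta n/2}\ge \frac{1}{D(\delta)}\Bigl(\frac{\delta}{\delta''}\Bigr)^{1-\frac{1}{\ell}}e^{\eta n/2}.
\]
This is exactly the paper's first case.

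A smaller point concerns the bounded case. The shadowing true orbit need not land in $\tilde B(\delta)$, so it need not lie in $\mathcal L(\delta/2)$ as you state. Since $N_0$ is fixed before $\epsilon$, you can make the shadowing tight enough that $f^j(x)\notin\tilde B(0.9\delta)$ for $j<n$ and $f^n(x)\in\tilde B(1.8\delta)$. Then the true orbit lies in $\mathcal L(0.9\delta)$, which is what the paper uses. This costs only $\ell$-dependent constants, because $\max\{d(x,CV),0.9\delta\}\le\delta''$.
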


\begin{proof}

Fix $\delta >0$. By Proposition 6 statement (1), there exist $C>0$ and $\eta >0$ such that if $\epsilon > 0$ is small enough, then for each $\epsilon$-random orbit $\{ f_{\omega}^j(x) \}_{j=0}^n \in \mathcal L^{\epsilon}(\delta)$ we have
\[
Df_{\omega}^n(x) \geq C e^{\eta n}.
\]
If $C e^{\eta n/2} \geq 1 / D(\delta)$, then the desired estimate holds with $\kappa = 1$ and $\hat \eta = \eta/2$. So assume the contrary. Then $n$ is bounded from above by a constant $N(\delta)$ since $C$ and $\eta$ depend only on $\delta$. When $\epsilon$ is small enough, we have $\{ f^j(x) \}_{j=0}^n \in \mathcal L(0.9 \delta)$ and 
\[
D f_{\omega}^n(x) \geq \frac{Df^n(x)}{2}.
\]
By Proposition 5, there is a constant $\kappa_0 > 0$ such that 
\[
D f_{\omega}^n(x) \geq \frac{\kappa_0}{2D(\delta)} \left( \frac{\delta}{\delta''} \right)^{1 - \frac{1}{\ell}}.
\]
Taking $\eta' >0$ such that $exp(\eta' N(\delta)) < 2$, we obtain the desired estimate with $\hat \eta = \eta '$ and $\kappa = \kappa_0 /4$.

\end{proof}

Let $\mathcal I^{\epsilon}(\delta, \hat \delta)$ denote the collection of $\epsilon$-random orbits $\{x_j\}_{j=0}^n$ for which there exists $v \in CV$ such that $d(x_0, v) \leq 4 \delta$ and such that one of the following holds:

\begin{enumerate}
\item[(1)] either $x_{M_v(\delta)} \in \tilde B(\hat \delta)$ and $n = M_v(\delta)$;
 
\item[(2)] or $x_{M_v(\delta)} \notin \tilde B(\hat \delta), n >M_v(\delta)$ and $\{ x_j \}_{j=M_v(\delta)+1}^n \in \mathcal L^{\epsilon}(\hat \delta)$. 
\end{enumerate}
In the language of \cite{BC}, $n$ is the first {\it free return} of the random orbits $\{ x_j \}_{j=0}^n$ into $\tilde B(\hat \delta)$.

\begin{lem}
There exists a constant $\zeta_2>0$ such that the following holds. For each $\delta_0 > 0$ small enough, there exists $\epsilon_0>0$ such that for each $\{ f_{\omega}^j(x) \}_{j=0}^n \in \mathcal I^{\epsilon}(\delta, \delta_0)$ with $\delta \in (0, \delta_0]$ and $0 \leq \epsilon \leq \min \{ \epsilon_0, \delta\}$, we have
\begin{equation}
Df_{\omega}^n(x) \geq \frac{\Lambda_0(\delta)^{\zeta_2}}{D(\delta)}. 
\end{equation}
Moreover, if $x_n \notin \tilde B(\delta)$, then
\begin{equation}
Df_{\omega}^{n+1}(x) \geq \Lambda_0(\delta)^{\zeta_2} \left(\frac{d_*(x_n, c)}{\delta} \right)^{1 - \frac{1}{\ell}}. 
\end{equation}
\end{lem}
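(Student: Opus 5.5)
The plan is to split the orbit at the preferred binding period $M = M_v(\delta)$ and combine Proposition 8 (binding phase) with Lemma 4.1 (free phase, with $\delta$ replaced by $\delta_0$). We fix $\delta_0$ small and then choose $\epsilon_0 \le \epsilon(\delta_0)$ from Lemma 4.1. This gives $\hat\eta(\delta_0)>0$ and $\kappa$ such that every $\epsilon$-random orbit in $\mathcal L^{\epsilon}(\delta_0)$ is expanded as in Lemma 4.1. Since $\epsilon \le \delta$, we have $\omega \in \Omega_\delta$, so Proposition 8 applies to $y = x$ with $d(x,v)\le 4\delta$.

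\emph{Case (1): $n = M$, $x_M \in \tilde B(\delta_0)$.} Estimate (4.9) is directly (4.4) with any $\zeta_2 \le \zeta_1$. For (4.10), suppose $x_n \notin \tilde B(\delta)$. Then (4.5) gives $Df_\omega^{M+1}(x) \ge \Lambda_0(\delta)^{\zeta_1}(d_*(x_M,c)/\delta)^{1-1/\ell}$, which is what we need.

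\emph{Case (2): $n > M$, $x_M \notin \tilde B(\delta_0)$.} We write
\[
Df_\omega^n(x) = Df_\omega^{M+1}(x)\cdot Df_{\sigma^{M+1}\omega}^{n-M-1}(x_{M+1}).
\]
Since $x_M\notin \tilde B(\delta_0)\supset\tilde B(\delta)$, the first factor is bounded below by (4.5). Put $\delta''' = d_*(x_M,c) \ge \delta_0$, up to the convention that $d_* = \delta_*$ away from $\tilde B(\delta_*)$; then the first factor is at least $\Lambda_0(\delta)^{\zeta_1}(\delta'''/\delta)^{1-1/\ell}$. For the second factor we apply Lemma 4.1 at scale $\delta_0$ to the orbit starting at $x_{M+1}$. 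Because $d(x_{M+1}, CV) \lesssim \delta'''$ (as $x_{M+1}$ is the image of a point at $d_*$-distance $\delta'''$, up to an $\epsilon$ perturbation), we get $\delta''_{\rm new} \lesssim \max\{\delta''',\delta_0\}$. The second factor is then at least
\[
\frac{\kappa}{D(\delta_0)}\Big(\frac{\delta_0}{\delta'''}\Big)^{1-1/\ell}.
\]
Multiplying the two bounds and using $D(\delta_0)\asymp \delta_0^{1-1/\ell}$, the $\delta'''$ and $\delta_0$ factors cancel, leaving $\gtrsim \kappa\,\Lambda_0(\delta)^{\zeta_1}\,\delta^{-(1-1/\ell)} \asymp \kappa\,\Lambda_0(\delta)^{\zeta_1}/D(\delta)$. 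This is (4.9) with $\zeta_2 = \zeta_1/2$, once $\Lambda_0(\delta)^{\zeta_1/2}$ absorbs the constant. That absorption needs $\Lambda_0(\delta)$ large, which holds for $\delta \le \delta_0$ with $\delta_0$ small.

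For (4.10) in Case (2), we multiply further by $Df_{\sigma^n\omega}(x_n) \ge C_3 D(d_*(x_n,c))$, using (3.1) as in (4.8), and we use $x_n\in \tilde B(2\delta_0)$. The orbit from $x_{M+1}$ lies in $\mathcal L^\epsilon(\delta_0)$, which here means landing in $\tilde B(\delta_0)$. Hence $d_*(x_n,c) \le \delta_0$, and $D(d_*(x_n,c))\,/\,D(\delta) \asymp (d_*(x_n,c)/\delta)^{1-1/\ell}$. Combined with (4.9), this gives (4.10). The degenerate subcase $n = M+1$ is handled directly by (4.5), since $x_{M+1}$ is then already the landing point.

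The main obstacle is the bookkeeping of the intermediate distance $\delta'''$ and the case where $x_M \notin \tilde B(\delta_*)$. In that case $d_*$ is capped at $\delta_*$, and we must check that the gain from Proposition 8 and the loss in Lemma 4.1 still cancel with uniform constants. We must also ensure the comparison $d(x_{M+1},CV)\lesssim d_*(x_M,c)+\epsilon$ holds uniformly, using (C2) and (C4). If the cancellation fails for bounded $\delta'''$, we would fall back on $Df$ being bounded below away from $c$ together with Proposition 6(1).
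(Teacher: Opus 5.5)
Your proposal is correct and follows the paper's proof essentially step for step. You split at the preferred binding period $M_v(\delta)$, use (4.4)/(4.5) of Proposition 8 for the binding part and Lemma 4.1 at scale $\delta_0$ for the free part, cancel via $d(x_{M+1},CV)\le C_1\, d_*(x_M,c)$, absorb constants with $\zeta_2=\zeta_1/2$, and multiply by $Df_{\sigma^n\omega}(x_n)\ge D(d_*(x_n,c))$ to get (4.10).

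The ``obstacle'' you flag is settled exactly as in the paper, so no fallback to Proposition 6 is needed. Take $C_1=\max\{2,1/\delta_*\}$, using $\epsilon\le\delta\le\delta_0\le d_*(x_M,c)$. Your final bound $\gtrsim\Lambda_0(\delta)^{\zeta_1}/D(\delta)$ is the right one; the paper's displayed $D(\delta_0)$ at that step is a slip.
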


\begin{proof}

Let $\delta_0 > 0$ be a small constant such that for all $\delta \in (0, \delta_0]$ the conclusion of Proposition 8 holds. Let $\epsilon_0 = \epsilon(\delta_0)$ be the constant determined by Lemma 4.1. Let $\zeta_2 = \zeta_1 /2$. Assume that $\delta \in (0, \delta_0]$ and $0 \leq \epsilon \leq \min \{ \epsilon_0, \delta\}$, and consider $\{x_j \}_{j=0}^n = \{ f_{\omega}^j(x) \}_{j=0}^n \in \mathcal I^{\epsilon}(\delta, \delta_0)$. Let $v \in CV$ be such that $d(x_0, v)\leq 4 \delta$ and let $M = M_v(\delta)$.

If $x_M \in \tilde B(\delta_0)$ and $n =M$,  then by Proposition 8 the desired estimate holds. Assume that $x_M \notin \tilde B(\delta_0)$. Let $\delta' = d_*(x_M, c) \geq \delta_0$, let $\delta''= d(x_{M+1}, CV)$. Then there exists a constant $C_1 >0$ such that $\delta'' \leq C_1 \delta'$. Indeed, if $x_M \notin \tilde B(\delta_*)$, then $\delta'' \leq 1, \delta' \geq \delta_*$, so $C_1 = 1/ \delta_*$ is enough; otherwise, $\delta'' \leq \delta' + \epsilon \leq 2 \delta'$, so $C_1 = 2$ is enough. By Lemma 4.1 and (4.5) in Proposition 8, we have
\[
Df_{\omega}^n(x) = Df_{\omega}^{M+1}(x) \cdot \prod_{j=M+1}^{n-1} Df_{\sigma^j \omega} (x_j)  \geq  \frac{\kappa \Lambda_0(\delta)^{\zeta_1}}{D(\delta_0)}  \left( \frac{\delta'}{\delta} \cdot \frac{\delta_0}{\delta''} \right)^{1 - \frac{1}{\ell}} \geq \frac{C_2 \Lambda_0(\delta)^{\zeta_1}}{D(\delta_0)}\left( \frac{\delta_0}{\delta} \right)^{1 - \frac{1}{\ell}}, 
\]
where $C_2$ is a constant. Since $\delta_0 \geq \delta$, there exists a constant $C_3 > 0$ such that 
\[
Df_{\omega}^n(x)  \geq \frac{C_3 \Lambda_0(\delta)^{\zeta_1}}{D(\delta_0)}.
\]
Provided $\delta_0$ is small enough, $\Lambda_0(\delta)$ is large so that (4.9) holds. To prove (4.10), assume that $\rho : = d_*(x_n, c) \geq \delta$. Then $Df_{\sigma^n \omega}(x_n) \geq D(\rho)$. Since $\rho < \delta_0$, then there exists a constant $C_4>0$ such that 
\[
D f_{\omega}^{n+1} (x) = Df_{\sigma^n \omega}(x_n) \cdot D f_{\omega}^{n} (x) \geq C_4 \Lambda_0(\delta)^{\zeta_1} \left( \frac{\rho}{\delta} \right)^{1 - \frac{1}{\ell}}.
\]
Then (4.10) holds provided $\delta_0$ is small enough.

\end{proof}

We now give the proof of Proposition 7.

\begin{proof}[Proof of Proposition 7]

Let $\delta_0 >0$ be a small constant such that $\Lambda_0(\delta) > 1$ for all $\delta \in (0, \delta_0]$. Reducing $\delta_0$ if necessary, we may assume that there exists $\epsilon_0 > 0$ such that the conclusion of Lemma 4.2 holds. Consider $0 \leq \epsilon \leq \min\{\epsilon_0, \delta_0/2 \}$. Let $\{x_j \}_{j=0}^s = \{ f_{\omega}^j(x) \}_{j=0}^s $ be an $\epsilon$-random orbit with $d(x_0, v) \leq 4 \epsilon$ for some $v \in CV$, $x_j \notin \tilde B(\epsilon)$ for each $1 \leq j < s$ and $x_s \in \tilde B(2 \epsilon)$. We shall prove that 
\[
Df_{\omega}^s(x) \geq \frac{\Lambda_0(\epsilon)^{\zeta_2}}{D(\epsilon)}.
\]

Let $s_1$ be the minimal integer such that $s_1 \geq M_v(\epsilon)$ and $x_{s_1} \in \tilde B(\delta_0) $. Such $s_1$ exists because $s \geq M_v(\epsilon)$ by the definition of preferred binding period and $x_s \in \tilde B(\delta_0) $. Then $\{x_j \}_{j=0}^{s_1} \in \mathcal I^{\epsilon}(\epsilon, \delta_0)$.

If $s_1 = s$, then it follows by (4.9). Assume that $s_1 < s$. Then $\delta_1 = d_*(x_{s_1}, c) \geq \epsilon$. By (4.10), we have
\begin{equation}
Df_{\omega}^{s_1+1}(x) \geq \Lambda_0(\delta)^{\zeta_2} \left(\frac{\delta_1}{\epsilon} \right)^{1 - \frac{1}{\ell}}.  
\end{equation}
Now consider the orbit $\{ x_j\}_{j=s_1 + 1} ^s$, let $v_1 \in CV$ be the closet critical value to $x_{s_1+1}$. Let $s_2$ be the minimal integer such that $s_2 - (s_1 + 1) \geq M_{v_1}(\delta_1)$ and $x_{s_2} \in \tilde B (\delta_0)$. If $s_2 = s$, then stop. Otherwise, we define $v_2, \delta_2$ and $s_3$ similarly. The procedure ends when $s_k = s$. Then for each $i = 1, 2, \cdots, k-1$, $\{x_j \}_{j=s_i +1}^{s_{i+1}} \in \mathcal I^{\epsilon} (\delta_i, \delta_0)$. By (4.10), we have
\[
\prod_{j=s_i+1}^{s_{i+1}} Df_{\sigma^j \omega}(x_j) \geq \Lambda_0(\delta_i)^{\zeta_2} \left(\frac{\delta_{i+1}}{\delta_i} \right)^{1 - \frac{1}{\ell}}
\]
for all $i = 1, 2, \cdots, k-2$ and by (4.9), 
\[
\prod_{j=s_{k-1}+1}^{s-1} Df_{\sigma^j \omega}(x_j) \geq \frac{\Lambda_0(\delta_{k-1})^{\zeta_2}}{D(\epsilon)}.
\]
Combining these together, we have
\begin{align*}
Df_{\omega}^s (x)& = Df_{\omega}^{s_1+1} (x) \left( \prod_{i=1}^{k-2} \prod_{j = s_i+1}^{s_{i+1}} Df_{\sigma^j \omega}(x_j) \right) \prod_{j=s_{k-1}+1}^{s-1} Df_{\sigma^j \omega}(x_j)\\
& \geq \frac{\Lambda_0(\epsilon)^{\zeta_2}}{D(\epsilon)} \prod_{j=1}^{k-1} \Lambda_0(\delta_j)^{\zeta_2} \left( \frac{\delta_{k-1}}{\epsilon} \right)^{1 - \frac{1}{\ell}} \geq \frac{\Lambda_0(\epsilon)^{\zeta_2}}{D(\epsilon)}.
\end{align*}
This finishes the proof.

\end{proof}

\subsection{Exponential rate of expansion}

Let $\mathcal R^{\epsilon}(\delta)$ denote the collection of $\epsilon$-random orbits $\{ x_j \}_{j=0}^s = \{ f_{\omega}^j(x) \}_{j=0}^s$ for which $d(x_0, CV) \leq 4 \delta, x_j \notin \tilde B(\delta)$ for $1 \leq j < s$ and $x_s \in \tilde B(2 \delta)$. Let $\eta_0(\delta)$ be the maximal number in $[0, 1]$ such that for any $\{ x_j \}_{j=0}^s  \in \mathcal R^{\epsilon}(\delta)$ with $0 \leq \epsilon \leq \min\{ \epsilon_1, \delta \}$, we have
\begin{equation}
D f_{\omega}^s(x) \geq \frac{e}{D(\delta)} e^{\eta_0(\delta) s}.
\end{equation}

Let $\epsilon_0$ be a small constant such that Proposition 7 holds for all $\epsilon \in (0, \epsilon_0]$ with $\hat \Lambda(\epsilon) > 2e$. Let $\epsilon_1 = \epsilon(\epsilon_0)$ and $\hat \eta = \hat \eta(\epsilon_0)$ be constants determined by Lemma 4.1 for $\delta = \epsilon_0$. Replacing $\epsilon_1$ by a smaller constant if necessary, we assume that $\epsilon_1 < \epsilon_0$. 

For any orbit $\{ f_{\omega}^j(x) \}_{j=0}^s \in \mathcal R^{\epsilon}(\epsilon_0)$ with $0 \leq \epsilon \leq \epsilon_1$, $Df_{\omega}^s(x)$ is exponentially large in $s$. Combining with Proposition 7, we have
\begin{equation}
\eta_0(\epsilon_0) > 0. 
\end{equation}
Let $\kappa: (0, \epsilon_0] \to (0, 1)$ be a continuous function such that
\begin{itemize}
\item[(1)] $\kappa(\epsilon) \to 0$ as $\epsilon \to 0$,

\item[(2)] $\Lambda(\epsilon) : = \hat \Lambda(\epsilon)^{\kappa(\epsilon)} e^{1 - \kappa(\epsilon)} \geq 2e^2$ for all $\epsilon \in (0, \epsilon_0]$ and 

\item[(3)] $\hat \Lambda(\epsilon)^{\kappa(\epsilon)} \to \infty$ as $\epsilon \to 0$,
\end{itemize}
and let
\begin{equation}
\tilde \eta_0(\delta) : = (1 - \kappa(\delta)) \eta_0(\delta).
\end{equation}
Combining the estimate given by Proposition 7 and (4.12), we obtain that for each $\delta \in (0, \epsilon_0]$ and each $\{ f_{\omega}^j(x) \}_{j=0}^s \in \mathcal R^{\epsilon}(\delta)$,
\begin{equation}
Df_{\omega}^s(x) \geq \left( \frac{\hat \Lambda(\delta)}{D(\delta)}\right)^{\kappa(\delta)} \left(  \frac{e}{D(\delta)} e^{\eta_0(\delta) s}\right)^{1 - \kappa(\delta)} \geq \frac{\Lambda(\delta)}{D(\delta)} e^{\tilde \eta_0(\delta) s}.
\end{equation}

\begin{lem}
For each $\delta \in (0, \epsilon_0]$ and $\delta' \in [\delta/2, \delta)$, we have $\eta_0(\delta') \geq \tilde \eta_0(\delta)$.
\end{lem}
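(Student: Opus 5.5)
We need to show: every orbit in $\mathcal R^{\epsilon}(\delta')$ with $0\le \epsilon\le\min\{\epsilon_1,\delta'\}$ satisfies $Df_\omega^s(x)\ge \frac{e}{D(\delta')}e^{\tilde\eta_0(\delta)s}$. Since $\eta_0(\delta')$ is defined as the maximal exponent with this property, this gives $\eta_0(\delta')\ge\tilde\eta_0(\delta)$. The strategy is to cut an orbit of $\mathcal R^\epsilon(\delta')$ at its visits to $\tilde B(\delta)$. The cuts produce consecutive pieces belonging to $\mathcal R^\epsilon(\delta)$, plus a final piece controlled by Proposition 7 at scale $\delta'$. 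We then chain the estimate (4.15) along these pieces.

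Fix $\{x_j\}_{j=0}^s\in\mathcal R^\epsilon(\delta')$. Then $d(x_0,CV)\le 4\delta'\le 4\delta$, $x_j\notin\tilde B(\delta')$ for $1\le j<s$, and $x_s\in\tilde B(2\delta')\subset\tilde B(2\delta)$.

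\emph{Decomposition.} Let $0=t_0<t_1<\dots<t_k=s$ be chosen as follows. The times $t_1,\dots,t_{k-1}$ are exactly the times $1\le j<s$ with $x_j\in\tilde B(\delta)$, listed in increasing order; then set $t_k=s$.

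For each $1\le i<k$, the point $x_{t_i}$ lies in $\tilde B(\delta)\setminus\tilde B(\delta')$. Hence $d(x_{t_i+1},CV)\le\delta+\epsilon\le 2\delta$; this uses the definition of $\tilde B$ together with (C2).

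Consider the pieces $P_0=\{x_j\}_{j=0}^{t_1}$ and $P_i=\{x_j\}_{j=t_i+1}^{t_{i+1}}$ for $i\ge1$. All but possibly the last piece lie in $\mathcal R^\epsilon(\delta)$: the initial point is within $4\delta$ of $CV$, the intermediate points avoid $\tilde B(\delta)$, and the endpoint lies in $\tilde B(\delta)\subset\tilde B(2\delta)$. The last piece ends at $x_s\in\tilde B(2\delta)$, so it also lies in $\mathcal R^\epsilon(\delta)$.

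Also $\epsilon\le\delta'<\delta$, so $\epsilon\le\min\{\epsilon_1,\delta\}$ and (4.15) applies to each piece. A piece of length $\ell_i$ therefore contributes a factor $\frac{\Lambda(\delta)}{D(\delta)}e^{\tilde\eta_0(\delta)\ell_i}$.

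\emph{Between pieces.} The transition step at $x_{t_i}$ has $d_*(x_{t_i},c)\ge\delta'\ge\delta/2$. By (3.1) it contributes $Df\ge D(\delta/2)\gtrsim D(\delta)$, which cancels the $1/D(\delta)$ of the preceding piece up to a constant. Since $\Lambda(\delta)\ge 2e^2$, each interior piece together with its transition step gives a factor at least $e\cdot e^{\tilde\eta_0(\delta)(\ell_i+1)}$, once the constant is absorbed.

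This absorption requires $\Lambda(\delta)\cdot\frac{D(\delta/2)}{D(\delta)}\ge e\cdot e^{\tilde\eta_0}$. It holds because $D(\delta/2)/D(\delta)\asymp 2^{-(1-1/\ell)}$ and $\tilde\eta_0\le1$. If $2e^2$ turns out not to be enough, we enlarge the lower bound in property (2) of $\kappa$, which is at our disposal.

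\emph{Last piece and conclusion.} The last piece retains the factor $\frac{\Lambda(\delta)}{D(\delta)}$. Since $D(\delta')\le D(\delta)$, we have $\frac{\Lambda(\delta)}{D(\delta)}\ge\frac{\Lambda(\delta)}{D(\delta')}\cdot\frac{D(\delta')}{D(\delta)}\ge\frac{e}{D(\delta')}$, again using $D(\delta')/D(\delta)\ge 2^{-(1-1/\ell)}$ and $\Lambda\ge 2e^2$. Multiplying all the pieces gives
\[
Df_\omega^s(x)\ge \frac{e}{D(\delta')}e^{\tilde\eta_0(\delta)s}.
\]

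The main obstacle is the bookkeeping at the junctions. We must verify carefully that the initial condition $d(x_{t_i+1},CV)\le 4\delta$ holds after a perturbed step. We must also check that the derivative at the cut point, which is only at distance $\ge\delta'$ from $c$ rather than $\ge\delta$, loses at most a bounded factor. That bounded factor has to be dominated by the margin $\Lambda(\delta)/e\ge 2e$ built into the definition of $\kappa$. If the constant from (3.1) is too poor, the fallback is to use instead the bound $Df_{\sigma^{t_i}\omega}(x_{t_i})\ge O_1 d(x_{t_i},c)^{\ell-1}$ from (C4).
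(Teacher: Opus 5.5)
Your proof is correct and follows essentially the same argument as the paper. You decompose the orbit at its visits to a scale-$\delta$ critical neighborhood, apply (4.15) to each piece, bound each junction derivative by (3.1), and absorb the constants using $\Lambda(\delta)\ge 2e^2$ and $\tilde\eta_0(\delta)\le 1$. The paper cuts at visits to $\tilde B(2\delta)$ instead of $\tilde B(\delta)$, which makes no difference. The fallback of enlarging the bound on $\kappa$ is unnecessary. The paper uses the exact inequality $D(\delta')/D(\delta)=(\delta'/\delta)\,|\tilde B(\delta)|/|\tilde B(\delta')|\ge \delta'/\delta\ge 1/2$, which holds because $\tilde B(\delta')\subset\tilde B(\delta)$, not merely up to the constant hidden in $\asymp$.
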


\begin{proof}

Given any random orbit $\{ f_{\omega}^j(x) \}_{j=0}^s \in \mathcal R^{\epsilon}(\delta')$ with $0 \leq \epsilon \leq \min\{ \epsilon_1, \delta' \}$, it suffices to prove
\begin{equation}
Df_{\omega}^s(x) \geq \frac{e}{D(\delta')}e^{\tilde \eta_0(\delta) s}.
\end{equation}

Let $1 \leq s_1 < s_2 < \cdots < s_k = s$ be all the positive integers such that $x_{s_i} \in \tilde B(2 \delta)$. Then for each $0 \leq i < k, \{ x_j\}_{j = s_i +1}^{s_{i+1}} \in \mathcal R^{\epsilon}(\delta)$, where we set $s_0 = -1$. By (4.15), we have that for each $0 \leq i < k$,
\[
D_i : = \prod_{j =s_i+1}^{s_{i+1} -1} Df_{\sigma^j \omega}(x_j) \geq \frac{2e^2}{D(\delta)} e^{\tilde \eta_0(\delta) (s_{i+1} -s_i -1)}.
\]
Thus,
\begin{align*}
Df_{\omega}^s(x) &= \prod_{i=0}^{k-1} D_i \prod_{i=1}^{k-1} Df_{\sigma^{s_i} \omega}(x_{s_i}) \geq \frac{(2e^2)^k}{D(\delta)} e^{\tilde \eta_0(\delta) (s-k)} \prod_{i=1}^{k-1} \frac{D f_{\sigma^{s_i} \omega}(x_{s_i})}{D(\delta)}\\
& \geq \frac{(2e^2)^k}{2D(\delta') } e^{\tilde \eta_0(\delta) (s-k)} \prod_{i=1}^{k-1} \frac{1}{2} \geq \frac{e^{2k}}{D(\delta') } e^{\tilde \eta_0(\delta) (s-k)} \geq \frac{e}{D(\delta')}e^{\tilde \eta_0(\delta) s}.
\end{align*}
Where we use the fact that $D(\delta) \leq 2 D(\delta')$, $Df_{\sigma^{s_i} \omega}(x_{s_i}) \geq D(\delta') \geq D(\delta)/2$ and $\tilde \eta_0(\delta) \leq 1$. This completes the proof.

\end{proof}

Now we will prove Theorem 2.

\begin{proof}[Proof of Theorem 2]

(1) Since Lemma 4.3 implies that $\log_{\epsilon} \eta_0(\epsilon) \to 0$ as $\epsilon \to 0$. Hence $\alpha(\epsilon) : = \log_{\epsilon} \tilde \eta_0(\epsilon) \to 0$ as $\epsilon \to 0$. By (4.15), statement (1) holds.

(2) Let $\epsilon_0, \epsilon_1, \hat \eta$ be as above in the beginning of this subsection. Let $\eta$ be the constant given by Proposition 6 for the neighborhood $U = \tilde B(\epsilon_0)$. For each $\delta \in (0, \epsilon_0]$, let $\eta_0(\delta)$ and $\tilde \eta_0(\delta)$ be as above and let 
\[
\eta(\delta) = \min\{ \hat \eta, \eta, \frac{1}{4}, \inf\{ \tilde \eta_0(\delta') : \delta' \in [ \delta, \epsilon_0] \} \}.
\]
Then $\alpha(\delta) : = \log_{\delta} \eta(\delta) \to 0$ as $\delta \to 0$.

Now let $\epsilon \in (0, \epsilon_1]$ be small and consider an $\epsilon$-random orbit $\{ x_j \}_{j=0}^s = \{ f_{\omega}^j(x) \}_{j=0}^s$ with $x_j \notin \tilde B(\epsilon)$ for all $0 \leq j < s$. Let $\rho_j = d_*(x_j, c)$, $\rho_j \geq \epsilon$, $0 \leq j < s$. By Proposition 6, statement (1), if $\rho_j \geq \epsilon_0$ for all $0 \leq j < s$, then the desired estimate holds.

So we assume the contrary. Without loss of generality, we may assume that $\rho_0 < \epsilon_0$ and $\rho_{s-1} < \epsilon_0$. If there exists $s' < s-1$ such that $\rho_{s'} < \rho_{s-1}$, then let $s'$ be the maximal integer with this property. Therefore, the orbit $\{ x_j \}_{j=s'+1}^{s-1} \in \mathcal R^{\epsilon}(\rho_{s-1})$. By (4.15),
\begin{align*}
\prod_{j = s'+1}^{s-1} Df_{\sigma^j \omega}(x_j) & = Df_{\sigma^{s-1} \omega}(x_{s-1}) \prod_{j = s'+1}^{s-2} Df_{\sigma^j \omega}(x_j)\\
& \geq Df_{\sigma^{s-1} \omega}(x_{s-1}) \frac{\Lambda(\rho_{s-1})}{D(\rho_{s-1})} e^{\eta(\epsilon) (s - s' -2)}\\
& = \frac{Df_{\sigma^{s-1} \omega}(x_{s-1})}{D(\rho_{s-1})} \Lambda(\rho_{s-1}) e^{\eta(\epsilon) (s - s' -2)}\\ 
& \geq \Lambda(\rho_{s-1}) e^{\eta(\epsilon) (s - s' -2)}\\
& \geq 2 e^2 \cdot e^{\eta(\epsilon) (s - s' -2)} > 2 e^{\eta(\epsilon) (s - s' )},
\end{align*}
where we use the fact that $Df_{\sigma^{s-1} \omega}(x_{s-1}) \geq D(\rho_{s-1})$, $\Lambda(\rho_{s-1}) \geq 2e^2$ and $\eta(\delta) \leq 1$.

Now it suffices to prove the desired estimate under the further assumption that $\rho_{s-1} \leq \rho_j$ for each $0 \leq j < s$. Let $s_0 < s_1 < \cdots < s_k = s-1$ be a sequence of integers such that $s_0 = 0$ and such that for each $0 \leq i < k$, $s_{k+1}$ is the minimal integer such that $\rho_{s_{i+1}} \leq \rho_{s_i}$. Then for $0 \leq i < k$, $\{ x_j \}_{j = s_i+1}^{s_{i+1}} \in \mathcal R^{\epsilon} (\rho_{s_i})$. So by (4.15) again, we have
\[
\prod_{j = s_i +1}^{s_{i+1} -1} Df_{\sigma^j \omega}(x_j) \geq \frac{\Lambda(\rho_{s_i})}{D(\rho_{s_i})} e^{\tilde \eta_0(\rho_{s_i}) (s_{i+1} -s_i -1)} \geq \frac{2e^{\eta(\delta) (s_{i+1} -s_i)} }{D(\rho_{s_i})}.
\]
Therefore,
\begin{align*}
Df_{\omega}^s(x)& \geq Df_{\omega}(x_0) \prod_{i = 1}^k \frac{2 Df_{\sigma^{s_i} \omega}(x_{s_i} )}{D(\rho_{s_{i-1}})} \cdot e^{\eta(\epsilon) (s -1)} \\
& \geq D(\rho_0) \prod_{i = 1}^k \frac{2 D(\rho_{s_i})}{D(\rho_{s_{i-1}})} \cdot e^{\eta(\epsilon) (s -1)}  \geq A e^{\eta(\epsilon) s} \rho_{s-1}^{1 - \frac{1}{\ell}},
\end{align*}
where $A>0$ is a constant. Since $\rho_{s-1} \geq \epsilon$, then the desired estimate holds.

\end{proof}

\subsection{More properties of return maps to $\tilde B(\epsilon)$}

We first prove the `small total distortion' result for iterates of random systems.

\begin{prop}
Let $\{f_t \}_{t \in [-1, 1]} $ be an admissible family with $f_0 =f \in \mathcal S_1$. For each $\epsilon > 0$ small enough there exists $\theta(\epsilon) \geq 0$ such that $\lim_{\epsilon \to 0} \theta(\epsilon) =0$ and such that the following holds. For $x \in I_c$ and $\omega \in \Omega_{\epsilon}$, if $n \geq 1$ is an integer such that $f_{\omega}^j(x) \notin \tilde B(\epsilon), 0 \leq j \leq n-1$, and $f_{\omega}^n(x) \in \tilde B(\epsilon)$, then
\begin{equation}
A(x, \omega, n) |\tilde B(\epsilon)| \leq \theta(\epsilon) Df_{\omega}^n(x).
\end{equation}
\end{prop}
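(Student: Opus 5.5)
We rewrite the quantity to be bounded as a sum over individual times:
\[
\frac{A(x,\omega,n)}{Df_\omega^n(x)}=\sum_{i=0}^{n-1}\frac{1}{Df^{n-i}_{\sigma^i\omega}(x_i)\, d(x_i,c)},\qquad x_i=f_\omega^i(x).
\]
The goal is then to show that $|\tilde B(\epsilon)|$ times this sum is at most $\theta(\epsilon)\to0$. Each term is controlled by Theorem 2(2), applied to the tail orbit $\{x_j\}_{j=i}^{n}$. For this tail, $x_j\notin\tilde B(\epsilon)$ for $i\le j<n$, which is exactly the hypothesis of Theorem 2(2). It gives
\[
Df^{n-i}_{\sigma^i\omega}(x_i)\ge A\epsilon^{1-1/\ell}e^{\epsilon^{\alpha(\epsilon)}(n-i)}.
\]
Since $x_n\in\tilde B(\epsilon)$, we can also use Theorem 2(1) or Proposition 7 to gain a factor $\Lambda(\epsilon)$ whenever $x_i$ is near $CV$. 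Combining, each term is bounded by something like
\[
\frac{|\tilde B(\epsilon)|}{d(x_i,c)}\cdot\frac{e^{-\epsilon^{\alpha}(n-i)}}{A\epsilon^{1-1/\ell}}.
\]
Here $|\tilde B(\epsilon)|\asymp\epsilon^{1/\ell}$ and $d(x_i,c)\gtrsim\epsilon^{1/\ell}$.

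The naive estimate therefore gives terms of size $\epsilon^{-(1-1/\ell)}e^{-\epsilon^\alpha(n-i)}$, which sum to roughly $\epsilon^{-(1-1/\ell)-\alpha}$. That is far too large, so a finer decomposition is needed. I plan to split the orbit $x_0,\dots,x_n$ at the successive "returns" $s_0<s_1<\dots<s_k=n$ used in the proof of Theorem 2(2). These are the times where $\rho_j=d_*(x_j,c)$ reaches a new minimum, working backwards from $n$.

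Between consecutive returns, the piece $\{x_j\}_{s_i+1}^{s_{i+1}}$ lies in $\mathcal R^\epsilon(\rho_{s_i})$. On such a piece I handle the contribution in two stages, using the preferred binding period $M_v(\rho_{s_i})$ from Proposition 4.
\begin{itemize}
\item[(a)] During the binding period, $x_j$ shadows $f^{j}(v)$, so the factors $d(x_j,c)$ and the derivatives are comparable to those along the critical orbit. Estimate (3.7), $A(v,f,M)\le\theta/\delta$, together with (3.15) then bounds the partial sum by $\theta/\rho_{s_i}$ times $Df^{M}$.
\item[(b)] After the binding period, the free part has exponential expansion (Lemma 4.1) and stays at distance $\ge\delta_0$-ish or in $\mathcal L$. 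Its contribution is a bounded geometric sum relative to the derivative.
\end{itemize}

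Multiplying by the remaining derivative up to time $n$, each block contributes at most
\[
\frac{C\theta}{\rho_{s_i}\,Df^{n-s_i}_{\sigma^{s_i}\omega}(x_{s_i})}.
\]
By Proposition 7 and Lemma 4.2 this is at most
\[
C\theta\,\frac{D(\rho_{s_i})}{\rho_{s_i}}\,\Lambda_0(\rho_{s_i})^{-\zeta_2}\Bigl(\frac{\epsilon}{\rho_{s_i}}\Bigr)^{1-1/\ell}\cdot\frac{1}{\Lambda(\epsilon)}.
\]
Since $D(\rho)/\rho=|\tilde B(\rho)|^{-1}$, multiplying by $|\tilde B(\epsilon)|$ gives, per block, roughly $\Lambda_0(\rho_{s_i})^{-\zeta_2}(\epsilon/\rho_{s_i})^{\gamma}$ for some $\gamma>0$. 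Summing over blocks, the $\rho_{s_i}$ decrease toward $\epsilon$ and the accumulated derivative factors $\prod\Lambda_0(\rho_{s_j})^{\zeta_2}$ grow. This should yield a geometric-type series dominated by the last block, so the total is $\lesssim \Lambda_0(\epsilon)^{-\zeta_2}+\Lambda(\epsilon)^{-1}$ plus a term from the free part of size $\epsilon^{1/\ell}/\delta_0$. We define $\theta(\epsilon)$ to be this bound; it tends to $0$ as $\epsilon\to0$.

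The main obstacle is the bookkeeping in the summation over blocks. The blocks with $\rho_{s_i}$ much larger than $\epsilon$ need enough decay to be summable uniformly. My first attempt would exploit the factor $(\epsilon/\rho_{s_i})^{1-1/\ell}$ coming from (4.10) together with $\Lambda_0(\rho)\ge\Lambda_0(\delta_0)>2e$ to get geometric decay in $i$. If that proves insufficient, I would use the monotone ordering of the $\rho_{s_i}$ to group blocks dyadically by scale.
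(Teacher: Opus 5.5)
\textbf{There is a genuine gap, in step (b).}

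Between two consecutive new-minimum times $s_i<s_{i+1}$, the orbit only avoids $\tilde B(\rho_{s_i})$. After the preferred binding period $M_v(\rho_{s_i})$ it may re-enter $\tilde B(\delta_0)\setminus\tilde B(\rho_{s_i})$ any number of times, at scales arbitrarily close to $\rho_{s_i}$ (so possibly close to $\epsilon$). Each such free return contributes terms $Df^j_\omega(x)/d(x_j,c)$ with $d(x_j,c)$ as small as $|\tilde B(\rho_{s_i})|$, and each starts a new bound period. So the free part is not ``outside $\tilde B(\delta_0)$'', and Lemma 4.1 does not rescue it, for two reasons:
\begin{itemize}
\item[(i)] Its $\epsilon(\delta)$ comes from Proposition 6 applied with a \emph{fixed} $\delta$, so it cannot be invoked at the moving scale $\rho_{s_i}\geq\epsilon$.
\item[(ii)] It only bounds derivatives from below, not the $A$-sum. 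Converting it (or Theorem 2(2) at scale $\rho_{s_i}$) into a bound on $A$ just reproduces, at scale $\rho_{s_i}$, the naive estimate you already discarded.
\end{itemize}
Controlling $A$ along these free stretches is essentially the statement itself at larger scales, and your plan has no mechanism for it.

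By contrast, the block summation you single out as the main obstacle is harmless. By (4.15), each block from $s_j$ to $s_{j+1}$ contributes a factor at least $\Lambda(\rho_{s_j})\geq 2e^2$ to $Df^{n-s_i}_{\sigma^{s_i}\omega}(x_{s_i})$, which gives geometric damping. Your displayed per-block bound is also not justified. It amounts to $Df^{n-s_i}_{\sigma^{s_i}\omega}(x_{s_i})\gtrsim\Lambda_0(\rho_{s_i})^{\zeta_2}\Lambda(\epsilon)/D(\epsilon)$, but Proposition 7 and Lemma 4.2 only give order $\prod_{j\geq i}\Lambda(\rho_{s_j})$, because the first factor $Df_{\omega_{s_i}}(x_{s_i})\asymp D(\rho_{s_i})$ is small. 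The correct size of the bound-period contribution, after multiplying by $|\tilde B(\epsilon)|$, is $(\epsilon/\rho_{s_i})^{1/\ell}\prod_{j\geq i}\Lambda(\rho_{s_j})^{-1}$, which would still be enough.

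\textbf{How the paper handles this.} It uses induction on scales instead of a one-shot decomposition. It lets $\theta(\epsilon)$ be the optimal constant, finite for each fixed $\epsilon$ by the crude bound from Theorem 2(2). It then proves $\theta(\epsilon/2)\leq\kappa(\theta(\epsilon)+\tau(\epsilon))$, with $\kappa^2=\sup_{\delta\leq\delta_*}|\tilde B(\delta/2)|/|\tilde B(\delta)|<1$ and $\tau(\epsilon)\to0$. The argument cuts an orbit in $\mathcal L^{\epsilon/2}(\epsilon/2)$ at its visits to $\tilde B(\epsilon)$:
\begin{itemize}
\item[(i)] Each stretch between visits lies in $\mathcal L^{\epsilon}(\epsilon)$ and is bounded using $\theta(\epsilon)$ itself. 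This automatically absorbs all returns at scales above $\epsilon$, which is exactly what your step (b) is missing.
\item[(ii)] The visit terms form a geometric series of ratio $2/\Lambda(\epsilon)$, by Theorem 2(1).
\end{itemize}

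\textbf{If you want to keep a direct argument,} you would need to do three things. First, cut at \emph{every} free return into $\tilde B(\delta_0)$, as in the proof of Proposition 7. Second, derive from each such return time $r$ a lower bound for $Df^{n-r}_{\sigma^r\omega}(x_r)$ of order $\prod\Lambda_0(\rho)^{\zeta_2}$, the product running over the later returns; this telescopes via (4.9)--(4.10). Third, compare each term directly with $Df^n_\omega(x)/|\tilde B(\epsilon)|$, not with the preceding return term, since consecutive return terms need not increase when the orbit moves to a shallower scale. This yields a bound of the form $\sup_{\rho\in[\epsilon,\delta_0]}(\epsilon/\rho)^{1/\ell}\Lambda_0(\rho)^{-\zeta_2}+C(\delta_0)(\epsilon/\delta_0)^{1/\ell}$, which does tend to $0$.
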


\begin{proof}

Consider an orbit $\{f_{\omega}^j(x) \}_{j = 0}^{n} \in \mathcal L^{\epsilon}(\epsilon)$ for some $n \geq 1$. By Theorem 2 statement (2), the random system $f_{\omega}$ is uniformly expanding outside $\tilde B(\epsilon)$. Then
\[
\frac{A(x, \omega, n) }{Df_{\omega}^n(x)} \leq \frac{2}{\tilde B(\epsilon)} \sum_{i=0}^{n-1} \frac{1}{Df_{\sigma^i \omega}^{n-i}(f_{\omega}^i(x))} \leq \frac{2 \epsilon^{\frac{1}{\ell}-1}}{A|\tilde B(\epsilon)|} \sum_{i=0}^{n-1} \frac{1}{ e^{\epsilon^{\alpha(\epsilon)}(n-i)}}  \leq \frac{C'}{|\tilde B(\epsilon)|},
\]
the constant $C'$ depends only on $\epsilon$ and $\ell$. Therefore, given any $\epsilon > 0$, there exists a minimal non-negative integer $\theta(\epsilon)$ such that $(4.17)$ holds for each orbit in $\mathcal L^{\epsilon}(\epsilon)$. Using the same argument, for each $\epsilon_0 > 0$, $\theta(\epsilon)$ is bounded from above for $\epsilon \geq \epsilon_0$.

We aim to show that $\theta(\epsilon) \to 0$ as $\epsilon \to 0$. It suffices to prove that for $\epsilon > 0$ small enough, we have
\[
\theta(\epsilon /2) \leq \kappa (\theta(\epsilon) + \tau (\epsilon) )\mbox{ where \ } 
\kappa^2 = \sup_{\delta \in (0, \delta_*]} \frac{|\tilde B(\delta/2)|}{|\tilde B(\delta)|} < 1,
\]
and $\tau(\epsilon) \to 0$ as $\epsilon \to 0$.

Now consider an orbit $\{ f_{\omega}^j(x)\}_{j=0}^n \in \mathcal L^{\epsilon/2}(\epsilon/2)$. Let $0 \leq s_1 < s_2 < \cdots < s_m = n$ be all the integers such that $f_{\omega}^{s_i}(x) \in \tilde B(\epsilon)$. Let $\rho_i = d_*(f_{\omega}^{s_i}(x), c)$. We may assume that $\epsilon \leq \delta_*$, then $f_{\omega}^{s_i}(x) \in \tilde B(\epsilon) \subset \tilde B(\delta_*)$ and hence  $\rho_i = d (f_{\omega}^{s_i+1}(x), CV)$ for each $i = 1, 2, \cdots, m-1$. Moreover, $\rho_i \in [\epsilon/2, \epsilon]$. By (3.1), we have
\[
Df_{\sigma^{s_i} \omega}(f_{\omega}^{s_i}(x)) \geq D(d_*(f_{\omega}^{s_i}(x), c)) = D(\rho_i).
\]
Taking $y = f_{\omega}^{s_i+1}(x)$ and $k = s_{i+1} - s_i -1$, by the choice of $s_i$, we have that for $0 \leq j < k$,
\[
f_{\sigma^{s_i +1}\omega}^j (y) \notin \tilde B(\epsilon) \text{ while } f_{\sigma^{s_i +1}\omega}^k(y) = f_{\omega}^{s_{i+1}}(x) \in \tilde B(\epsilon). 
\]
Then the orbit $\{f_{\sigma^{s_i +1}\omega}^j (y) \}_{j=0}^k \in \mathcal L^{\epsilon}(\epsilon)$ with $d(y, CV) \leq 4\epsilon$. By Theorem 2 statement (1), we have  
\[
Df_{\sigma^{s_i +1}\omega}^k (y) = Df_{\sigma^{s_i +1}\omega}^{s_{i+1} - s_i -1}(f_{\omega}^{s_i+1}(x)) \geq \frac{\Lambda(\epsilon)}{D(\epsilon)} e^{\epsilon^{\alpha(\epsilon)}k} \geq \frac{\Lambda(\epsilon)}{D(\epsilon)}.
\]
By the chain rule,
\begin{align*}
Df_{\sigma^{s_i} \omega}^{s_{i+1} - s_i }(f_{\omega}^{s_i}(x)) & = Df_{\sigma^{s_i +1}\omega}^{s_{i+1} - s_i -1}(f_{\omega}^{s_i+1}(x)) \cdot Df_{\sigma^{s_i}\omega}(f^{s_i}(x)) \\ & 
\geq \frac{\Lambda(\epsilon) D(\rho_i)}{D(\epsilon)} 
= \frac{\Lambda(\epsilon) \rho_i}{\delta} \frac{|\tilde B(\epsilon)|}{|\tilde B(\rho_i)|} \geq \frac{\Lambda(\epsilon) }{2} \frac{|\tilde B(\epsilon)|}{|\tilde B(\rho_i)|},
\end{align*}
which implies 
\[
\frac{Df_{\omega}^{s_i}(x)}{|\tilde B(\epsilon)|} \leq \frac{Df_{\omega}^{s_i}(x)}{|\tilde B(\rho_i)|} \leq \frac{2}{\Lambda(\epsilon)} \frac{Df_{\omega}^{s_{i+1}}(x)}{|\tilde B(\epsilon)|}.
\]
Since $\rho_i \in [\epsilon/2, \epsilon]$, $d(f_{\omega}^{s_i}(x), c) \asymp |\tilde B(\epsilon)|$, there exists a universal constant $C>0$ (depending only on $\ell$) such that by iterating the above result, 
\begin{equation}
\frac{Df_{\omega}^{s_i}(x)}{d(f_{\omega}^{s_i}(x), c)} \leq C \frac{Df_{\omega}^{s_i}(x)}{|\tilde B(\epsilon)|} \leq C \tau_1(\epsilon)^{m-i} \frac{Df_{\omega}^{n}(x)}{|\tilde B(\epsilon)|}, 
\end{equation}
where $\tau_1(\epsilon) = 2/\Lambda(\epsilon)$.

Since $\{f_{\sigma^{s_i+1}\omega}^j(y) \}_{j=0}^k = \{ f_{\omega}^j(x) \}_{j=s_i + 1}^{s_{i+1}} \in \mathcal L^{\epsilon}(\epsilon)$, we have
\[
\sum_{j = s_i + 1}^{s_{i+1} - 1} \frac{Df_{\sigma^{s_i+1} \omega}^j(x)}{ d (f_{\omega}^j(x), c)} \leq \theta(\epsilon) \frac{Df_{\omega}^{s_{i+1}}(x)}{|\tilde B(\epsilon)|}.
\]
Similarly, if $s_1 \neq 0$, then
\[
\sum_{j = 0}^{s_1 - 1} \frac{Df_{\omega}^j(x)}{ d_{\omega} (f^j(x), c)} \leq \theta(\epsilon) \frac{Df^{s_1}(x)}{|\tilde B(\epsilon)|}.
\]
It follows that 
\[
A(x, \omega, n) \leq (1 + \theta(\epsilon)) \sum_{i=1}^{m-1} \frac{Df_{\omega}^{s_i}(x)}{d(f_{\omega}^{s_i}(x), c)} + \theta(\epsilon) \frac{Df_{\omega}^n(x)}{|\tilde B(\epsilon)|}.
\]
By (4.18) we have
\[
A(x, \omega, n) \leq  [ \tau(\epsilon) ( 1 + \theta(\epsilon)) + \theta(\epsilon) ] \frac{Df_{\omega}^n(x)}{|\tilde B(\epsilon)|} =  [ \tau(\epsilon) + (1 + \tau(\epsilon)) \theta(\epsilon) ] \frac{Df_{\omega}^n(x)}{|\tilde B(\epsilon)|},
\]
where
\[
\tau(\epsilon) = \frac{C \tau_1(\epsilon)}{1 - \tau_1(\epsilon)}.
\]
Note that for $\epsilon > 0$ small enough, we have $1 + \tau(\epsilon) < \kappa^{-1}$. Since $|\tilde B (\epsilon/2)| \leq \kappa^2 |\tilde B(\epsilon)|$, it follows that
\[
A(x, \omega, n) \leq \kappa (\theta(\epsilon) + \tau(\epsilon)) \frac{Df_{\omega}^n(x)}{|\tilde B(\epsilon/2)|}.
\]
This finishes the proof.

\end{proof}

\begin{prop}
Let $\{f_t \}_{t \in [-1, 1]} $ be an admissible family with $f_0 =f \in \mathcal S_1$. Given any $0 < \xi < \xi' \leq 2$, we have the following holds for each $\epsilon>0$ small enough. For any $\omega \in \Omega_{\epsilon}$ and any integer $s \geq 1$, if $W \subset I^{\pm}$ is an interval intersecting $\tilde B(\xi \epsilon)$ and $f_{\omega}^s(W) \subset \tilde B(2 \epsilon)$, then $W \subset \tilde B(\xi' \epsilon)$.
\end{prop}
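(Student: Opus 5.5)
We argue by contradiction and combine the expansion estimates of Theorem 2 with the Koebe principle, in the spirit of \cite{S}[Lemma 4.5]. Fix $0<\xi<\xi'\le 2$, an $\omega\in\Omega_\epsilon$, an integer $s\ge1$ and an interval $W\subset I^{\pm}$ meeting $\tilde B(\xi\epsilon)$ with $f_\omega^s(W)\subset\tilde B(2\epsilon)$. We may assume $W$ is maximal, i.e.\ a component of $(f_\omega^{s})^{-1}(\tilde B(2\epsilon))$ inside $I^\pm$. Suppose $W\not\subset\tilde B(\xi'\epsilon)$. Then $W$ contains a subinterval $W_0$ with one endpoint in $\partial\tilde B(\xi\epsilon)$ and the other in $\partial \tilde B(\xi'\epsilon)$. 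Consequently $f(W_0)$, and hence $f_{\omega_0}(W_0)$ for $\epsilon$ small (by (C2)), has length at least $(\xi'-\xi)\epsilon-2\epsilon\cdot o(1)\gtrsim(\xi'-\xi)\epsilon$, and it lies within distance $2\epsilon$ of $CV$.

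\textbf{Step 1: reduce to a first-entry chain.} Let $y\in W_0$ and write $y_j=f_\omega^j(y)$. Define $1\le s'\le s$ to be the first time at which some point of $f^{s'}_\omega(W_0)$ enters $\tilde B(\epsilon)$, or at which $f_\omega^{s'}(W_0)$ meets $\tilde B(2\epsilon)$ with the image contained in it; the case $s'=s$ is allowed. For $1\le j<s'$, the interval $f_\omega^j(W_0)$ avoids $\tilde B(\epsilon)$, and $f^{s'}_\omega$ is a diffeomorphism on $W_0$ because it never meets $c$. The image $f_\omega^{s'}(W_0)$ has length at most $|\tilde B(2\epsilon)|\asymp\epsilon^{1/\ell}$: in the first case the image is an interval that has just entered a neighbourhood, and the sets $f^j_\omega(W_0)$ are contained in components of preimages of $\tilde B(2\epsilon)$, so they cannot straddle $c$.

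\textbf{Step 2: expansion lower bound.} For points of $f_{\omega_0}(W_0)$, which satisfy $d(\cdot,CV)\le 4\epsilon$, Theorem 2(1) (or Proposition 7) gives
\[
Df^{s'-1}_{\sigma\omega}\ge \frac{\Lambda(\epsilon)}{D(\epsilon)}
\]
at every point whose orbit lands in $\tilde B(2\epsilon)$ at time $s'-1$. To transfer this to the whole interval $f_{\omega_0}(W_0)$ we use the distortion control of Lemma 3.5 together with Proposition 9: the small total distortion $A\,|\tilde B(\epsilon)|\le\theta(\epsilon)\,Df^n_\omega$ implies that the Lemma 3.5 interval around a point, of radius $\theta_0/A$, maps onto a set of size $\gtrsim\theta(\epsilon)^{-1}|\tilde B(\epsilon)|$, which is much larger than $\tilde B(2\epsilon)$. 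Hence $f_\omega^{s'-1}$ restricted to $f_{\omega_0}(W_0)$ has distortion bounded by $e$.

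\textbf{Step 3: contradiction.} Combining the two steps,
\[
|f^{s'}_\omega(W_0)|\ \ge\ e^{-1}\frac{\Lambda(\epsilon)}{D(\epsilon)}\,(\xi'-\xi)\epsilon\ \asymp\ \Lambda(\epsilon)(\xi'-\xi)\,|\tilde B(\epsilon)|,
\]
which exceeds $|\tilde B(2\epsilon)|$ once $\epsilon$ is small, since $\Lambda(\epsilon)\to\infty$. This contradicts Step 1. If instead $s'<s$, we iterate: $f_\omega^{s'}(W)$ is again an interval meeting $\tilde B(\epsilon)$ that is mapped into $\tilde B(2\epsilon)$ at time $s-s'$. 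Induction on $s$, using the case $\xi=1$ and a fixed $\xi'\in(1,2)$, shows $f_\omega^{s'}(W)\subset\tilde B(\xi'\epsilon)\subset \tilde B(2\epsilon)$, so the first-entry argument applies to the original $W$ with $s'$ in place of $s$.

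\textbf{Main obstacle.} The hardest step is Step 2: obtaining a uniform derivative lower bound on the entire interval $f_{\omega_0}(W_0)$, not just at a single orbit. This is needed to rule out the interval being swallowed near $c$ or having points that land early. I would try first to use Proposition 9 at the endpoint that lands first, combined with Lemma 3.5, and, failing that, to fall back on the one-sided Koebe principle for the random composition, with the negative Schwarzian replaced by (C3)-type distortion.
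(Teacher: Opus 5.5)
Your argument is essentially the one the paper relies on (Sublemma 4.8 of \cite{LR}, run with Theorem 2): argue by contradiction, stop at a first entry time $s'$, use induction on $s$ to get $f_\omega^{s'}(W)\subset\tilde B(2\epsilon)$, and then Theorem 2(1) forces $|f_\omega^{s'}(W_0)|\gtrsim\Lambda(\epsilon)(\xi'-\xi)|\tilde B(\epsilon)|\gg|\tilde B(2\epsilon)|$. Two repairs are needed. First, the bound $|f_{\omega_0}(W_0)|\gtrsim(\xi'-\xi)\epsilon$ should come from (C4), since $Df_{\omega_0}\asymp d(\cdot,c)^{\ell-1}\asymp Df$ on $W_0$, whereas (C2) only gives an additive error of $2\epsilon$. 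Second, your Step 2 is unnecessary, and Proposition 9 does not cover landings in $\tilde B(2\epsilon)\setminus\tilde B(\epsilon)$; instead, define $s'$ as the first time $f_\omega^j(W)$ (not $W_0$) meets $\tilde B(\epsilon)$, or $s'=s$ if it never does, and then after the induction every point of $f_{\omega_0}(W_0)$ satisfies the hypotheses of Theorem 2(1), so the mean value theorem gives the uniform lower bound directly.
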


\begin{proof}

This is sublemma 4.8 in \cite{LR}, the proof remains valid when using Theorem 2 instead of Proposition 3.1 therein.

\end{proof}

The following proposition provides us nice sets. The proof is similar to the deterministic case in \cite{BRSS} and \cite{CD}.

\begin{prop}
Let $\{f_t \}_{t \in [-1, 1]} $ be an admissible family with $f_0 =f \in \mathcal S_1$. If $0 < \epsilon \leq \delta$ are small enough, then there exists a nice set $V$ for $\epsilon$-random perturbations such that for each $\omega \in \Omega_{\epsilon}$, we have
\[
\tilde B(\delta) \subset V^{\omega} \subset \tilde B(2\delta).
\]
\end{prop}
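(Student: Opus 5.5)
The nice set will be built as the set of points that never return into a slightly smaller critical neighbourhood. This is the random analogue of the deterministic construction in \cite{BRSS} and \cite{CD}. For each $\omega \in \Omega_{\epsilon}$, consider the family of intervals
\[
\mathcal W^{\omega} = \{ W : W \text{ is a component of } (f_{\omega}^s)^{-1}(\tilde B(\delta)) \text{ for some } s \geq 1, \ W \cap \tilde B(\delta) \neq \emptyset \}.
\]
I would define $V^{\omega}$ to be the component containing $c$ of $\tilde B(\delta) \cup \bigcup_{W \in \mathcal W^{\omega}} W$, that is, $\tilde B(\delta)$ enlarged by all pullbacks that touch it. Measurability in $\omega$ follows because everything is a countable union of sets defined by finitely many continuous coordinates.

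\textbf{Step 1: containment.} The lower bound $\tilde B(\delta) \subset V^{\omega}$ holds by construction. For the upper bound I apply Proposition 10 with $2\delta$ in the role of $2\epsilon$. Concretely, if $W$ meets $\tilde B(\delta)$ and $f_{\omega}^s(W) \subset \tilde B(\delta) \subset \tilde B(2\delta)$, then $W \subset \tilde B(\xi'\delta)$ with $\xi' \in (1,2)$, say $\xi' = 3/2$. Since $\epsilon \leq \delta$, the hypotheses of Proposition 10 (or its proof, via Theorem 2) remain valid at scale $\delta$. Strictly, Proposition 10 is stated with the noise level equal to the scale; one checks that its proof (sublemma 4.8 in \cite{LR}) only uses $\epsilon \le$ scale.

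The pullbacks could chain, however: $W_1$ meets $\tilde B(\delta)$ and $W_2$ meets $W_1$. To control this I use the standard trick of taking the union of $\tilde B(\delta)$ with pullbacks of $\tilde B(\delta_1)$, where $\delta_1 \in (\delta, 2\delta)$ is iteratively increased. I then argue that each added interval stays in $\tilde B(\xi'' \delta)$ with $\xi''<2$, applying Proposition 10 at most finitely many times with a geometric sequence $1 < \xi_1 < \xi_2 < \cdots < 2$. Alternatively, one can note that the union of intervals all meeting the common interval $\tilde B(\delta)$ is already an interval contained in $\tilde B(3\delta/2)$, so no chaining is needed. I would first check that alternative, since the definition above only adds intervals that touch $\tilde B(\delta)$ itself.

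\textbf{Step 2: niceness.} Let $x \in \partial V^{\omega}$ and suppose $f_{\omega}^n(x) \in V^{\sigma^n\omega}$. There are two cases.
\begin{itemize}
\item If $f_{\omega}^n(x) \in \tilde B(\delta)$, then the component of $(f_{\omega}^n)^{-1}\tilde B(\delta)$ containing $x$ is an open interval meeting $V^{\omega}$. Because it contains $x$, a boundary point of $V^{\omega}$ that lies in $\tilde B(\delta)$ or in the closure of some $W$, this component intersects $\tilde B(\delta)$ or some $W \in \mathcal W^{\omega}$.
\item If $f_{\omega}^n(x)$ lies in some $W' \in \mathcal W^{\sigma^n\omega}$ with $f_{\sigma^n\omega}^{s'}(W') \subset \tilde B(\delta)$, then $x$ lies in a component of $(f_{\omega}^{n+s'})^{-1}\tilde B(\delta)$.
\end{itemize}
In both cases $x$ lies in an open pullback of $\tilde B(\delta)$ under $f_\omega^{m}$, for $m = n$ or $m = n+s'$. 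The key point is that this pullback must meet $\tilde B(\delta)$, or at least meet $V^\omega$, so that it is absorbed into $V^{\omega}$. Absorption would make $x$ interior, a contradiction.

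\textbf{Main obstacle.} The delicate part is precisely this absorption, because a pullback $W$ may meet only a previously added interval $W_1$ and not $\tilde B(\delta)$ itself. This forces the definition to be closed under the relation ``meets $V^\omega$''. I would therefore define $V^{\omega}$ as the smallest set containing $\tilde B(\delta)$ that is closed under adding pullback components meeting it, obtained by transfinite (in fact countable) iteration. Proposition 10 must then control the growth. The idea is to apply it with $W$ meeting $\tilde B(\xi\delta)$ and image in $\tilde B(2\delta)$, giving $W\subset\tilde B(\xi'\delta)$. To prevent the scales from creeping up to $2$, I would instead pull back the larger set $V^{\sigma^s\omega} \subset \tilde B(2\delta)$. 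This gives a self-referential fixed-point definition: $V$ is the union of $\tilde B(\delta)$ with components of $(f^s_\omega)^{-1}(V^{\sigma^s\omega})$ that meet $\tilde B(\delta)$. I would solve it by monotone iteration from $V_0 = \tilde B(\delta)\times\Omega_\epsilon$, where Proposition 10 with $\xi = 1$ and $\xi' = 3/2$ keeps every iterate inside $\tilde B(3\delta/2)$. The limit then satisfies niceness by the argument of Step 2.
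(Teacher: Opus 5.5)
Your containment argument for the final (fixed-point) definition is sound, and cleaner than the paper's. Every interval you add meets $\tilde B(\delta)$ and is mapped into $V^{\sigma^s\omega}\subset\tilde B(3\delta/2)$, so Proposition 10 with $\xi=1$, $\xi'=3/2$ applies directly. You also need not re-inspect its proof: apply it at scale $\delta$, since $\Omega_{\epsilon}\subset\Omega_{\delta}$.

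The gap is niceness. Your fixed-point rule only absorbs components of $(f^s_{\omega})^{-1}(V^{\sigma^s\omega})$ that meet $\tilde B(\delta)$. But for $x\in\partial V^{\omega}$ with $f^n_{\omega}(x)\in V^{\sigma^n\omega}$, the argument of Step 2 produces a pullback component $W\ni x$ that is only known to meet $V^{\omega}$. This is exactly the obstacle you named. Saying that ``the limit then satisfies niceness by the argument of Step 2'' does not resolve it, since nothing in the fixed-point definition forces $W$ to meet $\tilde B(\delta)$.

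The missing absorption lemma is true and provable by strong induction on $n$: every component $W$ of $(f^n_{\omega})^{-1}\tilde B(\delta)$ that meets $V^{\omega}$ lies in $V^{\omega}$.
\begin{itemize}
\item If $W$ meets $\tilde B(\delta)$, this is the fixed-point property.
\item Otherwise $W$ meets some component $W'$ of $(f^s_{\omega})^{-1}(V^{\sigma^s\omega})$ that meets $\tilde B(\delta)$.
\item If $s>n$, then $f^n_{\omega}(W')$ lies in a component of $(f^{s-n}_{\sigma^n\omega})^{-1}(V^{\sigma^s\omega})$ meeting $f^n_{\omega}(W)\subset\tilde B(\delta)$. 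By the fixed-point property at $\sigma^n\omega$, that component lies in $V^{\sigma^n\omega}$. Hence $W\cup W'$ lies in one component of $(f^n_{\omega})^{-1}(V^{\sigma^n\omega})$, and this component meets $\tilde B(\delta)$.
\item If $s\le n$, the induction hypothesis at $\sigma^s\omega$ with time $n-s$ gives $f^s_{\omega}(W)\subset V^{\sigma^s\omega}$, so $W\subset W'$.
\end{itemize}
Since each $V^{\omega'}$ is contained in $\bigcup_{i\ge0}f_{\omega'}^{-i}\tilde B(\delta)$, a boundary point $x$ entering $V^{\sigma^n\omega}$ would lie in an open pullback of $\tilde B(\delta)$ that meets $V^{\omega}$. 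By the lemma it would then lie in $V^{\omega}$, contradicting openness. The lemma also shows that your $V$ coincides with the paper's set.

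The paper avoids this issue by defining $V^{\omega}$ directly as the component of $c$ in $\bigcup_{i\ge0}f_{\omega}^{-i}\tilde B(\delta)$, taken as the increasing union of $V^{\omega}(n)$. For that set, niceness is immediate because it is closed under chaining. Its work goes instead into containment, by induction on $n$. Take a component $J$ of $V^{\omega}(n+1)\setminus\tilde B(\delta)$ and the minimal $m$ with $f^{m+1}_{\omega}(J)\cap\tilde B(\delta)\neq\emptyset$. Then $f^{m+1}_{\omega}(J)\subset V^{\sigma^{m+1}\omega}(n-m)\subset\tilde B(2\delta)$, and Proposition 10 gives $J\subset\tilde B(2\delta)$.
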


\begin{proof}

Assume that $0 < \epsilon \leq \delta$ are small. By Proposition 10, for any $\omega \in \Omega_{\epsilon} \subset \Omega_{\delta}$, if $J$ is an interval intersecting $\tilde B(\delta)$ and $f_{\omega}^n(J) \subset \tilde B(2 \delta)$ for some integer $n \geq 1$, then $J \subset \tilde B(2 \delta)$.

For each $\omega \in \Omega_{\epsilon}$ and $n \geq 0$, let $V^{\omega}(n)$ denote the component of $\bigcup_{i=0}^n f_{\omega}^{-i}(\tilde B(\delta))$ containing $c$. Let $V^{\omega} = \bigcup_{n=0}^{\infty} V^{\omega}(n)$. It is easy to check that $V = \bigcup_{\omega \in \Omega_{\epsilon}} V^{\omega} \times \{ \omega\}$ is a nice set for $\epsilon$-random perturbations. It remains to show that for each $n \geq 0$ and $\omega \in \Omega_{\epsilon}$, we have
\[
\tilde B(\delta) \subset V^{\omega}(n) \subset \tilde B(2 \delta). 
\]

We prove this by induction on $n$. The case $n=0$ is trivial. Assume that the statement holds for some integer $n \geq 0$. Fix $\omega \in \Omega_{\epsilon}$. To show that $V^{\omega}(n+1) \subset \tilde B(2 \delta)$, it suffices to show that each component $J$ of  $V^{\omega}(n+1) \setminus \tilde B( \delta)$ is contained in $\tilde B(2 \delta)$. To this end, let $m \in \{0, 1, \cdots, n \}$ be minimal such that $f_{\omega}^{m+1}(J) \cap \tilde B(\delta) \neq \emptyset$. Then we have 
\[
f_{\omega}^{m+1}(J) \subset V^{\sigma^{m+1} \omega}(n-m).
\]
By induction hypothesis, this implies that $f_{\omega}^{m+1}(J) \subset \tilde B(2 \delta)$, hence $J \subset \tilde B(2 \delta)$. This completes the induction step and the proof is finished.

\end{proof}

\section{Slow recurrence of random orbits}

From now on, unless otherwise stated, let $\{f_t \}_{t \in [-1, 1]} $ be an admissible family with $f_0 =f \in \mathcal S_1$. For each $\epsilon > 0$ small, $\nu_{\epsilon}$ is a probability measure on $[-\epsilon, \epsilon]$ which belongs to the class $\mathbb M_{\epsilon}(L)$, where $L > 1$ is a fixed constant. Recall that $\Omega = [-1, 1]^{\mathbb N}, \Omega_{\epsilon} = [-\epsilon, \epsilon]^{\mathbb N}$ and $P_{\epsilon} = {\rm Leb}|_{[0, 1]} \times \nu_{\epsilon}^{\mathbb N}$. Let $F : I \times \Omega \to I \times \Omega$ denote the skew-product map:
\[
(x, \omega) \to (f_{\omega}(x), \sigma \omega).
\]

Let $\theta_0 > 0$ be a small constant determined by Lemma 3.5. For each $x \in I_c, \omega \in \Omega$ and $n \geq 1$, let  
\begin{equation}
\hat J^{\omega}_{x, n} = \left[ x - \frac{\theta_0}{A(x, \omega, n)},  x + \frac{\theta_0}{A(x, \omega, n)} \right] \text{ and } J^{\omega}_{x, n} = \hat J^{\omega}_{x, n} \cap I_c. 
\end{equation}
Then $f_{\omega}^n$ maps $J^{\omega}_{x, n}$ diffeomorphically onto its image with $\mathcal N(f_{\omega}^n | J^{\omega}_{x, n}) \leq 1$. Note that for $0 < \epsilon \leq \delta$ small enough, if $x \in \tilde B(\delta)$ and $\omega \in \Omega_{\epsilon}$, then $\hat J^{\omega}_{x, n} = J^{\omega}_{x, n} \subset I^{\pm}$. This is because each component of $\hat J^{\omega}_{x, n} \setminus \{ x \}$ has length $\theta_0/A(x, \omega, n) \leq \theta_0 d(x, c) \leq d(x, c)$.

\begin{definition}
We say that an integer $s \geq 1$ is a $\theta$-good return time of $(x, \omega)$ into $\tilde B(\delta) \times \Omega$ (resp. $\tilde B(\delta) \times \Omega_{\epsilon}$) if $f_{\omega}^s (x) \in \tilde B(\delta)$ and such that 
\begin{equation}
\theta Df_{\omega}^s (x) \geq A(x, \omega, s) |\tilde B(\delta)|. 
\end{equation}
We say that a positive integer $s$ is a {\it $\tau$-scale expansion time of $(x, \omega)$} if
\[
\theta_0 Df_{\omega}^s (x) \geq e \tau A(x, \omega, s).
\]
\end{definition}

\begin{lem}

Assume that $s$ is a $\theta$-good return time of $(x, \omega)$ into $\tilde B(\delta) \times \Omega$ with $ \delta \in (0,  \delta_{*}]$ and such that $J_{x, s}^{\omega} = \hat J_{x, s}^{\omega}$.  
\begin{itemize}
\item[(1)] If $\theta \leq \theta_0 /e$, then $f_{\omega}^s (J^{\omega}_{x, s})$ contains $\tilde B(\delta)$.

\item[(2)] If $\theta \leq \theta_0 /(e\tilde \kappa)$ where $\tilde \kappa : = \sup_{\delta \in (0, \delta_*]} |\tilde B(2 \delta)|/ |\tilde B(\delta)|$, then $f_{\omega}^s (J^{\omega}_{x, s})$ contains $\tilde B(2\delta)$.
\end{itemize}
In particular, $\tilde \kappa \asymp 2^{1/\ell} $ is independent of $\delta$ and $\omega$. 
\end{lem}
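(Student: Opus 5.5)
The plan is to show that the image $f_{\omega}^s(J^{\omega}_{x,s})$ is an interval containing $f_{\omega}^s(x)$ whose two one-sided parts each have length at least $|\tilde B(\delta)|$ (respectively $|\tilde B(2\delta)|$). Since $f_{\omega}^s(x)\in\tilde B(\delta)$ and $\hat B(\delta)$ is an interval of length $|\tilde B(\delta)|$ (we identify $\tilde B$ and $\hat B$), such an interval covers $\tilde B(\delta)$. For part (2), the concentric-type inclusion $\tilde B(\delta)\subset\tilde B(2\delta)$ together with $|\tilde B(2\delta)|\le\tilde\kappa|\tilde B(\delta)|$ gives that an interval around a point of $\tilde B(2\delta)$ extending at least $|\tilde B(2\delta)|$ to each side covers $\tilde B(2\delta)$.

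First, since $J^{\omega}_{x,s}=\hat J^{\omega}_{x,s}$, each component of $J^{\omega}_{x,s}\setminus\{x\}$ has length exactly $\theta_0/A(x,\omega,s)$. By Lemma 3.5, $f_{\omega}^s$ is a diffeomorphism on $J^{\omega}_{x,s}$ with $\mathcal N(f_{\omega}^s|J^{\omega}_{x,s})\le 1/2$, so ${\rm Dist}(f_{\omega}^s|J^{\omega}_{x,s})\le 1/2<1$. In particular $Df_{\omega}^s(y)\ge e^{-1}Df_{\omega}^s(x)$ for all $y\in J^{\omega}_{x,s}$. Alternatively, (3.12) of Lemma 3.5 combined with (3.11) yields the same bound up to constants that we absorb. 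By the mean value theorem, each side of $f_{\omega}^s(J^{\omega}_{x,s})\setminus\{f_{\omega}^s(x)\}$ then has length at least
\[
e^{-1}Df_{\omega}^s(x)\,\frac{\theta_0}{A(x,\omega,s)}.
\]

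Next we insert the $\theta$-good return inequality $A(x,\omega,s)\le\theta Df_{\omega}^s(x)/|\tilde B(\delta)|$. This gives the lower bound $\theta_0|\tilde B(\delta)|/(e\theta)$. In case (1), $\theta\le\theta_0/e$ makes this at least $|\tilde B(\delta)|$. In case (2), $\theta\le\theta_0/(e\tilde\kappa)$ makes it at least $\tilde\kappa|\tilde B(\delta)|\ge|\tilde B(2\delta)|$.

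Finally, $\tilde\kappa\asymp 2^{1/\ell}$ follows from non-flatness. Because $f$ behaves like $|x-c|^{\ell}$ near $c$ on each side, $|\tilde B(\delta)|\asymp\delta^{1/\ell}$ uniformly for $\delta\le\delta_*$, and more precisely the ratio $|\tilde B(2\delta)|/|\tilde B(\delta)|$ is bounded above and below uniformly, tending to $2^{1/\ell}$. The only delicate point is the distortion constant: whether we get $e^{-1}$ from $\mathcal N\le 1/2$ or must use a slightly different factor. I expect $\mathcal N\le1/2$, which gives ${\rm Dist}\le 1/2$, to suffice, so the constant $e$ in the hypotheses matches.
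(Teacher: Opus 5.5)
Your proposal is correct and is essentially the paper's proof. Each component of $J^{\omega}_{x,s}\setminus\{x\}$ has length $\theta_0/A(x,\omega,s)$, Lemma 3.5 gives the distortion factor $e^{-1}$, and the $\theta$-good return inequality then bounds each one-sided image length below by $\theta_0|\tilde B(\delta)|/(e\theta)$; since $f_\omega^s(x)\in\tilde B(\delta)$, this covers $\tilde B(\delta)$ (resp.\ $\tilde B(2\delta)$). Keep the route through ${\rm Dist}\le\mathcal N\le 1/2$ rather than your ``alternative'' via (3.11)--(3.12): the latter only yields a factor $e^{-3}$, and that cannot be absorbed given the exact thresholds $\theta_0/e$ and $\theta_0/(e\tilde\kappa)$.
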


\begin{proof}
Let $\mathcal J$ be any component of $J^{\omega}_{x, s} \setminus \{ x\}$. By Lemma 3.5, we have
\[
|f_{\omega}^s (\mathcal J) | \geq \frac{Df_{\omega}^s (x)}{e} \cdot \frac{\theta_0}{A(x, \omega, n)} \geq \frac{\theta_0}{\theta e} |\tilde B(\delta)|. 
\]
Since $f_{\omega}^s(x) \in \tilde B(\delta)$, the lemma holds.

\end{proof}

We shall use the following notations:
\begin{align}
h^{\theta}_{\delta}(x, \omega)& = \inf \{s \geq 1: s \text{ is a $\theta$-good return time of $(x, \omega)$ into $\tilde B(\delta) \times \Omega$} \}, \\
T^{\tau}(x, \omega) & = \inf \{s \geq 1: s \text{ is a $\tau$-scale expansion time of $(x, \omega)$} \}, \\
\hat h^{\theta}_{\delta, \tau} (x, \omega) & = \min  \{ \inf_{\delta' \geq \delta} h^{\theta}_{\delta'}(x, \omega), T_{\tau}(x, \omega) \},
\end{align}
and
\begin{equation}
l_{\delta} (x, \omega) = \inf \{s \geq 0 : f_{\omega}^s(x) \in \tilde B(\delta) \}.
\end{equation}
The following is an easy consequence of Proposition 9.

\begin{lem}
Given $\theta >0$ there exists $\delta_0 > 0$ such that for $x \in I_c \setminus \tilde B(\delta_0)$ and $\omega \in \Omega_{\epsilon}$ with $\epsilon \in (0, \delta_0]$, we have $h^{\theta}_{\delta_0}(x, \omega)= l_{\delta_0} (x, \omega)$.
\end{lem}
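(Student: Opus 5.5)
Since $\theta(\epsilon)\to 0$ by Proposition 9, I choose $\delta_0>0$ so small that $\theta(\epsilon)\le \theta$ for every $\epsilon\in(0,\delta_0]$. I then fix $x\in I_c\setminus\tilde B(\delta_0)$ and $\omega\in\Omega_\epsilon$ with $\epsilon\in(0,\delta_0]$, and put $n=l_{\delta_0}(x,\omega)$. Because $x\notin\tilde B(\delta_0)$, we have $n\ge 1$ (if $n=\infty$ there is nothing to prove, as neither time exists). The aim is to show $h^\theta_{\delta_0}(x,\omega)=n$, which I do in two inequalities.

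\emph{The inequality $h^\theta_{\delta_0}\ge n$.} Any $\theta$-good return time $s$ into $\tilde B(\delta_0)\times\Omega$ satisfies $f^s_\omega(x)\in\tilde B(\delta_0)$ by definition. By minimality of $n$, this forces $s\ge n$.

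\emph{The inequality $h^\theta_{\delta_0}\le n$.} Here I show that $n$ itself is $\theta$-good. By construction $f^j_\omega(x)\notin\tilde B(\delta_0)$ for $0\le j\le n-1$ and $f^n_\omega(x)\in\tilde B(\delta_0)$, which is exactly the hypothesis of Proposition 9 with $\epsilon$ replaced by $\delta_0$. The subtlety is that Proposition 9 is stated for $\omega\in\Omega_{\delta_0}$ with the same parameter in both roles, while our $\omega$ lies in $\Omega_\epsilon$. Since $\epsilon\le\delta_0$, we have $\Omega_\epsilon\subset\Omega_{\delta_0}$, so the proposition applies. It gives
\[
A(x,\omega,n)\,|\tilde B(\delta_0)|\le \theta(\delta_0)\,Df^n_\omega(x)\le \theta\, Df^n_\omega(x),
\]
which is precisely the $\theta$-good return condition (5.2) at time $n$ with $\delta=\delta_0$. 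Hence $h^\theta_{\delta_0}(x,\omega)\le n$, and combining with the first inequality gives equality.

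The only point needing care is that Proposition 9 holds for $\omega$ in the larger space $\Omega_{\delta_0}$, and hence for our $\omega\in\Omega_\epsilon$. This is immediate from the statement of Proposition 9, because its proof uses Theorem 2 only for $\delta_0$-random orbits. Beyond that, the argument is a direct unpacking of the definitions.
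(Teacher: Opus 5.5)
Your proof is correct and follows the paper's argument: the bound $h^{\theta}_{\delta_0}\ge l_{\delta_0}$ comes straight from the definitions, and $h^{\theta}_{\delta_0}\le l_{\delta_0}$ follows by applying Proposition 9 at scale $\delta_0$ with $\delta_0$ small enough that $\theta(\delta_0)\le\theta$. You also make explicit a point the paper leaves implicit, namely that $\Omega_\epsilon\subset\Omega_{\delta_0}$, so Proposition 9 at scale $\delta_0$ covers every $\omega\in\Omega_\epsilon$.
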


\begin{proof}

By definition, $h^{\theta}_{\delta_0}(x, \omega) \geq  l_{\delta_0} (x, \omega)$. By Proposition 9, $l_{\delta_0} (x, \omega)$ is either infinity or a $\theta$-good return time of $(x, \omega)$ into $\tilde B(\delta_0) \times \Omega$ provided $\delta_0$ is small enough. Thus, $h^{\theta}_{\delta_0}(x, \omega) \leq  l_{\delta_0} (x, \omega)$. This finishes the proof.

\end{proof}

We shall prove the following two propositions in subsection 5.3.

\begin{prop}
Given $\theta > 0, p \geq 1$ and $\gamma > 0$, there exists $\tau > 0$ such that
\[
\frac{1}{|\tilde B(\epsilon)|} \iint_{\tilde B(\epsilon) \times \Omega_{\epsilon}} (\hat h^{\theta}_{\epsilon, \tau} (x, \omega) )^p dP_{\epsilon} < \epsilon^{- \gamma},
\]
provided $\epsilon > 0$ is small enough.
\end{prop}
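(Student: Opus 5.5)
We follow the strategy of \cite{S}: we bound the tail $P_\epsilon(\{(x,\omega)\in\tilde B(\epsilon)\times\Omega_\epsilon : \hat h^{\theta}_{\epsilon,\tau}>n\})$ by decomposing a random orbit starting in $\tilde B(\epsilon)$ into successive first returns to $\tilde B(\epsilon)$. Between returns we use the expansion and small total distortion from Theorem 2 and Proposition 9. At returns we use the regularity $\nu_\epsilon\in\mathbb M_\epsilon(L)$ to control how deep the orbit falls into the critical region. The integral of $(\hat h)^p$ then follows by summing $n^{p-1}$ times the tail.

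\emph{Step 1 (one block of returns).} Fix $(x,\omega)$ with $x\in\tilde B(\epsilon)$. Let $0=s_0<s_1<\cdots$ be the successive return times of $f^j_\omega(x)$ to $\tilde B(\epsilon)$, and put $\rho_i=d_*(f^{s_i}_\omega(x),c)$. By Proposition 9 and (3.1), the contribution of each block $[s_i,s_{i+1})$ to $A(x,\omega,s_{i+1})$ is at most about $(\theta(\epsilon)+1)\,Df^{s_{i+1}}_\omega(x)/|\tilde B(\epsilon)|$, weighted down by the later derivatives. By Theorem 2(1), $Df^{s_{i+1}-s_i}_{\sigma^{s_i}\omega}(f^{s_i}_\omega x)\gtrsim \Lambda(\epsilon)\,D(\rho_i)/D(\epsilon)$. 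Telescoping as in the proof of Proposition 9 gives the following: if the return depths satisfy $\rho_i\geq \epsilon^{1+\beta}$ for a small $\beta$, then either a shallow return (with $\rho_i$ comparable to some $\delta'\geq\epsilon$) is a $\theta$-good return into $\tilde B(\delta')$, or the accumulated expansion makes the time a $\tau$-scale expansion time. This explains why $\hat h$ takes an infimum over $\delta'\geq\epsilon$ and includes $T^\tau$.

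\emph{Step 2 (probabilistic control).} At each return, the position $f^{s_i+1}_\omega(x)$ is obtained by applying the fresh random parameter $\omega_{s_i}$. By (C2) and $\mathbb M_\epsilon(L)$, conditioned on the past, the probability that $d(f^{s_i+1}_\omega x,CV)<r$ is at most $L(r/2\epsilon)^{1/L}$. Consequently the event that a return is \emph{bad} (too deep, i.e.\ $\rho_i<\epsilon^{1+\beta}$, so that the derivative loss $D(\rho_i)$ cannot be compensated) has probability at most $\epsilon^{\beta/L}$, uniformly in the past. Independence of the coordinates of $\nu_\epsilon^{\mathbb N}$ makes the number of bad returns among $k$ returns stochastically dominated by a binomial variable. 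Combining this with Step 1, $\hat h\le s_k$ once there have been roughly $k_0\asymp \log(1/\epsilon)/\log\Lambda(\epsilon)$ consecutive non-bad returns.

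\emph{Step 3 (time control).} Return times themselves must be controlled. By Proposition 6(2) applied to $U=\tilde B(\epsilon)$ (through Theorem 2(2)), the Lebesgue measure of points in the fiber staying outside $\tilde B(\epsilon)$ for $n$ steps decays like $\epsilon^{-(1-1/\ell)}e^{-\epsilon^{\alpha(\epsilon)}n}$. Pulling back through the blocks with bounded distortion (Lemma 3.5), the probability that a single inter-return gap exceeds $n$ is $\lesssim e^{-\epsilon^{\alpha(\epsilon)}n}$ up to polynomial factors in $\epsilon^{-1}$. Hence $s_k$ has $p$-th moment at most $k^p\epsilon^{-p\alpha(\epsilon)}\cdot \mathrm{poly}$, and summing over the geometric number of restarts gives $\mathbb E[\hat h^p]\le \epsilon^{-C\alpha(\epsilon)p}\cdot(\log\epsilon^{-1})^{C}$. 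Since $\alpha(\epsilon)\to0$, this is below $\epsilon^{-\gamma}$ for $\epsilon$ small.

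\emph{Main obstacle.} The hardest step is Step 2. It requires making the conditional estimate rigorous: the fiber measure of a return landing very deep must be estimated on the Lebesgue $\times\nu_\epsilon^{\mathbb N}$ product, using that the last parameter $\omega_{s_i-1}$ spreads the image over an interval of size $\epsilon$ via (C2). Doing this requires the monotonicity $\partial_t\Phi$ to be bounded below (not just above), or else averaging over $x$ via Lemma 3.5 distortion instead. I would first try to use the $x$-averaging: the block map has bounded distortion onto a neighborhood of size $\gtrsim\epsilon$, so deep landings have relative Lebesgue measure $\lesssim (\epsilon^{1+\beta}/\epsilon)^{1/\ell}$. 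I would fall back on $\mathbb M_\epsilon(L)$ only where the image is too short.
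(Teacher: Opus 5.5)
The main gap is in Step~2, and it goes beyond a technicality. The derivative loss at the return $s_i$ is governed by $d(x_{s_i},c)$, i.e.\ by $\rho_i$. The bound you derive from $\mathbb M_\epsilon(L)$ instead concerns $d(x_{s_i+1},CV)$. By (C2) the step $\omega_{s_i}$ moves the image by up to $\epsilon$, so $d(x_{s_i+1},CV)\ge r$ with $r<\epsilon$ says nothing about $\rho_i$.

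Applying $\mathbb M_\epsilon(L)$ to the step that produces $x_{s_i}$, as in your last paragraph, fails for a scale reason. Since $|\tilde B(r)|\asymp r^{1/\ell}$, the bound $L(|\tilde B(r)|/2\epsilon)^{1/L}$ is trivial unless $r\lesssim\epsilon^{\ell}$; in particular it gives nothing for $r=\epsilon^{1+\beta}$. The noise scale $\epsilon$ is far below $|\tilde B(\epsilon)|\asymp\epsilon^{1/\ell}$.

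The parameter $\omega_{s_i}$ is the right one, but it controls the depth of the \emph{next} return, and only after pulling the event back through first-landing branches. By Lemma~5.3, for the independent future parameters, branches starting within $4\epsilon$ of $CV$ have length $<\epsilon$ and map onto $\tilde B(\epsilon)$ with $\mathcal N\le 1$. Hence the part of each branch landing at depth $\ge q$ has relative length $\lesssim e^{-q/\ell}$, and $\mathbb M_\epsilon(L)$ then gives probability $\lesssim e^{-q/(\ell L)}$ uniformly in the past. This is what Lemma~5.4 packages: $\nu_\epsilon^{\mathbb N}(U^n_\epsilon(x,\vec q))\le K_0^n e^{-\rho_0|\vec q|}$. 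No lower bound on $\partial_t\Phi$ is needed.

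Your $x$-averaging fallback does not substitute for this. For fixed $\omega$ the return branches from $\tilde B(\epsilon)$ are not full: the branch adjacent to $c$ has power-law distortion, and branches are cut at the ends of $f_{\omega_{s_i}}(\tilde B(\epsilon))$. So you do not get the conditional estimates a binomial comparison needs.

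Even granting a depth estimate, Steps~1--2 do not close. Under $(\mathrm{SC}_1)$ one only knows $\Lambda(\epsilon)\to\infty$, with no rate. A return with $\rho_i\approx\epsilon^{1+\beta}$ costs $D(\rho_i)/D(\epsilon)\asymp\epsilon^{\beta(1-1/\ell)}$, which $\Lambda(\epsilon)$ need not compensate, so your ``non-bad'' returns may give no net expansion. Moreover, $d(x,c)$ and the depths of bad returns are unbounded, so no fixed $k_0$ of later returns compensates them. Restarting at a bad return also needs justification, because $A(x,\omega,s)$ keeps the whole history; a good return of $F^{s_i}(x,\omega)$ transfers back only under extra conditions, e.g.\ when $s_i$ is a close return (Lemma~6.1(2)).

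The paper uses no depth threshold:
\begin{itemize}
\item Lemma~5.5 gives a gain $e^{K}$ per return for any fixed $K$.
\item Lemma~5.6 shows that $\kappa\Gamma_j^{s-1}\ge Q_j^{s-1}$ for all $j<s$ makes $s$ a $\theta$-good return.
\item Lemma~5.7, a multiscale argument over $\tilde B(e^{N-i}\epsilon)$, together with the $T^\tau$ term in $\hat h^\theta_{\epsilon,\tau}$, absorbs long excursions, so no tail for return times is needed.
\end{itemize}
Your Step~3 appeal to Proposition~6(2) with $U=\tilde B(\epsilon)$ is not available as stated, since there $U$ is fixed and $\epsilon$ is taken small depending on $U$.

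These lemmas give Proposition~15: $(x,\omega)\notin Bad_m(\kappa,\epsilon)$ implies $\hat h^\theta_{\epsilon,\tau}\le m\epsilon^{-\alpha}$. Proposition~14 bounds $P_\epsilon(Bad^c_m(\kappa,\epsilon))\le Ke^{-\rho m}|\tilde B(\epsilon)|$ by counting depth vectors against Lemma~5.4. Summing $m^p\epsilon^{-\alpha p}P_\epsilon(Bad^c_{m-1}(\kappa,\epsilon))$ with $\alpha<\gamma/p$ then proves the statement.
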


\begin{prop}
Given $\theta >0, \alpha > 0$ and $0<b < 1$, there exists $\tau = \tau(\theta, \alpha, b) > 0$ and $m = m(b) \geq 1$ is an integer such that the following holds provided $\epsilon>0$ is small enough. For each $x \in \tilde B(\epsilon) \setminus \tilde B(b\epsilon) $ and $\omega  \in \Omega_{\epsilon}$, we have $\hat h^{\theta}_{\epsilon, \tau} (x, \omega)  \leq m \epsilon^{-\alpha}$.
\end{prop}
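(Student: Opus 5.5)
The plan is to follow the random orbit of $(x,\omega)$ with $x\in\tilde B(\epsilon)\setminus\tilde B(b\epsilon)$ through a bounded number of \emph{free returns} to $\tilde B(\epsilon)$, tracking $A(\cdot,\omega,\cdot)$ along the way. We stop either at a $\theta$-good return into some $\tilde B(\delta')$ with $\delta'\ge\epsilon$, or at a $\tau$-scale expansion time. The basic observation is that, since $x\notin\tilde B(b\epsilon)$, we have $d(x,c)\gtrsim b^{1/\ell}|\tilde B(\epsilon)|$ and $\delta_1:=d_*(x,c)\ge b\epsilon$. Hence the first term of $A(x,\omega,s)$ is at most $C_b/|\tilde B(\epsilon)|$, and $Df_\omega(x)\ge D(b\epsilon)\asymp_b D(\epsilon)$ by (3.1). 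So after the first iterate the point $x_1=f_\omega(x)$ satisfies $d(x_1,CV)\le 2\epsilon$, and we are in the setting of Theorem 2(1).

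\textbf{Step 1 (first return).} Let $s_1\ge1$ be the first time with $x_{s_1}\in\tilde B(2\epsilon)$, and let $s_1'$ be the first time with $x_{s_1'}\in\tilde B(\epsilon)$. First suppose no return to $\tilde B(\epsilon)$ occurs within time $N:=\epsilon^{-\alpha}$. Then Theorem 2(2), together with the summation estimate in the proof of Proposition 9, gives
\[
A(x,\omega,s)\lesssim_b |\tilde B(\epsilon)|^{-1}Df_\omega^s(x)\Big(\sum_{j} e^{-\epsilon^{\alpha(\epsilon)}(s-j)}\Big)\lesssim \epsilon^{-\alpha(\epsilon)}|\tilde B(\epsilon)|^{-1}Df_\omega^s(x).
\]
Combined with the growth $Df_\omega^s(x)\ge A\epsilon^{1-1/\ell}e^{\epsilon^{\alpha(\epsilon)}s}$, this shows that $s\approx\epsilon^{-\alpha}$ is a $\tau$-scale expansion time. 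Here we use $\alpha(\epsilon)\to0$, so $\epsilon^{\alpha(\epsilon)}\epsilon^{-\alpha}\gg\log(1/\epsilon)$. Thus $T^\tau\le\epsilon^{-\alpha}$ in that case.

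Otherwise the return happens at $s_1'\le\epsilon^{-\alpha}$. Theorem 2(1) (or Proposition 7) gives $Df^{s_1'}_\omega(x)\ge \Lambda(\epsilon)D(b\epsilon)/D(\epsilon)\gtrsim_b\Lambda(\epsilon)$. Proposition 9 applied to the segment from $x_1$ controls the sum $A$ over the intermediate iterates by $\theta(\epsilon)Df/|\tilde B(\epsilon)|$. Hence
\[
A(x,\omega,s_1')|\tilde B(\epsilon)|\le \big(C_b\Lambda(\epsilon)^{-1}+\theta(\epsilon)\big)Df_\omega^{s_1'}(x)\le \theta\,Df_\omega^{s_1'}(x)
\]
for $\epsilon$ small, so $s_1'$ is a $\theta$-good return into $\tilde B(\epsilon)$, whence $\hat h\le s_1'\le\epsilon^{-\alpha}$.

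\textbf{Step 2 (bookkeeping with $m=m(b)$).} The danger is that the return lands near the centre, which makes $d(x_{s_1'},c)$ small and is fine, or that there is an intermediate return to $\tilde B(2\epsilon)\setminus\tilde B(\epsilon)$, which breaks the $\mathcal L^\epsilon(\epsilon)$ hypothesis. To handle this I will split the orbit at the successive returns to $\tilde B(\epsilon)$ and reuse the telescoping inequality (4.18). Each block gains a factor $\Lambda(\epsilon)/2$ but loses at most $D(\epsilon)/D(\rho_i)\le 2^{1-1/\ell}$. The factor $b$ only enters the first block, so after at most $m(b)$ blocks, each of length $\le\epsilon^{-\alpha}$ or else terminated by Step 1's expansion-time argument, the accumulated ratio $A|\tilde B(\epsilon)|/Df$ drops below $\theta$. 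This yields $\hat h^\theta_{\epsilon,\tau}\le m\epsilon^{-\alpha}$.

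The main obstacle is the long block without return. It requires choosing $\tau$ so that the exponential factor $e^{\epsilon^{\alpha(\epsilon)}s}$ beats both the $\epsilon^{1-1/\ell}$ loss in Theorem 2(2) and the $\epsilon^{-\alpha(\epsilon)}$ loss in the $A$-sum within time $\epsilon^{-\alpha}$. I would first check this exponent comparison carefully, using $\alpha(\epsilon)\to0$ while $\alpha>0$ is fixed.
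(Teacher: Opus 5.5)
Your return case is correct, and it in fact closes at $s_1'$ itself. Proposition 9 applied from $x_1$ gives $A(x,\omega,s_1')|\tilde B(\epsilon)|\le C_b+\theta(\epsilon)Df^{s_1'}_\omega(x)$, and Theorem 2(1) gives $Df^{s_1'}_\omega(x)\gtrsim_b\Lambda(\epsilon)$. Step 2 is therefore unnecessary: visits to $\tilde B(2\epsilon)\setminus\tilde B(\epsilon)$ violate neither Theorem 2(1) nor Proposition 9. It also does not address the real difficulty.

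The genuine gap is the case with no return to $\tilde B(\epsilon)$ before time $\epsilon^{-\alpha}$, and this case is the whole content of the proposition. Being a $\tau$-scale expansion time, with $\tau$ independent of $\epsilon$, is the condition $A(x,\omega,s)/Df^s_\omega(x)\le\theta_0/(e\tau)$ on a ratio, and
\[
\frac{A(x,\omega,s)}{Df^s_\omega(x)}=\sum_{i=0}^{s-1}\frac{1}{Df^{s-i}_{\sigma^i\omega}(x_i)\,d(x_i,c)}\ \ge\ \frac{1}{Df_{\omega_{s-1}}(x_{s-1})\,d(x_{s-1},c)}.
\]
Exponential growth of $Df^s_\omega(x)$ only kills the terms with $s-i$ large. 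The last terms are of order $\epsilon^{-1}$ whenever $x_{s-1}$ lies in, say, $\tilde B(2\epsilon)\setminus\tilde B(\epsilon)$, however large $s$ is. Your own bound $A/Df\lesssim\epsilon^{-\alpha(\epsilon)}|\tilde B(\epsilon)|^{-1}$ tends to infinity; it even omits the factor $\epsilon^{-(1-1/\ell)}$ from Theorem 2(2). ``Combining with the growth of $Df$'' cannot repair a bound on a ratio. An orbit avoiding $\tilde B(\epsilon)$ for a long time may keep returning to intermediate scales $\tilde B(\delta')$ with $\epsilon<\delta'<\epsilon_0$. There the correct stopping time is a $\theta$-good return into the larger set $\tilde B(\delta')$, which is why $\hat h^\theta_{\epsilon,\tau}$ contains $\inf_{\delta'\ge\epsilon}h^\theta_{\delta'}$.

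The missing idea is the multiscale selection of Lemma 5.7. Let $s_i$ be the last visit before $s$ to $\tilde B(e^{N-i}\epsilon)$, where $N\approx\log(\epsilon_0/\epsilon)$. Take the first $n$ with $s_n-s_{n+1}\ge\epsilon^{-2\beta}$; it exists because $s\ge\epsilon^{-3\beta}>N\epsilon^{-2\beta}$. Over this long gap, Lemma 5.5 gives a gain of order $e^{\epsilon^{-\beta}/2}$. This absorbs the polynomial loss $\epsilon^{2}$ of Lemma 5.5(2), and also the $Q$-terms, since $s\gg\epsilon^{-\gamma}Q_0^{s-1}$. Hence $s_n$ is a $\theta$-good return into $\tilde B(e^{N-n}\epsilon)$ if $n>0$. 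If $n=0$, then $s_0$ is a $\tau$-scale expansion time, because after $s_1$ the orbit stays outside the fixed neighbourhood $\tilde B(\epsilon_0/e)$, where the expansion constants are uniform. The time bound $N\epsilon^{-2\beta}\ll\epsilon^{-\alpha}$ makes this fit in the budget.

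The paper avoids any new orbit analysis here. For $x\notin\tilde B(b\epsilon)$, non-flatness gives $q_\epsilon(x,\omega)\le C(b)$. Choosing $m,\kappa>C(b)$, condition (1) in the definition of $Bad_m(\kappa,\epsilon)$ fails at $s=0$, since $\Gamma_0^0=1$. Proposition 15, whose proof is exactly Lemmas 5.6 and 5.7, then yields $\hat h^\theta_{\epsilon,\tau}\le m\epsilon^{-\alpha}$, with $\tau$ depending on $b$ through $\kappa(b)$.
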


\subsection{Bad return estimate}

\begin{definition}
For $x \in I_c$ and $\omega = (\omega_0, \omega_1, \cdots) \in \Omega_{\epsilon}$, define the depth function:
\[
q_{\epsilon}(x, \omega) =  \inf \{q \in \mathbb N : Df_{\omega}(x) d(x, c) \geq e^{-q} \epsilon \}.
\]
For non-negative integers $0 \leq n_1 \leq n_2$, define
\[
Q_{n_1}^{n_2} (x, \omega, \epsilon) = \sum_{j=n_1}^{n_2} q_{\epsilon}(F^j(x, \omega)),
\]
and
\[
\Gamma_{n_1}^{n_2} (x, \omega, \epsilon) = \# \{ n_1 \leq j \leq n_2 : f_{\omega}^j(x) \in \tilde B(\epsilon) \}.
\]
For $\kappa > 0$ and integer $m \geq 0$, let $Bad_m(\kappa, \epsilon)$ be the collection of $(x, \omega) \in I_c \times \Omega_{\epsilon}$ such that the following holds:
\begin{itemize}
\item[(1)] $Q_0^s (x, \omega, \epsilon) > \min\{ m, \kappa \Gamma_0^s(x, \omega, \epsilon) \}$ for each integer $s \geq 0$;
\item[(2)] $\lim_{s \to \infty} Q_0^s (x, \omega, \epsilon) \geq m$.
\end{itemize}
Finally, let $Bad^c_m(\kappa, \epsilon) = \{ (x, \omega) \in Bad_m(\kappa, \epsilon) : x \in \tilde B(\epsilon) \}$.
\end{definition}

Note that by non-flatness (C4), we have that if $x \in \tilde B(\epsilon)$, 
\[
Df_{\omega} (x) \geq O_1 \left( \frac{e^{-q_{\epsilon}(x, \omega)}\epsilon}{O_2} \right)^{1 - \frac{1}{\ell}} \mbox{ and } d(x, c) \geq \left( \frac{e^{-q_{\epsilon}(x, \omega)}\epsilon}{O_2} \right)^{\frac{1}{\ell}}.
\]

The aim of this subsection is to prove the following proposition.

\begin{prop}
There exist $\kappa >1, K > 0$ and $\rho>0$ such that if $\epsilon > 0$ small enough, then for each integer $m \geq 0$, we have
\[
P_{\epsilon} (Bad^c_m(\kappa, \epsilon)) \leq K e^{- \rho m} |\tilde B(\epsilon)|.
\]
\end{prop}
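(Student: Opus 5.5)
We follow the large-deviation scheme of \cite{S} (the bad return estimate there), adapted to the two-sided critical neighbourhood $\tilde B(\epsilon)$. The idea is that a deep visit (large $q_\epsilon$) requires the random parameter at the preceding step to land the orbit in an exponentially small interval near $CV$. The regularity $\nu_\epsilon\in\mathbb M_\epsilon(L)$ makes each such event exponentially unlikely in the depth, uniformly in the past. A Chernoff bound in the sum of depths then gives the tail $e^{-\rho m}$.

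\textbf{Step 1: one-step conditional estimate.} Fix $j\ge 1$ and condition on $x$ and $\omega_0,\dots,\omega_{j-2}$, so that $y=f_\omega^{j-1}(x)$ is fixed. Since $f_\omega^j(x)=f_{\omega_{j-1}}(y)$ has law $p_\epsilon(y,\cdot)$, the event $q_\epsilon(F^j(x,\omega))\ge q$ forces $f_\omega^j(x)\in\tilde B(\epsilon)$ with $Df_{\omega_j}(f_\omega^j(x))\,d(f_\omega^j(x),c)<e^{-q}\epsilon$. By (C4) this means $d(f_\omega^j(x),c)\lesssim (e^{-q}\epsilon)^{1/\ell}$, a set of Lebesgue measure $\lesssim (e^{-q}\epsilon)^{1/\ell}$. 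Hence
\[
p_\epsilon\bigl(y,\{q_\epsilon\ge q\}\bigr)\le L\Bigl(\tfrac{C(e^{-q}\epsilon)^{1/\ell}}{2\epsilon}\Bigr)^{1/L}\le C'\epsilon^{-\frac{1}{L}(1-\frac1\ell)}e^{-q/(\ell L)}.
\]
This bound is weak because of the factor $\epsilon^{-(\cdot)}$. I would fix it by letting the depth be measured relative to $\tilde B(\epsilon)$: conditioned on landing in $\tilde B(\epsilon)$, whose probability is also controlled, the relative probability of depth $\ge q$ decays like $e^{-\beta q}$ with $\beta=1/(\ell L)$. Concretely, I aim for
\[
\mathbb E\bigl[e^{\beta' q_\epsilon(F^j)}\,\big|\,\mathcal F_{j-1}\bigr]\le 1+K_1\,\mathbb P\bigl(f_\omega^j(x)\in\tilde B(\epsilon)\,\big|\,\mathcal F_{j-1}\bigr)
\]
for some $0<\beta'<\beta$. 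The $j=0$ term is handled directly with Lebesgue measure on $x\in\tilde B(\epsilon)$, which produces the factor $|\tilde B(\epsilon)|$.

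\textbf{Step 2: supermartingale.} Choose $\kappa$ large, with $\kappa>\log(1+K_1)/\beta'$. Then
\[
Z_s=\exp\bigl(\beta' Q_0^s-\beta'\kappa\Gamma_0^s\bigr)
\]
is a supermartingale, after absorbing the factor $1+K_1\mathbf 1_{\tilde B(\epsilon)}$ into $e^{\beta'\kappa}$ on steps where the orbit is in $\tilde B(\epsilon)$; note that $q_\epsilon>0$ forces the orbit to be in $\tilde B(\epsilon)$ for small $\epsilon$.

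\textbf{Step 3: stopping-time argument.} On $Bad_m$, condition (2) gives a first time $s$ with $Q_0^s\ge m$. Condition (1) at each earlier time rules out $Q_0^s\le\kappa\Gamma_0^s$. To get a quantitative statement, run the supermartingale with $\kappa/2$ in place of $\kappa$. At the stopping time $Q_0^s\ge m$ and $Q_0^s>\kappa\Gamma_0^s$, so the exponent is at least $\beta' m/2$. Optional stopping then yields
\[
P_\epsilon(Bad^c_m)\le e^{-\beta' m/2}\,\mathbb E[Z_0;\,x\in\tilde B(\epsilon)]\le K e^{-\rho m}|\tilde B(\epsilon)|,
\]
with $\rho=\beta'/2$.

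\textbf{Main obstacle.} The hard step is Step 1: obtaining a conditional depth bound that is uniform in $\epsilon$. The naive estimate from $\mathbb M_\epsilon(L)$ loses a factor $\epsilon^{-c}$. The worry is that a visit to $\tilde B(\epsilon)$ is not itself rare, so the bound must be normalised by the probability of entering $\tilde B(\epsilon)$. If this fails, I would instead use the absolute bound $\mathbb P(q_\epsilon\ge q\mid\mathcal F)\lesssim e^{-q/(\ell L)}$ once $q\ge q_0(\epsilon)\asymp\log\epsilon^{-1}$. In that case I would redefine the depth with an $\epsilon$-independent offset, which should be possible since $Df_\omega(x)d(x,c)\asymp d(f(x),CV)$ while the perturbation itself has size $\epsilon$.
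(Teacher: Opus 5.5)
The gap is in Step 1. The one-step conditional bound you aim for is false; it is not merely hard to prove. Take a past with $y=f^{j-1}_\omega(x)$ and $|f(y)-c|\le\epsilon$; such pasts have positive probability, and a supermartingale needs the bound for almost every past. Then $p_\epsilon(y,\cdot)$ is carried by $[c-2\epsilon,c+2\epsilon]\subset\hat B(\epsilon)$. By (C4), every landing point $z$ satisfies $Df_t(z)\,d(z,c)\le O_2(2\epsilon)^{\ell}$, so $q_\epsilon\ge(\ell-1)\log\epsilon^{-1}-C$ with conditional probability one. Your left-hand side is then $\gtrsim\epsilon^{-\beta'(\ell-1)}$, while the right-hand side is at most $1+K_1$.

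Normalising by the entry probability changes nothing, because that probability is $1$ here. The factor $\epsilon^{-(1-1/\ell)/L}$ you found is intrinsic to using only the last parameter: that noise acts on scale $\epsilon$ in the landing coordinate, while depth is measured on scale $|\tilde B(\epsilon)|\asymp\epsilon^{1/\ell}$. The same computation defeats your fallback. The offset needed is $\asymp(\ell-1)\log\epsilon^{-1}$, not $\epsilon$-independent. Subtracting it at every visit destroys the comparison of $Q_0^s$ with $\kappa\Gamma_0^s$ for an $\epsilon$-independent $\kappa$, which the statement requires and which Proposition 15 relies on through Lemma 5.5. So no supermartingale that conditions step by step on the full past can work.

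The missing idea is to control depths per return rather than per step. The useful noise is the one injected right after the previous visit to $\tilde B(\epsilon)$, when the orbit is within $2\epsilon$ of $CV$.

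\begin{itemize}
\item Along the excursion back to $\tilde B(\epsilon)$, the random orbit has derivative at least $\Lambda(\epsilon)/D(\epsilon)$ (Theorem 2(1), Proposition 7).
\item Distortion is bounded on the pull-back of $\tilde B(\epsilon)$ (Lemma 5.3).
\item Hence an $\epsilon$-spread at the critical value becomes a spread of order $\Lambda(\epsilon)\epsilon/D(\epsilon)=\Lambda(\epsilon)|\tilde B(\epsilon)|$ at the landing.
\item The set of depth $\ge q$ has relative measure $\asymp e^{-q/\ell}$ in $\tilde B(\epsilon)$. Applying $\mathbb M_\epsilon(L)$ to that earlier parameter gives an $\epsilon$-uniform tail $\lesssim e^{-q/(\ell L)}$.
\end{itemize}

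This is the content of Lemma 5.4, $\nu_\epsilon^{\mathbb N}(U^n_\epsilon(x,\vec q))\le K_0^n e^{-\rho_0|\vec q|}$ along the first-entry map $\mathbb F_\epsilon$. The paper imports it and combines it with two further ingredients. The first is a counting bound: $\binom{m'+n-1}{n-1}\le e^{\rho m'}$ for $m'>\kappa n/2$, with $K_0\le e^{\kappa\rho}$. The second is a direct Lebesgue estimate of the initial depth, namely the set $\Delta_0$.

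Once you have Lemma 5.4, your Chernoff bookkeeping would also go through if indexed by returns instead of steps. Summing the joint tail gives $\mathbb E\bigl[e^{\beta\sum_{i\le n}q_i}\bigr]\le (K')^n$ for $\beta<\rho_0$.

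Separately, fix Step 3. At the first time $s_0$ with $Q_0^{s_0}\ge m$, condition (1) does not give $Q_0^{s_0}>\kappa\Gamma_0^{s_0}$; it only gives $Q_0^{s_0}>m$ when $\kappa\Gamma_0^{s_0}\ge m$. As in the paper, pass to the last visit $s<s_0$ to $\tilde B(\epsilon)$, where $Q_0^s>\max\{m-\kappa,\kappa\Gamma_0^s\}$.
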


To prove this proposition, we need a few preparations.

\begin{lem}
For each $\epsilon > 0$ small enough the following holds: For each $\omega \in \Omega_{\epsilon}$ and $x \in I_c$ with $d(x, CV) \leq 4 \epsilon$, if $n: = l_{\epsilon} (x, \omega) < \infty$ and $J$ is the component of $f_{\omega}^{-n} (\tilde B(\epsilon))$ which contains $x$, then $f_{\omega}^n |J$ is a diffeomorphism onto its image, $\mathcal N(f_{\omega}^n |J) \leq 1$ and $|J| < \epsilon$.
\end{lem}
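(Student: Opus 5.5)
The natural approach is to use the Markov structure from Lemma 3.5 together with the small total distortion estimate of Proposition 9 and the expansion estimate of Theorem 2(1). Let $n = l_\epsilon(x,\omega)$. Since $d(x,CV)\le 4\epsilon$ and $x\notin \tilde B(\epsilon)$ (for $\epsilon$ small a point $4\epsilon$-close to a critical value is far from $c$, so $n\ge 1$), we have $f_\omega^j(x)\notin\tilde B(\epsilon)$ for $0\le j<n$ and $f_\omega^n(x)\in\tilde B(\epsilon)$. Then Proposition 9 gives
\[
A(x,\omega,n)\,|\tilde B(\epsilon)| \le \theta(\epsilon)\, Df_\omega^n(x),
\]
with $\theta(\epsilon)\to 0$. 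For $\epsilon$ small we have $\theta(\epsilon)\le \theta_0/(e\tilde\kappa)$. So $n$ is a $\theta(\epsilon)$-good return time, and Lemma 5.1(2) shows that $f_\omega^n(J^\omega_{x,n})\supset \tilde B(2\epsilon)$. Here we must check that $J^\omega_{x,n}=\hat J^\omega_{x,n}$. This holds because the half-length $\theta_0/A(x,\omega,n)\le \theta_0 d(x,c)$ is less than $d(x,c)$, so $c\notin \hat J^\omega_{x,n}$.

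By Lemma 3.5, $f_\omega^n$ is a diffeomorphism on $J^\omega_{x,n}$ with $\mathcal N\le 1/2$. The component $J$ of $f_\omega^{-n}(\tilde B(\epsilon))$ containing $x$ is then a subinterval of $J^\omega_{x,n}$: the preimage of the interval $\hat B(\epsilon)\subset f_\omega^n(J^\omega_{x,n})$ under this diffeomorphism is an interval containing $x$, and it is the maximal connected piece. Hence $f_\omega^n|J$ is a diffeomorphism. Moreover, $\mathcal N(f_\omega^n|J)\le \mathcal N(f_\omega^n|J^\omega_{x,n})\cdot |J|/|J^\omega_{x,n}|\le 1/2\le 1$. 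This uses that ${\rm Dist}$ on subintervals $J'\subset J$ is bounded by $\frac12 |J'|/|J^\omega_{x,n}|$.

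For the length bound, bounded distortion (${\rm Dist}\le 1$) gives $|J|\le e\,|\tilde B(\epsilon)|/Df_\omega^n(x)$. Theorem 2(1) applies to $\{f_\omega^j(x)\}_{j=0}^n$ since $f_\omega^n(x)\in\tilde B(2\epsilon)$, giving $Df_\omega^n(x)\ge \Lambda(\epsilon)/D(\epsilon)$. Therefore
\[
|J| \le \frac{e\,|\tilde B(\epsilon)|\, D(\epsilon)}{\Lambda(\epsilon)} = \frac{e\,\epsilon}{\Lambda(\epsilon)} < \epsilon
\]
once $\Lambda(\epsilon)>e$, which holds for $\epsilon$ small.

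The main obstacle is the bookkeeping in the first step. One has to ensure that the hypotheses of Theorem 2(1) and Proposition 9 are met: the start $x$ is outside $\tilde B(\epsilon)$, the index conventions agree ($1\le j\le s-1$ versus $0\le j\le n-1$), and $\hat B(\epsilon)$ is treated as an interval so that the preimage component is well defined. If Lemma 5.1 cannot be used directly, the fallback is the Koebe-type argument: $f_\omega^n(J^\omega_{x,n})$ has each side of length $\gtrsim \theta_0 Df_\omega^n(x)/A \gg |\tilde B(\epsilon)|$ by Proposition 9, which gives the containment directly.
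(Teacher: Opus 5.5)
Your proposal is correct and follows the paper's proof essentially step by step. The paper likewise makes $n$ a good return time via Proposition 9 (packaged as Lemma 5.2 with $\theta=\theta_0/e$), uses Lemma 5.1 to get $f_\omega^n(J^\omega_{x,n})\supset\tilde B(\epsilon)$ and hence $J\subset J^\omega_{x,n}$, and then bounds $|J|\le e|\tilde B(\epsilon)|/Df_\omega^n(x)<\epsilon$ by Theorem 2(1). Your choice of the smaller threshold $\theta_0/(e\tilde\kappa)$ together with Lemma 5.1(2) is a harmless refinement: it gives strict containment of $\hat B(\epsilon)$ in the image, so the step $J\subset J^\omega_{x,n}$ is slightly cleaner than with the paper's borderline choice $\theta=\theta_0/e$.
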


\begin{proof}

Let $\theta = \theta_0 /e$. By Lemma 5.2, $h_{\epsilon}^{\theta} (x, \omega) = l_{\epsilon} (x, \omega)  = n$ provided $\epsilon >0$ small enough. Let $T = J_{x, n}^{\omega}$ be defined as in (5.1). Then $f_{\omega}^n |T$ is a diffeomorphism onto its image with $\mathcal N(f_{\omega}^n |T) \leq 1$. Since $n$ is a $\theta$-good return time, by Lemma 5.1 we have $f_{\omega}^s (T)  \supset  \tilde B(\epsilon)$. Thus $J \subset T$, so $\mathcal N(f_{\omega}^n | J) \leq 1$. Since $d(x, CV) \leq 4 \epsilon$, by Theorem 2 statement (1), $Df_{\omega}^n(x) > \frac{e}{D(\epsilon)}$ provided $\epsilon$ small enough. Then
\[
|J| \leq \frac{e}{Df_{\omega}^n(x)} |\tilde B(\epsilon)| < D(\epsilon) |\tilde B(\epsilon)| = \epsilon.
\]

\end{proof}

Let $\mathbb F_{\epsilon}$ denote the first entry map into the region $\tilde B(\epsilon) \times \Omega_{\epsilon}$ under $F$, that is 
\[
\mathbb F_{\epsilon} (x, \omega) = F^{R_{\epsilon}(x, \omega)}(x, \omega)
\]
where 
\[
R_{\epsilon}(x, \omega) = \begin{cases}
l_{\epsilon}(x, \omega), & \mbox{ if } x \notin \tilde B(\epsilon),\\
l_{\epsilon}(F(x, \omega)), & \mbox{ if } x \in \tilde B(\epsilon).
\end{cases}
\]
Note that $\mathbb F_{\epsilon}$ is defined on a subset of $I_c \times \Omega_{\epsilon}$.

To complete the proof, we shall need the following result which was proved in \cite{LR}.

For $n \geq 1$ and a vector $\vec q = (q_1, q_2, \cdots, q_n) \in \mathbb N^n$,  let $|\vec q| = \sum_{i=1}^n q_i$. For each $x \in I_c$, denote
\[
U_{\epsilon}^n (x, \vec q) = \{ \omega \in \Omega_{\epsilon} : q_{\epsilon} (\mathbb F^i_{\epsilon}(x, \omega)) \geq q_i, i =1, 2, \cdots, n \}.
\]

\begin{lem}[Lemma 4.9 \& 4.10, \cite{LR}]

There exist $K_0 > 0$ and $\rho_0 > 0$ such that for each $\epsilon > 0$ small enough the following holds. For each $x \in \tilde B(\epsilon)$ and any $\vec q  \in \mathbb N^n, n \geq 1$, we have 
\[
\nu_{\epsilon}^{\mathbb N} (U_{\epsilon}^n (x, \vec q)) \leq K_0^n e^{- \rho_0 |\vec q|}.
\]
\end{lem}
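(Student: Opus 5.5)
We prove the bound $\nu_{\epsilon}^{\mathbb N}(U_{\epsilon}^n(x,\vec q)) \leq K_0^n e^{-\rho_0|\vec q|}$ by induction on $n$, using the Markov (independence) structure of $\nu_\epsilon^{\mathbb N}$ together with the regularity assumption $\nu_\epsilon \in \mathbb M_\epsilon(L)$. The key one-step estimate will be: there exist $K_0,\rho_0$ such that for every $y\in\tilde B(\epsilon)$ and every $q\geq 0$,
\[
\nu_\epsilon^{\mathbb N}\bigl(\{\omega : q_\epsilon(\mathbb F_\epsilon(y,\omega))\geq q\}\bigr)\leq K_0 e^{-\rho_0 q}.
\]
Granting this, we write $\mathbb F_\epsilon^{n}(x,\omega)=\mathbb F_\epsilon(\mathbb F_\epsilon^{n-1}(x,\omega))$. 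The point $\mathbb F^{n-1}_\epsilon(x,\omega)=(y,\sigma^{k}\omega)$ with $k$ a stopping time depends only on $\omega_0,\dots,\omega_{k-1}$. The event in the $n$-th coordinate depends only on the future coordinates $\sigma^k\omega$, and these are independent of the past. Conditioning on $\omega_0,\dots,\omega_{k-1}$ and applying the one-step estimate at the point $y\in\tilde B(\epsilon)$ yields a factor $K_0e^{-\rho_0 q_n}$. Iterating over $n$ gives the product bound.

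For the one-step estimate, let $y_0=y\in\tilde B(\epsilon)$. Then $f_{\omega_0}(y)$ lies within $\epsilon$ of $f(y)$, hence within distance $\lesssim\epsilon$ of $CV$, say $d(f_{\omega_0}(y),CV)\le 4\epsilon$. Set $R=R_\epsilon(y,\omega)$ and $z=f_\omega^{R}(y)\in\tilde B(\epsilon)$. The event $q_\epsilon(z,\sigma^R\omega)\geq q$ means $Df_{\omega_R}(z)\,d(z,c)< e^{-q}\epsilon$. By (C4) this forces $d(z,c)\lesssim (e^{-q}\epsilon)^{1/\ell}$, so $z$ lies in a set $E_q$ of length $\lesssim (e^{-q}\epsilon)^{1/\ell}$. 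Moreover $f_{\omega_R}(z)$ then lies within $\asymp e^{-q}\epsilon$ of $CV$, uniformly in the value of $\omega_R\in[-\epsilon,\epsilon]$. We therefore condition on the last free coordinate before landing. For fixed $\omega_1,\omega_2,\dots$, we view the map $t\mapsto f_{t}(y)=:w$ and follow $w$ under $f_{\sigma\omega}^{j}$ up to the first entry time into $\tilde B(\epsilon)$.

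This first-entry branch is analyzed via Lemma 5.3. The component $J$ of the first-entry domain containing $w$ has $|J|<\epsilon$ and is mapped diffeomorphically onto $\tilde B(\epsilon)$ with $\mathcal N\le 1$, so the preimage of $E_q$ in $J$ has relative measure $\lesssim |E_q|/|\tilde B(\epsilon)|\lesssim e^{-q/\ell}$. The parameter $t$ moves $w$ by (C2) with derivative at most $1$. The transition kernel satisfies $p_\epsilon(y,A)\le L(|A|/2\epsilon)^{1/L}$. We sum over the (at most countably many) first-entry components $J$ meeting the $\epsilon$-window around $f(y)$. This should give probability $\lesssim\sum_J (|J\cap \text{preimage}|/\epsilon)^{1/L}$.

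The main obstacle is controlling this sum. The exponent $1/L$ is concave, so summing over many components could destroy the bound. We would first try to avoid the sum by noting that all components lie inside a window of length $\sim\epsilon$. Bounded distortion gives total preimage measure $\lesssim e^{-q/\ell}\cdot\epsilon$, and hence probability $\le L(Ce^{-q/\ell})^{1/L}$, i.e.\ $\rho_0=1/(\ell L)$. This requires only a single application of $\mathbb M_\epsilon(L)$ to the union. The subtlety is that later coordinates $\omega_1,\dots$ also vary. We handle this by fixing them first, since the measure bound is uniform in them, and integrating afterwards by Fubini. A second subtlety is that the first entry could be immediate, i.e.\ $R=1$ with $w\in\tilde B(\epsilon)$. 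That case is covered directly by the same estimate, with $E_q$ itself playing the role of the preimage.
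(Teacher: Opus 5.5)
The paper does not prove this lemma; it quotes it from \cite{LR}, so your proposal has to be judged on its own. Your overall plan is the right one: freeze $\sigma\omega$, pull $E_q$ back along first-entry branches via Lemma 5.3, apply $\mathbb M_\epsilon(L)$ once to the union, then iterate by the strong Markov property. But the step ``bounded distortion gives total preimage measure $\lesssim e^{-q/\ell}\epsilon$'' has a genuine gap.

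\textbf{The overlap problem.} Lemma 5.3 is about the component $J_w$ of $f_{\sigma\omega}^{-l}(\tilde B(\epsilon))$ containing a window point $w$, where $l=l_\epsilon(w,\sigma\omega)$ is its first entry time. These are the intervals that map onto $\tilde B(\epsilon)$ with $\mathcal N\le 1$ and $|J_w|<\epsilon$. They are not the components of the first-entry domain. The first entry time need not be constant on $J_w$, and intervals attached to different times $l'<l$ can overlap. The reason is that $\tilde B(\epsilon)$ is not a nice set for the random system; Proposition 11 only produces nice sets between $\tilde B(\delta)$ and $\tilde B(2\delta)$. So $f^{l'}_{\sigma\omega}(J_w)$ may straddle $\partial\tilde B(\epsilon)$. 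Knowing that all $J_w$ lie in an $O(\epsilon)$-window therefore does not bound $\sum |J_w|$ without a bounded-overlap statement. If you instead use the genuine, disjoint components of the first-entry domain, they need not map onto $\tilde B(\epsilon)$, and the relative bound $|E_q|/|\tilde B(\epsilon)|$ is lost.

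\textbf{The fix uses Proposition 10.}
\begin{itemize}
\item Fix $b\in(0,1)$ and take $q\ge q_0(b)$ so that $E_q\subset\tilde B(b\epsilon)$; smaller $q$ are absorbed into $K_0$.
\item For each $l$, let $\mathcal J_l$ be the pairwise disjoint components of $f^{-l}_{\sigma\omega}(\tilde B(\epsilon))$ that contain a window point of first entry time exactly $l$.
\item For $J\in\mathcal J_l$, set $K_J=(f^l_{\sigma\omega}|_J)^{-1}(\tilde B(b\epsilon))$. The bad points of $J$ lie in $K_J$, and their measure is at most $e\,|E_q|\,|K_J|/|\tilde B(b\epsilon)|$.
\item Suppose $K_J\cap K_{J'}\neq\emptyset$ with $J\in\mathcal J_l$, $J'\in\mathcal J_{l'}$ and $l'<l$. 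Then $f^{l'}_{\sigma\omega}(J)\subset I^{\pm}$, since intermediate images avoid $c$. It meets $\tilde B(b\epsilon)$, and $f^{l-l'}_{\sigma^{l'+1}\omega}$ maps it onto $\tilde B(\epsilon)\subset\tilde B(2\epsilon)$.
\item Proposition 10 with $\xi=b$, $\xi'=1$ then gives $f^{l'}_{\sigma\omega}(J)\subset\tilde B(\epsilon)$. This contradicts the fact that $J$ contains a point whose first entry time is $l$.
\item Hence all the $K_J$ are pairwise disjoint. Each lies within $\epsilon$ of the window, so $\sum|K_J|\le 4\epsilon$.
\end{itemize}
Your single application of $\mathbb M_\epsilon(L)$ then gives the one-step bound with $\rho_0=1/(\ell L)$.

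\textbf{A smaller issue in the Markov step.} The event $\{q_\epsilon(\mathbb F^{n-1}_\epsilon(x,\omega))\ge q_{n-1}\}$ depends on $\omega_k$ through $Df_{\omega_k}$. That is the same coordinate that drives the first step of the next excursion. So, as you stated them, the ``past'' and ``future'' events are not independent. The fix is to replace each event by the larger event $\{z_i\in E_{q_i}\}$, where $z_i$ is the $i$-th landing point.
\begin{itemize}
\item By (C4), this event contains the original one uniformly in the next coordinate.
\item It is measurable with respect to the coordinates strictly before the $i$-th landing time, which is a stopping time.
\item Your argument in fact proves the one-step estimate in exactly this form.
\end{itemize}
The strong Markov property for $\nu_\epsilon^{\mathbb N}$ then gives the product bound $K_0^n e^{-\rho_0|\vec q|}$. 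Separately, your ``immediate entry'' case is vacuous for small $\epsilon$: the window lies near $CV$, far from $c$.
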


\begin{proof}[Proof of Proposition 14]

Let $K_0, \rho_0$ be given by Lemma 5.4, let $\rho= \min\{ \rho_0/5, 1/(2\ell) \}$. By the Stirling's formula, there exists $\kappa > 1$ such that if $m, n \in \mathbb N^+$ with $m > \kappa n/2$, then $C_{m+n-1}^{m-1} \leq e^{\rho m}$. Replacing $\kappa$ by a larger constant, we may assume that $K_0 \leq e^{\kappa \rho}$.

Fix $m \geq 0$. Let 
\[
\Delta_0 = \{(x, \omega) : x \in \tilde B(\epsilon), q_{\epsilon}(x, \omega) \geq \frac{m}{2} - \kappa \}.
\]
For non-negative integers $m', n$, let
\[
\Delta_n^{m'} = \{ (x, \omega) : x \in \tilde B(\epsilon) \times \Omega_{\epsilon}: \exists s \geq 1 \mbox{ such that }  \Gamma_1^s(x, \omega, \epsilon) =n, Q_1^s(x, \omega, \epsilon) = m' \}.
\]
Put $\mathcal I = \{ (m', n) \in \mathbb N^2 : 2 m' \geq \max\{ m, \kappa n \}>0 \}$.

\begin{claim}
\begin{equation}
Bad_m^c(\kappa, \epsilon) \subset \Delta_0 \cup \left( \bigcup_{(m', n) \in \mathcal I}\Delta_n^{m'}  \right).
\end{equation}
\end{claim}

First, we show that for each $(x, \omega) \in Bad_m^c(\kappa, \epsilon) $, there exists an integer $s \geq 0$ such that
\begin{equation}
Q_0^s(x, \omega, \epsilon)  > \max\{m - \kappa, \kappa \Gamma_0^s(x, \omega, \epsilon)  \}.
\end{equation}
Let $s_0$ be minimal such that $Q_0^{s_0}(x, \omega, \epsilon) \geq m$, such $s_0$ always exists. If $Q_0^{s_0}(x, \omega, \epsilon) > \kappa \Gamma_0^{s_0}(x, \omega, \epsilon)$, then set $s = s_0$. Otherwise, take $s < s_0$ be the maximal integer such that $f_{\omega}^s(x) \in \tilde B(\epsilon)$. Since $Q_0^s(x, \omega, \epsilon) < m$, then $Q_0^s(x, \omega, \epsilon) > \kappa \Gamma_0^s(x, \omega, \epsilon)$. Since by assumption, $Q_0^{s_0}(x, \omega, \epsilon) \leq \kappa \Gamma_0^{s_0}(x, \omega, \epsilon)$ and also $ \Gamma_0^{s_0}(x, \omega, \epsilon) =  \Gamma_0^{s}(x, \omega, \epsilon) + 1$, we have
\[
Q_0^s(x, \omega, \epsilon)  > \kappa \Gamma_0^{s_0}(x, \omega, \epsilon) - \kappa \geq Q_0^{s_0}(x, \omega, \epsilon) - \kappa \geq m - \kappa.
\]
This proves (5.8).

Second, for each $(x, \omega) \in Bad_m^c(\kappa, \epsilon) \setminus \Delta_0$, assume that $(x, \omega) \in \Delta_n^{m'}$ where $m' = Q_1^s(x, \omega, \epsilon)$, $n = \Gamma_1^s(x, \omega, s)$ and $s \geq 0$ so that (5.8) holds. It suffices to show that $(m', n) \in \mathcal I$, that is, $2m' \geq m $ and $2m' \geq \kappa n$.

On one hand, 
\[
2m' = 2(Q_0^s(x, \omega, \epsilon) - q_{\epsilon}(x, \omega)) > 2((m-\kappa) - (\frac{m}{2} - \kappa)) =m.
\]
Note that this implies $n > 0$. On the other hand, since $2 q_{\epsilon}(x, \omega) \leq m - 2 \kappa \leq m - \kappa < Q_0^s(x, \omega, \epsilon)$, we have 
\[
\kappa n \leq \kappa \Gamma_0^{s}(x, \omega, \epsilon)  <Q_0^{s}(x, \omega, \epsilon)  =Q_1^{s}(x, \omega, \epsilon) + q_{\epsilon}(x, \omega) \leq 2 Q_1^{s}(x, \omega, \epsilon) = 2 m'.
\]
This proves the claim.

To complete the proof, we separate into two steps.

\noindent
{\bf Step 1.} Estimate of $P_{\epsilon} (\Delta_0)$. We first observe that by the definition of $q_{\epsilon}$, there exists a constant $C = C(\kappa)$ such that for each $\omega \in \Omega_{\epsilon}$, we have
\[
|\{ x \in \tilde B(\epsilon) : q_{\epsilon} (x, \omega) \geq \frac{m}{2} - \kappa \} | \leq C | \tilde B(\epsilon)| e^{-\frac{m}{2 \ell}} \leq C | \tilde B(\epsilon)| e^{-\rho m}.
\]
Indeed, for each $x \in \tilde B(\epsilon)$ with $q_{\epsilon} (x, \omega) \geq m/2 - \kappa$, we have
\[
e^{-(q_{\epsilon}(x, \omega) -1)}\epsilon \geq Df_{\omega}(x) d (x, c) \asymp d(x, c)^{\ell},  
\]
hence
\[
d(x, c)\leq C_1 e^{- \frac{q_{\epsilon}(x, \omega)}{\ell}} \epsilon^{\frac{1}{\ell}} \leq C_2 e^{\frac{\kappa}{\ell}}e^{- \frac{m}{2\ell}} |\tilde B(\epsilon)|.
\]
By Fubini,
\[
P_{\epsilon} (\Delta_0) \leq C | \tilde B(\epsilon)| e^{-\rho m}.
\]

\noindent
{\bf Step 2.} Estimate of $P_{\epsilon} (\Delta_n^{m'})$ for $(m', n) \in \mathcal I$. For each $x \in \tilde B(\epsilon)$, let 
\[
E_n^{m'}(x, \epsilon)  = \{\omega \in \Omega_{\epsilon} : (x ,\omega) \in \Delta_n^{m'} \}.
\]
Then 
\[
E_n^{m'}(x, \epsilon) \subset \bigcup_{\substack{\vec q \in \mathbb N^n\\ |\vec q| = m'} } U_{\epsilon}^n (x, \vec q).
\]
Since the number of $\vec q \in \mathbb N^n$ with $\vec q = m'$ is $C_{m'+n-1}^{n-1}$, by Lemma 5.4, 
\[
\nu_{\epsilon}^{\mathbb N} (E_n^{m'}(x, \epsilon)) \leq C_{m'+n-1}^{n-1} e^{-\rho_0 m'} K_0^n \leq e^{\rho m'} e^{-\rho_0 m'} e^{\kappa \rho n} \leq e^{- \rho(4m' - \kappa n)} \leq e^{-2 \rho m'},
\]
here we use the fact that $m' > \kappa n /2$. By Fubini,
\[
P_{\epsilon} (\Delta_n^{m'}) = \int_{\tilde B(\epsilon)} \nu_{\epsilon}^{\mathbb N} (E_n^{m'}(x, \epsilon))dx \leq e^{-2\rho m'} |\tilde B(\epsilon)|.
\]
Thus,
\begin{align*}
\sum_{(m', n) \in \mathcal I} P_{\epsilon} (\Delta_n^{m'})  & \leq \sum_{m' = [\frac{m}{2}]}^{\infty} \sum_{n: (m', n) \in \mathcal I} P_{\epsilon} (\Delta_n^{m'})  \\
&\leq \sum_{m' = [\frac{m}{2}]}^{\infty}  \frac{2m'}{\kappa} e^{-2\rho m'} |\tilde B(\epsilon)| \leq C' e^{-\rho m} |\tilde B(\epsilon)|.
\end{align*}
Combining with (5.7), we obtain the desired estimates.

\end{proof}

\subsection{Good return estimate}

The main result of this subsection is the following.

\begin{prop}
Given $\theta > 0, \kappa > 1$ and $\alpha > 0$, there exists $\tau > 0$ such that the following holds when $\epsilon > 0$ small enough. For $(x, \omega) \in \tilde B(\epsilon) \times \Omega_{\epsilon}$ and $m \geq 1$, if $(x, \omega) \notin Bad_m(\kappa, \epsilon)$, then $\hat h_{\epsilon, \tau}^{\theta} (x, \omega) \leq m \epsilon^{-\alpha}$.
\end{prop}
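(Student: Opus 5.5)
Since $(x,\omega)\notin Bad_m(\kappa,\epsilon)$ and $x\in\tilde B(\epsilon)$, one of the two defining conditions must fail. Either (a) $\lim_{s\to\infty}Q_0^s(x,\omega,\epsilon)<m$, or (b) there is $s\ge 0$ with $Q_0^s(x,\omega,\epsilon)\le\min\{m,\kappa\Gamma_0^s(x,\omega,\epsilon)\}$. In case (a) every return has bounded total depth, so for all large $s$ we are in the situation of (b) with $\Gamma$ growing. I would therefore reduce to the following: there is a return time $s$ (with $f^s_\omega(x)\in\tilde B(\epsilon)$) such that $Q_0^s\le m$ and $Q_0^s\le \kappa\Gamma_0^s$.

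I would take $s$ to be the last return to $\tilde B(\epsilon)$ before or at the relevant time. I would then decompose the orbit $\{x_j\}_{j=0}^{s}$ at the successive returns $0=s_0<s_1<\dots<s_k$ into $\tilde B(\epsilon)$, so that $k+1=\Gamma_0^s$.

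\textbf{Step 1: derivative growth across one return cycle.} Fix a cycle $s_i\to s_{i+1}$. By the definition of $q_\epsilon$ and (C4),
\[
Df_{\sigma^{s_i}\omega}(x_{s_i})\,d(x_{s_i},c)\ge e^{-q_i}\epsilon,\qquad q_i=q_\epsilon(F^{s_i}(x,\omega)).
\]
The point $x_{s_i+1}$ is within $4\epsilon$ of $CV$, so Theorem~2(1) and Proposition~7 give
\[
Df^{s_{i+1}-s_i-1}_{\sigma^{s_i+1}\omega}(x_{s_i+1})\ge \frac{\Lambda(\epsilon)}{D(\epsilon)}\,e^{\epsilon^{\alpha(\epsilon)}(s_{i+1}-s_i-1)}.
\]
Since $d(x_{s_{i+1}},c)\lesssim|\tilde B(\epsilon)|=\epsilon/D(\epsilon)$, combining the two bounds shows that the quantity $Df^{s_i}_\omega(x)/d(x_{s_i},c)$ grows from one return to the next by a factor at least $\gtrsim\Lambda(\epsilon)e^{-q_i}$.

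\textbf{Step 2: the sum $A(x,\omega,s)$.} By Proposition~9, the sum of the terms of $A$ over each free segment $(s_i,s_{i+1})$ is at most
\[
\theta(\epsilon)\,\frac{Df^{s_{i+1}}_\omega(x)}{|\tilde B(\epsilon)|}.
\]
Hence, arguing exactly as in the proof of Proposition~9,
\[
\frac{A(x,\omega,s)\,|\tilde B(\epsilon)|}{Df^s_\omega(x)}\lesssim \theta(\epsilon)+\sum_{i<k}\prod_{j=i}^{k-1}\frac{C e^{q_j}}{\Lambda(\epsilon)}.
\]
The condition $\sum_j q_j\le\kappa\Gamma$ says that the depths are small on average. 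The difficulty is that one deep return late in the orbit ruins the product for the tail. This is the main obstacle. I would handle it by choosing $s$ to be a maximal ``good'' time: the last return $s'\le s$ such that every tail sum satisfies $\sum_{j=i}^{k'-1}q_j\le\kappa'(k'-i)$, in the style of a Pliss lemma. Since $\Lambda(\epsilon)\to\infty$ while $\kappa$ is fixed, $\Lambda(\epsilon)e^{-\kappa}$ becomes large, each tail product is at most $(Ce^{\kappa}/\Lambda(\epsilon))^{k'-i}$, and the bound above is $<\theta$ for small $\epsilon$. Thus $s'$ is a $\theta$-good return time into $\tilde B(\epsilon)$, which gives $\hat h\le s'$.

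If no such Pliss time exists (for instance when $k$ is small), I would instead aim for a $\tau$-scale expansion time. By the depth bound $Q_0^s\le m$, $Df^s_\omega(x)/A$ loses at most a factor $e^{m}$-type per return, and the term $T_\tau$ in $\hat h$ absorbs this once $\tau$ is chosen small depending on $\theta,\kappa,\alpha$.

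\textbf{Step 3: bounding the time $s'$.} I would show that $s'\le m\epsilon^{-\alpha}$. Theorem~2(1) gives expansion $e^{\epsilon^{\alpha(\epsilon)}}$ per step with $\alpha(\epsilon)\to0$. Derivatives are bounded above by $(\sup Df+\epsilon)^{\text{time}}$, and $\Gamma\le m/\kappa$-type bounds from $Q\le m$ hold if every return has $q\ge1$; otherwise returns with $q=0$ are automatically good. Together these bound the length of each free segment before the orbit either makes a good return or reaches scale $\tau$, giving a length of order $\epsilon^{-\alpha(\epsilon)}$ per cycle. With at most $O(m)$ relevant cycles, the total is at most $m\epsilon^{-\alpha}$ once $\alpha(\epsilon)<\alpha$.

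The delicate point is precisely this length bound on free segments, since Theorem~2 gives only lower bounds on derivatives. I would get it from Theorem~2(2) combined with the fact that an interval cannot be expanded beyond length $1$. Applied to the interval $J^\omega_{x,n}$ of Lemma~3.5, this means a long free segment forces a $\tau$-scale expansion time within $O(\epsilon^{-\alpha(\epsilon)}\log(1/\epsilon))$ steps.
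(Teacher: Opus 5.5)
Your Steps 1--2 are sound and coincide with the paper's Lemma 5.6. The gain $\Lambda(\epsilon)e^{-q}$ per return cycle, together with Proposition 9, makes a return $\theta$-good once every tail satisfies $Q_j^{s-1}\le\kappa\Gamma_j^{s-1}$. Your Pliss-type choice is what the paper gets from the \emph{minimal} $s_1$ with $Q_0^{s_1}\le\min\{m,\kappa\Gamma_0^{s_1}\}$: minimality forces the tail bounds, so the next return $s_2$ is $\theta$-good.

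The gap is in every step that must bound the \emph{time}.
\begin{enumerate}
\item The constant $\tau$ is fixed by $\theta,\kappa,\alpha$ alone and must work for all $m$. So a deficit of order $e^{-Q}$ in $Df^s_\omega(x)/A(x,\omega,s)$ cannot be ``absorbed'' by taking $\tau$ small. It can only be paid for by waiting until the growth $e^{\epsilon^{\beta}t}$ beats it.
\item Your claim that a long free segment produces a $\tau$-scale expansion time within $O(\epsilon^{-\alpha(\epsilon)}\log(1/\epsilon))$ steps is false as stated. Since $Df\le\sup Df+\epsilon$, after a return of depth $q$ no $\tau$-scale expansion time occurs for roughly $(q+\log(1/\epsilon))/\log(\sup Df+1)$ steps. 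Outside $Bad_m$, $q$ can be comparable to $m$, and $m$ is arbitrary.
\item A time $s$ is a $\tau$-scale expansion time only if $Df^{s-i}_{\sigma^i\omega}(x_i)\,d(x_i,c)\ge e\tau/\theta_0$ for \emph{every} $i<s$. Nothing in Theorem 2 prevents an orbit that avoids $\tilde B(\epsilon)$ from repeatedly visiting $\tilde B(\delta')\setminus\tilde B(\epsilon)$, $\epsilon<\delta'<\epsilon_0$, so that this fails for a long time. For such orbits the paper obtains $\hat h^\theta_{\epsilon,\tau}$ from a $\theta$-good return into $\tilde B(\delta')$ with $\delta'>\epsilon$; this is why $\hat h$ contains $\inf_{\delta'\ge\epsilon}h^\theta_{\delta'}$. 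Your argument never uses such returns.
\item The tools you cite point the wrong way. $|f^n_\omega(J^\omega_{x,n})|\le 1$ gives the \emph{upper} bound $Df^n_\omega(x)/A(x,\omega,n)\le e/(2\theta_0)$, and $(\sup Df+\epsilon)^{\mathrm{time}}$ bounds times from below. A $\tau$-scale expansion time needs a \emph{lower} bound on $Df^n_\omega/A$.
\end{enumerate}
Two smaller points: ``returns with $q=0$ are automatically good'' is not true, since goodness depends on the whole history through $A$. And your case (a) tacitly assumes infinitely many returns.

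The missing ingredient is a cumulative time-versus-depth criterion, the paper's Lemma 5.7: if $s-j>\epsilon^{-\gamma}Q_j^{s-1}$ for all $0\le j<s$, then $\hat h^\theta_{\epsilon,\tau}(x,\omega)\le s$. Its proof is multi-scale. One takes the last visits $s_i$ to the nested neighbourhoods $\tilde B(e^{N-i}\epsilon)$ up to scale $\epsilon_0$. Either some scale has a gap of at least $\epsilon^{-2\beta}$ steps between consecutive last visits, or the last visit to $\tilde B(\epsilon)$ is already good. In the first case, Lemma 5.5 (with its $\epsilon^\beta t$ gain and $2\log\epsilon$ loss) gives a $\theta$-good return into that intermediate neighbourhood, or, at the top scale, a $\tau$-scale expansion time.

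The proposition then follows in four steps:
\begin{enumerate}
\item Set $T_1=\inf\{s\ge1: s>\epsilon^{-4\beta}Q_0^{s-1}\}$ with $\beta<\alpha/8$, so that $\hat h\le T_1$.
\item If $T_1>m\epsilon^{-\alpha}$, then $Q_0^{s_0-1}>m$ for $s_0=[m\epsilon^{-\alpha/2}]$, so condition (2) of $Bad_m$ holds.
\item Non-badness then gives the minimal $s_1<s_0$ above.
\item Minimality of $T_1$ yields $\hat h\le\min\{s_2,T_1\}\le\epsilon^{-4\beta}Q_0^{s_1}+1<m\epsilon^{-\alpha}$.
\end{enumerate}
This also handles the case $\lim_sQ_0^s<m$, where $T_1\le\epsilon^{-4\beta}m+1$, regardless of how many returns occur.
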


We shall need several preparations.

\begin{lem}
Given $K>0$ and $\beta >0$, the following holds for each $(y, \overline \omega) \in \tilde B(\delta) \times \Omega_{\delta}$ provided $\delta> 0$ is small enough.

(1) If $t \geq 1$ is an integer such that $f_{\overline \omega}^t (y) \in \tilde B(\delta)$, then
\[
\log \left( \frac{Df_{\overline \omega}^t (y) d(y, c) }{|\tilde B(\delta)|} \right) \geq K \Gamma_0^{t-1}(y, \overline \omega, \delta) - Q_0^{t-1}(y, \overline \omega, \delta) + \delta^{\beta} t.
\]

(2) If $t \geq 1$ is an integer such that $f_{\overline \omega}^t (y) \notin \tilde B(\delta)$, then
\[
\log \left( \frac{Df_{\overline \omega}^t (y) d(y, c) }{d(f_{\overline \omega}^t (y), c)} \right) \geq K \Gamma_0^{t-1}(y, \overline \omega, \delta) - Q_0^{t-1}(y, \overline \omega, \delta) + \delta^{\beta} t + 2 \log \delta.
\]
\end{lem}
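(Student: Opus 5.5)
We will prove the statement by decomposing the random orbit $y_j = f_{\overline\omega}^j(y)$, $0\le j\le t$, at its visits to $\tilde B(\delta)$. Let $0 = s_0 < s_1 < \cdots < s_k \le t-1$ be the times $j\le t-1$ with $y_j\in\tilde B(\delta)$, so that $k+1 = \Gamma_0^{t-1}(y,\overline\omega,\delta)$. We will write
\[
Df_{\overline\omega}^t(y)\,d(y,c) = \prod_{i=0}^{k}\Big(Df_{\sigma^{s_i}\overline\omega}(y_{s_i})\,d(y_{s_i},c)\Big)\cdot\prod_{i=0}^{k-1}\frac{Df^{s_{i+1}-s_i-1}_{\sigma^{s_i+1}\overline\omega}(y_{s_i+1})}{d(y_{s_{i+1}},c)}\cdot Df^{t-s_k-1}_{\sigma^{s_k+1}\overline\omega}(y_{s_k+1}).
\]
By the definition of the depth function, each factor in the first product satisfies $Df(y_{s_i})d(y_{s_i},c)\ge e^{-q_\delta(F^{s_i}(y,\overline\omega))}\delta$. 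The depth is $0$ at times outside $\tilde B(\delta)$, possibly after enlarging constants, since there $Df\,d(\cdot,c)$ is bounded below by $\asymp\delta$. Hence the first product is at least $e^{-Q_0^{t-1}}\delta^{k+1}$.

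For the intermediate blocks we will use Theorem 2(1). Each piece $y_{s_i+1},\dots,y_{s_{i+1}}$ starts at distance $\le 2\delta\le4\delta$ from $CV$, since $d(y_{s_i+1},CV)\le d_*(y_{s_i},c)+\delta$. The piece avoids $\tilde B(\delta)$ in between and lands in $\tilde B(\delta)\subset\tilde B(2\delta)$. Theorem 2(1) therefore gives
\[
Df^{s_{i+1}-s_i-1}(y_{s_i+1})\ge \frac{\Lambda(\delta)}{D(\delta)}e^{\delta^{\alpha(\delta)}(s_{i+1}-s_i-1)}.
\]
Combining this with $\delta\cdot D(\delta)^{-1} = |\tilde B(\delta)|$ and $d(y_{s_{i+1}},c)\lesssim|\tilde B(\delta)|$, each pair (the $\delta$ from a critical visit times one block) contributes at least a factor $\gtrsim\Lambda(\delta)\,e^{\delta^{\alpha(\delta)}(\cdot)}$.

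Now consider the final segment. In case (1), $y_t\in\tilde B(\delta)$, so the last block is again covered by Theorem 2(1). The leftover $\delta$ from the visit at $s_k$, divided by $D(\delta)$, yields exactly $|\tilde B(\delta)|$, which we divide out. In case (2), the last block does not return, so we instead use Theorem 2(2): $Df\ge A\delta^{1-1/\ell}e^{\delta^{\alpha}(\cdot)}$. Since $d(y_t,c)\le1$, the losses amount to at most $\delta\cdot\delta^{1-1/\ell}\gtrsim\delta^2$, which is absorbed by the term $2\log\delta$.

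Collecting terms, we get
\[
\log(\cdots)\ge (k+1)\log(c_0\Lambda(\delta)) - Q_0^{t-1}+\delta^{\alpha(\delta)}(t-k-1).
\]
Since $\Lambda(\delta)\to\infty$, we choose $\delta$ small so that $\log(c_0\Lambda(\delta))\ge K+1$. The exponent loss over the $k+1$ critical times is paid by the extra $+1$ per visit. Since $\alpha(\delta)\to0$, we also have $\delta^{\alpha(\delta)}\ge\delta^{\beta}$ for small $\delta$. Together these give both inequalities.

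The main obstacle is the bookkeeping at the critical visits. We must ensure that $d(y_{s_{i+1}},c)$ is comparable to $|\tilde B(\delta)|$ from above (this is true up to a constant) and that the factor $D(\delta)^{-1}$ from Theorem 2(1) is exactly compensated by the $\delta$ from the depth bound. This requires care with whether $s_0=0$ is a critical visit, since $y\in\tilde B(\delta)$ is assumed, so it is. It also requires handling the case $k=0$ separately.
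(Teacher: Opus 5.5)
Your proposal is correct and is essentially the paper's argument. The paper first treats a single excursion, using Theorem 2(1) on the block after the first step together with the depth bound $Df_{\overline\omega_0}(y)\,d(y,c)\ge e^{-q_\delta(y,\overline\omega)}\delta$ and $\delta/D(\delta)=|\tilde B(\delta)|$; it then concatenates by induction on $\Gamma_0^{t-1}$ (implicitly using $d(f^{s_i}_{\overline\omega}(y),c)\le|\tilde B(\delta)|$), and handles part (2) by a final excursion estimated with Theorem 2(2) and $d(f^t_{\overline\omega}(y),c)\le 1$. Your unrolled product does exactly this.

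One small correction: the claim that the depth vanishes at non-visit times is not needed and is not quite accurate. Since $q_\delta\ge 0$ everywhere, $\sum_i q_\delta(F^{s_i}(y,\overline\omega))\le Q_0^{t-1}(y,\overline\omega,\delta)$ already gives your bound on the first product.
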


\begin{proof}

(1) It suffices to consider the case $f_{\overline \omega}^j (y) \notin \tilde B(\delta)$ for $1 \leq j < t$, since the general case follows by induction on $\Gamma_0^{t-1}(y, \overline \omega, \delta)$.

Let $x = f_{\overline \omega_0}(y)$. By Theorem 2 statement (1), we have
\[
D f_{\sigma \overline \omega}^{t-1}(x) \geq \frac{e^K}{D(\delta)} e^{\delta^{\beta} t},
\]
provided $\delta>0$ is small enough such that $\alpha(\delta) \leq \beta$. Therefore,
\begin{align*}
\frac{Df_{\overline \omega}^t (y) d(y, c) }{|\tilde B(\delta)|} &= D f_{\sigma \overline \omega}^{t-1}(x) \frac{Df_{\overline \omega_0} (y) d(y, c) }{|\tilde B(\delta)|} \geq D f_{\sigma \overline \omega}^{t-1}(x) \frac{e^{-q_{\delta} (y, \overline \omega)} \delta}{|\tilde B(\delta)|}  \\
& = D f_{\sigma \overline \omega}^{t-1}(x) e^{-q_{\delta} (y, \overline \omega)} D(\delta) \geq e^Ke^{-q_{\delta} (y, \overline \omega)} e^{\delta^{\beta} t}.
\end{align*}
This finishes the proof.

(2) Put $\rho_j = d_* (f_{\overline \omega}^j(y), c)$ for $0 \leq j \leq t$. By part (1) of this lemma, it suffices to consider the case that $\rho_j \geq \delta$ for all $1 \leq j \leq t$. By Theorem 2 statement (2), 
 \begin{align*}
 Df_{\overline \omega}^t (y) d(y, c)  & = D f_{\sigma \overline \omega}^{t-1}(x) Df_{\overline \omega_0} (y) d(y, c) \geq A \delta^{1 - \frac{1}{\ell}} e^{\delta^{\beta} t} e^{-q_{\delta} (y, \overline \omega)} \delta \\
 & > \delta^2 \frac{A}{ \delta^{ \frac{1}{\ell}}} e^{-q_{\delta} (y, \overline \omega)}e^{\delta^{\beta} t} > \delta^2 e^{K-q_{\delta} (y, \overline \omega)}e^{\delta^{\beta} t},
 \end{align*}
 provided $\delta > 0$ small enough. Then we obtain the desired estimate since $d(f_{\overline \omega}^t (y), c) \leq 1$.

\end{proof}

\begin{lem}
Given $\kappa > 1$ and $\theta > 0$ we have the following provided $\epsilon > 0$ is small enough. Let $(x, \omega) \in \tilde B(\epsilon) \times \Omega_{\epsilon}$, let $s \geq 1$ be an integer such that $f_{\omega}^s (x) \in \tilde B(\epsilon)$ and such that for each $0 \leq j < s$,
\[
\kappa \Gamma_j^{s-1}(x,  \omega, \epsilon) \geq Q_j^{s-1}(x, \omega, \epsilon).
\]
Then $s$ is a $\theta$-good return time of $(x, \omega)$ into $\tilde B(\epsilon) \times \Omega_{\epsilon}$.
\end{lem}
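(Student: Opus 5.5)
We have to show $A(x,\omega,s)\,|\tilde B(\epsilon)| \le \theta\, Df_\omega^s(x)$. Write
\[
\frac{A(x,\omega,s)}{Df_\omega^s(x)}=\sum_{j=0}^{s-1}\frac{1}{Df^{s-j}_{\sigma^j\omega}(f_\omega^j(x))\, d(f_\omega^j(x),c)} ,
\]
so it suffices to show that each term is at most $\theta\, a_j/|\tilde B(\epsilon)|$, with $\sum_j a_j\le 1$.

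The bound will come from Lemma 5.5, applied with $\delta=\epsilon$, $y=f_\omega^j(x)$, $\overline\omega=\sigma^j\omega$ and $t=s-j$. This is legitimate because $f_{\overline\omega}^{t}(y)=f_\omega^s(x)\in\tilde B(\epsilon)$. For each $0\le j<s$ we will use the large parameter $K=\kappa+K'$, where $K'$ is chosen so that $e^{-K'}$ is small compared with $\theta$. Part (1) of Lemma 5.5 then gives
\[
\log\frac{Df^{s-j}_{\sigma^j\omega}(f^j_\omega x)\,d(f^j_\omega x,c)}{|\tilde B(\epsilon)|}\ \ge\ (\kappa+K')\Gamma_j^{s-1}-Q_j^{s-1}+\epsilon^\beta (s-j).
\]
By hypothesis $\kappa\Gamma_j^{s-1}\ge Q_j^{s-1}$, so the right-hand side is at least $K'\Gamma_j^{s-1}+\epsilon^\beta(s-j)$.

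One point needs care. Lemma 5.5 requires $y\in\tilde B(\delta)$, but $f^j_\omega(x)$ need not lie in $\tilde B(\epsilon)$. So we split according to the position of $f^j_\omega(x)$.

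\textbf{Case $f^j_\omega(x)\in\tilde B(\epsilon)$.} Then $\Gamma_j^{s-1}\ge 1$, and the term is at most $e^{-K'\Gamma_j^{s-1}-\epsilon^\beta(s-j)}/|\tilde B(\epsilon)|$.

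\textbf{Case $f^j_\omega(x)\notin\tilde B(\epsilon)$.} Let $j'>j$ be the first time with $f^{j'}_\omega(x)\in\tilde B(\epsilon)$, so $j'\le s$. We factor $Df^{s-j}=Df^{j'-j}\cdot Df^{s-j'}$. Proposition 9 controls the segment from $j$ to $j'$: summed over $j$ in that segment, these terms contribute at most $\theta(\epsilon)/|\tilde B(\epsilon)|$ times $1/Df^{s-j'}(f^{j'}_\omega x)$. Since $d(f^{j'}_\omega x,c)\lesssim|\tilde B(\epsilon)|$, the factor $1/Df^{s-j'}(f^{j'}_\omega x)$ is in turn $\lesssim$ the $j'$-term of the first case (for $j'=s$ the factor equals $1$, which is fine).

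\textbf{Summation.} Group the indices by the successive returns $j_1<j_2<\dots$ of the orbit to $\tilde B(\epsilon)$ before $s$. For the $i$-th return counted backward from $s$ we have $\Gamma_{j}^{s-1}=i$. Summing over the returns gives at most $\sum_{i\ge1}e^{-K'i}(1+C\theta(\epsilon))\le 2e^{-K'}$. The final segment from the last return (or from $0$) to $s$ contributes at most $\theta(\epsilon)$. Altogether,
\[
A(x,\omega,s)\,|\tilde B(\epsilon)|/Df^s_\omega(x)\ \le\ 2e^{-K'}+C\theta(\epsilon)\ <\ \theta,
\]
once $K'$ is large and $\epsilon$ is small enough that $\theta(\epsilon)\to0$ takes effect. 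The $\epsilon^\beta$ term is not needed here; any fixed $\beta$ will do.

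The main obstacle is doing this bookkeeping cleanly: the non-return indices must be bounded through Proposition 9 applied to the segments between consecutive returns, and the mismatch between $d(\cdot,c)$ and $|\tilde B(\epsilon)|$ at a return point must be absorbed. The latter is handled by $d(y,c)\le|\tilde B(\epsilon)|$ for $y\in\tilde B(\epsilon)$, which goes in the right direction.
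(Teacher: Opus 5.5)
Your proposal is correct and follows essentially the same route as the paper. Both decompose at the returns $0=s_0<s_1<\dots<s_n=s$ to $\tilde B(\epsilon)$, bound the free segments by Proposition 9 (absorbing $|\tilde B(\epsilon)|$ versus $d(\cdot,c)$ via $d(y,c)\le|\tilde B(\epsilon)|$), and apply Lemma 5.5(1) with $K=K_0+\kappa$ at each return so that the hypothesis gives $A_i\le e^{-(n-i)K_0}\tilde A_n$, which yields a geometric series plus a $\theta(\epsilon)$ term.
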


\begin{proof}

Let $0 = s_0 < s_1 < \cdots < s_n = s$ be all the integers such that $f_{\omega}^{s_i} (x) \in \tilde B(\epsilon)$. For each $0 \leq i \leq n$, let 
\[
A_i = \frac{Df_{\omega}^{s_i} (x) }{d(f_{\omega}^{s_i} (x), c)} \mbox{ and } \tilde A_i = \frac{Df_{\omega}^{s_i} (x) }{|\tilde B(\epsilon)|}.
\]
It suffices to show that 
\[
\theta \tilde A_n \geq A(x, \omega, s).
\]
By Proposition 9, we have
\begin{align*}
A(x, \omega, s) & = \sum_{i=0}^{s-1} \frac{Df_{\omega}^i (x) }{d(f_{\omega}^i (x), c)} =  \sum_{i=0}^{n-1} A_n + \sum_{i=1}^{n} \sum_{i = s_{i-1}+1}^{s_i-1}  \frac{Df_{\omega}^i (x) }{d(f_{\omega}^i (x), c)} \\
& =\sum_{i=0}^{n-1} A_n + \sum_{i=1}^{n} \sum_{i = s_{i-1}+1}^{s_i-1}  \frac{Df_{\sigma^{s_{i-1}+1}\omega}^{i-(s_{i-1} +1)} (x) }{d(f_{\sigma^{s_{i-1}+1}\omega}^{i-(s_{i-1} +1)} (f_{\omega}^{s_{i-1} +1}(x)), c)}  \cdot Df_{\omega}^{s_{i-1} +1} (x) \\
& \leq  \sum_{i=0}^{n-1} A_n  + \theta(\epsilon) \sum_{i=1}^{n}  \tilde A_n \leq A_0 + (1 + \theta (\epsilon)) \sum_{i=1}^{n-1} A_n + \theta(\epsilon) \tilde A_n,
\end{align*}
where we use the fact that since $f_{\omega}^{s_i} (x) \in \tilde B(\epsilon)$, then $|\tilde B(\epsilon)| \geq d(f_{\omega}^{s_i} (x), c)$ and hence $\tilde A_i \leq A_i$, $1 \leq i \leq n$. Moreover, $\theta(\epsilon) \to 0$ as $\epsilon \to 0$.

Let $K_0$ be a large constant and $\epsilon > 0$ small. By Lemma 5.5 statement (1), for each $i = 0, 1, \cdots, n-1$, 
\begin{align*}
\log \frac{\tilde A_n}{A_i} &= \log \left( \frac{Df_{\omega}^{s} (x) }{|\tilde B(\epsilon)|} \frac{d(f_{\omega}^{s_i} (x), c)}{Df_{\omega}^{s_i} (x) } \right)  = \log \left( \frac{  Df_{\sigma^{s_i} \omega}^{s - s_i} ( f_{\omega}^{s_i} (x)) d(f_{\omega}^{s_i} (x), c) }{|\tilde B(\epsilon)|} \right) \\
& \geq (K_0 + \kappa) \Gamma_{s_i}^{s-1} (x, \omega, \epsilon) - Q_{s_i}^{s-1} (x, \omega, \epsilon) \geq (n - i) K_0.
\end{align*}
Hence $A_i \leq e^{-(n-i) K_0} \tilde A_n$. Therefore
\[
A(x, \omega, s) \leq  \left( e^{- K_0 n} +(1+\theta(\epsilon)) \sum_{i=1}^{n-1} e^{-K_0(n-i)} + \theta(\epsilon)  \right) \tilde A_n \leq \frac{\tilde A_n}{\theta}
\]
provided $\epsilon > 0$ is small enough.

\end{proof}

\begin{lem}
Given $\theta > 0$ and $\gamma > 0$ there exists a constant $\tau > 0$ such that the following holds provided $\epsilon > 0$ is small enough. Let $(x, \omega) \in \tilde B(\epsilon) \times \Omega_{\epsilon}$, let $s \geq 1$ be an integer such that for each $0 \leq j < s$, 
\begin{equation}
s - j > \epsilon^{- \gamma} Q_{j}^{s-1} (x, \omega, \epsilon),
\end{equation}
then $\hat h_{\epsilon, \tau}^{\theta} (x, \omega) \leq s$.
\end{lem}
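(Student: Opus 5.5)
The plan is to show that $s$ itself is a $\tau$-scale expansion time of $(x,\omega)$, i.e. $\theta_0 Df^s_\omega(x)\ge e\tau A(x,\omega,s)$, with $\tau$ independent of $\epsilon$. Then $T^\tau(x,\omega)\le s$, and hence $\hat h^\theta_{\epsilon,\tau}(x,\omega)\le s$. (If instead $f^s_\omega(x)$ lands in $\tilde B(\epsilon)$ with good control, Lemma 5.6 would give a $\theta$-good return, but I expect the expansion-time route to handle every case.) Write
\[
\frac{A(x,\omega,s)}{Df^s_\omega(x)}=\sum_{j=0}^{s-1}\frac{1}{Df^{s-j}_{\sigma^j\omega}(f^j_\omega(x))\, d(f^j_\omega(x),c)} .
\]
It then suffices to bound each term by something summable in $s-j$ with a total bounded by a constant independent of $\epsilon$.

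To bound each term, apply Lemma 5.5(2) with $\delta=\epsilon$, $y=f^j_\omega(x)$ and $t=s-j$, taking $K=0$ and a small $\beta<\gamma/2$. When $f^{s}_\omega(x)\in\tilde B(\epsilon)$, use Lemma 5.5(1) instead, since $|\tilde B(\epsilon)|\ge d(f^s_\omega(x),c)$. In either case, using $d(f^s_\omega(x),c)\le 1$, we get
\[
\log\big(Df^{s-j}_{\sigma^j\omega}(f^j_\omega x)\,d(f^j_\omega x,c)\big)\ \ge\ -Q_j^{s-1}+\epsilon^{\beta}(s-j)+2\log\epsilon .
\]
The hypothesis (5.9) gives $Q_j^{s-1}<\epsilon^{\gamma}(s-j)$. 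Since $\beta<\gamma$ and $\epsilon$ is small, $\epsilon^\beta-\epsilon^\gamma\ge \epsilon^\beta/2$, so the $j$-th term is at most $\epsilon^{-2}e^{-\epsilon^\beta (s-j)/2}$.

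Summing these bounds naively gives about $\epsilon^{-2-\beta}$, which is not bounded. This is the main obstacle: the $2\log\epsilon$ loss and the weak rate $\epsilon^\beta$ must be absorbed. My first attempt is to split the sum. For $s-j\ge N_\epsilon:=\epsilon^{-\beta}\cdot 10\log(1/\epsilon)$ the terms are tiny, and their total is $o(1)$. For $s-j<N_\epsilon$ the hypothesis gives $Q_j^{s-1}<\epsilon^{\gamma}N_\epsilon<1$ (using $\beta<\gamma$), so $Q_j^{s-1}=0$. This means every return of $f^i_\omega(x)$, $j\le i<s$, into $\tilde B(\epsilon)$ has depth $0$, i.e. $Df\cdot d(\cdot,c)\ge\epsilon$. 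Note also that $Q_0^0=0$ for the starting point in $\tilde B(\epsilon)$, and for large $s$ with $j=s-1$ the orbit is shallow there as well.

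In this final window I would replace Theorem 2 by the sharper estimates along shallow orbits. These are Proposition 7/Theorem 2(1), giving $\Lambda(\epsilon)/D(\epsilon)$ between consecutive returns, combined with Proposition 9 (small total distortion, $\theta(\epsilon)\to0$) and the decomposition at return times used in the proof of Lemma 5.6. On the excursion pieces this shows $\sum Df^i/d(\cdot,c)\lesssim Df^{s}/|\tilde B(\epsilon)|$ up to the last return. For the final stretch after the last return outside $\tilde B(\epsilon)$, I would use Theorem 2(2), which gives expansion $\ge A\epsilon^{1-1/\ell}$; together with $d\gtrsim|\tilde B(\epsilon)|\asymp\epsilon^{1/\ell}$ this makes the contribution $O(1)$ per geometric block. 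I expect the window sum to be bounded by a constant $C$ depending on $\theta_0,\ell$, giving the result with $\tau=\theta_0/(eC')$. The delicate point is the last segment where $f^{s}_\omega(x)$ is far from $c$ and no return occurs. If the $O(1)$ bound fails there, I would fall back on stopping at the last return, where Lemma 5.6 yields a $\theta$-good return time $\le s$.
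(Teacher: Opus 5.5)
Your main route cannot succeed: under (5.9), $s$ itself need not be a $\tau$-scale expansion time for any $\tau$ independent of $\epsilon$. The hypothesis only forces zero depth in the final window; your observation that $Q_j^{s-1}=0$ for $s-j<N_\epsilon$ is correct. But zero depth does not exclude a visit to $\tilde B(\epsilon)$ at time $s-1$ with $q_\epsilon(F^{s-1}(x,\omega))=0$. For that single term, (C4) gives
\[
\frac{A(x,\omega,s)}{Df^s_\omega(x)}\ \ge\ \frac{1}{Df_{\omega_{s-1}}(f^{s-1}_\omega(x))\, d(f^{s-1}_\omega(x),c)}\ \ge\ \frac{1}{O_2\, d(f^{s-1}_\omega(x),c)^{\ell}}\ \gtrsim\ \epsilon^{-1}.
\]
In the same way, a depth-zero visit to $\tilde B(\delta')\setminus\tilde B(\epsilon)$ a few steps before $s$ contributes $\gtrsim 1/\delta'$. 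So the window sum is not $O(1)$. The factor $Df\approx\delta'^{1-1/\ell}$ lost at a recent close approach has not been recovered by time $s$, and this is exactly where your ``$O(1)$ per geometric block'' estimate for the final stretch breaks. Two smaller points: Lemma 5.5 can only be applied at times $j$ with $f^j_\omega(x)\in\tilde B(\epsilon)$, and $q_\epsilon(x,\omega)=0$ follows from (5.9) only when $s<\epsilon^{-\gamma}$.

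Your fallback does not close the gap either. Lemma 5.6 needs $\kappa\Gamma_j^{s'-1}\ge Q_j^{s'-1}$, which (5.9) does not give: (5.9) compares depths with elapsed time, not with the number of returns. Worse, if the last visit $s'$ to $\tilde B(\epsilon)$ lies far before $s$, then (5.9) only yields $Q_j^{s'-1}<\epsilon^{\gamma}(s'-j)+\epsilon^{\gamma}(s-s')$. The second term is not absorbed by the gain $\epsilon^{\beta}(s'-j)$ in Lemma 5.5(1), so $s'$ need not be good. Now combine a recent visit to a small intermediate $\tilde B(\delta')$ with no return to $\tilde B(\epsilon)$ in $[1,s]$. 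Then neither $s$ nor any $\tilde B(\epsilon)$-return is available, and the witness must be a $\theta$-good return into some $\tilde B(\delta')$ with $\epsilon<\delta'<\epsilon_0$. This is why $\hat h^\theta_{\epsilon,\tau}$ takes $\inf_{\delta'\ge\epsilon}h^\theta_{\delta'}$.

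The missing idea is the paper's ladder of scales. Take $\beta=\gamma/4$ and $N\approx\log(\epsilon_0/\epsilon)<\epsilon^{-\beta}$. Let $s_i$ be the last visit to $\tilde B(e^{N-i}\epsilon)$ up to time $s$, and take $n$ minimal with $s_n-s_{n+1}\ge\epsilon^{-2\beta}$ (or $n=N$ if $s-s_N<\epsilon^{-3\beta}$). Then:
\begin{itemize}
\item[(i)] $s-s_n<\epsilon^{-3\beta}$, so (5.9) transfers to $Q_j^{s_n-1}\le\epsilon^{3\beta}(s_n-j)$.
\item[(ii)] The long gap supplies a factor $e^{\epsilon^{-\beta}/2}$ that beats the $\epsilon^{-2}$ loss in Lemma 5.5(2). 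This is your ``far past is tiny'' idea, but applied at the selected scale.
\item[(iii)] The witness is a good return at $s_N$ into $\tilde B(\epsilon)$ if $n=N$, or a good return at $s_n$ into $\tilde B(e^{N-n}\epsilon)$ if $0<n<N$. Only if $n=0$ is it a $\tau$-scale expansion at $s$; in that case the final stretch avoids $\tilde B(\epsilon_0/e)$, so $\tau$ depends only on the fixed $\epsilon_0$.
\end{itemize}

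One caveat applies to any route: the case $n=N$ tacitly needs $s_N\ge 1$. For $s=1$ and $q_\epsilon(x,\omega)=0$ the hypothesis holds, but $Df_{\omega_0}(x)\,d(x,c)\lesssim\epsilon$ excludes both alternatives at time $1$. So the statement needs a non-degeneracy assumption whichever route you take.
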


\begin{proof}

Let $\beta = \gamma/4$. Let $\epsilon_0 > 0$ be small such that Lemma 5.5 holds for all $\delta \in (0, \epsilon_0]$. In the following we may assume that $\epsilon \in (0, \epsilon_0 /e]$ small. Let $N$ be the maximal positive integer such that $e^{N-1} \epsilon \leq \epsilon_0$, then 
\[
2 \leq N \leq \log \frac{\epsilon_0}{\epsilon} + 1 < \epsilon^{- \beta} \mbox{ provided $\epsilon > 0$  is small enough}.
\]

Let $s_0 = s$, for $i = 1, 2, \cdots, N$, define 
\[
s_i = \max\{0 \leq j \leq s : f_{\omega}^j (x) \in \tilde B(e^{N-i} \epsilon)  \}.
\]
Then $s_N \leq s_{N-1} \leq \cdots \leq s_1 \leq s_0$. Define an integer $n \in \{0, 1, \cdots, N-1\}$ in the following way: $n = N$, if $s_0 - s_N < \epsilon^{-3 \beta}$; otherwise, let $n$ be the minimal integer in $ \{0, 1, \cdots, N-1\}$ such that $s_n - s_{n+1} \geq {\epsilon}^{-2 \beta}$. Such minimal $n$ exists since for otherwise $s_0 - s_N = \sum_{i=0}^{N-1} (s_n - s_{n+1}) < N e^{-2 \beta} \leq {\epsilon}^{-3 \beta}$, a contradiction. By minimality of $n$, we have  $s_0 - s_n < N \epsilon^{-2 \beta} \leq \epsilon^{-3 \beta}$. By (5.9), it follows that for $0 \leq j < s_n$,
\begin{equation}
s_n - j = (s - j) - (s - s_n) \geq \epsilon^{- \gamma} Q_{j}^{s-1} (x, \omega, \epsilon) - \epsilon^{-3 \beta} \geq \epsilon^{- 3\beta} Q_{j}^{s_n-1} (x, \omega, \epsilon),
\end{equation}
when $Q_{j}^{s_n-1} (x, \omega, \epsilon)> 0$. Note that when $Q_{j}^{s_n-1} (x, \omega, \epsilon)= 0$, the inequality holds trivally.

If $n = N$, $f_{\omega}^{s_N}(x) \in \tilde B(\epsilon) $. We can argue as in the proof of Lemma 5.6 to show that $s_N$ is a $\theta$-good return time of $(x, \omega)$ into the region $\tilde B(\epsilon) \times \Omega_{\epsilon}$. Indeed, Let $s_N > s_{N+1} > \cdots > s_{N+N_0} = 0$ be all the integers such that $f_{\omega}^{s_{N+i}}(x) \in \tilde B(\epsilon), 0 \leq i \leq N_0$. let 
\[
B_i = \frac{Df_{\omega}^{S_{N+i}}(x)}{d(f_{\omega}^{S_{N+i}}(x), c)} \mbox{ and } \tilde B_i = \frac{Df_{\omega}^{S_{N+i}}(x)}{|\tilde B(\epsilon)|}.
\]
Similarly, we have
\[
A(x, \omega, s_N) \leq B_{N_0} + (1+\theta(\epsilon)) \sum_{i = 1}^{N_0 -1} B_i + \theta(\epsilon) \tilde B_0.
\]
Choose $K_0$ large enough, by Lemma 5.5 statement (1) and (5.10), for each $1 \leq i \leq N_0$,
\begin{align*}
\log \frac{\tilde B_0}{B_i}& \geq K_0 \Gamma_{s_{N+i}}^{s_N-1} (x, \omega, \epsilon) - Q_{s_{N+i}}^{s_N-1} (x, \omega, \epsilon) + \epsilon^{\beta}(s_N - s_{N+i})\\
&  \geq K_0 i  - \epsilon^{3 \beta} (s_N - s_{N+i}) + \epsilon^{\beta} (s_N - s_{N+i}) \geq K_0 i.
\end{align*}
The last inequality is enough to obtain the desired result.

If $n < N$, then $s_n > s_{n+1} \geq s_N$, so $f_{\omega}^{s_n} (x) \notin \tilde B(\epsilon)$. For each $0 \leq j \leq s_n$, let 
\[
A_j := \frac{D f_{\omega}^j(x)}{ d(f_{\omega}^j(x), c)}.
\]
Now we estimate $A_{s_n} / A(x, \omega, s_n)$ from below. Let $( s_n > ) s_N > s_{N+1} > \cdots > s_{N+N_0} = 0$ be all the integers such that $f_{\omega}^{s_{N+i}}(x) \in \tilde B(\epsilon), 0 \leq i \leq N_0$.

For $n \leq k \leq N+N_0$, by Lemma 5.5 statement (2), we have
\[
\log \frac{A_{s_n}}{A_{s_k}} \geq \epsilon^{\beta} (s_n - s_k) + 2 \log \epsilon - Q_{s_k}^{s_n -1}(x, \omega, \epsilon).
\]
To apply Lemma 5.5, we set $K=1$, $(y, \overline \omega) = F^{s_k}(x, \omega)$, and $\delta = \epsilon$ in the case that $k \geq N$ and $\delta = e^{N-k} \epsilon$ for otherwise, the estimate keeps the same. Since $s_n - s_k \geq s_n - s_{n+1} \geq \epsilon^{-2 \beta}$, by (5.10) we have
\begin{align*}
\log \frac{A_{s_n}}{A_{s_k}} & \geq \epsilon^{\beta} (s_n - s_k) + 2 \log \epsilon - \epsilon^{3 \beta} (s_n - s_k) \geq \frac{\epsilon^{\beta}}{2} (s_n - s_k) \\
& \geq \frac{\epsilon^{\beta}}{2} (s_n - s_{n+1}) + \frac{\epsilon^{\beta}}{2} (s_{n+1} - s_k) \geq \frac{\epsilon^{-\beta}}{2} + \frac{\epsilon^{\beta}}{2} \max\{ k- N, 0 \}.
\end{align*}
Thus, 
\[
\frac{\sum_{i=n+1}^{N + N_0} A_{s_i}}{A_{s_n}} \leq \sum_{i=n+1}^{N + N_0} \frac{1}{e^{\frac{1}{2 \epsilon^{\beta}} +  \frac{\epsilon^{\beta}}{2} \max\{ i- N, 0 \}}}
\]
is very large. By Proposition 9, this implies 
\begin{align*}
A(x, \omega, s_{n+1} + 1)& = \sum_{i=S_{N+N_0}}^{s_{n+1}} \frac{D f_{\omega}^j(x)}{ d(f_{\omega}^j(x), c)} = \sum_{i = N+N_0}^{n+1} A_{s_i} + \sum_{i = N+N_0}^{n+2} \sum_{j = s_i+1}^{s_{i-1} -1} A_j \\
& \leq 2 \sum_{i = n+1}^{N + N_0} A_{s_i} \ll A_{s_n}. 
\end{align*}
To finish the proof, we distinguish two cases.

{\it Case 1.} $n > 0$. Then $f_{\omega}^{s_n} (x) \in \tilde B(e^{N-n} \epsilon) \subset \tilde B(\epsilon_0)$. By Proposition 9 again, 
\[
\sum_{j = s_{n+1} +1}^{s_n -1}  \frac{D f_{\omega}^j(x)}{ d(f_{\omega}^j(x), c)} \leq \theta(\epsilon_0) \frac{Df_{\omega}^{s_n}(x)}{ |\tilde B(\epsilon_0)|} \ll  \frac{D f_{\omega}^{s_n}(x)}{ d(f_{\omega}^{s_n}(x), c)} = A_{s_n},
\]
provided $\epsilon_0 > 0$ is small enough. This and the above inequality shows that $A(x, \omega, s_n) \ll A_{s_n}$. Note that since $f_{\omega}^{s_n} (x) \notin \tilde B(\epsilon)$,
\[
A_{s_n} = \frac{Df_{\omega}^{s_n}(x)}{|\tilde B(\epsilon)|} \frac{|\tilde B(\epsilon) |}{d(f_{\omega}^{s_n}(x), c)} \leq 2  \frac{Df_{\omega}^{s_n}(x)}{|\tilde B(\epsilon)|}.
\]
This implies $s_n$ is a $\theta$-good return time of $(x, \omega)$ into $\tilde B(\epsilon) \times \Omega_{\epsilon}$.

{\it Case 2.} $n =0$. Since $s_1 < s_0$ is well-defined, then $f_{\omega}^j(x) \notin \tilde B(\epsilon_0/e)$ for all $s_1 < j \leq s_0$. Note that in this case, $A_{s_0} \leq Df_{\omega}^{s_0} (x) / |\tilde B(\epsilon_0/e)|$. By Theorem 2 statement (2), $Df_{\omega}^{s_0} / Df_{\omega}^j (x)$ is exponentially large in $s_0 -j$, hence 
\[
\sum_{j = s_1 +1}^{s_0 -1}  \frac{D f_{\omega}^j(x)}{ d(f_{\omega}^j(x), c)} \leq C  \frac{D f_{\omega}^{s_0}(x)}{ |\tilde B(\epsilon_0/e)|},
\]
where $C$ depends only on $\ell$ and $\epsilon_0$. Combined with the above estimate, we have 
\[
A(x, \omega, s_0) = A(x, \omega, s_1 + 1) +  \sum_{j = s_1 +1}^{s_0 -1}  \frac{D f_{\omega}^j(x)}{ d(f_{\omega}^j(x), c)} \leq (C + \theta') \frac{Df_{\omega}^{s_0} (x)}{ |\tilde B(\epsilon_0/e)|},
\]
which implies that $s_0$ is a $\tau$-scale expansion time of $(x, \omega)$ for some constant $\tau > 0$ independent of $\epsilon$.

\end{proof}

Now we state the proof of Proposition 15.

\begin{proof}[Proof of Proposition 15]

Fix $\beta \in (0, \frac{\alpha}{8})$. Let  $(x, \omega) \in \tilde B(\epsilon) \times \Omega_{\epsilon}$ with $\epsilon > 0$ small.

\begin{claim}
There exists a constant $\tau > 0$ such that 
\begin{equation}
\hat h_{\epsilon, \tau}^{\theta} (x, \omega) \leq T_1 : = \inf \{s \geq 1 : s > \epsilon^{-4 \beta} Q_0^{s-1} (x, \omega, \epsilon) \}.
\end{equation}
\end{claim}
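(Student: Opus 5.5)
The plan is to reduce the claim directly to Lemma 5.7, applied with exponent $\gamma = 4\beta$. That lemma provides a constant $\tau = \tau(\theta, 4\beta) > 0$ such that, for $\epsilon$ small, every $s \geq 1$ satisfying
\[
s - j > \epsilon^{-4\beta} Q_j^{s-1}(x, \omega, \epsilon) \quad \text{for all } 0 \leq j < s
\]
obeys $\hat h^{\theta}_{\epsilon, \tau}(x, \omega) \leq s$. Fix this $\tau$. If $T_1 = \infty$ there is nothing to prove, so assume $T_1 < \infty$. It then suffices to verify the displayed hypothesis of Lemma 5.7 at $s = T_1$.

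The verification uses the minimality of $T_1$. By definition, $T_1 > \epsilon^{-4\beta} Q_0^{T_1 - 1}$. For each $1 \leq j < T_1$, minimality gives that $j$ is not admissible, so $j \leq \epsilon^{-4\beta} Q_0^{j-1}$. Since the depth function $q_\epsilon$ takes non-negative integer values, $Q$ is additive over consecutive blocks:
\[
Q_j^{T_1 - 1} = Q_0^{T_1 - 1} - Q_0^{j-1}.
\]
Subtracting the two inequalities yields
\[
T_1 - j > \epsilon^{-4\beta}\bigl( Q_0^{T_1-1} - Q_0^{j-1} \bigr) = \epsilon^{-4\beta} Q_j^{T_1-1}.
\]
The case $j = 0$ is exactly the defining inequality of $T_1$. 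Hence the hypothesis of Lemma 5.7 holds for every $0 \leq j < T_1$, and therefore $\hat h^{\theta}_{\epsilon,\tau}(x, \omega) \leq T_1$.

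The only point needing care is the edge case $j = 1$ in the minimality step, where $Q_0^{0} = q_\epsilon(x, \omega)$ and the condition at $s = 1$ reads $1 > \epsilon^{-4\beta} q_\epsilon(x, \omega)$. If this holds, then $T_1 = 1$ and there are no intermediate indices to check, so the argument goes through uniformly. I also need that $\tau$ depends only on $\theta$ and $\beta$, hence only on $\theta$ and $\alpha$, and not on $\epsilon$ or $(x, \omega)$; this is exactly what Lemma 5.7 provides. I do not expect a genuine obstacle in this step: the substantive work was done in Lemma 5.7, and the claim is essentially the observation that the first "admissible" time is automatically admissible from every intermediate index.
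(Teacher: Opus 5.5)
Your proposal is correct and is essentially the paper's argument: minimality of $T_1$ gives the hypothesis of Lemma 5.7 (with $\gamma = 4\beta$) at $s = T_1$ for every $0 \leq j < T_1$, and that lemma then yields the claim. Your subtraction step gives the strict inequality $T_1 - j > \epsilon^{-4\beta} Q_j^{T_1-1}(x, \omega, \epsilon)$, which is what Lemma 5.7 actually requires; the paper's write-up records only $\geq$.
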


Indeed, if $T_1 < \infty$, by minimality, for each $0 \leq j < T_1$,
\[
T_1 - j \geq \epsilon^{-4 \beta} Q_j^{T_1-1} (x, \omega, \epsilon).
\]
By Lemma 5.7, there exists $\tau > 0$ such that $\hat h_{\epsilon, \tau}^{\theta} (x, \omega) \leq T_1$.

Assume that $(x, \omega) \notin Bad_m(\kappa, \epsilon)$. If $T_1 \leq m \epsilon^{- \alpha}$, then the proof is finished. So assume that $T_1 > m \epsilon^{- \alpha}$. Set $s_0 = [m \epsilon^{- \frac{\alpha}{2}}] < T_1$. This implies $s_0 \leq \epsilon^{-4 \beta} Q_0^{s_0-1} (x, \omega, \epsilon)$, then $Q_0^{s_0-1} (x, \omega, \epsilon) \geq \epsilon^{4 \beta} [m \epsilon^{- \frac{\alpha}{2}}] > m$.

Since $(x, \omega) \notin Bad_m(\kappa, \epsilon)$, there exists a minimal non-negative integer $s_1$ such that 
\[
Q_0^{s_1} (x, \omega, \epsilon) \leq \min \{ m, \kappa \Gamma_0^{s_1} (x, \omega, \epsilon) \}.
\]
Since $Q_0^{s_1} (x, \omega, \epsilon) \leq m  < Q_0^{s_0 -1} (x, \omega, \epsilon)$, we have $s_1 < s_0 -1 < s_0$. By minimality of $s_1$, it follows that 

(1) $f_{\omega}^{s_1} (x) \in \tilde B(\epsilon)$;

(2) for each $0 \leq j < s_1$, 
\[
Q_j^{s_1} (x, \omega, \epsilon) \leq \kappa \Gamma_j^{s_1} (x, \omega, \epsilon).
\]
This is because $Q_0^j (x, \omega, \epsilon) \leq Q_0^{s_1} (x, \omega, \epsilon) \leq m$, then $Q_0^j (x, \omega, \epsilon) \geq \kappa \Gamma_0^j (x, \omega, \epsilon)$. Let 
\[
s_2 : = \inf\{s > s_1 : f_{\omega}^s(x) \in \tilde B(\epsilon) \}.
\]
By property (2) above, for each $0 \leq j < s_2 -1$, $Q_j^{s_2-1} (x, \omega, \epsilon) \leq \kappa \Gamma_j^{s_2-1} (x, \omega, \epsilon)$. Note that for $s_1 < j \leq s_2 -1,  Q_j^{s_2-1} (x, \omega, \epsilon)= 0 =  \Gamma_j^{s_2-1} (x, \omega, \epsilon)$. By Lemma 5.6, $s_2$ is a $\theta$-good return time, so $h_{\epsilon}^{\theta} (x, \omega)  \leq s_2$, and by (5.11), $\hat h_{\epsilon, \tau}^{\theta} (x, \omega) \leq \min\{s_2, T_1 \}$.

If $s_1 = 0$. By minimality of $T_1$, for each $0 < s < \min\{s_2, T_1 \}$ we have
\[
s \leq \epsilon^{-4 \beta} Q_0^{s-1} (x, \omega, \epsilon) =  \epsilon^{-4 \beta} q_{\epsilon} (x, \omega) \leq  \epsilon^{-4 \beta} \kappa.
\]
Therefore, 
\[
\hat h_{\epsilon, \tau}^{\theta} (x, \omega) \leq  \epsilon^{-4 \beta} \kappa + 1 < m \epsilon^{-\frac{\alpha}{2}} < m \epsilon^{-\alpha}.
\]

If $s_1 \geq 1$. Similarly, for each $s_1 \leq s < \min\{s_2, T_1 \}$, 
\begin{align*}
s \leq & \epsilon^{-4 \beta} Q_0^{s-1} (x, \omega, \epsilon) =\epsilon^{-4 \beta} Q_0^{s_1} (x, \omega, \epsilon) \leq \epsilon^{-4 \beta} \kappa \Gamma_0^{s_1} (x, \omega, \epsilon) \\
& \leq \epsilon^{-4 \beta} \kappa (s_1 + 1) < \epsilon^{-4 \beta} \kappa (m \epsilon^{- \frac{\alpha}{2}} + 1) < m \epsilon^{-\alpha}.
\end{align*}
This completes the proof.

\end{proof}

\subsection{Proof of Proposition 12 \& 13}

\begin{proof}[Proof of Proposition 12 ]

Take $\alpha \in (0, \gamma/p)$, let $\tau > 1$ be given by Proposition 15. Then for $(x, \omega) \in \tilde B(\epsilon) \times \Omega_{\epsilon}$ with $\epsilon > 0$ small enough, we have
\[
\hat h_{\epsilon, \tau}^{\theta} (x, \omega) > m \epsilon^{-\alpha} \Rightarrow (x, \omega) \notin Bad_m^c(\kappa, \epsilon),
\]
where $\kappa$ is the constant given by Proposition 14. Therefore,
\begin{align*}
\iint_{\tilde B(\epsilon) \times \Omega_{\epsilon}} (\hat h^{\theta}_{\epsilon, \tau} (x, \omega) )^p dP_{\epsilon} &= \sum_{m=1}^{\infty} \iint_{\{\hat h^{\theta}_{\epsilon, \tau} (x, \omega) \in ( (m-1) \epsilon^{- \alpha}, m \epsilon^{- \alpha} ]  \}} (\hat h^{\theta}_{\epsilon, \tau} (x, \omega) )^p dP_{\epsilon}\\
& \leq \sum_{m=1}^{\infty} m^p \epsilon^{- \alpha p} P_{\epsilon} (Bad^c_{m-1} (\kappa, \epsilon)).
\end{align*}
By Proposition 14, we complete the proof.

\end{proof}

\begin{proof}[Proof of Proposition 13 ]

By assumption, there exists $\kappa = \kappa(b) $ and $m = m(b)  \geq 1$ such that for $(x, \omega) \in \tilde B(\epsilon) \times \Omega_{\epsilon}$ with $x \notin \tilde B(b \epsilon)$, we have $(x, \omega) \notin Bad_{m}(\kappa, \epsilon)$, provided $\epsilon> 0$ is small enough. Indeed, we can choose $m$ and $\kappa$ large so that $Q_0^0(x, \omega, \epsilon) : = q_{\epsilon}(x, \omega) < \min\{m,\kappa\}$ for all $x \notin \tilde B(b \epsilon)$. Then the desired estimate holds by Proposition 15. 
 
\end{proof}

\section{Inducing to a large scale}

We shall deduce the Reduced Main Theorem from the following proposition.

\begin{prop}
Under the same assumption, we have the following holds. Let $\theta >0$ and $p \geq 1$
 be constants. Then for each $\delta_0 > 0$ small, there exist $\epsilon_0 >0$ and $C>0$ such that for each $\epsilon \in (0, \epsilon_0]$ we have:
 \[
 \iint_{\tilde B(\delta_0) \times \Omega_{\epsilon}} (h_{\delta_0}^{\theta} (x, \omega))^p d P_{\epsilon} \leq C.
 \]

\end{prop}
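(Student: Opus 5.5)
The plan is to climb scales from $\epsilon$ up to $\delta_0$ by composing the small-scale returns $\hat h^{\theta}_{\epsilon,\tau}$ controlled by Propositions 12 and 13, much as in \cite{S}[Section 5]. Fix $\theta,p$ and $\delta_0$ small, and choose $\epsilon_0$ so small that all results of Sections 4--5 apply at every scale $\delta\in[\epsilon,\delta_0]$ with $\epsilon\le\epsilon_0$. For $(x,\omega)\in\tilde B(\delta_0)\times\Omega_\epsilon$ we build a stopping sequence $0=t_0<t_1<\cdots$. At step $i$ the point $x_i=f^{t_i}_\omega(x)$ lies in $\tilde B(\delta_i)\setminus\tilde B(\delta_i/2)$ for some dyadic scale $\delta_i\in[\epsilon,\delta_0]$; points deeper than $\tilde B(\epsilon)$ are treated at scale $\epsilon$. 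We then define $t_{i+1}=t_i+\hat h^{\theta'}_{\delta_i,\tau}(F^{t_i}(x,\omega))$ for a suitable $\theta'\le\theta$. By Proposition 13, applied with $b=1/2$ at scale $\delta_i$ (valid because $\omega\in\Omega_\epsilon\subset\Omega_{\delta_i}$), this step costs at most $m\,\delta_i^{-\alpha}$ when $\delta_i>\epsilon$. When $\delta_i=\epsilon$, Proposition 12 gives a bound in $L^p$ of size $\epsilon^{-\gamma}$. The process stops as soon as the stopping time is a $\tau$-scale expansion time, or $x_{i+1}$ exits $\tilde B(\delta_0)$. In the exit case, Lemma 5.2 converts the first landing into $\tilde B(\delta_0)$ into a $\theta$-good return.

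The key gain comes from the good-return property together with Lemma 5.1. At each $\theta'$-good return into $\tilde B(\delta')$ with $\delta'\ge\delta_i$, the map $f^{t_{i+1}-t_i}$ carries $J_{x_i,\cdot}$ over a neighbourhood whose size is comparable to $\tilde B(\delta')$, and does so with bounded distortion. The distortion sums combine as in the proof of Lemma 3.1 into a total $\theta$-good return, provided each step has geometric expansion, which is the quantity $e^{2}$-type gain from Lemma 5.6. Meanwhile, the conditional probability, given the past, that the next landing is deeper is controlled as follows. By $\nu_\epsilon\in\mathbb M_\epsilon(L)$ and bounded distortion, a landing at $x_{i+1}\in\tilde B(\delta_i 2^{-k})$ has conditional probability $\lesssim 2^{-k/\ell}$, which is $|\tilde B(\delta 2^{-k})|/|\tilde B(\delta)|$. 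So the depth walk $\log_2(\delta_0/\delta_i)$ behaves like a random walk with positive drift toward larger scales. Tails decay exponentially in the number of steps, and the expected number of visits to scale $\delta$ is $O(1)$.

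The $p$-th moment is then estimated by Minkowski over scales:
\[
\Big\|h^{\theta}_{\delta_0}\Big\|_{L^p}\le \sum_{\delta\ \mathrm{dyadic}} \big(\text{visits at }\delta\big)\cdot m\,\delta^{-\alpha} \;+\; (\text{visits at }\epsilon)\,\epsilon^{-\gamma}\,\mathrm{Prob}(\text{reach scale }\epsilon)^{1/p}.
\]
The probability of reaching scale $\epsilon$ from $\tilde B(\delta_0)$ is $\lesssim(\epsilon/\delta_0)^{c}$ with $c\sim 1/\ell$. We choose $\gamma,\alpha<c/(2p)$, and the resulting geometric sum is bounded independently of $\epsilon$. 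A $\tau$-scale expansion time also stops the process with bounded further cost: Proposition 6 and Lemma 5.2 give exponential tails for the first landing into $\tilde B(\delta_0)$ afterwards, since the image interval then has definite size $\tau$. The main obstacle is making the "drift toward larger scales" rigorous with a memory-dependent $\omega$. This requires a conditional version of Lemma 5.4 at the stopping times, namely independence of $\omega_{t_{i+1}-1}$ from the past, combined with a check that the $\theta'$-good returns chain into a genuine $\theta$-good return at scale $\delta_0$. I would attempt this by choosing $\theta'=\theta/C$ and using the telescoping estimate in the proof of Lemma 5.6.
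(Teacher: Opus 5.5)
\paragraph{Verdict: there is a genuine gap.} Your overall architecture is close in spirit to the paper's scale-by-scale recursion $S_p^{\theta}(e\delta,\epsilon;\delta_0)<\lambda\big(S_p^{\theta}(\delta,\epsilon;\delta_0)+2\hat S_p^{\theta/e}(\delta,\epsilon;\delta_0)\big)$ with $\lambda<1$. It consists of a stopping sequence of good returns glued by Lemma 6.1, Proposition 13 on annuli, Proposition 12 at scale $\epsilon$, exponential tails via Proposition 6 after leaving $\tilde B(\delta_0)$, and a Minkowski sum over scales. That sum only converges if the cost $m\delta^{-\alpha}$ at scale $\delta$ is weighted by the probability of reaching scale $\delta$, since $\sum\delta^{-\alpha}\asymp\epsilon^{-\alpha}$ over dyadic $\delta\ge\epsilon$.

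The gap is the step everything rests on, the ``drift toward larger scales''. It is not established, and the mechanism you propose cannot give it. You want the bound $\lesssim 2^{-k/\ell}$ for landing in $\tilde B(\delta_i2^{-k})$, conditionally on the past, from $\nu_\epsilon\in\mathbb M_\epsilon(L)$ and a conditional Lemma 5.4. But one noise step spreads a point over length at most $2\epsilon$. Hence $p_\epsilon(z,\tilde B(\delta_i2^{-k}))\le L(|\tilde B(\delta_i2^{-k})|/2\epsilon)^{1/L}$ is vacuous whenever $(\delta_i2^{-k})^{1/\ell}\gtrsim\epsilon$, which holds at every scale your walk distinguishes. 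Noise injected at the start of an excursion is only guaranteed to be amplified by $\Lambda(\delta_i)/D(\delta_i)$. That gives a spread $\Lambda(\delta_i)(\epsilon/\delta_i)|\tilde B(\delta_i)|\ll|\tilde B(\delta_i)|$ once $\delta_i\gg\epsilon$. Lemma 5.4 works precisely because the depth $q_\epsilon$ is measured at the noise scale $\epsilon$. So at scales $\delta_i\gg\epsilon$, the next landing depth given the past is essentially deterministic. The gain must come from averaging over $x$ with respect to Lebesgue measure, i.e.\ from a transfer-operator estimate.

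\paragraph{The missing idea: Lemma 6.5.} Take the induced map $\mathscr G=F^{s}$ restricted to $E_1=\{s<\hat s\}$. These are the points of $\tilde B(\delta)$ whose first $\theta$-good return to a scale in $[\delta,\delta_0]$ is not already a $\theta/e$-good return to a scale in $[e\delta,\delta_0]$. For every $\omega$, Lemma 6.5 gives $\mathcal L^{\omega}_{\mathscr G}(y)\le\frac{36\theta}{\theta_0}\frac{|\tilde B(\delta)|}{|\tilde B(\delta')|}$ with $\delta'=\max\{\delta,d_*(y,c)\}$. Its proof uses two things absent from your plan.
\begin{enumerate}
\item Each branch is mapped with bounded distortion over an interval of length $\frac{\theta_0}{e\theta}|\tilde B(\delta')|$, by the good-return property as in Lemma 5.1. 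So a branch contributes at most $\frac{e^2\theta}{\theta_0}|J_x|/|\tilde B(\delta')|$.
\item The bound $\sum_x|J_x|\le 4|\tilde B(\delta)|$ for overlapping branches needs a nesting claim: $J_{x'}\subset\tilde J_x$ whenever $x'\in J_x$ returns later. This claim holds only because of the restriction to $E_1$.
\end{enumerate}
The \emph{smallness} of $36\theta/\theta_0$ is essential. A bound $C2^{-k/\ell}$ with unspecified $C$, as you state it, does not push the walk upward at all. It does not even make the probability of landing at the same or a deeper level less than one. This is why the paper first takes $\theta<\theta_*$ with $e^{-1/\ell}(1-(36\theta/\theta_0)^{1/p})^{-p}<\lambda$. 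Larger $\theta$ then follows because $h^{\theta}_{\delta_0}$ is non-increasing in $\theta$.

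\paragraph{Dependence of stopping times on $\omega$.} This is handled not by independence of one coordinate but by future-freeness of the induced maps together with the change of variables Lemma 6.2. The same tool, combined with a density bound on landing points, is what you need before applying Proposition 12 at the random times your walk reaches scale $\epsilon$. Proposition 12 only controls the \emph{uniform} measure on $\tilde B(\epsilon)\times\Omega_\epsilon$.
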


To prove Proposition 16, we need the following two propositions whose proofs will be given in Subsection 7.2 and 7.3 respectively. For $p \geq 1$ and $0 < \epsilon \leq \delta \leq \delta_0/e$, write
\begin{align}
S_p^{\theta} (\delta, \epsilon; \delta_0) & = \frac{1}{|\tilde B(\delta)|} \iint_{\tilde B(\delta) \times \Omega_{\epsilon}} \left( \inf_{\delta' \in [\delta, \delta_0]} h^{\theta}_{\delta'} (x, \omega) \right)^p d P_{\epsilon},   \\
\hat S_p^{\theta} (\delta, \epsilon; \delta_0)  & = \iint_{(\tilde B(\delta_0) \setminus \tilde B(\delta))\times \Omega_{\epsilon}} \frac{1}{d(x, c)} \left( \inf_{\delta' \in [e\delta, \delta_0]} h^{\theta}_{\delta'} (x, \omega) \right)^p d P_{\epsilon}.
\end{align}

\begin{prop}
Fix $\theta > 0, \gamma > 0$ and $p \geq 1$. For each $\delta_0> 0$ small enough, there exist $\epsilon_0 >0$ and $C>0$ such that the following holds provided that $0 < \delta \leq \delta_0/e $ and $0 < \epsilon \leq \min\{\epsilon_0, \delta \}$:

(1) $S_p^{\theta} (\epsilon, \epsilon; \delta_0) \leq C \epsilon^{-\gamma} $;

(2) $\hat S_p^{\theta} (\delta, \epsilon; \delta_0) \leq C \delta^{-\gamma}$.

\end{prop}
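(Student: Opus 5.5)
Both statements will be proved together by a scale-by-scale induction in the spirit of \cite{S}[Section 5]. The two ingredients are Proposition 12, which controls $\hat h^{\theta}_{\epsilon,\tau}$ at the smallest scale, and Proposition 13, which controls it on the annulus $\tilde B(\epsilon)\setminus\tilde B(b\epsilon)$. The key point is that $\hat h^{\theta}_{\delta,\tau}$ differs from $\inf_{\delta'\ge\delta}h^{\theta}_{\delta'}$ only through the $\tau$-scale expansion time $T^{\tau}$. We therefore first show that a $\tau$-scale expansion time can be converted into a $\theta$-good return time into $\tilde B(\delta_0)$ after a further random time with controlled moments. At such a time $s=T^{\tau}(x,\omega)$, Lemma 3.5 and the definition of $\tau$-scale expansion show that $f^s_\omega$ maps $J^{\omega}_{x,s}$ diffeomorphically, with $\mathcal N\le 1$, onto an interval of length $\gtrsim\tau$. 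Proposition 6(2), applied with $U=\tilde B(\delta_0)$, shows that the proportion of that interval whose random orbit avoids $\tilde B(\delta_0)$ for $k$ more steps is at most $Ke^{-\eta k}$. By Lemma 5.2 the first entry into $\tilde B(\delta_0)$ is automatically a $\theta$-good return time. The extra $A(x,\omega,s)$ accumulated before time $s$ is dominated by the expansion after $s$, by Theorem 2(2) and Proposition 9. The bounded distortion then transfers the exponential tail to the original pair $(x,\omega)$.

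For statement (1), we fix $\tau$ from Propositions 12 and 13 and write $\inf_{\delta'\in[\epsilon,\delta_0]}h^{\theta}_{\delta'}\le \hat h^{\theta}_{\epsilon,\tau}+R$. If the minimum defining $\hat h$ is attained by some $h^{\theta}_{\delta'}$ with $\delta'\ge\delta_0$, a good return into a larger ball is also a good return into $\tilde B(\delta_0)$ up to the constant $\tilde\kappa$ of Lemma 5.1. Otherwise $R$ is the post-expansion waiting time above, whose $p$-th moment is bounded uniformly in $\epsilon$ after integrating over $\omega$ and using Fubini. Using $(a+b)^p\le 2^{p}(a^p+b^p)$ and Proposition 12 with exponent $\gamma$, we get $S_p^{\theta}(\epsilon,\epsilon;\delta_0)\le C\epsilon^{-\gamma}$.

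For statement (2), we decompose $\tilde B(\delta_0)\setminus\tilde B(\delta)$ into the dyadic-type annuli $\tilde B(e^{k+1}\delta)\setminus\tilde B(e^{k}\delta)$, $0\le k\le \log(\delta_0/\delta)$. On the $k$-th annulus we apply Proposition 13 at scale $\rho=e^{k+1}\delta\ge\epsilon$, with $b=e^{-1}$ and $\alpha=\gamma/p$. Since $\Omega_\epsilon\subset\Omega_\rho$, this is legitimate; it is the reason the propositions were stated for $\omega\in\Omega_\epsilon$ with $\epsilon$ smaller than the scale. It gives $\hat h^{\theta}_{\rho,\tau}\le m\rho^{-\gamma/p}$ there. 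Adding the tail term $R$ as before, the integrand $d(x,c)^{-1}(\cdot)^p$ on that annulus integrates to $\lesssim (|\tilde B(\rho)|/d)\,\rho^{-\gamma}\lesssim \rho^{-\gamma}$, since $d(x,c)\asymp|\tilde B(\rho)|$. Summing the geometric series in $k$ yields $C\delta^{-\gamma}$.

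The main obstacle is the conversion step, namely showing that the waiting time $R$ after a $\tau$-scale expansion time has moments bounded independently of $\epsilon$ and of the random past. Two things must hold along the way. First, the orbit must not enter deep into $\tilde B(\delta_0)$ through a non-good return. Second, the dependence of the future noise on the interval $J^\omega_{x,s}$ must be handled, since $\omega$ is shifted by $s$ and the future coordinates are independent of the past. I would resolve both by conditioning on $\sigma^s\omega$, which is distributed as $\nu_\epsilon^{\mathbb N}$ and independent of $(\omega_0,\dots,\omega_{s-1})$, and by applying Proposition 6(2) fiberwise.
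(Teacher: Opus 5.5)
There is a genuine gap in the conversion step, which is the step you yourself flag as the main obstacle.

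Your mechanism is local. For a single pair $(x,\omega)$ with $s=\hat h^{\theta}_{\epsilon,\tau}(x,\omega)$, bounded distortion on $J^{\omega}_{x,s}$ and Proposition 6(2) on its image show that only a small proportion of $J^{\omega}_{x,s}$ waits long \emph{after time $s$}. But $\{R\ge n\}$ consists of points $x'$ that each carry their own stopping time $s(x',\omega)$. So these small-proportion sets do not add up to a bound on $|\{R\ge n\}|$.

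Conditioning on $\sigma^{s}\omega$ only handles the independence of the future noise. You still have to control the distribution of the landing point $y=f^{s}_{\omega}(x)$ as $x$ ranges over $\tilde B(\epsilon)$. The reason is that $g_n(y)=\nu_\epsilon^{\mathbb N}\{\omega' : l_{\delta_0}(y,\omega')\ge n\}$ is controlled by Proposition 6(2) only in $L^1$, i.e.\ $\int g_n\,dy\le Ke^{-\eta n}$.

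For a \emph{fixed} value $k$ of the stopping time, your distortion argument does give this control. Consider the intervals $J^{\omega}_{x,k}$ with $x\in X_k^{\omega}=\{x: \hat h^{\theta}_{\epsilon,\tau}(x,\omega)=k\}$ and $f^k_\omega(x)=y$. They are essentially disjoint, have length at most $2\theta_0 d(x,c)\le 2\theta_0|\tilde B(\epsilon)|$, and have images of length bounded below. Hence $\mathcal L^{\nu_\epsilon}_{\mathscr G_k}(y)\le C_3|\tilde B(\epsilon)|$ for $y\notin\tilde B(\delta_0)$, where $\mathscr G_k=F^k|_{X_k}$. This bound cannot be summed over $k$. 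So your claim that $|\tilde B(\epsilon)|^{-1}\iint R^p\,dP_\epsilon$ is bounded uniformly in $\epsilon$ is not supported by anything in the proposal.

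The missing idea is a truncation that makes the per-$k$ bound sufficient. On $X_k$ one has $H\le h+l_{\delta_0}\circ F^{h}\le 2h+\phi_k\circ\mathscr G_k$, with $\phi_k=l_{\delta_0}\mathbf 1_{\{k<l_{\delta_0}<\infty\}}$. A waiting time not exceeding $k=h$ is absorbed into $2h$, which Proposition 12 controls; this is why the final bound carries the factor $\epsilon^{-\gamma}$ and no separate bound on $R$ is needed. Only the excess needs the density. The future-free change of variables (Lemma 6.2) then gives $\int_{X_k}(\phi_k\circ\mathscr G_k)^p\,dP_\epsilon\le C_3|\tilde B(\epsilon)|L_k$, with $L_k=\iint_{\{l_{\delta_0}>k\}}l_{\delta_0}^p\,dP_\epsilon\lesssim\sum_{m\ge k}m^pe^{-\eta m}$, which is summable in $k$. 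Together with Lemma 6.3 (which gives $H\le h+l_{\delta_0}\circ F^{h}$), this is the paper's proof of (1).

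An alternative would be to use $\nu_\epsilon\in\mathbb M_\epsilon(L)$ to get $\sup_y g_n(y)\lesssim\epsilon^{-1/L}e^{-\eta n/L}$. That rescues (1) with a harmless $(\log(1/\epsilon))^p$ loss. It does not rescue (2), where the bound must be uniform in $\epsilon\le\delta$.

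Your annulus decomposition for (2) is the right structure and is more explicit than the paper's sketch. That means Proposition 13 at scale $\rho_j=e^{j+1}\delta$, with $b=e^{-1}$ and $\alpha=\gamma/p$. It inherits the same gap, however. With the truncation in place, each annulus contributes $O(\rho_j^{-\gamma}+1)$, because there the per-$k$ density is $\lesssim|\tilde B(\rho_j)|$ and $d(x,c)\asymp|\tilde B(\rho_j)|$. Summing over $j\lesssim\log(\delta_0/\delta)$ gives $C\delta^{-\gamma}$.

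Finally, consider the case where $\hat h$ is attained by $h^\theta_{\delta'}$ with $\delta'>\delta_0$. The landing point need not lie in $\tilde B(\delta_0)$, so this is not a good return into $\tilde B(\delta_0)$ ``up to $\tilde\kappa$''. Treat it like an expansion time: a $\theta$-close return followed by the first entry into $\tilde B(\delta_0)$, via Lemma 6.1(2) with Lemma 5.2.
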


\begin{prop}

Fix $p \geq 1, \gamma >0$ and $\lambda \in (e^{- \frac{1}{\ell}}, 1)$. There exists $\theta_* >0$ such that for each $\theta \in (0, \theta_*)$ the following holds:
\begin{equation}
S_p^{\theta} (e\delta, \epsilon; \delta_0) < \lambda( S_p^{\theta} (\delta, \epsilon; \delta_0) + 2 \hat S_p^{\theta/e} (\delta, \epsilon; \delta_0))
\end{equation}
provided that $0 < \epsilon \leq \delta \leq \delta_0/e$ small enough.

\end{prop}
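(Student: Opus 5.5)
We would prove Proposition 18 by comparing, fiber by fiber, the return time of a point of $\tilde B(e\delta)$ with either its return time from $\tilde B(\delta)$ or the return time of its first landing point in the annulus $\tilde B(\delta_0)\setminus\tilde B(\delta)$. This is the random analogue of the scale-recursion in \cite{S}. Fix $\omega\in\Omega_\epsilon$ and split $\tilde B(e\delta)=\tilde B(\delta)\cup(\tilde B(e\delta)\setminus\tilde B(\delta))$.

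\emph{The inner part.} For $x\in\tilde B(\delta)$ we use monotonicity: $[e\delta,\delta_0]\subset[\delta,\delta_0]$, but also any $\theta$-good return into some $\tilde B(\delta')$ with $\delta'\ge\delta$ already gives the infimum over $[\delta,\delta_0]$. The $p$-th moment over $\tilde B(\delta)$ is therefore bounded by $|\tilde B(\delta)|\,S_p^\theta(\delta,\epsilon;\delta_0)$. When we divide by $|\tilde B(e\delta)|$, this becomes the factor $|\tilde B(\delta)|/|\tilde B(e\delta)|\approx e^{-1/\ell}<\lambda$. The slack $\lambda-e^{-1/\ell}$ is what absorbs all constants from the other part.

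\emph{The annulus part.} For $x\in\tilde B(e\delta)\setminus\tilde B(\delta)$ we have $d(x,c)\asymp|\tilde B(\delta)|$. The weight $1/d(x,c)$ in $\hat S$ therefore turns $\frac{1}{|\tilde B(e\delta)|}\iint$ over this region into at most a constant times $\hat S_p^{\theta/e}(\delta,\epsilon;\delta_0)$ restricted to that annulus. The key claim is that a $(\theta/e)$-good return into $\tilde B(\delta')$ with $\delta'\ge e\delta$ is a $\theta$-good return into a scale in $[e\delta,\delta_0]$. This loss of a factor $e$ comes from comparing $|\tilde B(\delta')|$ across neighbouring scales via $\tilde\kappa$; we will adjust by taking $\theta_*$ small, so that Lemma 5.1 applies and the constants coming from $\tilde\kappa$ are swallowed. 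This gives
\[
\frac{1}{|\tilde B(e\delta)|}\iint_{(\tilde B(e\delta)\setminus\tilde B(\delta))\times\Omega_\epsilon}\Big(\inf_{\delta'\in[e\delta,\delta_0]}h^\theta_{\delta'}\Big)^p dP_\epsilon\le C_\ell\,|\tilde B(\delta)|\cdot\frac{1}{|\tilde B(e\delta)|}\,\hat S_p^{\theta/e}(\delta,\epsilon;\delta_0).
\]
The prefactor is at most $2\lambda$ once we note that $|\tilde B(\delta)|/|\tilde B(e\delta)|\le\lambda$ and that the comparability constant between $d(x,c)$ and $|\tilde B(\delta)|$ on the annulus is at most $2$ for small $\delta$.

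\emph{Main obstacle.} The hard point is that points of $\tilde B(\delta)$ whose first good return is only to scale $\delta$ (not $\ge e\delta$) must be charged correctly. Here we push forward: if $s$ is a $\theta$-good return into $\tilde B(\delta)$, then Lemma 5.1(2) and Lemma 3.5 give a neighbourhood $J^\omega_{x,s}$ mapped by $f^s_\omega$ with $\mathcal N\le1$ onto a set containing $\tilde B(2\delta)$. We then continue from the landing point $y=f^s_\omega(x)$. If $y\in\tilde B(e\delta)\setminus\tilde B(\delta)$, the concatenation of $s$ with a $(\theta/e)$-good return of $(y,\sigma^s\omega)$ is a $\theta$-good return of $x$. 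For this we check $A(x,\omega,s+t)\le A(x,\omega,s)+Df^s_\omega(x)A(y,\sigma^s\omega,t)$, and use $A(x,\omega,s)\le\theta Df^s_\omega(x)/|\tilde B(\delta)|$ together with the good-return inequality at $y$, which has the form $\frac{\theta}{e}\cdot\frac{Df^{s}_\omega(x)}{d(y,c)}\cdot\ldots$.

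The pieces combine with the bound $(s+t)^p\le 2^{p-1}(s^p+t^p)$. The bounded distortion on $J^\omega_{x,s}$ transports Lebesgue measure, with Jacobian $\asymp|J|/|f^s_\omega J|$. Integrating over $\omega$ uses that $\nu_\epsilon^{\mathbb N}$ is shift-invariant and that the future coordinates $\sigma^s\omega$ are independent of $\omega_0,\dots,\omega_{s-1}$. This produces the $\hat S$ term with its $1/d(y,c)$ density weight. The factor $2^{p-1}$ and the distortion constants are the delicate part. We expect to absorb them by choosing $\theta_*$ small: then every good return at scale $\delta$ is extremely expanding, so the measure of $J^\omega_{x,s}$ is a tiny multiple of $|\tilde B(\delta)|$. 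This is where the strict inequality with $\lambda$ is obtained.
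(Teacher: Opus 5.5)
There is a genuine gap, and it lies where you place your ``main obstacle''. First, the inner-part step runs the wrong way. Since $[e\delta,\delta_0]\subset[\delta,\delta_0]$, for $x\in\tilde B(\delta)$ one has $\inf_{\delta'\in[e\delta,\delta_0]}h^\theta_{\delta'}(x,\omega)\ge s(x,\omega):=\inf_{\delta'\in[\delta,\delta_0]}h^\theta_{\delta'}(x,\omega)$. So monotonicity gives no bound by $|\tilde B(\delta)|S_p^\theta(\delta,\epsilon;\delta_0)$. Your repair continues only from landing points $y=f^{s}_\omega(x)$ in the annulus. But the minimizing return is typically a return into $\tilde B(\delta)$ itself, so $y\in\tilde B(\delta)$, and you must restart with $s(y,\cdot)$, possibly infinitely often. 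The paper organizes this as the induced map $\mathscr G=F^{s}$ on $E_1=\{(x,\omega)\in\tilde B(\delta)\times\Omega_\epsilon : s<\hat s\}$. It bounds $\inf_{[e\delta,\delta_0]}h^\theta$ by $\sum_{n\ge0}\chi_{E_n}\varphi\circ\mathscr G^n$ using Lemma 6.1: a chain of $\tfrac12$-close returns is a $1$-close return, and this is followed by a $\tfrac{\theta}{e}$-good return. It then needs the $L^p$ norms $K_n$ of the summands to decay geometrically.

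That decay is the missing key estimate, Lemma 6.5: $\mathcal L^{\nu_\epsilon}_{\mathscr G}(y)\le\frac{36\theta}{\theta_0}\frac{|\tilde B(\delta)|}{|\tilde B(\delta')|}$ with $\delta'=\max\{\delta,d_*(y,c)\}$.

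Your heuristic that $|J^\omega_{x,s}|$ is a tiny multiple of $|\tilde B(\delta)|$ is not what holds, since $|J^\omega_{x,s}|$ can be of order $\theta_0 d(x,c)$. The smallness is rather that only a fraction $\lesssim\theta/\theta_0$ of $J_x$ is mapped onto the $O(|\tilde B(\delta')|)$-neighbourhood of $c$ containing $y$.

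More seriously, you ignore multiplicity. One $y$ has many preimages $x$ with different return times, and their intervals $J_x$ overlap. The paper gets $\sum_x|J_x|\le 4|\tilde B(\delta)|$ from a nesting claim: $x'\in J_x$ and $s(x',\omega)>s(x,\omega)$ imply $J_{x'}\subset\tilde J_x$. Its proof uses exactly the restriction $s<\hat s$ defining $E_1$, which forces $F^{s(x,\omega)}(x',\omega)$ to be deep, i.e. $d_*(\cdot,c)\le\delta'$. Nothing in your proposal plays this role.

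Finally, $(s+t)^p\le2^{p-1}(s^p+t^p)$ cannot close the argument. It puts a coefficient up to $2^{p-1}|\tilde B(\delta)|/|\tilde B(e\delta)|\approx2^{p-1}e^{-1/\ell}$ on $S$. This does not depend on $\theta$ and exceeds $\lambda$ when $p>1$ and $\lambda$ is near $e^{-1/\ell}$, so shrinking $\theta_*$ does not help. One must use Minkowski's inequality, as in Lemma 6.4: $\big(|\tilde B(e\delta)|S^\theta_p(e\delta,\epsilon;\delta_0)\big)^{1/p}\le\sum_nK_n$ with $K_n^p\le(36\theta/\theta_0)^n|\tilde B(\delta)|(S+2\hat S)$. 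This yields the factor $\frac{|\tilde B(\delta)|}{|\tilde B(e\delta)|}\big(1-(36\theta/\theta_0)^{1/p}\big)^{-p}$, which tends to $e^{-1/\ell}<\lambda$ as $\theta\to0$ and $\delta\to0$.

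Your annulus estimate is essentially fine: a $\tfrac{\theta}{e}$-good return is trivially $\theta$-good, and no $\tilde\kappa$ argument is needed.
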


Let us assume these propositions and prove Proposition 16 and the Reduced Main Theorem.

\begin{proof}[Proof of Proposition 16]

According to (6.1), it suffices to show that $S_p^{\theta} (\delta_0, \epsilon; \delta_0) \leq C$ for some constant $C>0$ provided that $\epsilon > 0$ small enough. Take $\lambda \in (e^{-\frac{1}{\ell}}, 1)$ and $\gamma > 0$ such that $\lambda_0 : = \lambda e^{\gamma} < 1$. Let $p \geq 1$ and $\theta>0$ be given. We may certainly assume that $\theta \in (0, \theta_*)$. It is a fact that $h_{\delta}^{\theta/e}(x, \omega) \leq h_{\delta}^{\theta}(x, \omega)$ for each $\delta > 0$ and $(x, \omega)$, hence $\hat S_p^{\theta/e} (\delta, \epsilon; \delta_0) \leq \hat S_p^{\theta} (\delta, \epsilon; \delta_0)$.

By Proposition 17 and 18, for each $\delta_0 > 0$ small enough, there exist $\epsilon_0 > 0$ and $C_1 > 0$ such that 
\begin{align}
S_p^{\theta} (\epsilon, \epsilon; \delta_0) & \leq C_1 \epsilon^{-\gamma},\\
S_p^{\theta} (e\delta, \epsilon; \delta_0) & \leq \lambda S_p^{\theta} (\delta, \epsilon; \delta_0) + C_1 \lambda \delta^{-\gamma},
\end{align}
for any $0 < \delta \leq \delta_0/e$ and $0 < \epsilon \leq \min \{\delta, \epsilon_0 \}$.

Let $N$ be the maximal integer such that $e^N \epsilon \leq \delta_0$. Let $S_k = (e^{-k} \delta_0)^{\gamma} S_p^{\theta} (e^{-k} \delta_0, \epsilon; \delta_0)$. Then by (6.5), for each $0 \leq k <N$, we have
\[
S_k \leq (e^{-k} \delta_0)^{\gamma} \left( \lambda S_p^{\theta} (e^{-(k+1)} \delta_0, \epsilon; \delta_0) + C_1 \lambda (e^{-(k+1)}\delta_0)^{\gamma}  \right) \leq \lambda_0 (S_{k+1} + C_1).
\]
It follows that 
\[
S_0 : = \delta_0^{\gamma} S_p^{\theta} (\delta_0, \epsilon; \delta_0)  \leq \lambda_0^N S_N + C_2 \leq S_N + C_2,
\]
where $C_2 > 0$ is a constant. Since $e^{-N} \delta_0 < e \epsilon \leq e^{-(N-1)} \delta_0 $, by (6.1), 
\[
\inf_{\delta' \in [e^{-N} \delta_0, \delta_0]} h_{\delta'}^{\theta}(x, \omega) \leq  \inf_{\delta' \in [e \epsilon, \delta_0]} h_{\delta'}^{\theta}(x, \omega),
\]
and hence
\[
\frac{S_p^{\theta} (e^{-N}\delta_0, \epsilon; \delta_0)}{S_p^{\theta} (e\epsilon, \epsilon; \delta_0)} \leq \frac{|\tilde B(e \epsilon)|}{|\tilde B(e^{-N} \delta_0)|} \leq \frac{|\tilde B(e^{-(N-1)} \delta_0)|}{|\tilde B(e^{-N} \delta_0)|} = C_3.
\]
So by (6.4) and (6.5),
\begin{align*}
S_N & : = (e^{-N} \delta_0)^{\gamma} S_p^{\theta} (e^{-N} \delta_0, \epsilon; \delta_0) \leq C_3 (e^{-N} \delta_0)^{\gamma}S_p^{\theta} (e\epsilon, \epsilon; \delta_0) \\
& \leq C_3 (e^{-N} \delta_0)^{\gamma} \left( \lambda S_p^{\theta} (\epsilon, \epsilon; \delta_0) + C_1 \lambda \epsilon^{- \gamma} \right)  \\
& \leq 2 \lambda C_1  C_3 (e^{-N} \delta_0)^{\gamma} \epsilon^{- \gamma}  \leq C_4.
\end{align*}
Thus  $S_p^{\theta} (\delta_0, \epsilon; \delta_0)$ is bounded from above by a constant independent of $\epsilon$.

\end{proof}

\begin{proof}[Proof of the Reduced Main Theorem]

Fix $p>1$ and $\theta > 0$ small so that
\[
\theta < \min\left\{ \frac{\theta_0}{4\tilde \kappa}, \frac{1}{\tilde \kappa^2 e^3} \right\} \mbox{ where \ } \tilde \kappa : = \sup_{\delta \leq \delta_*} \frac{|\tilde B(2\delta)|}{|\tilde B(\delta)|}.
\] 
Let $\delta_0 > 0$ be small such that the conclusion of Proposition 16 holds. Reducing $\delta_0$ if necessary, by Lemma 5.2, $l_{\delta_0} (x, \omega) = h_{\delta_0}^{\theta} (x, \omega)$ holds for all $x \in I_c \setminus \tilde B(\delta_0)$ and $\omega \in \Omega_{\delta_0}$. By Proposition 6, there exist constants $K_1 = K_1(\delta_0)> 0$ and $\eta = \eta(\delta_0) > 1$ such that 
\[
P_{\epsilon} (\{ (x, \omega) \in (I_c \setminus \tilde B(\delta_0)) \times \Omega_{\epsilon}: l_{\delta_0}(x, \omega) \geq n \}) \leq K_1 e^{-\eta n}.
\]
Then,
\[
\iint_{ (I_c \setminus \tilde B(\delta_0)) \times \Omega_{\epsilon}} (h_{\delta_0}^{\theta} (x, \omega))^p d P_{\epsilon} = \iint_{ (I_c \setminus \tilde B(\delta_0)) \times \Omega_{\epsilon}} (l_{\delta_0} (x, \omega))^p d P_{\epsilon} \leq \sum_{n=1}^{\infty} K_1 n^p  e^{-\eta n} \leq C',
\]
where $C'$ is a constant, provided $\epsilon > 0$ is small enough. By Proposition 16, there exists a constant $C>0$ such that 
\[
\iint_{ I_c  \times \Omega_{\epsilon}} (h_{\delta_0}^{\theta} (x, \omega))^p d P_{\epsilon}  \leq C
\]
holds when $\epsilon > 0$ is small enough.

By Proposition 11, when $0 < \epsilon \leq \delta_0$, there exists a nice set $V$ for $\epsilon$-perturbations such that $\tilde B(\delta_0) \subset V^{\omega} \subset \tilde B( 2\delta_0) $ for each $\omega \in \Omega_{\epsilon}$. By Lemma 3.5 and Lemma 6.1, for each $(x, \omega) \in V$, there exists a interval $J \ni x$ such that  $f_{\omega}^m$ maps $J$ diffeomorhpically onto $\tilde B(2\delta_0) \supset V^{\sigma^m \omega}$ with $\mathcal N(f_{\omega}^m |J)< 1$ where $m =h_{\delta_0}^{\theta} (x, \omega)$. By bounded distortion and the choice of $\theta$, 
\begin{align*}
\inf_{y \in J} Df_{\omega}^m(y) & \geq \frac{Df_{\omega}^m(x) }{e} \geq \frac{1}{e \theta} A(x, \omega, m) |\tilde B(\delta_0)|   \geq \frac{1}{e \theta} \frac{|\tilde B(\delta_0)|}{d(x, c)} \\
& \geq \frac{1}{e\theta} \frac{|\tilde B(\delta_0)|}{|\tilde B(2\delta_0)|}  \geq \frac{1}{\tilde \kappa^2 e\theta} \frac{|\tilde B(2\delta_0)|}{|\tilde B(\delta_0)|} \geq e^2 \frac{|V^{\sigma^m \omega}|}{|V^{\omega}|}.
\end{align*}
This shows that $h_{\delta_0}^{\theta} (x, \omega)$ is a Markov inducing time. So $m_V(x, \omega) \leq h_{\delta_0}^{\theta} (x, \omega)$. Therefore,
\[
\int_V (m_V(x, \omega))^p dP_{\epsilon} \leq \iint_{ I_c  \times \Omega_{\epsilon}} (h_{\delta_0}^{\theta} (x, \omega))^p d P_{\epsilon}  \leq C.
\]
This completes the proof.

\end{proof}

\subsection{Preparatory lemmas}

\begin{definition}
We say that a positive integer $s$ is a {\it $\theta$-close return time} of $(x, \omega) \in I_c \times \Omega$ if:
\[
\theta Df_{\omega}^s(x) \geq A(x, \omega, s) d(f_{\omega}^s(x), c).
\]
\end{definition}

If $(x, \omega) \in I_c \times \Omega$ and $s$ is a $\theta$-good return time of $(x, \omega)$ into $\tilde B(\delta) \times \Omega$, then 
\[
\theta Df_{\omega}^s(x) \geq A(x, \omega, s) |\tilde B(\delta)| \geq A(x, \omega, s) d(f_{\omega}^s(x), c).
\]
Hence $s$ is also a $\theta$-close return time. If $s$ is a $\tau$-scale expansion time of $(x, \omega) \in I_c \times \Omega$, then it is a $\theta_0/\tau$-close return time since:
\[
\frac{\theta_0}{\tau} Df_{\omega}^s(x) \geq e A(x, \omega, s) \geq A(x, \omega, s) d(f_{\omega}^s(x), c).
\]

\begin{lem}

Consider $(x, \omega) \in I_c \times \Omega$.

(1) Let $0 = T_0 < T_1 < \cdots < T_n$ be integers such that for each $0 \leq i < n$, $T_{i+1} - T_i$ is a $\frac{1}{2}$-close return of $F^{T_i} (x, \omega)$, then $T_n$ is a 1-close return time of $(x, \omega)$.

(2) If $t$ is a $\theta_1$-close return of $(x, \omega)$ and $s$ is a $\theta_2$-good return of $(y, \sigma^t \omega) = F^t(x, \omega)$ into $\tilde B(\delta) \times \Omega$ for some $\delta >0$. Then $t+s$ is a $(1 + \theta_1)\theta_2$-good return of $(x, \omega)$ into $\tilde B(\delta) \times \Omega$.

\end{lem}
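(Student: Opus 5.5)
Both parts rest on one concatenation identity for $A$. By the chain rule, for integers $t,s\ge 1$ with $(y,\sigma^t\omega)=F^t(x,\omega)$,
\[
A(x,\omega,t+s)=A(x,\omega,t)+Df_\omega^t(x)\,A(y,\sigma^t\omega,s),\qquad Df_\omega^{t+s}(x)=Df_\omega^t(x)\,Df_{\sigma^t\omega}^s(y).
\]
To see this, split the sum defining $A(x,\omega,t+s)$ at index $t$ and write $Df_\omega^{t+i}(x)=Df_\omega^t(x)Df^i_{\sigma^t\omega}(y)$ in the second block. Each part then reduces to bounding the first term $A(x,\omega,t)$ by a multiple of the second block, using the close-return hypothesis at time $t$ together with a lower bound on $A(y,\cdot,\cdot)$ coming from its $i=0$ term. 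That term is
\[
A(y,\sigma^t\omega,s)\ge \frac{1}{d(y,c)}.
\]

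For (2), let $t$ be a $\theta_1$-close return, so that $A(x,\omega,t)\le \theta_1 Df^t_\omega(x)/d(y,c)\le \theta_1 Df^t_\omega(x)A(y,\sigma^t\omega,s)$. The identity then gives
\[
A(x,\omega,t+s)\le (1+\theta_1)\,Df_\omega^t(x)\,A(y,\sigma^t\omega,s).
\]
Since $s$ is a $\theta_2$-good return of $(y,\sigma^t\omega)$, we have $A(y,\sigma^t\omega,s)|\tilde B(\delta)|\le\theta_2 Df^s_{\sigma^t\omega}(y)$. Multiplying through and using the chain rule yields $A(x,\omega,t+s)|\tilde B(\delta)|\le (1+\theta_1)\theta_2 Df^{t+s}_\omega(x)$. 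Finally $f^{t+s}_\omega(x)=f^s_{\sigma^t\omega}(y)\in\tilde B(\delta)$, so $t+s$ is a $(1+\theta_1)\theta_2$-good return time.

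For (1), I would argue by induction on $n$ and prove the slightly stronger bound that $T_n$ is a $1$-close return. Set $y_i=f_\omega^{T_i}(x)$, $\Delta_i=T_{i+1}-T_i$, and
\[
a_k=\frac{A(x,\omega,T_k)\,d(y_k,c)}{Df_\omega^{T_k}(x)}.
\]
The claim is $a_n\le 1$. Applying the identity with $t=T_k$, $s=\Delta_k$ and dividing by $Df^{T_{k+1}}_\omega(x)/d(y_{k+1},c)$ gives
\[
a_{k+1}=\frac{A(x,\omega,T_k)\,d(y_{k+1},c)}{Df^{T_{k+1}}_\omega(x)}+\frac{A(y_k,\sigma^{T_k}\omega,\Delta_k)\,d(y_{k+1},c)}{Df^{\Delta_k}(y_k)}.
\]
The second term is at most $1/2$ by the $\tfrac12$-close-return hypothesis on $\Delta_k$. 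For the first term, I use $1/Df^{\Delta_k}(y_k)\le \frac12\,\bigl(d(y_{k+1},c)A(y_k,\sigma^{T_k}\omega,\Delta_k)\bigr)^{-1}\le \tfrac12\,d(y_k,c)/d(y_{k+1},c)$, where the second inequality comes from $A(y_k,\sigma^{T_k}\omega,\Delta_k)\ge 1/d(y_k,c)$. This makes the first term at most $\tfrac12 a_k$, so
\[
a_{k+1}\le \tfrac12 a_k+\tfrac12 .
\]
Since $a_0=0$, induction gives $a_k\le 1$ for every $k$, in particular $a_n\le 1$, which is what (1) asserts.

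I expect the main obstacle to be carrying the weight $d(y_k,c)$ correctly through the induction, so that the inequality $A\ge 1/d$ cancels it. This bookkeeping is precisely why the hypothesis in (1) is a $\tfrac12$-close return rather than a $1$-close return: the constant $\tfrac12$ is what makes the recursion $a_{k+1}\le\tfrac12a_k+\tfrac12$ contract toward $1$.
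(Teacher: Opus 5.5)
Your proof is correct and is essentially the paper's argument. Part (2) uses the same chain-rule splitting of $A(x,\omega,t+s)$ together with the lower bound $A(y,\sigma^t\omega,s)\ge 1/d(y,c)$, and in part (1) your recursion $a_{k+1}\le \tfrac12 a_k+\tfrac12$ is a normalized repackaging of the paper's block-doubling estimate $\tilde A_{i+1}\ge 2\tilde A_i$, built from the same two ingredients and giving the same bound $A(x,\omega,T_n)\,d(f_\omega^{T_n}(x),c)\le (1-2^{-n})\,Df_\omega^{T_n}(x)$.
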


\begin{proof}

(1) For $0 \leq i < n$, let
\[
A_i = A(F^{T_i} (x, \omega), T_{i+1} -T_i) \mbox{ and \ } \tilde A_i =  Df_{\omega}^{T_i} (x) A_i.
\]
By assumption, for each $i$, we have
\[
\frac{1}{2} \frac{Df_{\omega}^{T_{i+1}}(x)}{Df_{\omega}^{T_i}(x)} = \frac{1}{2}  Df_{\sigma^{T_i}\omega}^{T_{i+1}-T_i}(f_{\omega}^{T_i}(x)) \geq A_i d(f_{\omega}^{T_{i+1}}(x), c).
\]
For $i < n-1$, we have
\[
d(f_{\omega}^{T_{i+1}}(x), c) A_{i+1}  \geq d(f_{\omega}^{T_{i+1}}(x), c) \cdot \frac{1}{d(f_{\omega}^{T_{i+1}}(x), c)} = 1.
\]
Therefore,
\[
\tilde A_{i+1} = Df_{\omega}^{T_{i+1}}(x) A_{i+1} \geq 2 Df_{\omega}^{T_i}(x) \cdot A_i d(f_{\omega}^{T_{i+1}}(x), c) \cdot A_{i+1} \geq 2 \tilde A_i.
\]
Thus 
\begin{align*}
A(x, \omega, T_n) & = \sum_{i=0}^{n-1} \tilde A_i \leq \left( \frac{1}{2^{n-1}} + \frac{1}{2^{n-2}} + \cdots + 1 \right) \tilde A_{n-1} \leq 2  \tilde A_{n-1}\\
& = 2 Df_{\omega}^{T_{n-1}}(x) A_{n-1} \leq 2 Df_{\omega}^{T_{n-1}}(x) \cdot \frac{1}{2} \frac{Df_{\omega}^{T_{n}}(x)}{Df_{\omega}^{T_{n-1}}(x)} \cdot \frac{1}{d(f_{\omega}^{T_n}(x), c)}\\
& = \frac{Df_{\omega}^{T_{n}}(x)}{d(f_{\omega}^{T_n}(x), c)}.
\end{align*}
This finishes the proof.

(2) Since $A(y, \sigma^t \omega, s) \geq  1 /d(y, c) = 1/d(f_{\omega}^t(x), c)$, we have
\[
\theta_1 Df_{\omega}^t (x) \geq A(x, \omega, t) d(f_{\omega}^t(x), c) \geq \frac{ A(x, \omega, t)}{A(y, \sigma^t \omega, s)}.
\]
Thus,
\begin{align*}
A(x, \omega, t+s)  |\tilde B(\delta)|& = (A(x, \omega, t) + Df_{\omega}^t(x) A(y, \sigma^t \omega, s)  )\tilde B(\delta)|\\
& \leq (1+\theta_1) Df_{\omega}^t(x) A(y, \sigma^t \omega, s) |\tilde B(\delta)| \\
& = (1 + \theta_1) Df_{\omega}^{t+s}(x) \frac{ A(y, \sigma^t \omega, s) |\tilde B(\delta)| }{D f_{\sigma^t \omega}^s(y)} \\
& \leq (1 + \theta_1) \theta_2 Df_{\omega}^{t+s}(x).
\end{align*}
The statement follows.

\end{proof}

Let $\mathscr G: \mathcal E \to I \times \Omega$ be a Borel measurable map defined on Borel subset $\mathcal E \subset I_c \times \Omega$.

(1) We say that $\mathscr G$ is induced by $F$ if there exists a Borel measurable function $T : \mathcal E \to \mathbb Z^+$ such that 
\[
\mathscr G(x, \omega) = F^{T(x, \omega)} (x, \omega) \mbox{ for each \ } (x, \omega) \in \mathcal E.
\]

(2) We say that $\mathscr G$ is {\it future-free} provided for each $(x, \omega) \in \mathcal E$ and $\tilde \omega \in \Omega$ with $\omega_i = \tilde \omega_i$ for $0 \leq i < T(x, \omega)$, we have $(x, \tilde \omega) \in \mathcal E$ and $T(x, \tilde \omega) = T(x, \omega)$.

Given a Borel probability measure $\nu$ on $[-1, 1]$, we define the randomized transfer operator corresponding to the map $\mathscr G$ as 
\begin{equation}
\mathcal L_{\mathscr G}^{\nu} (y) = \int_{\Omega} \mathcal L_{\mathscr G}^{\omega} (y) d \nu^{\mathbb N} (\omega)
\end{equation}
for each $y \in I$, where
\begin{equation}
 \mathcal L_{\mathscr G}^{\omega} (y) = \sum_{\substack{x \in \mathcal E^{\omega} \\ f_{\omega}^{T(x, \omega)}(x) =y}} \frac{1}{Df_{\omega}^{T(x, \omega)}(x)}.
\end{equation}

The following lemma can be viewed as change of variable.

\begin{lem}

Let $\mathscr G: \mathcal E \to I \times \Omega$ be a future-free and Borel measurable induced map with an inducing time function $T$ and let $\phi: I \times \Omega \to [0, \infty)$ be a Borel measurable function. Then for any Borel probability measure $\nu$ on $[-1, 1]$, we have
\[
\int_{\mathcal E} \phi(\mathscr G(x, \omega)) dx d \nu^{\mathbb N} (\omega) = \int_{\Omega} \int_0^1 \mathcal L_{\mathscr G}^{\nu} (y) \phi(y, \tilde \omega) dy d  \nu^{\mathbb N} (\tilde \omega),
\]
where $(y, \tilde \omega) = \mathscr G(x, \omega)$.

\end{lem}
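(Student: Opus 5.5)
The plan is to prove the identity first for a fixed $\omega$ restricted to a level set of $T$, where it is the ordinary change of variables for a piecewise diffeomorphism, and then to use the future-free property to decouple the first $n$ coordinates of $\omega$ from the tail.

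For each $n\ge 1$, let $\mathcal E_n=\{(x,\omega)\in\mathcal E: T(x,\omega)=n\}$. Because $\mathscr G$ is future-free, the condition $(x,\omega)\in\mathcal E_n$ depends only on $x$ and $(\omega_0,\dots,\omega_{n-1})$. Moreover, on $\mathcal E_n$ we have $\mathscr G(x,\omega)=(f_\omega^n(x),\sigma^n\omega)$, and $f^n_\omega(x)$ also depends only on $x$ and $\omega_0,\dots,\omega_{n-1}$. We write $\omega=(\underline\omega,\tilde\omega)$ with $\underline\omega=(\omega_0,\dots,\omega_{n-1})$ and $\tilde\omega=\sigma^n\omega$. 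Since $\nu^{\mathbb N}$ is a product measure, it equals $\nu^n\otimes\nu^{\mathbb N}$ under this splitting. Applying Fubini,
\[
\int_{\mathcal E_n}\phi(\mathscr G(x,\omega))\,dx\,d\nu^{\mathbb N}(\omega)=\int_{\Omega}\int_{[-1,1]^n}\int_{\mathcal E_n^{\underline\omega}}\phi(f^n_{\underline\omega}(x),\tilde\omega)\,dx\,d\nu^n(\underline\omega)\,d\nu^{\mathbb N}(\tilde\omega),
\]
where $\mathcal E_n^{\underline\omega}$ denotes the (common) fiber.

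Now fix $\underline\omega$ and $\tilde\omega$. The map $f^n_{\underline\omega}$ is a composition of maps in $\mathscr F_1$. Away from the countable set $\bigcup_{j<n}(f^j_{\underline\omega})^{-1}(c)$ it is a local $C^1$ diffeomorphism with positive derivative, and each of its branches is a diffeomorphism on one of countably many intervals. We decompose $\mathcal E_n^{\underline\omega}$ into its intersections with these branch intervals and apply the one-dimensional change of variables $y=f^n_{\underline\omega}(x)$ on each piece. Summing over the branches gives
\[
\int_{\mathcal E_n^{\underline\omega}}\phi(f^n_{\underline\omega}(x),\tilde\omega)\,dx=\int_0^1\phi(y,\tilde\omega)\sum_{x\in\mathcal E_n^{\underline\omega},\,f^n_{\underline\omega}(x)=y}\frac{1}{Df^n_{\underline\omega}(x)}\,dy.
\]
The integrand is nonnegative, so monotone convergence justifies the countable sums.

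Next, sum over $n$. The sets $\mathcal E_n$ partition $\mathcal E$. For fixed $\omega$, the inner sum over $n$ of the branch sums recovers $\mathcal L^\omega_{\mathscr G}(y)$ in the statement, because each preimage $x$ with $(x,\omega)\in\mathcal E$ has a unique $T(x,\omega)=n$. The one point needing care is that the $n$-th term depends only on $\underline\omega$, whereas in the statement $\mathcal L^\omega_{\mathscr G}$ is integrated against all of $\omega$. Since the $n$-th summand does not depend on the tail, we can reinsert a dummy integration over the coordinates $\omega_n,\omega_{n+1},\dots$. This lets us rewrite
\[
\int_{[-1,1]^n}(\cdots)\,d\nu^n(\underline\omega)=\int_\Omega(\cdots)\,d\nu^{\mathbb N}(\omega').
\]
Here $\omega'$ is a fresh variable independent of $\tilde\omega$. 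Tonelli then allows us to interchange the sum over $n$ with the integrals over $\omega'$ and $y$. This produces $\int_\Omega\int_0^1\mathcal L^\nu_{\mathscr G}(y)\phi(y,\tilde\omega)\,dy\,d\nu^{\mathbb N}(\tilde\omega)$, which is the claimed identity.

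I expect the main obstacle to be this decoupling step. We must check that, after splitting $\omega=(\underline\omega,\sigma^n\omega)$ separately on each $\mathcal E_n$, the variable $\tilde\omega$ in the conclusion really is distributed as $\nu^{\mathbb N}$ independently of the variable used in $\mathcal L^\nu_{\mathscr G}$. This requires the future-free property for measurability of the fibers $\mathcal E_n^{\underline\omega}$, and the product structure of $\nu^{\mathbb N}$ for the splitting itself. Measurability of the branch sums in $(y,\underline\omega)$ is routine from Borel measurability of $T$ and $\mathcal E$.
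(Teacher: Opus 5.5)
Your proposal is correct and follows essentially the same route as the paper. You decompose $\mathcal E$ into level sets of $T$, change variables fiberwise, and use future-freeness with the product structure of $\nu^{\mathbb N}$ to decouple the first $n$ coordinates from the tail $\sigma^n\omega$, then sum over $n$ by Tonelli. The only difference is cosmetic: you split $\nu^{\mathbb N}=\nu^n\otimes\nu^{\mathbb N}$ before changing variables, whereas the paper changes variables first and then invokes the independence of $\mathcal L_T^\omega(y)$ and $\phi(y,\sigma^T\omega)$.
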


\begin{proof}

Let $X_T = \{ (x, \omega) \in \mathcal E: T(x, \omega) = T \}$, and let 
\[
\mathcal L_T^{\omega}(y) = \sum_{\substack{x \in X_T^{\omega} \\ f_{\omega}^T(x) = y}} \frac{1}{D f_{\omega}^T(x)}.
\]
Then $\mathcal E = \bigcup_{T=1}^{\infty} X_T$ and $ \mathcal L_{\mathscr G}^{\omega} (y)  = \sum_{T=1}^{\infty}  \mathcal L_{T}^{\omega} (y) $. By Fubini's Theorem and change of variable, we have
\begin{align*}
\int_{X_T} \phi(\mathscr G(x, \omega)) dx d \nu^{\mathbb N} (\omega) & = \int_{\Omega} \int_{X_T^{\omega}} \phi(\mathscr G(x, \omega)) dx d \nu^{\mathbb N} (\omega)\\
& =\int_{\Omega} \int_0^1 \mathcal L_T^{\omega}(y) \phi(y, \sigma^T \omega) dy d \nu^{\mathbb N} (\omega)\\
&=\int_0^1 \int_{\Omega}  \mathcal L_T^{\omega}(y) \phi(y, \sigma^T \omega)  d \nu^{\mathbb N} (\omega) dy.
\end{align*}
Since $\mathscr G$ is future-free, $\mathcal L_T^{\omega}(y)$ depends only on the first $T$ coordinates of $\omega$. Hence
\begin{align*}
\int_{\Omega}  \mathcal L_T^{\omega}(y) \phi(y, \sigma^T \omega)  d \nu^{\mathbb N} (\omega) & =\int_{\Omega}  \mathcal L_T^{\omega}(y)  d \nu^{\mathbb N} (\omega) \int_{\Omega} \phi(y, \sigma^T \omega)  d \nu^{\mathbb N} (\omega) \\
& =\int_{\Omega}  \mathcal L_T^{\omega}(y)  d \nu^{\mathbb N} (\omega) \int_{\Omega} \phi(y, \omega)  d \nu^{\mathbb N} (\omega).
\end{align*}
By Fubini again,
\begin{align*}
\int_{X_T} \phi(\mathscr G(x, \omega)) dx d \nu^{\mathbb N} (\omega) & = \int_0^1 \left( \int_{\Omega}  \mathcal L_T^{\omega}(y)  d \nu^{\mathbb N} (\omega) \int_{\Omega} \phi(y, \omega)  d \nu^{\mathbb N} (\omega)\right) dy\\
& = \int_{\Omega} \int_0^1 \mathcal L_{\mathscr G}^{\nu} (y) \phi(y, \tilde \omega) dy d  \nu^{\mathbb N} (\tilde \omega).
\end{align*}

\end{proof}

\subsection{Proof of Proposition 17}

\begin{lem}

Given $\tau>0$ and $\theta>0$ the following holds provided $0 < \epsilon \leq \delta \leq \delta_0$ are small enough. For any $x \in \tilde B(\delta)$ and $\omega \in \Omega_{\epsilon}$, if $h = \hat h_{\delta, \tau}^{\theta} (x, \omega) < \infty$ and $l = l_{\delta_0} (F^h(x, \omega)) < \infty$, then $l+h$ is a $\theta$-good return map of $(x, \omega)$ into $\tilde B(\delta') \times \Omega$ for some $\delta' \in [\delta, \delta_0]$.

\end{lem}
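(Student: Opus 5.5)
We plan to split according to which alternative realizes $h=\hat h^{\theta}_{\delta,\tau}(x,\omega)$, and to handle both cases with the concatenation rule of Lemma 6.1(2). Throughout, write $(y,\tilde\omega)=F^h(x,\omega)$, so that $\tilde\omega=\sigma^h\omega\in\Omega_\epsilon$, and $l=l_{\delta_0}(y,\tilde\omega)$.

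\emph{Case 1: $h=h^{\theta}_{\delta'}(x,\omega)$ for some $\delta'\in[\delta,\delta_0]$.} Then $y\in\tilde B(\delta')\subset\tilde B(\delta_0)$, so $l_{\delta_0}(y,\tilde\omega)=0$ by definition ($s\ge0$ is allowed in $l_{\delta}$). Hence $l+h=h$ is already a $\theta$-good return into $\tilde B(\delta')\times\Omega$, and nothing more is needed. If the infimum over $\delta'$ is attained only in the limit, we use that $h$ is integer valued, so it equals $h^{\theta}_{\delta'}$ for some $\delta'$ in the range.

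\emph{Case 2: $h=T^{\tau}(x,\omega)$, a $\tau$-scale expansion time.} By the remark after Definition 6.1, $h$ is then a $\theta_0/\tau$-close return time of $(x,\omega)$.

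\emph{Case 2a: $y\in\tilde B(\delta_0)$.} Then $l=0$ and $h$ must itself be shown to be a good return. Here we use the definition directly:
\[
A(x,\omega,h)\,|\tilde B(\delta_0)|\le \frac{\theta_0}{e\tau}\,Df^h_\omega(x)\,|\tilde B(\delta_0)|\le\theta\, Df^h_\omega(x),
\]
which holds once $\delta_0$ is small relative to $\tau$ and $\theta$. So $h$ is a $\theta$-good return into $\tilde B(\delta_0)$, and we take $\delta'=\delta_0$.

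\emph{Case 2b: $y\notin\tilde B(\delta_0)$.} Lemma 5.2, applied with a smaller constant $\theta'$ in place of $\theta$, gives that $l=l_{\delta_0}(y,\tilde\omega)$ is a $\theta'$-good return of $(y,\tilde\omega)$ into $\tilde B(\delta_0)\times\Omega$. This requires $\epsilon\le\delta_0$ and $\delta_0$ small depending on $\theta'$. Lemma 6.1(2) then shows that $h+l$ is a $(1+\theta_0/\tau)\theta'$-good return into $\tilde B(\delta_0)\times\Omega$. Choosing
\[
\theta'=\frac{\theta}{1+\theta_0/\tau}
\]
and then $\delta_0$ small enough gives the claim with $\delta'=\delta_0$.

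\emph{Main obstacle.} The delicate point is Case 2b: Lemma 6.1(2) charges the close-return constant multiplicatively, so $\theta'$ must be fixed before $\delta_0$. This is consistent with the order of quantifiers in the statement, since $\tau,\theta$ are given first and $\delta_0$ is then chosen small. One also checks that the same smallness of $\delta_0$ suffices for Case 2a; the constants there are uniform in $\epsilon\le\delta\le\delta_0$.
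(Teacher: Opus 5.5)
Your proof is the paper's argument, organized by which term realizes $h$ rather than by $l=0$ versus $l\ge 1$. The ingredients are the same: smallness of $|\tilde B(\delta_0)|$ relative to $\tau\theta/\theta_0$ when an expansion time lands in $\tilde B(\delta_0)$, then Lemma 5.2 with the reduced constant $\theta/(1+\theta_1)$, then Lemma 6.1(2) to concatenate. One case is missing, however.

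In (5.5) the infimum defining $\hat h^{\theta}_{\delta,\tau}$ runs over all $\delta'\ge\delta$, not over $[\delta,\delta_0]$. Your split therefore does not cover $h=h^{\theta}_{\delta''}(x,\omega)<T^{\tau}(x,\omega)$ with $\delta''>\delta_0$.

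\begin{itemize}
\item If in that case $y\in\tilde B(\delta_0)$, then $l=0$ and $h$ is still a $\theta$-good return into $\tilde B(\delta_0)$, because $|\tilde B(\delta_0)|\le|\tilde B(\delta'')|$. This is why the paper sets $\delta'=\min\{\delta'',\delta_0\}$.
\item If instead $y\in\tilde B(\delta'')\setminus\tilde B(\delta_0)$, your Case 2b argument still applies, but $h$ is now only a $\theta$-close return rather than a $\theta_0/\tau$-close one. Lemma 6.1(2) then costs a factor $1+\theta$, so your choice $\theta'=\theta/(1+\theta_0/\tau)$ fails when $\theta>\theta_0/\tau$.
\end{itemize}

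The fix is to take $\theta'=\theta/(1+\max\{\theta,\theta_0/\tau\})$ and shrink $\delta_0$ accordingly. The paper's own Case 2 glosses over this subcase too, asserting $h=T_\tau(x,\omega)$. Its choice $\theta_1=\max\{\theta_0/\tau,1\}$ nonetheless covers it whenever $\theta\le 1$, since a $\theta$-close return is then a $\theta_1$-close return.
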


\begin{proof}

Let $\theta_0$ be a small constant given by Lemma 3.5. Let $\theta_1 = \max\{ \theta_0/\tau, 1 \}, \theta_2 = \theta/(1 + \theta_1)$. If $\delta_0$ is small enough, then 
\begin{equation}
| \tilde B(\delta_0) | \leq  \frac{\tau \theta}{\theta_0}.
\end{equation}
Moreover, by Lemma 5.2, we have either
\begin{equation}
l=0 \mbox{ or \ } l = h_{\delta_0}^{\theta_2} (F^h(x, \omega)).
\end{equation}

{\it Case 1.}  If $l=0$, which means $f_{\omega}^h(x) \in \tilde B(\delta_0)$. In case that $h$ is a $\tau$-scale expansion time, by (6.8) we have
\[
\theta Df_{\omega}^h(x) = \frac{\theta}{\theta_0} \theta_0 Df_{\omega}^h(x) \geq \frac{\theta}{\theta_0} e\tau A(x, \omega, h) \geq e A(x, \omega, h) |\tilde B(\delta_0)|,
\]
which shows $h$ is a $\theta$-good return time of $(x, \omega)$ into $\tilde B(\delta_0) \times \Omega$. Otherwise, by definition $h$ is a $\theta$-good return time of $(x, \omega)$ into $\tilde B(\delta'') \times \Omega$ for some $\delta'' \geq \delta$. Let $\delta' = \min\{ \delta'', \delta_0 \}$, be definition it follows that $h$ is a $\theta$-good return time of $(x, \omega)$ into $\tilde B(\delta') \times \Omega$.

{\it Case 2.} If $l \geq 1$, then $f_{\omega}^h(x) \notin \tilde B(\delta_0)$. Hence $h = T_{\tau}(x, \omega)$ and 
$l = h_{\delta_0}^{\theta_2} (F^h(x, \omega))$. Then $h$ is a $\theta_1$-close return time, and $l$ is a $\theta_2$-good return of $(x, \omega)$ into $\tilde B(\delta_0) \times \Omega$. By Lemma 6.1, $l + h$ is a $\theta$-good return time of $(x, \omega)$ into $\tilde B(\delta_0) \times \Omega$.

\end{proof}

\begin{proof}[Proof of Proposition 17]

(1) Fix $\theta>0, \gamma>0, p \geq 1$ and let $\tau >0$ be given by Proposition 12. Assume that $\delta_0$ is small enough, then for each $\epsilon \in (0, \delta_0]$ we have
\begin{equation}
\frac{1}{|\tilde B(\epsilon)|} \iint_{\tilde B(\epsilon) \times \Omega_{\epsilon}} (\hat h^{\theta}_{\epsilon, \tau} (x, \omega) )^p dP_{\epsilon} \leq \epsilon^{- \gamma}.
\end{equation}
By Proposition 6, there exist constants $\epsilon_0 \in (0, \delta_0), C_1>0$ and $\rho_0 >0$ such that for each $\omega \in \Omega_{\epsilon}$ with $\epsilon \in (0, \epsilon_0]$, we have
\[
|\{ y : l_{\delta_0}(y, \omega) \geq l \} | \leq C_1 e^{- \rho_0 l}.
\]

Fix $\epsilon \in (0, \epsilon_0]$. Write
\begin{align*}
h(x, \omega) & = \hat h_{\epsilon, \tau}^{\theta} (x, \omega), \\
l(x, \omega) & = l_{\delta_0} (x, \omega),\\
H(x, \omega) & = \inf_{\delta' \in [\epsilon, \delta_0]} h_{\delta'}^{\theta} (x, \omega).
\end{align*}
Then $h(x, \omega)$ and $l(x, \omega)$ are finite $P_{\epsilon}$-almost everywhere. By Lemma 6.3, for each $(x, \omega) \in \tilde B(\epsilon) \times \Omega_{\epsilon}$, we have 
\begin{equation}
h(x, \omega) + l(F^{h(x, \omega)} (x, \omega)) \geq H(x, \omega),
\end{equation}
provided that $\delta_0$ is small enough.

For each $k \geq 1$, let $X_k = \{ (x, \omega) \in \tilde B(\epsilon) \times \Omega_{\epsilon} : h(x, \omega) = k\}$, let $\mathscr G_k: X_k \to I \times \Omega$ be the measurable induced map defined by 
\[
(x, \omega) \to F^k(x, \omega),
\]
and let $\phi_k : I \times \Omega  \to [0, \infty)$ be defined as
\[
\phi_k(y, \tilde \omega) = \begin{cases}
l(y, \tilde \omega), & \mbox{if $k < l(y, \tilde \omega) < \infty$ and $\tilde \omega \in \Omega_{\epsilon}$}, \\
0, & \mbox{otherwise}.
\end{cases}
\]
Let 
\[
L_k : = \iint_{I \times \Omega_{\epsilon}} (\phi_k(y, \tilde \omega))^p d P_{\epsilon}.
\]
By the choice of $\epsilon_0$, there exists a constant $C_2 > 0$ such that
\begin{equation}
\sum_{k=1}^{\infty} L_k \leq \sum_{k=1}^{\infty} \sum_{m \geq k} \iint_{l(y, \tilde \omega) = m} m^p d P_{\epsilon} \leq \sum_{k=1}^{\infty} \sum_{m \geq k} m^p C_1 e^{-\rho_0 m} \leq C_2.
\end{equation}

\begin{claim}
There exists a constant $C_3 > 0$ such that for each $y \in I_c \setminus \tilde B(\delta_0)$, each $\omega \in \Omega_{\epsilon}$ and each $k \geq 1$, we have
\[
\mathcal L_{\mathscr G_k}^{\nu_{\epsilon}} (y) \leq C_3 |\tilde B(\epsilon)|.
\]
\end{claim}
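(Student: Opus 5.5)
The plan is to bound $\mathcal L_{\mathscr G_k}^{\omega}(y)$ by $C_3|\tilde B(\epsilon)|$ for each fixed $\omega\in\Omega_{\epsilon}$, uniformly in $\omega$ and $k$. Integrating over $\nu_{\epsilon}^{\mathbb N}$ then gives the claim. Fix $y\in I_c\setminus\tilde B(\delta_0)$ and let $x\in X_k^{\omega}$ be a preimage, so $f_{\omega}^k(x)=y$, $x\in\tilde B(\epsilon)$ and $k=\hat h^{\theta}_{\epsilon,\tau}(x,\omega)$. By definition, $k$ is either a $\theta$-good return time into some $\tilde B(\delta')\times\Omega$ or a $\tau$-scale expansion time. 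By the remarks after Definition 6.1, in both cases $k$ is a $\theta'$-close return time of $(x,\omega)$, with $\theta'=\max\{\theta,\theta_0/\tau\}$ independent of $\epsilon$. Hence
\[
\frac{Df_{\omega}^k(x)}{A(x,\omega,k)}\geq \frac{d(y,c)}{\theta'}\geq \frac{|\tilde B(\delta_0)|}{2\theta'}=:c_1,
\]
where $c_1>0$ depends only on $\delta_0,\theta,\tau$.

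Next I would use Lemma 3.5. On $J^{\omega}_{x,k}$ the map $f_{\omega}^k$ is a diffeomorphism with $\mathcal N\le 1$. Each half of $J^{\omega}_{x,k}\setminus\{x\}$ has image of length at least $e^{-1}\theta_0 Df^k_{\omega}(x)/A(x,\omega,k)\geq r:=\theta_0c_1/e$. So $f_{\omega}^k(J^{\omega}_{x,k})\supset B(y,r)$. Let $W_x\subset J^{\omega}_{x,k}$ be the component of $f_{\omega}^{-k}(B(y,r))$ containing $x$. By bounded distortion,
\[
\frac{1}{Df_{\omega}^k(x)}\le e\,\frac{|W_x|}{|B(y,r)|}=\frac{e}{2r}|W_x|.
\]
For two distinct preimages $x\neq x'$, the intervals $W_x,W_{x'}$ are disjoint. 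Indeed, each is a maximal interval on which $f_{\omega}^k$ is a diffeomorphism onto $B(y,r)$. If they met, they would coincide, and injectivity would force $x=x'$.

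It remains to locate the $W_x$. Since $A(x,\omega,k)\ge 1/d(x,c)$, we have $W_x\subset J^{\omega}_{x,k}\subset[x-\theta_0 d(x,c),\,x+\theta_0 d(x,c)]$. For $\epsilon$ small and $\theta_0<1$, this lies in an interval around $c$ (minus $c$) of length at most $(1+2\theta_0)|\hat B(\epsilon)|\le 3|\tilde B(\epsilon)|$. Summing over preimages then gives
\[
\mathcal L_{\mathscr G_k}^{\omega}(y)\le \frac{e}{2r}\sum_x|W_x|\le \frac{3e}{2r}|\tilde B(\epsilon)|.
\]
This yields $C_3=3e/(2r)$, independent of $\epsilon,k,\omega,y$.

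The main obstacle is the first step: checking that $k$ really is a close return time with a constant independent of $\epsilon$. In the good-return case, the close-return constant is simply $\theta$. In the scale-expansion case, it is $\theta_0/\tau$. Together with $y\notin\tilde B(\delta_0)$, this gives the uniform lower bound on the image scale $r$. A subsidiary point is that preimages with $f^j_{\omega}(x)=c$ for some $j<k$ are excluded automatically, since then $A=\infty$.
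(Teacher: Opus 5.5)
Your proposal is correct and follows essentially the paper's argument. In both, a lower bound on the scale of $f^k_{\omega}(J^{\omega}_{x,k})$, uniform in $\epsilon,k,\omega$ and coming from $y\notin\tilde B(\delta_0)$ or from the $\tau$-scale expansion, is combined with bounded distortion to compare $1/Df^k_{\omega}(x)$ with the length of a pulled-back interval, and these intervals are disjoint and lie in an $O(|\tilde B(\epsilon)|)$-neighbourhood of $c$. Handling both cases at once via close return times, and summing over pullbacks $W_x$ of a fixed ball, are harmless variations: they make disjointness immediate and use only the length estimate, not the hypothesis $\theta\le\theta_0/e$ of Lemma 5.1. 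One small correction: $d(y,c)\ge|\tilde B(\delta_0)|/2$ should read $d(y,c)\ge C'|\tilde B(\delta_0)|$, since the two sides of $\tilde B(\delta_0)$ need not have equal length.
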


For $(x, \omega) \in X_k$, let $J_{x, k}^{\omega}$ be defined as (5.1), then $\mathcal N(f_{\omega}^k | J_{x, k}^{\omega} ) \leq 1$ and 
\[
|J_{x, k}^{\omega}| = \frac{2 \theta_0}{A(x, \omega, k)} = 2 \theta_0 \left( \sum_{i=0}^{k-1} \frac{Df_{\omega}^i(x)}{d(f_{\omega}^i(x), c)} \right)^{-1} \leq 2 \theta_0 d(x, c) \leq 2 \theta_0 |\tilde B(\epsilon)|.
\]
Given $\omega \in \Omega_{\epsilon}$ and $k \geq 1$, these intervals $J_{x, k}^{\omega}$ with $(x, \omega) \in X_k$ and $f_{\omega}^k(x) = y$ are pairwise disjoint. If $h(x, \omega) = T^{\tau}(x, \omega)$, then $\theta_0 Df_{\omega}^k(x) \geq e \tau A(x, \omega, k)$. Hence 
\[
|f_{\omega}^k (J_{x, k}^{\omega})| \geq \frac{Df_{\omega}^k(x)}{e} |J_{x, k}^{\omega}| \geq \frac{\tau}{\theta_0} A(x, \omega, k) \cdot  \frac{2 \theta_0}{A(x, \omega, k)} = 2 \tau.
\]
If $h(x, \omega) = h_{\delta'}^{\theta}(x, \omega)$ for some $\delta' \geq \epsilon$. By Lemma 5.1 and since $y \notin \tilde B(\delta_0)$,
\[
|f_{\omega}^k (J_{x, k}^{\omega})| \geq d(y, c) \geq C' |\tilde B(\delta_0)|.
\]
This implies that $|f_{\omega}^k (J_{x, k}^{\omega})|$ is bounded from below by a constant $\tau_1 = \tau_1(\tau, \delta_0) > 0$. Therefore,
\[
\mathcal L_{\mathscr G_k}^{\omega} (y) = \sum_{\substack{x \in X_k^{\omega} \\ f_{\omega}^k(x) =y}} \frac{1}{Df_{\omega}^k(x)} \leq \frac{e}{\tau_1}  \sum_{\substack{x \in X_k^{\omega} \\ f_{\omega}^k(x) =y}} |J_{x, k}^{\omega}| \leq \frac{e}{\tau_1}(1 + 4 \theta_0) |\tilde B(\epsilon)|.
\]
Then the claim follows.

Since $\mathscr G_k$ is future-free, by Lemma 6.2, we have
\[
M_k : = \int_{X_k} (\phi_k(\mathscr G_k(x, \omega)))^p dP_{\epsilon} = \int_{\Omega_{\epsilon}} \int_0^1 \mathcal L_{\mathscr G_k}^{\nu_{\epsilon}} (y) (\phi_k(y, \tilde \omega))^p dP_{\epsilon}.
\]
Since $\phi_k(y, \tilde \omega) = 0$ for each $y \in \tilde B(\delta_0)$ and by the claim above, we have
\[
M_k = \int_{\Omega_{\epsilon}} \int_{I_c \setminus \tilde B(\delta_0)} \mathcal L_{\mathscr G_k}^{\nu_{\epsilon}} (y) (\phi(y, \tilde \omega))^p dP_{\epsilon} \leq C_3 |\tilde B(\epsilon)| L_k.
\]
By (6.12),
\begin{equation}
\sum_{k=1}^{\infty} M_k \leq C_2 C_3  |\tilde B(\epsilon)|.
\end{equation}
On each $X_k$, we have 
\[
H(x, \omega) \leq h(x, \omega) + l(F^{h(x, \omega)} (x, \omega)) \leq 2h(x, \omega) + \phi_k(\mathscr G_k(x, \omega)).
\]

So,
\[
\int_{X_k} (H(x, \omega))^p d P_{\epsilon} \leq \int_{X_k} (2h(x, \omega) + \phi_k(\mathscr G_k(x, \omega)))^p d P_{\epsilon} \leq C_4 \int_{X_k} (h(x, \omega) )^p d P_{\epsilon} + C_4 M_k,
\]
where $C_4 > 0$ is a constant. Here we use the inequality 
\[
|a+b|^p \leq 2^p (|a|^p + |b|^p), p \geq 1.
\]
Then, by (6.10) and (6.13), 
\begin{align*}
| \tilde B(\epsilon) | S_p^{\theta} (\epsilon, \epsilon; \delta_0) & =  \sum_{k=1}^{\infty} \iint_{X_k} (H(x, \omega))^p d P_{\epsilon}  \leq C_4 \sum_{k=1}^{\infty} \iint_{X_k} (h(x, \omega))^p d P_{\epsilon} + C_4 \sum_{k=1}^{\infty} M_k\\
& \leq C_4 \iint_{\tilde B(\epsilon) \times \Omega_{\epsilon}} (h(x, \omega))^p d P_{\epsilon} + C_4 \sum_{k=1}^{\infty} M_k \leq C_4 \epsilon^{-\gamma} |\tilde B(\epsilon)| + C_5  |\tilde B(\epsilon)|.
\end{align*}
Then the desired estimate holds.

(2) The proof is similar as in (1). We shall use Proposition 13 instead of Proposition 12 to show that for  each $\delta_0> 0$ small enough, there exist $\epsilon_0 >0$ such that if $0 < \delta \leq \delta_0/e $ and $0 < \epsilon \leq \min\{\epsilon_0, \delta \}$, then 
\[
\frac{1}{|\tilde B(\delta)|} \iint_{(\tilde B(e \delta) \setminus \tilde B(\delta))\times \Omega_{\epsilon}} \frac{1}{d(x, c)} \left( \inf_{\delta' \in [e\delta, \delta_0]} h^{\theta}_{\delta'} (x, \omega) \right)^p d P_{\epsilon} \leq C' \delta^{-\gamma},
\]
where $C' > 0$ is a constant. Which implies that
\[
\iint_{(\tilde B(e \delta) \setminus \tilde B(\delta))\times \Omega_{\epsilon}} \frac{1}{d(x, c)} \left( \inf_{\delta' \in [e\delta, \delta_0]} h^{\theta}_{\delta'} (x, \omega) \right)^p d P_{\epsilon} \leq C'' \delta^{-\gamma}.
\]
This finishes the proof.

\end{proof}

\subsection{Proof of Proposition 18}

Fix $p \geq 1, \gamma >0$ and $\lambda \in (e^{- \frac{1}{\ell}}, 1)$. Let $\theta>0$ be small such that 
\[
(1 - (36\theta/\theta_0)^{1/p})^p \lambda e^{\frac{1}{\ell}} > 1.
\]
Let $\delta_0 >0$ be small enough and consider $0 < \epsilon \leq \delta \leq \delta_0/e$. Let 
\begin{align*}
s(x, \omega) &= \inf_{\delta' \in [\delta, \delta_0]} h_{\delta'}^{\theta} (x, \omega), \\
\hat s(x, \omega) & = \inf_{\delta' \in [e\delta, \delta_0]} h_{\delta'}^{\theta/e} (x, \omega),\\
\varphi(x, \omega) & = \begin{cases}
s(x, \omega), & \mbox{if \ } x \in \tilde B(\delta),\\
\hat s(x, \omega), & \mbox{otherwise}.
\end{cases}
\end{align*}
Let $\tilde E_0 = \tilde B(\delta_0) \times \Omega_{\epsilon} \supset E_0 = \tilde B(e\delta) \times \Omega_{\epsilon} $, let 
\[
E_1 = \{(x, \omega) \in  \tilde B(\delta) \times \Omega_{\epsilon}: s(x, \omega) < \hat s(x, \omega) \}.
\]
Let $\mathscr G: E_1 \to \tilde E_0$ denote the map $(x, \omega) \to F^{s(x, \omega)} (x, \omega)$. For each $n \geq 1$, let $E_n = {\rm dom}(\mathscr G^n)$ and $\varphi_n = \chi_{E_n} \cdot \varphi \circ \mathscr G^n$. For each $n \geq 0$, let
\[
K_n = \left( \iint_{E_n} \varphi (\mathscr G^n(x, \omega))^p d P_{\epsilon}  \right)^{\frac{1}{p}}.
\]

\begin{lem}

Let $\delta_0 > 0$ be small enough, then
\[
\left( |\tilde B(e \delta)| S_p^{\theta} (e\delta, \epsilon; \delta_0) \right)^{\frac{1}{p}} \leq \sum_{n=0}^{\infty} K_n.
\]

\end{lem}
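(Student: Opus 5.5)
The plan is to prove a pointwise bound and then apply Minkowski's inequality. Write
\[
H(x,\omega)=\inf_{\delta'\in[e\delta,\delta_0]}h^{\theta}_{\delta'}(x,\omega),
\]
so that by definition $|\tilde B(e\delta)|\,S_p^{\theta}(e\delta,\epsilon;\delta_0)=\iint_{E_0}H^p\,dP_\epsilon$, where $E_0=\tilde B(e\delta)\times\Omega_\epsilon$ is the domain of $\mathscr G^0$. I will show that for $P_\epsilon$-a.e. $(x,\omega)\in E_0$
\[
H(x,\omega)\le \sum_{n=0}^{\infty}\varphi_n(x,\omega),\qquad \varphi_n=\chi_{E_n}\cdot\varphi\circ\mathscr G^n .
\]
Given this, Minkowski's inequality in $L^p(E_0,P_\epsilon)$, applied to partial sums and combined with monotone convergence, gives $\|H\|_{L^p(E_0)}\le\sum_n\|\varphi_n\|_{L^p}=\sum_n K_n$, which is the lemma.

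For the pointwise bound, fix $(x,\omega)\in E_0$. Let $N\ge 0$ be the first index with $\mathscr G^N(x,\omega)\notin E_1$. If no such $N$ exists, then $(x,\omega)\in E_n$ for all $n$ and each $\varphi_n\ge1$, so the right-hand side is infinite and there is nothing to prove. Put $T_N=\sum_{i<N}s(\mathscr G^i(x,\omega))$ and $(y,\tilde\omega)=\mathscr G^N(x,\omega)=F^{T_N}(x,\omega)$, which lies in $\tilde E_0$ (or in $E_0$ if $N=0$). I first claim that $\hat s(y,\tilde\omega)\le\varphi(y,\tilde\omega)$, and I check this in two cases.
\begin{itemize}
\item If $y\notin\tilde B(\delta)$, then $\varphi(y,\tilde\omega)=\hat s(y,\tilde\omega)$ by definition.
\item If $y\in\tilde B(\delta)$, then $(y,\tilde\omega)\notin E_1$ means $s\ge\hat s$, and here $\varphi=s$.
\end{itemize}
Hence it suffices to prove
\[
H(x,\omega)\le T_N+\hat s(y,\tilde\omega)\le\sum_{n\le N}\varphi_n .
\]
The second inequality is immediate, since $\varphi_n=s(\mathscr G^n(x,\omega))$ for $n<N$ and $\varphi_N\ge\hat s(y,\tilde\omega)$ by the claim.

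The first inequality is a concatenation argument using Lemma 6.1. Each step $s(\mathscr G^i(x,\omega))$ is a $\theta$-good return into some $\tilde B(\delta_i)\times\Omega$. As observed after Definition 6.1, it is therefore a $\theta$-close return, and hence a $\tfrac12$-close return provided $\theta\le 1/2$, which holds since $\theta$ is small. By Lemma 6.1(1), $T_N$ is a $1$-close return of $(x,\omega)$ when $N\ge1$. The final piece $\hat s(y,\tilde\omega)$ is a $\theta/e$-good return of $(y,\tilde\omega)$ into $\tilde B(\delta'')\times\Omega$ for some $\delta''\in[e\delta,\delta_0]$. Lemma 6.1(2) then makes $T_N+\hat s$ a $(1+1)\theta/e\le\theta$-good return of $(x,\omega)$ into $\tilde B(\delta'')\times\Omega$, so $H(x,\omega)\le T_N+\hat s(y,\tilde\omega)$. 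When $N=0$ no concatenation is needed, because a $\theta/e$-good return is already a $\theta$-good return, so $H\le\hat s\le\varphi_0$.

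The main point to verify carefully is the last step: that the close-return and good-return constants compose to at most $\theta$, and that the scale $\delta''$ of the final return lies in $[e\delta,\delta_0]$. Both are built into the definition of $\hat s$ and the factor $1/e$ in it, so I expect this to go through. A secondary check is measurability and a.e. finiteness of the quantities involved. Infinite values cause no harm: wherever some $\varphi_n=\infty$, the pointwise inequality holds trivially.
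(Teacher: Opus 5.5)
Your proposal is correct and follows the paper's proof. Minkowski's inequality reduces the lemma to the pointwise bound $\inf_{\delta'\in[e\delta,\delta_0]}h^{\theta}_{\delta'}\le\sum_n\varphi_n$. This bound comes from chaining the $\tfrac12$-close returns $s(\mathscr G^i(x,\omega))$ with Lemma 6.1(1) and then appending the $\theta/e$-good return $\hat s$ with Lemma 6.1(2). You note that when $\mathscr G^N(x,\omega)\in(\tilde B(\delta)\times\Omega_\epsilon)\setminus E_1$ one only has $\varphi\ge\hat s$, not the equality the paper writes; this is a small but welcome tightening of the same argument.
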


\begin{proof}
By Minkowski's inequality, it suffices to prove that for each $(x, \omega) \in E_0$, we have
\begin{equation}
\inf_{\delta' \in [e\delta, \delta_0]} h_{\delta'}^{\theta} (x, \omega) \leq \sum_{n=0}^{\infty} \varphi_n(x, \omega)
\end{equation}
provided $\delta_0 > 0$ is small enough.

If $(x, \omega) \in \bigcap_{n=0}^{\infty} E_n$, then the right hand side is infinity, so (6.14) holds. If $(x, \omega) \in E_0 \setminus E_1$, then  
\[
\varphi_0(x, \omega) = \hat s(x, \omega)  = \inf_{\delta' \in [e\delta, \delta_0]} h_{\delta'}^{\theta/e} (x, \omega) \geq \inf_{\delta' \in [e\delta, \delta_0]} h_{\delta'}^{\theta} (x, \omega),
\]
so (6.14) holds.

Now assume that there exists an integer $n \geq 1$ such that $(x, \omega) \in E_n \setminus E_{n+1}$. We will show that (6.14) follows from Lemma 6.1. By definition, for $0 \leq i \leq n-1, \mathscr G^i(x, \omega) \in E_1$ and $\mathscr G^n(x, \omega) \notin E_1$. Let $T_0 = 0$ and $T_i = \sum_{j=0}^{i-1} \varphi_j(x, \omega) $ for $1 \leq i \leq n+1$. Then for each $0 \leq i \leq n-1$,
\[
T_{i+1} - T_i = \varphi_i(x, \omega) = \varphi(\mathscr G^i(x, \omega)) = s (\mathscr G^i(x, \omega)) = s (F^{T_i}(x, \omega));
\]
and 
\[
T_{n+1} - T_n = \hat s(F^{T_n}(x, \omega)).
\]
By definition, for $0 \leq i \leq n-1$,
\begin{align*}
\frac{1}{2} Df_{\sigma^{T_i}\omega}^{T_{i+1} - T_i } ( F^{T_i}(x, \omega))& \geq \theta Df_{\sigma^{T_i}\omega}^{T_{i+1} - T_i } ( F^{T_i}(x, \omega))  \geq A(F^{T_i}(x, \omega)), T_{i+1} - T_i ) | \tilde B(\delta'') |\\
& \geq A(F^{T_i}(x, \omega)), T_{i+1} - T_i ) d(f_{\omega}^{T_{i+1}}(x), c);
\end{align*}
and 
\[
\frac{\theta}{2} Df_{\sigma^{T_n}\omega}^{T_{n+1} - T_n} ( F^{T_n}(x, \omega)) \geq \frac{\theta}{e} Df_{\sigma^{T_n}\omega}^{T_{n+1} - T_n} ( F^{T_n}(x, \omega))  \geq  A(F^{T_n}(x, \omega)), T_{n+1} - T_n ) | \tilde B(\delta') |.
 \]
This shows that for $0 \leq i < n$, $T_{i+1} - T_i$ is a $\frac{1}{2}$-close return of $F^{T_i}(x, \omega)$ and by Lemma 6.1 part (1), $T_n$ is a $1$-close return of $(x, \omega)$. Also $T_{n+1} - T_n$ is a $\frac{\theta}{2}$-good return time of $F^{T_n}(x, \omega)$ into $\tilde B(\delta') \times \Omega_{\epsilon}$ for some $\delta' \in [e\delta, \delta_0]$. By Lemma 6.1 part (2), $T_{n+1}$ is a $\theta$-good return time of $(x, \omega)$ into $\tilde B(\delta') \times \Omega_{\epsilon}$. This completes the proof.

\end{proof}

Now we estimate $K_n$.

\begin{lem}
Let $\delta_0 > 0$ be small enough, then for any $y \in \tilde B(\delta_0)$, 
\[
\mathcal L_{\mathscr G}^{\nu_{\epsilon}} (y) \leq \frac{36 \theta}{\theta_0} \frac{|\tilde B(\delta)|}{|\tilde B(\delta')|},
\]
where $\delta' = \max \{ \delta, d_*(y, c)\}$.
\end{lem}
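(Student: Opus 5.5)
We want to bound $\mathcal L_{\mathscr G}^{\nu_\epsilon}(y)$, and we will do it fiberwise. Fix $\omega \in \Omega_\epsilon$ and $y \in \tilde B(\delta_0)$. We bound
\[
\mathcal L_{\mathscr G}^{\omega}(y)=\sum_{x\in E_1^\omega,\, f_\omega^{s(x,\omega)}(x)=y} \frac{1}{Df_\omega^{s(x,\omega)}(x)}
\]
uniformly in $\omega$, and then integrate over $\nu_\epsilon^{\mathbb N}$. Future-freeness of $\mathscr G$ is not needed for this pointwise bound.

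For each preimage $x$ with $k=s(x,\omega)$, we use the interval $J_{x,k}^\omega$ of (5.1). By Lemma 3.5, $f_\omega^k|J_{x,k}^\omega$ is a diffeomorphism with $\mathcal N\le 1$, so its derivative varies by at most a factor $e$. Its length satisfies $|J_{x,k}^\omega| = 2\theta_0/A(x,\omega,k) \le 2\theta_0\, d(x,c)$, and since $x\in\tilde B(\delta)$ this gives $J_{x,k}^\omega \subset I^\pm$ (see the remark after (5.1)).

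The key point is that $k$ is a $\theta$-good return time into $\tilde B(\delta'')$ for some $\delta''\in[\delta,\delta_0]$. Hence
\[
\frac{1}{Df_\omega^k(x)} \le \frac{\theta}{A(x,\omega,k)\,|\tilde B(\delta'')|} = \frac{\theta\,|J_{x,k}^\omega|}{2\theta_0\,|\tilde B(\delta'')|}.
\]
Next we need $|\tilde B(\delta'')| \gtrsim |\tilde B(\delta')|$ with $\delta'=\max\{\delta,d_*(y,c)\}$. Since $y=f_\omega^k(x)\in\tilde B(\delta'')$, we get $d_*(y,c)\le\delta''$, and $\delta''\ge\delta$ holds as well. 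So $|\tilde B(\delta'')|\ge|\tilde B(\delta')|$ for $\delta_0\le\delta_*$. Summing over preimages, provided the intervals $J_{x,k}^\omega$ can be arranged to have bounded overlap inside $\tilde B(\delta)$, we get
\[
\mathcal L^\omega_{\mathscr G}(y)\le \frac{\theta}{2\theta_0}\,\frac{C\,|\tilde B(\delta)|}{|\tilde B(\delta')|},
\]
with $C$ absorbing the overlap multiplicity and the factor by which $J_{x,k}^\omega$ may stick out of $\tilde B(\delta)$ (length at most $(1+4\theta_0)|\tilde B(\delta)|$). The target constant is $36 = 2\cdot 18$.

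The main obstacle is the disjointness or bounded multiplicity of the intervals $J_{x,k}^\omega$ for different preimages $x$ of the same $y$, possibly with different $k$. When $k$ is fixed, the branches are distinct, so the intervals are disjoint as in the Claim in the proof of Proposition 17. For different $k$ I would proceed as follows.
\begin{enumerate}
\item[(a)] Use Lemma 5.1(2), valid since $\theta<\theta_0/(e\tilde\kappa)$: $f_\omega^k(J_{x,k}^\omega)\supset\tilde B(2\delta'')\supset\tilde B(\delta'')\ni y$, which gives a Koebe-type nesting.
\item[(b)] Argue that if $x$ and $x'$ with times $k<k'$ had overlapping intervals, then $f_\omega^k(x')$ would lie in $f_\omega^k(J_{x,k}^\omega)$. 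Combined with the minimality in $s$ and the condition $s<\hat s$ defining $E_1$, this should contradict either minimality or the bounded-distortion comparison.
\end{enumerate}
If this fails, I would instead decompose the sum by $k$. I would use the exponential growth of $A(x,\omega,k)$ along good returns (Lemma 6.1(1) style doubling) to control the total length by a geometric series, which still costs only an absolute constant absorbed into $36$.
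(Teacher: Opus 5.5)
Your fiberwise reduction is correct. Since $k=s(x,\omega)$ is a $\theta$-good return into some $\tilde B(\delta'')$ with $\delta''\ge\delta'$, you get $1/Df^k_\omega(x)\le \theta|J^\omega_{x,k}|/(2\theta_0|\tilde B(\delta')|)$. Everything then reduces to showing that the intervals attached to all preimages of $y$ have total length $O(|\tilde B(\delta)|)$. That is exactly the step you leave open, and there is a genuine gap there.

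\textbf{Why step (b) fails.} It aims at a false statement. Minimality of $s$ and the condition $s<\hat s$ do not prevent overlaps. A point $x'\in J^\omega_{x,k}$ can arrive at time $k$ much deeper than scale $\delta'$: bounded distortion only makes $k$ an $e^2\theta$-good return for $x'$, not a $\theta$-good one. Such a point can reach $y$ only later, with $s(x',\omega)>k$ and $(x',\omega)\in E_1$. Nothing bounds the depth of such nestings, so a bounded-multiplicity statement is not available and there is no contradiction to extract.

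\textbf{Why the fallback fails.} For fixed $k$ the intervals are indeed disjoint. But that gives a contribution of order $|\tilde B(\delta)|$ for every $k$, with no decay in $k$. The doubling in Lemma 6.1(1) concerns successive returns along a single orbit, not the number of preimages of $y$ with a given return time.

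\textbf{The missing idea.} The condition $E_1$ forces \emph{nesting with a definite contraction}, not disjointness. The argument runs as follows.
\begin{itemize}
\item Pull back a common target. Let $J$ be the interval centred at $c$ of length $\theta_0|\tilde B(\delta')|/(e\theta)$; it is covered by $f^s_\omega(\hat J_x)$ because of the good return. Let $J_0\subset J$ be the concentric interval of length $4|\tilde B(\delta')|$, and let $\tilde J_x\subset J_x$ be their pullbacks. Then $|\tilde J_x|\le (4e^2\theta/\theta_0)|J_x|$ and $1/Df^s_\omega(x)\le e|J_x|/|J|$.
\item Take $x'\in J_x\cap\mathcal X$ with $s'=s(x',\omega)>s=s(x,\omega)$, and set $z=f^s_\omega(x')$. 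Then $d_*(z,c)\le\delta'$. Otherwise let $t$ be the first later time at which the orbit of $z$ reaches scale $\delta'$. By Lemma 5.2, $t$ is a good return into some $\tilde B(\delta'')$ with $\delta''>\delta'$. By Lemma 3.5, $s$ is a $1$-close return of $(x',\omega)$. Lemma 6.1(2) then makes $s+t\le s'$ a $\theta/e^2$-good return, giving $\hat s(x',\omega)\le s'$ and contradicting $(x',\omega)\in E_1$.
\item Comparing with the $s$-th term of $A(x',\omega,s')$ gives $|f^s_\omega(J_{x'})|\le 2e\theta_0\,d(z,c)$. Hence $f^s_\omega(J_{x'})\subset J_0$, that is, $J_{x'}\subset\tilde J_x$.
\item Peel off the generations $\mathcal X(0),\mathcal X(1),\dots$ of this nesting. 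For $\theta$ small this gives $\sum_{\mathcal X(k)}|J_x|\le\frac12\sum_{\mathcal X(k-1)}|J_x|$.
\item Therefore $\sum_{x\in\mathcal X}|J_x|\le 2\sum_{\mathcal X(0)}|J_x|\le 4|\tilde B(\delta)|$. This yields the bound with constant $4e^2<36$.
\end{itemize}
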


\begin{proof}

It suffices to prove that for any fixed $y \in \tilde B(\delta_0)$, $\omega \in \Omega_{\epsilon}$ and $\delta' = \max \{ \delta, d_*(y, c)\}$, 
\[
\mathcal L_{\mathscr G}^{\omega} (y) \leq \frac{36 \theta}{\theta_0} \frac{|\tilde B(\delta)|}{|\tilde B(\delta')|}.
\]
Denote
\[
\mathcal X : = \{x \in \tilde B(\delta) : (x, \omega)\in E_1, f_{\omega}^{s(x, \omega)} (x) = y \}.
\]
For each $x \in \mathcal X$, let $\hat J_x = \hat J_{x, s(x, \omega)}^{\omega}$ be defined in (5.1). Then $\hat J_x \subset I^{\pm}$,  $f_{\omega}^{s(x, \omega)} | \hat J_x$ is a diffeomorphism with $\mathcal N (f_{\omega}^{s(x, \omega)} | \hat J_x) \leq 1$. Let $J_0 \subset J$ be two nested closed intervals centered at $c$ with 
 \[
 |J_0| = 4 |\tilde B(\delta')| \mbox{ and \ } |J| = \frac{\theta_0  |\tilde B(\delta')|}{e \theta}.
 \]
 Since $f_{\omega}^{s(x, \omega)} (x) = y$, by definition $s(x, \omega)$ is a $\theta$-good return time of $(x, \omega)$ into region $\tilde B(\delta_x) \times \Omega_{\epsilon}$ for some $\delta_x \in [\delta', \delta_0]$. Let $\mathcal J$ be any component of $\hat J_x \setminus \{ x\}$, then
\[
 f_{\omega}^s(\mathcal J) \geq \frac{Df_{\omega}^s(x)}{e}  \frac{\theta_0}{A(x, \omega, s)} \geq \frac{\theta_0}{e \theta} |\tilde B(\delta_x)|.
\]
Thus $f_{\omega}^{s(x, \omega)}(\hat J_x) \supset J$. Let $\tilde J_x \subset J_x \subset \hat J_x$ be such that $f_{\omega}^{s(x, \omega)}(\tilde J_x) = J_0$ and $f_{\omega}^{s(x, \omega)}(J_x) = J$. Then
 \begin{equation}
 |\tilde J_x| \leq e \frac{|J_0|}{|J|} |J_x| \leq \frac{4 e^2 \theta}{\theta_0} |J_x|,
 \end{equation}
 and both component of $J_x \setminus \tilde J_x$ have length bigger that $|\tilde J_x|$. Therefore 
\[
\mathcal L_{\mathscr G}^{\omega} (y) \leq e \sum_{x \in \mathcal X} \frac{|J_x|}{|J|} \leq \frac{e^2 \theta}{\theta_0} \sum_{x \in \mathcal X} \frac{|J_x|}{|\tilde B(\delta')|}.
\]
Now it suffice to show that 
\begin{equation}
\sum_{x \in \mathcal X} |J_x| \leq 4 |\tilde B(\delta)|.
\end{equation}

\begin{claim}
For each $x' \in J_x \cap \mathcal X$ with $s(x', \omega) > s(x, \omega)$, then we have $J_x \supset \tilde J_x \supset J_{x'} $.
\end{claim}

Let $s = s(x, \omega), s'=s(x', \omega), (z, \tilde \omega) = F^s(x', \omega)$. We first show that $d_*(z, c) \leq \delta'$. Argue by contradiction, assume that $d_*(z, c)>  \delta'$. Since $f_{\tilde \omega}^{s'-s}(z) = f_{\omega}^s(x') = y$, there exists a minimal integer $0 < t \leq s'-s$ such that $d_*(f_{\tilde \omega}^{t}(z), c) \leq \delta'$. Let $\delta'' \in (\delta', \delta_0]$ be such that $d_*(f_{\tilde \omega}^{j}(z), c) \geq \delta''$ for all $0 \leq j < t$. Then by Lemma 5.2, $t$ is a $\frac{\theta}{2e^2}$-good return of $(z, \tilde \omega)$ into $\tilde B(\delta'') \times \Omega_{\epsilon}$, provided $\delta_0$ is small enough. By Lemma 3.5 and the fact that $\theta$-good return time of $(x, \omega)$ into region $\tilde B(\delta_x) \times \Omega_{\epsilon}$ with $\delta_x \in [\delta', \delta_0]$, we have
\[
\frac{ D f_{\omega}^s(x')}{ A(x', \omega, s) } \geq \frac{1}{e} \frac{Df_{\omega}^s(x)}{A(x, \omega, s)} \geq \frac{|\tilde B(\delta_x)|}{e \theta} \geq d(f_{\omega}^{s}(x'), c).
\]
Hence $s$ is a $1$-close return of $(x', \omega)$. By Lemma 6.1, $t+s$ is a $\frac{\theta}{e^2}$-good return time of $(x', \omega)$ into $\tilde B(\delta'') \times \Omega_{\epsilon}$. Since $\delta'' \geq \delta$, this implies that $\hat s(x', \omega) \leq s+ t \leq s'$. Since $\hat s(x', \omega) \geq s(x', \omega) =s'$, then $\hat s(x', \omega) =s'$. A contradiction since we assume that $(x', \omega) \in E_1$. This proves $d_*(z, c) \leq \delta'$. Since $\mathcal N(f_{\omega}^s | J_{x'}) \leq 1$, we have
\[
\frac{|f_{\omega}^s (J_{x'})| }{ d(f_{\omega}^s(x'), c) } \leq \frac{e Df_{\omega}^s(x') |J_{x'}| }{d(f_{\omega}^s(x'), c)} \leq \frac{2e \theta_0}{ A(x', \omega, s')}\frac{Df_{\omega}^s(x') }{d(f_{\omega}^s(x'), c)} \leq 2 e \theta_0 < \frac{1}{3},
\] 
it follows that $f_{\omega}^s (J_{x'}) \subset J_0$. This proves the claim.

To complete the proof of (6.16), we decompose $\mathcal X$ as a disjoint union of sub-collections $\mathcal X(k), k \geq 0$ as follows: 

(i) $\mathcal X (0)$ is the subset of $\mathcal X$ consisting of those points $x$ for which $s(x, \omega) \leq s(x', \omega)$ for each $x' \in J_x \cap \mathcal X$;

(ii) for each $k \geq 1$, $\mathcal X(k)$ is the subset of $\mathcal X \setminus (\bigcup_{i=0}^{k-1} \mathcal X(i))$ consisting of those pints $x$ for which $s(x, \omega) \leq s(x', \omega)$ for each $x' \in J_x \cap (\mathcal X \setminus (\bigcup_{i=0}^{k-1} \mathcal X(i)))$.

Then by the claim above and (6.15), for each $k \geq 1$ we have
\[
\sum_{x' \in \mathcal X(k)} |J_{x'}| \leq \sum_{x \in \mathcal X(k-1)} |\tilde J_x| \leq \frac{1}{2} \sum_{x \in \mathcal X(k-1)} |J_x|.
\]
Since each $J_x, x \in \mathcal X$, has length less that $|\tilde B(\delta)|$, then 
\begin{align*}
\sum_{x \in \mathcal X} |J_x| & = \sum_{k\geq 0} \sum_{x \in \mathcal X(k)} |J_x| = \sum_{x \in \mathcal X(0)} |J_x| + \sum_{k\geq 1} \sum_{x \in \mathcal X(k)} |J_x| \\
& \leq  \sum_{x \in \mathcal X(0)} |J_x| + \frac{1}{2} \sum_{k\geq 0} \sum_{x \in \mathcal X(k)} |J_x| \leq 2  \sum_{x \in \mathcal X(0)} |J_x|  \leq 4 |\tilde B(\delta)|.
\end{align*}

\end{proof}

We conclude this section with the proof of Proposition 18.

\begin{proof}[Proof of Proposition 18]

Let $S = S_p^{\theta}(\delta, \epsilon; \delta_0), \hat S = \hat S_p^{\theta/e} (\delta, \epsilon; \delta_0)$ and for each $n \geq 0$,
\[
\hat K_n = \frac{K_n^p}{|\tilde B(\delta)|}.
\]
We shall prove by induction that
\begin{equation}
\hat K_n \leq (36 \theta/\theta_0)^n (S+ 2 \hat S). 
\end{equation}

For $n=0$, by definition, 
\begin{align*}
K_0^p &: = \iint_{E_0} \varphi(x, \omega)^p d P_{\epsilon} = \iint_{\tilde B(\delta) \times \Omega_{\epsilon}} s(x, \omega)^p dP_{\epsilon} + \iint_{( \tilde B(e \delta) \setminus \tilde B(\delta)) \times \Omega_{\epsilon}} \hat s(x, \omega)^p dP_{\epsilon} \\
& \leq \frac{|\tilde B(\delta)|}{|\tilde B(\delta)|} \iint_{\tilde B(\delta) \times \Omega_{\epsilon}} s(x, \omega)^p dP_{\epsilon} + \frac{2|\tilde B(2\delta)|}{|\tilde B(e\delta)|}  \iint_{( \tilde B(e \delta) \setminus \tilde B(\delta)) \times \Omega_{\epsilon}} \hat s(x, \omega)^p dP_{\epsilon} \\
& \leq |\tilde B(\delta)| S_p^{\theta} (\delta, \epsilon; \delta_0) + |\tilde B(2\delta)|  \iint_{( \tilde B(e \delta) \setminus \tilde B(\delta)) \times \Omega_{\epsilon}} \frac{1}{d(x, c)} \hat s(x, \omega)^p dP_{\epsilon}\\
& \leq |\tilde B(\delta)| S + |\tilde B(2 \delta)| \hat S,
\end{align*}
provided $\delta_0$ is small enough.

For $n=1$. Since $\mathscr G$ is future-free, applying Lemma 6.2 and 6.5 to $\mathscr G$ and $\phi = \varphi^p$, we have
\begin{align*}
K_1^p & = \iint_{\tilde B(\delta_0) \times \Omega_{\epsilon}} \mathcal L_{\mathscr G}^{\nu_{\epsilon}} (y) (\varphi(y, \omega))^p dy d \nu_{\epsilon}^{\mathbb N} \\
& =  \iint_{\tilde B(\delta) \times \Omega_{\epsilon}} \mathcal L_{\mathscr G}^{\nu_{\epsilon}} (y) (\varphi(y, \omega))^p dy d \nu_{\epsilon}^{\mathbb N} +  \iint_{ (\tilde B(\delta_0) \setminus \tilde B(\delta)) \times \Omega_{\epsilon}} \mathcal L_{\mathscr G}^{\nu_{\epsilon}} (y) (\varphi(y, \omega))^p dy d \nu_{\epsilon}^{\mathbb N} \\
& \leq \frac{36\theta}{\theta_0} |\tilde B(\delta)| \left( \frac{1}{|\tilde B(\delta)|} \iint_{\tilde B(\delta) \times \Omega_{\epsilon}}  s(y, \omega)^p dP_{\epsilon} + \iint_{ (\tilde B(\delta_0) \setminus \tilde B(\delta)) \times \Omega_{\epsilon}} \frac{1}{d(y, c)} \hat s(y, \omega)^p  d P_{\epsilon}\right) \\
& \leq \frac{36 \theta}{\theta_0} |\tilde B(\delta)| (S +  \hat S) \leq \frac{36 \theta}{\theta_0} (S +  \hat S).
\end{align*}
So (6.17) holds for $n =1$. Similarly for each $n \geq 1$, applying Lemma 6.2 and 6.5 to $\mathscr G$ and $\phi = \varphi_n^p$, we have
\[
K_{n+1}^p = \iint_{E_n} \mathcal L_{\mathscr G}^{\nu_{\epsilon}} (y) (\varphi_n(y, \omega))^p d P_{\epsilon} \leq \frac{36 \theta}{\theta_0} K_n^p.
\]
So (6.17) holds by induction.

By Lemma 6.4, (6.17) and the choice of $\theta$, we have 
\begin{align*}
S_p^{\theta} (e \delta, \epsilon; \delta_0) & \leq \frac{1}{|\tilde B(e \delta)|} \left( \sum_{n=0}^{\infty} K_n \right)^p \leq  \frac{1}{|\tilde B(e \delta)|} \left( |\tilde B(\delta)|^{\frac{1}{p}} \sum_{n=0}^{\infty} {\hat K_n}^{\frac{1}{p}} \right)^p\\
& \leq \frac{|\tilde B(\delta)|}{|\tilde B(e \delta)|} \left(  \sum_{n=0}^{\infty} \left( \frac{36 \theta}{\theta_0} \right)^{\frac{n}{p}}  \right)^p (S + 2 \hat S)\\
& \leq e^{-\frac{1}{\ell}} \left(1 - \left( \frac{36 \theta}{\theta_0} \right)^{\frac{1}{p}} \right)^{-p} (S + 2 \hat S)\\
& < \lambda (S + 2 \hat S).
\end{align*}
The proposition follows.
\end{proof}

\subsection*{Conflict of interest}

The authors declared no potential conﬂicts of interest with respect to the research.

\subsection*{Data availability statement}

No datasets were generated or analysed during the current study.


%
%



{\textsc{ School of Mathematics and Statistics,
Zhengzhou University, Zhengzhou,
450001,  CHINA}} (e-mail:jihymath@zzu.edu.cn)

\end{document}